\documentclass[11pt]{amsart}
\usepackage{amsmath, amsthm, amssymb}
\usepackage{amscd}
\usepackage{mathabx}
\usepackage{hyperref}
\usepackage{mathrsfs}
\usepackage{xcolor, changepage} 
\usepackage{graphicx}
\usepackage{titletoc}
\usepackage{enumerate}
\usepackage{accents}
\usepackage{esint}
\usepackage{slashed}
\usepackage{tikz}
\usetikzlibrary{matrix,arrows}
\usetikzlibrary{shapes}
\usetikzlibrary{calc}
\usetikzlibrary{decorations.pathreplacing,decorations.markings,decorations.pathmorphing}
\usepackage[all]{xy}
\usepackage{tikz-cd}

\usepackage{caption}
\usepackage{subcaption}
\usepackage{geometry}
\theoremstyle{plain}
\newtheorem{theorem}{Theorem}
\newtheorem{proposition}[theorem]{Proposition} 
\newtheorem{lemma}[theorem]{Lemma}
\newtheorem{remark}[theorem]{Remark}
\newtheorem{corollary}[theorem]{Corollary}
\newtheorem{definition}[theorem]{Definition}

\numberwithin{theorem}{section}

\DeclareMathOperator{\dist}{dist}

\DeclareMathOperator{\supp}{supp}

\DeclareMathOperator{\Int}{Int}
\DeclareMathOperator{\divv}{div}

\DeclareMathOperator{\Ric}{Ric}
\DeclareMathOperator{\Capacity}{Cap}

\newcommand{\EADM}{\mathcal{M}}
\newcommand{\PADM}{\mathcal{P}}

\numberwithin{equation}{section}

\title[PMT for singular AH manifolds]{positive mass theorems for  singular asymptotically hyperbolic manifolds }

\author{Yuguang Shi}
\address{Key Laboratory of Pure and Applied Mathematics,
	School of Mathematical Sciences, Peking University, Beijing, 100871, P. R. China
}
\email{ygshi@math.pku.edu.cn}

\author{Chengzhang Sun}

\address{Beijing International Center for Mathematical Research, Peking University, Beijing, 100871, P. R. China
}
\email{chengzhang@bicmr.pku.edu.cn}

\author{Zijun Wang} 

\address{Key Laboratory of Pure and Applied Mathematics,
	School of Mathematical Sciences, Peking University, Beijing, 100871, P. R. China
}

\email{2501110021@stu.pku.edu.cn}

\thanks{ Y. Shi  is funded by NSFC12431003. }

\subjclass[2010]{Primary 53C21, secondary 53C24 }

\begin{document}
\begin{abstract}
    Through a careful analysis of the Yamabe equation on singular spaces, we establish a positive mass theorem for singular asymptotically hyperbolic manifolds with arbitrary ends. We also derive a rigidity result that is novel even in the smooth setting.
\end{abstract}

\maketitle
\tableofcontents	
	
\section{Introduction}

\subsection{Singular asymptotically hyperbolic manifolds}	
	   
The positive mass theorem (PMT) is a foundational result in Riemannian  geometry and general relativity, characterizing the nonnegativity of the total mass of manifolds with nonnegative scalar curvature and standard asymptotic fall-off behavior. 

In recent years, extensive progress has been made on the nonexistence of metrics with positive scalar curvature (PSC) on singular spaces and the corresponding positive mass theorems. In \cite{LM2019, Kaz24, WX2024}, the authors established Geroch-type conjectures for certain smooth, uniformly Euclidean metrics that are smooth outside a codimension-$3$ submanifold. In \cite{DaSW2024}, the authors considered related problems for manifolds with isolated conical singularities. \cite{DWWW2024} established global rigidity in general settings using methods from the RCD theory. \cite{JSZ22} and \cite{CLZ22} established positive mass theorems for $C^0\cap W^{1,p}$ metrics. More recently, \cite{HSY26}(see also \cite{HSY25}) and \cite{BHHSZ2026} have dealt with PMT for higher-dimensional asymptotically flat (AF) manifolds with a singular set. In \cite{BQ08}, the authors obtained PMT for spin asymptotically hyperbolic (AH) manifolds with corners along compact hypersurfaces. Most recently, \cite{khuri2026} proves the positive mass theorem for AF manifolds with an $L^\infty$-metric when the codimension is slightly larger than 3, and \cite{BZ2026} proves the Geroch-type conjectures in similar settings. 

The primary motivation of this paper is to investigate which classes of singularities yield positive contributions to the PMT in the asymptotically hyperbolic setting. Obviously, the singularity arising in negative-mass AdS-Schwarzschild manifolds exhibits negative mass contributions. We note that singularities satisfying RCD conditions are those near which both the Ricci curvature and, consequently, the scalar curvature are bounded below in a suitable sense. Intuitively, such singularities should yield positive mass contributions. Gromov, in his four lectures \cite{Gro23}, has already proposed such questions for Alexandrov spaces. We prove that PMT holds for low-dimensional singularities under the RCD condition, or when the Ricci curvature has a quadratic lower bound near the singular set (cf.\ Theorem \ref{thm:pmt AH}). Clarifying the sign contribution of different singularities will deepen our understanding of how local singular geometry affects global mass invariants on AH manifolds. 

\begin{definition}[Definition 2.3 in \cite{Wang2001}]\label{def:AH mfd}
    A complete noncompact Riemannian manifold \((M^n, g)\) is said to be asymptotically hyperbolic (AH) if there exists a compact manifold \(X\) with boundary and a smooth function \(\rho\) on \(X\) such that:
    \begin{enumerate}
        \item[(i)] \(M^n\) is diffeomorphic to \(X \setminus \partial X\) (we identify \(M^n\) with \(X \setminus \partial X\) in the sequel),
        \item[(ii)] \(\rho = 0\) on \(\partial X\) and \(\rho > 0\) on \(X \setminus \partial X\),
        \item[(iii)] \(\bar{g} = \rho^2 g\) extends to a smooth metric on \(X\),
        \item[(iv)] \(|d\rho|_{\bar{g}} = 1\) on \(\partial X\),
        \item[(v)] each component \(\Sigma\) of \(\partial X\) is the standard round sphere \((\mathbb{S}^{n-1}, \gamma_{\text{std}})\), and in a collar neighborhood of \(\Sigma\),
        \[
        g = \sinh^{-2} \rho \, (d\rho^2 + \gamma_\rho)
        \]
        with \(\gamma_\rho\) being a \(\rho\)-dependent family of metrics on \(\mathbb{S}^{n-1}\) that has the expansion
        \begin{equation}\label{eq:AH expansion}
            \gamma_\rho = \gamma_{\text{std}} + \frac{\rho^n}{n} h + O(\rho^{n+1}),
        \end{equation}
        where \(h\) is a symmetric 2-tensor on \(\mathbb{S}^{n-1}\), and the term $O(\rho^{n+1})$ is a function in $C^2_{n+1}$. 
    \end{enumerate}
\end{definition}
We use the conventions in \cite{ACF92} for the weighted H\"older spaces; see also Section \ref{sec:smoothness}. Although more restrictive, we use Wang's asymptotics to simplify our discussion. 

We are interested in the following class of singular spaces: 

\begin{definition}[Section 2 in \cite{BHHSZ2026}]
    A metric measure space $(M^n,d,\mu)$ is called almost-manifold if
    \begin{itemize}
    \item[(AM)] There is a positive integer $n\geq 3$ such that 
    \begin{itemize}
        \item $(M^n,d,\mu)$ is {\it locally Ahlfors $n$-regular}: for any bounded subset $U$ of $M^n$ there are constants $\varepsilon=\varepsilon(U)$ and $C=C(U)$ such that
        $$C^{-1}r^n\leq\mu(B_r(x))\leq Cr^{n}$$
        for any $x\in U$ and $r\in (0,\epsilon)$;
        \item $(M^n,d,\mu)$ admits a {\it regular-singular decomposition} 
        $$ M^n=\mathcal R\sqcup\mathcal S,$$ 
       where $\mathcal R$ is a {\it connected smooth} manifold of dimension $n$ equipped with a Riemannian metric $g$ such that $d_L=d_g$ and $\mu=\mu_g$ and $\mathcal S$ is a compact  subset of $M^n$ with $\mathcal H^{n-2}(\mathcal S)=0$, where $d_L$ denotes the length metric associated to $d$, and $d_g$ and $\mu_g$ are metric and volume measure induced from $g$ respectively. Note that we have $\mu(\mathcal S)=0$ in particular. We assume  that all AH ends described above are contained in $\mathcal R$. 
       \item $\mathcal{S}=\bigcup^m_{i=1} \mathcal{S}_i$, each $\mathcal{S}_i$ is a compact set of $M^n$. There exist $\Lambda>0$, $\delta_0>0$ such that  for each $i$ there is a $0\leq k_i\leq n-2$ with $\Lambda^{-1}\leq \mathcal{H}^{k_i}(\mathcal{S}_i)\leq \Lambda$, and   for any $x\in \mathcal{S}_i$, $r\leq \delta_0$ there holds $\Lambda^{-1}r^{k_i}\leq \mathcal{H}^{k_i}(\mathcal{S}_i\cap B(x,r))	\leq \Lambda r^{k_i}$. 
     \item[(LPI)]  {\it Local $(1,2)$-Poincar\'e Inequality}: for any bounded subset $U$ of $M$ there are constants $\varepsilon=\varepsilon(U)>0$,  $\lambda=\lambda(U)\geq 1$ and $C_P=C_P(U)>0$ such that we have
    $$\fint_{B}|\phi-\phi_B|\,\mathrm d\mu\leq C_Pr\left(\fint_{\mathcal R\cap\lambda B}|\nabla_g \phi|^2\,\mathrm d\mu\right)^{\frac{1}{2}}\mbox{ for any }\phi\in W^{1,2}_{loc}(M),$$
    where $B$ and $\lambda B$ denote $B_r(x)$ and $B_{\lambda r}(x)$ for any $x\in U$ and $r\in (0,\varepsilon)$ respectively, and $\phi_B$ denotes the average value of $\phi$ in $B$.
    \end{itemize}
\end{itemize}
\end{definition}

Owing to Lemma 2.2 and Lemma 2.4 in \cite{BHHSZ2026}, we know that the Poincar\'e inequality and the Sobolev inequality hold in any neighborhood of $\mathcal{S}$. As a consequence, there exists a Green function $G(x)$ for the Laplacian operator on $(M^n, g)$ with any measurable subset of $\mathcal{S}$ as its pole. 

We will assume throughout this article that $M$ is connected, and that there exists a compact set $K$ such that $\mathcal{S}\subset K\subset M$, and that 
$$
    M\setminus K=E_0\cup\bigcup_{i\in I}E_i
$$
is a decomposition into connected components, with $E_0$ isometric to an open neighborhood of $\partial X$ of some AH manifold as in Definition \ref{def:AH mfd}, and $E_i$ isometric to a complete non-compact Riemannian manifold with a compact subset removed. By Lemma 2.17 in \cite{BHHSZ2026}, we can always assume that $K$ is connected. We will refer to $E_0$ as an AH end of $M$, with conformal infinity $\partial X$, and $E_i$ as an arbitrary end of $M$. The set $I$ is allowed to be empty, in which case there is only a single AH end $E_0$. 

\begin{figure}[h]
\centering
\begin{tikzpicture}
    \coordinate (E01) at (-3,3);
    \coordinate (E02) at (3,3);
    \coordinate (Ei1) at (8,0);
    \coordinate (Ei2) at (8,-2);
    \coordinate (K1) at (-1.5,1.5);
    \coordinate (K2) at (1.5,1.5);
    \coordinate (K3) at (2,-1);
    \coordinate (K4) at (2,-2);
    \coordinate (S) at (-2,-2);

    \draw[line width=1.2pt]
    (E01)
    .. controls (K1) and (-0.5,0.5) .. (-0.5,0)
    .. controls (-0.5,-0.5) and (-1.5,-1.5) .. (S);

    \draw[line width=1.2pt]
    (E02)
    .. controls (K2) and (0.5,0.5) .. (0.5,0)
    .. controls (0.5,-0.5) and (1.5,-1) .. (K3)
    .. controls (2.5,-1) and (3.5,-1.25) .. (4,-1.25)
    .. controls (5,-1.25) and (7,0.5) .. (Ei1);

    \draw[line width=1.2pt]
    (S)
    .. controls (-1.3,-1.5) and (1.5,-1.75) .. (K4)
    .. controls (2.5,-2) and (3.5,-1.75) .. (4,-1.75)
    .. controls (5,-1.75) and (7,-2.5) .. (Ei2);

    \draw[dashed,line width=1pt]
    (K1)
    .. controls (-1,1.75) and (1,1.75) .. (K2);
    \draw[line width=1pt]
    (K1)
    .. controls (-1,1.25) and (1,1.25) .. (K2);

    \draw[dashed,line width=1pt]
    (K3)
    .. controls (2.2,-1.1) and (2.2,-1.9) .. (K4);
    \draw[line width=1pt]
    (K3)
    .. controls (1.8,-1.1) and (1.8,-1.9) .. (K4);

    \draw[line width=1pt]
    (-0.5,-1.2) .. controls (-0.3,-1.4) and (0.8,-1.4) .. (1,-1.2);
    \draw[line width=1pt]
    (-0.3,-1.3) .. controls (-0.25,-1) and (0.75,-1) .. (0.8,-1.3);
    \draw[line width=1pt]
    (4.5,-1.45) .. controls (4.7,-1.65) and (5.8,-1.65) .. (6,-1.45);
    \draw[line width=1pt]
    (4.7,-1.55) .. controls (4.75,-1.25) and (5.75,-1.25) .. (5.8,-1.55);

    \node at (0,2.5) {$E_0$};
    \node at (0,0) {$K$};
    \node[below left] at (S) {$S$};
    \node at (7,-1) {$E_i$};

\end{tikzpicture}
\caption{The AH and arbitrary ends.}
\label{fig:ends}
\end{figure}

\begin{theorem}\label{thm:pmt AH}
Let $(M^n, d, \mu)$ be an almost manifold with its regular part $\mathcal R$ containing an AH end $E_0$ and possibly some arbitrary ends. Suppose its scalar curvature $R_g\geq -n(n-1)$ on $\mathcal R$ and $\dim_{H}(\mathcal{S})< \frac{n-2}{2}$. Moreover, we assume that either 
\begin{itemize}
    \item the Ricci curvature $\Ric_g (x) \geq -C d^{-2}(x, \mathcal{S})$ for all $x\in\mathcal{S}$, or
    \item $(M^n, d, \mu)$ is an $RCD(K,N)$ space.
\end{itemize}
Here, $C, K, N$ are universal constants that depend only on $(M^n,d,\mu)$. Then we have 

\begin{equation}\label{eq:mass nonnegative}
    \int_{S^{n-1}} \operatorname{tr}_{\gamma_{\text{std}}}(h) \,d \mu_{\gamma_{\text{std}}} \geq\left|\int_{S^{n-1}} \operatorname{tr}_{\gamma_{\text{std}}}(h) x \,d \mu_{\gamma_{\text{std}}}\right|.
\end{equation}

The equality holds only if $R_g=-n(n-1)$ on $\mathcal{R}$. If $\mathcal{R}$ is spin, then the equality implies that $\mathcal{R}$ is Einstein. If $\mathcal{S}=\varnothing$ and if $\int_M\|\kappa\|^2_g\,d\mu_g<\infty$ where $\kappa:=\Ric_g+(n-1)g$, the equality implies that $M$ is isometric to the hyperbolic space $\mathbb{H}^n$. 
\end{theorem}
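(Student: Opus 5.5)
Our plan is to reduce the singular statement to the smooth positive mass theorem for AH manifolds with arbitrary ends. The tool is a conformal deformation that makes the scalar curvature strictly greater than $-n(n-1)$ near $\mathcal S$ and pushes $\mathcal S$ off to infinity. Concretely, we fix a Green function $G$ of the conformal Laplacian $L_g=-\frac{4(n-1)}{n-2}\Delta_g+R_g$, or of a suitably shifted version $L_g+n(n-1)$, with pole on $\mathcal S$; its existence is provided by the Sobolev and Poincaré inequalities near $\mathcal S$ (Lemmas 2.2 and 2.4 of \cite{BHHSZ2026}). For small $\varepsilon>0$ we set
\[
u_\varepsilon=1+\varepsilon G+v_\varepsilon,\qquad g_\varepsilon=u_\varepsilon^{\frac{4}{n-2}}g,
\]
where $v_\varepsilon$ is a correction solving a Yamabe-type equation on $\mathcal R$. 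It is chosen so that $R_{g_\varepsilon}\ge -n(n-1)$ and $u_\varepsilon-1=O(\rho^{n})$ on the AH end $E_0$ with a controlled $\rho^n$ coefficient. Because $G\to\infty$ at $\mathcal S$ at rate roughly $d(x,\mathcal S)^{2-n}$, the metric $g_\varepsilon$ turns a neighborhood of $\mathcal S$ into a complete end. The condition $\dim_H\mathcal S<\frac{n-2}{2}$ is what makes this end complete and keeps its volume and capacity behavior harmless. Once this holds, $(\mathcal R,g_\varepsilon)$ is a smooth complete manifold with one AH end and arbitrary other ends. The smooth PMT with arbitrary ends, proved via $\mu$-bubbles or stable hypersurfaces in the AH setting and assumed available, then gives \eqref{eq:mass nonnegative} for the mass aspect $h_\varepsilon$ of $g_\varepsilon$. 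Finally, we let $\varepsilon\to0$ and show $h_\varepsilon\to h$.

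The key steps are the following.

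(1) \emph{Green function asymptotics.} We show $C^{-1}d^{2-n}\lesssim G\lesssim C d^{2-n}$ near $\mathcal S$, together with gradient and Hessian bounds $|\nabla G|\lesssim d^{-1}G$ and $|\nabla^2G|\lesssim d^{-2}G$. The curvature hypothesis enters here. Under $\Ric\ge -Cd^{-2}$, we get a Cheng–Yau gradient estimate on balls of radius $\sim d(x,\mathcal S)$ after rescaling. Under $RCD(K,N)$, we use the Bakry–Émery/Li–Yau gradient estimates, which hold on such spaces.

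(2) \emph{Completeness of $g_\varepsilon$ near $\mathcal S$.} Along any curve reaching $\mathcal S$, the length $\int u_\varepsilon^{2/(n-2)}\,ds\gtrsim\int d^{-1}\,ds=\infty$, so $g_\varepsilon$ is complete.

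(3) \emph{Scalar curvature control.} Since $L_gG=0$ off $\mathcal S$, the only error terms come from the constant $1$ and the nonlinearity. For the correction $v_\varepsilon$, we solve
\[
L_g u-n(n-1)u^{\frac{n+2}{n-2}}\ \ge\ 0
\]
by sub- and supersolutions on $\mathcal R$, with $v_\varepsilon\to0$ as $\varepsilon\to0$ in weighted Hölder spaces $C^{2,\alpha}_\delta$ on $E_0$ (weights as in \cite{ACF92}). The required linear theory uses that $\mathcal S$ has zero capacity; this follows from $\mathcal H^{n-2}(\mathcal S)=0$, and with the stronger dimension bound it even holds with weight, so $W^{1,2}$ solutions on $\mathcal R$ do not see $\mathcal S$.

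(4) \emph{Convergence of the mass.} On $E_0$, $G=O(\rho^{n-1})$ at worst, so we must track its $\rho^n$-coefficient. The mass functional is linear in $u-1$ to leading order, so $h_\varepsilon=h+O(\varepsilon)$ and the inequality passes to the limit.

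We expect step (1)–(3) to be the main obstacle: producing the conformal factor that simultaneously blows up at $\mathcal S$ fast enough for completeness, keeps $R\ge -n(n-1)$, and has Hessian control at the singular set. The gradient and Hessian control is exactly what the Ricci lower bound or the RCD condition is meant to supply. We would first try to use $G$ for the operator $-\Delta+\lambda$ with large $\lambda$, which keeps the sign of the error terms favorable.

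For equality, suppose $R_g>-n(n-1)$ somewhere. Then a small compactly supported conformal perturbation in $\mathcal R$ decreases the mass strictly while preserving $R\ge -n(n-1)$, which contradicts \eqref{eq:mass nonnegative}. In the spin case, Wang's Witten-spinor argument \cite{Wang2001} applied to $g_\varepsilon$ yields imaginary Killing spinors in the limit, and hence $g$ is Einstein. For $\mathcal S=\varnothing$ with $\int|\kappa|^2<\infty$, we first deform the metric in the direction of $\kappa$, via a Ricci-flow-type or $g-t\kappa$ variation corrected conformally. This reduces the mass unless $\kappa\equiv0$, so $g$ is Einstein. An Einstein AH metric with zero mass in Wang's normal form then forces $h=0$, and unique continuation gives $M\cong\mathbb H^n$.
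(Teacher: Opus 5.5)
Your overall strategy differs from the paper's, which solves a Loewner--Nirenberg-type Yamabe problem on $M'\setminus\bar U_j$ and then needs the Aviles-type bound $u_\infty\le 1$. Your strategy is: turn $\mathcal S$ into complete ends with a conformal factor $u_\varepsilon\ge 1$, apply the smooth AH positive mass theorem with arbitrary ends to $(\mathcal R,g_\varepsilon)$, and let $\varepsilon\to0$. It can be made to work, but as written its decisive step is missing, and the concrete choices you propose fail.

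\textbf{The conformal factor.} On $E_0$ we have $\Delta_g\rho^\eta=\eta(\eta+1-n)\rho^\eta+O(\rho^{\eta+2})$. So your shifted operator $L_g+n(n-1)\approx-c_n\Delta_g$ (with $L_g$ the conformal Laplacian) has indicial roots $0$ and $n-1$; for $L_g$ itself they are $\frac{n-2}{2}$ and $\frac n2$. Its positive Green function therefore decays like $\rho^{n-1}$ with a positive leading coefficient. Then $u_\varepsilon-1\neq O(\rho^n)$, $g_\varepsilon$ is not AH in the sense of Definition~\ref{def:AH mfd}, and its mass is undefined; there is no $\rho^n$-coefficient to track. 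All the work is thus delegated to the unspecified correction $v_\varepsilon$.

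Your fallback, $G$ for $-\Delta_g+\lambda$ with $\lambda$ large, breaks the curvature condition. With $x=\varepsilon G$ and $R_g=-n(n-1)$ one gets
$$-c_n\Delta_g(1+x)+R_g(1+x)+n(n-1)(1+x)^\alpha=c_n(n-\lambda)x+O(x^2),$$
which is $<0$ wherever $x$ is small, e.g.\ near $\partial X$. Note also the sign: $R_{g_\varepsilon}\ge -n(n-1)$ means $-c_n\Delta_g u+R_gu+n(n-1)u^\alpha\ge0$, not $L_gu-n(n-1)u^\alpha\ge 0$.

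The missing idea is that $\lambda=n$ is exactly right. $-\Delta_g+n$ is the linearized Yamabe operator at $u\equiv1$, the operator of Proposition~\ref{prop:gap} and Lemma~\ref{lem:BQ conformal change}. Convexity of $s\mapsto s^\alpha$ and $R_g\ge-n(n-1)$ give, for $x>-1$,
\[
-c_n\Delta_g(1+x)+R_g(1+x)+n(n-1)(1+x)^{\alpha}\ \ge\ c_n\bigl(-\Delta_g x+n\,x\bigr).
\]
Take $G(x)=\int_{\mathcal S}G_n(x,y)\,d\nu(y)$, where $G_n$ is the Green function of $-\Delta_g+n$ and $\nu=\sum_i\mathcal H^{k_i}$ restricted to $\mathcal S_i$. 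Then $u_\varepsilon=1+\varepsilon G$ already has $R_{g_\varepsilon}\ge-n(n-1)$ with no correction. On $E_0$ the roots are $n$ and $-1$, so $G=a\rho^n+O(\rho^{n+1})$. The mass aspect then moves by $\frac{4(n+1)}{n-2}\varepsilon a\gamma_{\text{std}}$ (Proposition~\ref{prop:mass aspect change}).

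\textbf{Green function asymptotics and completeness.} Your step (1) is wrong for positive-dimensional $\mathcal S$. For this pole, $G\gtrsim d(x,\mathcal S)^{2+k-n}$, not $d^{2-n}$. With rate $d^{2-n}$ the dimension hypothesis would play no role at all. Completeness comes instead from $\int_0 r^{-2(n-2-k)/(n-2)}\,dr=\infty$, i.e.\ $k\le\frac{n-2}{2}$; this is exactly where the dimension hypothesis enters.

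\textbf{What the corrected route buys.} Since $u_\varepsilon\ge1$, completeness on the $E_i$ is automatic, so the cut at $\Sigma_i$ and the Miao/Bonini--Qing smoothing are unnecessary. No gradient or Hessian bounds on $G$ are used either. As far as I can see, this route does not need the Ricci/RCD hypothesis for the inequality. In the paper that hypothesis enters only through $u_\infty\le1$ (Propositions~\ref{prop:bound2} and \ref{prop:upper bound RCD}).

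\textbf{The equality statements.} These have their own gaps.

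\emph{Scalar rigidity.} A compactly supported conformal change leaves the mass unchanged, so it cannot give a strict decrease. What works is a global factor $1-tw+\varepsilon G$ with $(-\Delta_g+n)w=\phi\ge0$, where $\phi$ is supported where $R_g>-n(n-1)$. The displayed inequality keeps $R\ge-n(n-1)$ for small $t$. Also $w\ge\delta\rho^n$ near $\partial X$ (argue as in Proposition~\ref{prop:gap}), which lowers $\EADM-|\PADM|$ by an amount independent of $\varepsilon$. This plays the role of Proposition~\ref{prop:PMT strict}.

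\emph{Ricci rigidity in the smooth case.} Your $\kappa$-deformation is the paper's idea, but you give no reason why the mass drops. The first variation of $R$ is $\divv\divv(\eta\kappa)-\eta\|\kappa\|_g^2$, and the divergence term has no sign. The paper splits the linearized conformal factor as $v_1+v_2$:
\begin{itemize}
\item $v_1$, with source $\eta\|\kappa\|_g^2$, has a positive $\rho^n$-coefficient independent of the cutoff scale $\ell$;
\item $v_2$ has $W^{1,2}$-norm, hence $\rho^n$-coefficient, of size $O(\ell^{-1})$, because $\divv\kappa=0$, $|\nabla\eta|\lesssim\ell^{-1}$ and $\kappa\in L^2$.
\end{itemize}
This is the only use of $\int\|\kappa\|_g^2<\infty$, and it is absent from your sketch.

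\emph{From Einstein to $\mathbb H^n$.} The mass only detects $\operatorname{tr}h$, and the hypothesis is $\EADM=|\PADM|$, not zero mass, so it cannot force $h=0$. For Einstein metrics in Wang's class, $h=0$ comes instead from the Fefferman--Graham expansion. Even then, passing from ``hyperbolic near $\partial X$'' to $M\cong\mathbb H^n$ in the presence of arbitrary ends needs a global argument. The paper uses Anderson's bound $L_{\bar g}(c)\le 2$ (Proposition~\ref{prop:diameter bound}) to rule out other ends, then single-end rigidity.
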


\begin{remark}
Note that for manifolds with only AH ends, the condition $\int_M\|\kappa\|^2_g\,d\mu_g<\infty$ holds automatically.
\end{remark}

In this article, we use $d\mu_g$ to represent the volume form of any Riemannian metric $g$. We will denote 
$$\EADM(g)=\int_{S^{n-1}} \operatorname{tr}_{\gamma_{\text{std}}}(h) \,d \mu_{\gamma_{\text{std}}}, \quad \PADM(g)=\int_{S^{n-1}} \operatorname{tr}_{\gamma_{\text{std}}}(h) x \,d \mu_{\gamma_{\text{std}}}.$$

To prove Theorem \ref{thm:pmt AH}, we choose a sequence of decreasing domains of $\mathcal{S}$ with smooth boundaries such that $U_1 \supset U_2\supset\cdots\supset U_k\supset\cdots \supset\mathcal{S}$ (see \eqref{eq:neighborhood} below). Following the arguments in \cite{AMcpt88}, we conformally deform the metric $g$ on $M\setminus \bar U_j$. Unfortunately, we find it quite difficult to control the solution of the Yamabe equation \eqref{eq:Yamabe} on the arbitrary ends. Hence, we cannot ensure the completeness of the deformed metric. To resolve this issue, we employ gluing arguments (cf.\ Proposition \ref{prop:PMT strict} and Lemma \ref{lem:BQ conformal change} below). More specifically, let $M'\subset M$ be a neighborhood of the AH end $E_0$ and the compact set $K$; we assume that the closure of $U_j$ is contained in $K$ for all $j>0$, and $\Sigma_i=\partial M'\cap E_i$ is smooth. We consider the following Yamabe equation:
 \begin{equation}\label{eq:yamabe eq0}
        \begin{cases}
            L_g u_j:=-c_n\Delta_g u_j+R_g u_j-fu_j^{\alpha} =0 & \text{in }M'\setminus \bar U_j, \\
            u_j=1 & \text{on }\partial M',\\
            u_j=\infty & \text{on }\partial U_j,  \\
            u_j=1 & \text{on }\partial X.
        \end{cases}
    \end{equation} 
Here, $c_n:=\frac{4(n-1)}{n-2}$, $\alpha:=\frac{n+2}{n-2}$, and $f\in C^\infty(M\setminus \mathcal {S})$ satisfies 
$-n(n-1)\le f\le \min\{R_g, -1\}$. 

\begin{figure}[h]
\centering
\begin{tikzpicture}
    \coordinate (E01) at (-3,3);
    \coordinate (E02) at (3,3);
    \coordinate (Ei1) at (8,0);
    \coordinate (Ei2) at (8,-2);
    \coordinate (K1) at (-1.5,1.5);
    \coordinate (K2) at (1.5,1.5);
    \coordinate (K3) at (2,-1);
    \coordinate (K4) at (2,-2);
    \coordinate (S) at (-2,-2);

    \draw[line width=1.2pt]
    (E01)
    .. controls (K1) and (-0.5,0.5) .. (-0.5,0)
    .. controls (-0.5,-0.5) and (-1.5,-1.5) .. (S);

    \draw[line width=1.2pt]
    (E02)
    .. controls (K2) and (0.5,0.5) .. (0.5,0)
    .. controls (0.5,-0.5) and (1.5,-1) .. (K3)
    .. controls (2.5,-1) and (3.5,-1.25) .. (4,-1.25)
    .. controls (5,-1.25) and (7,0.5) .. (Ei1);

    \draw[line width=1.2pt]
    (S)
    .. controls (-1.3,-1.5) and (1.5,-1.75) .. (K4)
    .. controls (2.5,-2) and (3.5,-1.75) .. (4,-1.75)
    .. controls (5,-1.75) and (7,-2.5) .. (Ei2);

    \draw[dashed,line width=1pt]
    (K1)
    .. controls (-1,1.75) and (1,1.75) .. (K2);
    \draw[line width=1pt]
    (K1)
    .. controls (-1,1.25) and (1,1.25) .. (K2);

    \draw[dashed,line width=1pt]
    (K3)
    .. controls (2.2,-1.1) and (2.2,-1.9) .. (K4);
    \draw[line width=1pt]
    (K3)
    .. controls (1.8,-1.1) and (1.8,-1.9) .. (K4);

    \draw[dashed,color=red,line width=1pt]
    (2.5,-1.05)
    .. controls (2.7,-1.2) and (2.7,-1.8) .. (2.5,-1.95);
    \draw[color=red,line width=1pt]
    (2.5,-1.05)
    .. controls (2.3,-1.2) and (2.3,-1.8) .. (2.5,-1.95);

    \draw[dashed,color=blue,line width=1pt]
    (-1.5,-1.5)
    .. controls (-1.45,-1.45) and (-1.35,-1.75) .. (-1.3,-1.78);
    \draw[color=blue,line width=1pt]
    (-1.5,-1.5)
    .. controls (-1.45,-1.55) and (-1.35,-1.85) .. (-1.3,-1.78);

    \draw[line width=1pt]
    (-0.5,-1.2) .. controls (-0.3,-1.4) and (0.8,-1.4) .. (1,-1.2);
    \draw[line width=1pt]
    (-0.3,-1.3) .. controls (-0.25,-1) and (0.75,-1) .. (0.8,-1.3);
    \draw[line width=1pt]
    (4.5,-1.45) .. controls (4.7,-1.65) and (5.8,-1.65) .. (6,-1.45);
    \draw[line width=1pt]
    (4.7,-1.55) .. controls (4.75,-1.25) and (5.75,-1.25) .. (5.8,-1.55);

    \node at (0,2.5) {$E_0$};
    \node at (0,0) {$K$};
    \node[below left] at (S) {$S$};
    \node at (7,-1) {$E_i$};

    \draw[decorate,decoration={brace,amplitude=6pt},color=blue]
    (-2,-2) -- (-1.5,-1.5) node [midway,xshift=-10pt,yshift=10pt] {$U_j$};
    \draw[decorate,decoration={brace,amplitude=6pt},color=red]
    (-3.5,-2) -- (-3.5,3) node [midway,xshift=-15pt] {$M'$};
    \draw[decorate,decoration={brace,amplitude=6pt},color=red]
    (2.5,-2.5) -- (-3,-2.5) node [midway,yshift=-15pt] {$M'$};
    \node at (2.6,-0.8) [color=red] {$\Sigma_i$};

\end{tikzpicture}
\caption{The domain $M'$.}
\label{fig:cutoff}
\end{figure}

Then the resulting metric $\tilde g_j:=u_j^{\frac{4}{n-2}}g$ is complete on $M'$ and its scalar curvature satisfies $R_{g_j}\geq -n(n-1)$. Owing to the assumption that $\dim_H(\mathcal S)<\frac{n-2}{2}$, we observe that the limit of $u_j$, denoted by $u_\infty$, is bounded above by 1 (see Proposition \ref{prop:bound for u_infty} below). The key to obtaining $u_\infty\leq 1$ is an analogue of Theorem 2.1 in \cite{Aviles82} for the Yamabe equation \eqref{eq:Yamabe} on the almost manifold $(M, d, \mu)$ (see Propositions \ref{prop:bound2} and \ref{prop:upper bound RCD} below). 

Hence, by the strong maximum principle, we have either $u_\infty\equiv 1$, or $u_\infty<1$ and 
$$
    \frac{\partial u_\infty}{\partial v}>0 ~~\text {on} ~~\partial M'.
$$
In the latter case, for sufficiently large $j$, we have
$$
    \frac{\partial u_j}{\partial v}>0 ~~\text {on} ~~\partial M',
$$
which implies that the mean curvature of $\partial M'$ with respect to $\tilde g_j$, taken with the outward normal vector, is less than that with respect to $g$ for sufficiently large $j$. For details, see Lemmas \ref{lem:normal derivative} and \ref{lem:mean curvature}. Then we use the argument from \cite{BQ08} to obtain a smooth metric $\tilde{g}_\epsilon$ on $M\setminus\bar{U}_j$, which is $C^0$-close to $\tilde{g}_j$. Overall, the conformal factor from $g$ to $\tilde{g}_\epsilon$ is less than $1$. Hence, the mass of $\tilde{g}_\epsilon$ is less than that of $g$, and \eqref{eq:mass nonnegative} follows from the PMT for a smooth AH manifold, which has been established recently \cite{BW2026,hirsch2026,tsang2026} for all dimensions greater than or equal to 3. See Propositions \ref{prop:PMT strict} and \ref{prop:PMT R<0} for details. 

\begin{figure}[h]
\centering
\begin{tikzpicture}
    \coordinate (E01) at (-3,3);
    \coordinate (E02) at (3,3);
    \coordinate (Ei1) at (8,0);
    \coordinate (Ei2) at (8,-2);
    \coordinate (K1) at (-1.5,1.5);
    \coordinate (K2) at (1.5,1.5);
    \coordinate (K3) at (2,-1);
    \coordinate (K4) at (2,-2);
    \coordinate (S1) at (-3,-2);
    \coordinate (S2) at (-2,-3);

    \draw[line width=1.2pt]
    (E01)
    .. controls (K1) and (-0.7,0.5) .. (-0.7,0)
    .. controls (-0.7,-0.5) and (-1.5,-1.5) .. (S1);

    \draw[line width=1.2pt]
    (E02)
    .. controls (K2) and (0.7,0.5) .. (0.7,0)
    .. controls (0.7,-0.5) and (1.5,-1) .. (K3)
    .. controls (2.5,-1) and (3.5,-1.25) .. (4,-1.25)
    .. controls (5,-1.25) and (7,0.5) .. (Ei1);

    \draw[line width=1.2pt]
    (S2)
    .. controls (-1.3,-1.5) and (1.8,-2) .. (K4)
    .. controls (2.5,-2) and (3.5,-1.75) .. (4,-1.75)
    .. controls (5,-1.75) and (7,-2.5) .. (Ei2);

    \draw[dashed,line width=1pt]
    (K1)
    .. controls (-1,1.75) and (1,1.75) .. (K2);
    \draw[line width=1pt]
    (K1)
    .. controls (-1,1.25) and (1,1.25) .. (K2);

    \draw[dashed,line width=1pt]
    (K3)
    .. controls (2.2,-1.1) and (2.2,-1.9) .. (K4);
    \draw[line width=1pt]
    (K3)
    .. controls (1.8,-1.1) and (1.8,-1.9) .. (K4);

    \draw[dashed,color=red,line width=1pt]
    (2.5,-1.05)
    .. controls (2.7,-1.2) and (2.7,-1.8) .. (2.5,-1.95);
    \draw[color=red,line width=1pt]
    (2.5,-1.05)
    .. controls (2.3,-1.2) and (2.3,-1.8) .. (2.5,-1.95);

    \draw[line width=1pt]
    (-0.5,-1.2) .. controls (-0.3,-1.4) and (0.8,-1.4) .. (1,-1.2);
    \draw[line width=1pt]
    (-0.3,-1.3) .. controls (-0.25,-1) and (0.75,-1) .. (0.8,-1.3);
    \draw[line width=1pt]
    (4.5,-1.45) .. controls (4.7,-1.65) and (5.8,-1.65) .. (6,-1.45);
    \draw[line width=1pt]
    (4.7,-1.55) .. controls (4.75,-1.25) and (5.75,-1.25) .. (5.8,-1.55);

    \node at (0,2.5) {$E_0$};
    \node at (0,0) {$K\setminus\bar U_j$};
    \node at (7,-1) {$E_i$};
    \node at (2.6,-0.7) [color=red] {$\partial M'$};

\end{tikzpicture}
\caption{The blown-up metric $\tilde{g}_j$ can be smoothed out near $\partial M'$.}
\end{figure}

In the former case, i.e., $u_\infty\equiv 1$, we use a slightly different approach, namely, we can solve the Yamabe equation on $M\setminus \bar U_j$ (instead of $M'\setminus \bar U_j$). There is no need to smooth out the metric $\tilde g_j$ and the argument is easier, cf.\ Proposition \ref{prop:PMT R<0}.

The rigidity part of PMT for AH manifolds is more subtle than that of the AF case. The case of smooth AH manifolds without arbitrary ends was addressed in \cite{Huang2020, Hir2025}. 

We first show that the scalar curvature $R=-n(n-1)$ if the equality in \eqref{eq:mass nonnegative} holds, which is a direct consequence of our blow-up argument. Suppose that there is a $p\in \mathcal R$ with $R(p)>-n(n-1)$. We know that $u_\infty <1$ in $\mathcal R$. Thanks to Proposition \ref{prop:gap}, we see that 
$$
    u_\infty<1-\delta\rho^n
$$
for some $\delta>0$. This is crucial for achieving strict mass decay, and it violates the PMT, cf.\ Proposition \ref{prop:PMT strict}. 

To prove the Ricci rigidity when $\mathcal{S}=\varnothing$, we impose
the $L^2$-condition $\int_M\|\kappa\|_g^2\,d\mu_g<\infty$
on the traceless Ricci tensor $\kappa:=\Ric_g+(n-1)g$.
We perturb the metric by $g_t:=g+t\eta\kappa$, where $\eta$ is a smooth
cut-off supported in a large compact set and equal to $1$ near a point
with $\kappa\neq0$.  Solving the Yamabe equation on an exhaustion yields
global solutions $u_t$; letting $t\nearrow0$, the linearization
$v=\lim v_t$ with $u_t=1+tv_t$ decomposes as $v=v_1+v_2$.  The first part
$v_1$ is strictly positive near the AH end by a comparison argument, while
an $L^2$-energy estimate driven by the integrability of $\kappa$ shows
that $v_2$ is negligible when the cut-off region is made sufficiently
large.  The positivity of $v$ then forces the mass to drop for $t<0$,
contradicting the positive mass theorem unless $\kappa\equiv0$, i.e.\ 
$\Ric_g=-(n-1)g$, cf.\ Proposition \ref{prop:ricci-rigidity}.

In the spin case, we show that $\mathcal R$ is Einstein by constructing a Killing spinor. Roughly speaking, on each blown-up manifold $(M\setminus\bar U_j,\tilde{g}_j)$ there is an ``almost-Killing'' spinor, and it can be transplanted to an almost-Killing spinor on an open subset of $\mathcal{R}$. By letting $j\to\infty$, a subsequence converges to a Killing spinor on $\mathcal{R}$. 

Lastly, when the singular set is absent, we can use an argument of Anderson \cite{anderson03} to show that the manifold, which is proved to be Einstein, is indeed hyperbolic. 

\subsection{Organization of the Paper}

The remainder of this paper is organized as follows. In Section \ref{sec:Yamabe eqn}, we review and establish some basic properties of the Yamabe equation of our interest. From these properties, we obtain a smooth solution $u_\infty$ of the Yamabe equation on $\mathcal{R}$ in Section \ref{sec:Yamabe eqn reg part}. Then we prove that $u_\infty$ is bounded in Section \ref{sec:bound under Ricci cond} and Section \ref{sec:bound under RCD cond}. In Section \ref{sec:PMT proof}, we prove the positive mass theorem and establish the scalar curvature rigidity. In Section \ref{sec:Ricci rigid smooth}, we prove the Ricci rigidity when $\mathcal S=\varnothing$ and $\kappa\in L^2$, and in Section \ref{sec:Ricci rigid spin}, we prove the Ricci rigidity when $\mathcal R$ is spin. In Section \ref{sec:hyperbolic}, we prove that, when $\mathcal S=\varnothing$, the equality holds if and only if $M$ is isometric to $\mathbb{H}^n$. 

\bigskip
\section*{Acknowledgements}
We thank Lan-Hsuan Huang and Gang Li for their interest in this work, especially for Gang Li pointing us to the paper \cite{anderson03}.

\section{The Yamabe equation}\label{sec:Yamabe eqn}

We establish some basic properties of the following Yamabe equation, most of which are well-known, cf.\ \cite{AMcpt88, AMnoncpt88, ACF92}. 

\subsection{Local estimates}\label{sec:loc est}

Let $(M^n,g)$ be a Riemannian manifold (without boundary) and $n\ge 3$. Set $c_n=\frac{4(n-1)}{n-2}$, $\alpha=\frac{n+2}{n-2}$. Let $f\in C^\infty(M)$ satisfy $f\le -1$. Consider the equation: 
\begin{equation}\label{eq:Yamabe}
    -c_n\Delta_gu+R_gu=fu^{\alpha},
\end{equation}
For convenience, we write 
\begin{equation}\label{eq:operator}
    L_gu=-c_n\Delta_gu+R_gu-fu^{\alpha}.
\end{equation}

We have the a priori estimate for sub-solutions of the equation. 

\begin{lemma}
    Suppose that $K$ is a compact set. Then there exists a constant $C=C(K)$ such that if on a neighborhood $\Omega$ of $K$, a function $u\in H^1_{\text{loc}}(\Omega)$ satisfies
    $$
        L_gu\le 0
    $$
    in the weak sense, then 
    $$
        \max_{K}u\le C.
    $$
\end{lemma}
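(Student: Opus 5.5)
We will prove the lemma by a local barrier argument of Keller--Osserman type. The key ingredient is the absorption term $-fu^\alpha\ge u^\alpha$ with $\alpha>1$, which forces any subsolution to be bounded on compact sets independently of its boundary behaviour. We cover $K$ by finitely many small geodesic balls $B_{2r_0}(x_i)\subset\Omega$, where $r_0$ depends only on $K$ (injectivity radius and curvature bounds on a fixed compact neighborhood). It then suffices to show $u\le C$ on $B_{r_0}(x_i)$ for every $u\in H^1_{\mathrm{loc}}(B_{2r_0}(x_i))$ with $L_gu\le0$, with $C$ depending only on $r_0$ and on bounds for $R_g$ and the metric on $\bar B_{2r_0}(x_i)$. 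One subtlety: $\Omega$ is an arbitrary neighborhood, so if it is thin we simply take $r_0$ smaller than $\tfrac12\dist(K,\partial\Omega)$. If the constant is meant to depend on $K$ only, we read the statement as holding for a fixed neighborhood, say $\Omega\supset K_\delta$.

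We build a supersolution that blows up at the boundary. On $B=B_{2r}(x_0)$ let $d(x)=(2r)^2-d_g(x_0,x)^2$, which is smooth for $r$ small, and set $w=A\,d^{-(n-2)/2}$. This exponent is chosen so that $w^\alpha\sim A^\alpha d^{-(n+2)/2}$ and $\Delta w\sim A\,d^{-(n+2)/2}$ have matching singular orders. A direct computation, using $|\nabla d|^2\le Cr^2$ and $|\Delta d|\le C$ near $x_0$, gives
\[
-c_n\Delta_g w + R_g w - f w^\alpha \;\ge\; -C_1A\,d^{-(n+2)/2} - \|R_g\|_\infty A\,d^{-(n-2)/2} + A^\alpha d^{-(n+2)/2}.
\]
Here $C_1$ depends on $r$ and the geometry, and we used $-f\ge1$. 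Choosing $A$ large, with $A^{\alpha-1}\ge C_1+\|R_g\|_\infty(2r)^4$, makes the right-hand side nonnegative, so $L_gw\ge0$ in $B$ and $w=+\infty$ on $\partial B$.

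Next we compare $u\le w$ by a weak maximum principle. Test the difference of the inequalities $L_g u\le 0\le L_g w$ against $(u-w-\varepsilon)^+$, which has compact support in $B$ since $w\to\infty$ at $\partial B$ while $u\in H^1_{\mathrm{loc}}$. The zeroth-order terms are
\[
R_g(u-w)-f(u^\alpha-w^\alpha)
\]
on the set $\{u>w\}$. These need not be monotone when $R_g<0$. To handle this we shrink $B$ so that on it $w\ge A(2r)^{-(n-2)}$ is so large that $-f\alpha\,\xi^{\alpha-1}\ge\alpha w^{\alpha-1}>\|R_g\|_\infty$ for all $\xi$ between $w$ and $u$. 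The bracket is then increasing in $u$ on $\{u>w\}$. We obtain $\int|\nabla (u-w-\varepsilon)^+|^2\le0$, hence $u\le w+\varepsilon$, and letting $\varepsilon\to0$ gives $u\le w\le A(3r^2)^{-(n-2)/2}$ on $B_r(x_0)$. If $u$ changes sign the argument still applies on $\{u>w\}\subset\{u>0\}$; if $u^\alpha$ is undefined for negative $u$ we interpret the equation with $u^+$.

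The main obstacle is the regularity needed for the comparison in the weak setting. The integration by parts requires $u^\alpha(u-w)^+\in L^1_{\mathrm{loc}}$, and this is implicit in "$L_gu\le0$ weakly". We will assume it, since the statement presupposes the nonlinear term makes sense. Alternatively, we first truncate, replacing $u$ by $\min(u,k)$, which is again a subsolution up to the nonmonotone $R_g$ term handled above, and then let $k\to\infty$. Covering $K$ by finitely many such balls yields $C=C(K)$.
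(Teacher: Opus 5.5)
Your route is the one the paper intends. Its proof only points to Theorem~1.1 of \cite{AMnoncpt88}, which rests on the same kind of local comparison with a Loewner--Nirenberg type supersolution $A\bigl((2r)^2-d_g(x_0,x)^2\bigr)^{-(n-2)/2}$ on small balls, driven by the absorption term $-fu^\alpha\ge u^\alpha$. Several parts of your proposal are correct:
\begin{itemize}
\item the barrier computation;
\item the choice $A^{\alpha-1}\ge C_1+\|R_g\|_\infty(2r)^4$, which by itself already gives $\alpha w^{\alpha-1}\ge\alpha A^{\alpha-1}(2r)^{-4}>\|R_g\|_\infty$, so no extra shrinking of $B$ is needed;
\item the monotonicity of $s\mapsto R_gs-fs^\alpha$ on $\{s\ge w\}$;
\item your remark that the constant must also depend on $\dist(K,\partial\Omega)$.
\end{itemize}

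\textbf{The gap.} The step that fails as written is the claim that $(u-w-\varepsilon)^+$ has compact support in $B$ because $w\to\infty$ on $\partial B$ and $u\in H^1_{\mathrm{loc}}$. An $H^1_{\mathrm{loc}}$ function need not be bounded, so $\{u>w+\varepsilon\}$ may a priori reach $\partial B$. For weak subsolutions, qualitative local boundedness is precisely what is at stake.

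\textbf{The fix.} The missing ingredient is the local maximum principle for linear equations.
\begin{itemize}
\item Test $L_gu\le0$ with $\phi\min(u^+/\delta,1)$, where $\phi\ge0$, and let $\delta\to0$. Since $-fu^\alpha\ge0$ on $\{u>0\}$, this shows that $u^+$ is a weak subsolution of $-c_n\Delta v+R_gv\le0$.
\item De Giorgi--Nash--Moser (e.g.\ Gilbarg--Trudinger, Theorem~8.17) then gives $u^+\in L^\infty_{\mathrm{loc}}(\Omega)$.
\item Hence $u$ is bounded on $\bar B$, the support claim holds, and your comparison goes through unchanged.
\end{itemize}

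\textbf{Two smaller points.} First, the integrability you propose to assume is automatic. Since $\alpha+1=\frac{2n}{n-2}$, Sobolev embedding gives $u^\alpha\in L^{2n/(n+2)}_{\mathrm{loc}}$, which is the dual exponent. So the weak inequality extends to compactly supported $H^1$ test functions. Second, drop the fallback via $\min(u,k)$. For large $k$ the constant $k$ satisfies $L_gk>0$, so $\min(u,k)$ is not a subsolution on $\{u>k\}$. In addition, across $\{u=k\}$ its Laplacian acquires a nonpositive singular part.
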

\begin{proof}
    This follows from the same argument as in \cite[Theorem 1.1]{AMnoncpt88}. 
\end{proof}

\begin{corollary}\label{cor:convergence}
    Suppose $\{K_i\}$ is a compact exhaustion of $M$. If $u_i$ solves the equation \eqref{eq:Yamabe} on a neighborhood of $K_i$ for each $i$, then $u_i$ is smooth, and there is a subsequence of $\{u_i\}$ that converges smoothly on compact sets to a solution to \eqref{eq:Yamabe} on $M$.
\end{corollary}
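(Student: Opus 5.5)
The plan is to combine the a priori bound of the preceding lemma with standard elliptic regularity and a diagonal extraction.

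\emph{Step 1: uniform $C^0$ bounds.} Fix a compact set $K\subset M$. Since $\{K_i\}$ exhausts $M$, for all $i$ large a fixed neighborhood $\Omega$ of $K$ lies inside a neighborhood of $K_i$ where $u_i$ solves \eqref{eq:Yamabe}. Then $L_gu_i=0\le 0$ on $\Omega$ in the weak sense. The preceding lemma, applied to a slightly larger compact $K'\supset K$ with $K'\subset\Omega$, gives $\max_{K'}u_i\le C(K')$ independently of $i$. We take solutions to be positive, as is implicit for the Yamabe equation (otherwise $u^\alpha$ is not defined), so $0<u_i\le C(K')$ on $K'$.

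\emph{Step 2: smoothness and higher-order bounds.} Each $u_i\in H^1_{loc}$ is a weak solution of $-c_n\Delta_g u=F(x,u):=fu^{\alpha}-R_gu$. On $K'$ the right-hand side is bounded by a constant depending only on $K'$, because $u_i$ is bounded there and $f,R_g$ are smooth. De Giorgi--Nash--Moser (or $W^{2,p}$ estimates followed by Sobolev embedding) then gives a uniform $C^{0,\beta}$ bound on a slightly smaller compact set.

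Since $u_i>0$ and $t\mapsto t^\alpha$ is smooth on $(0,\infty)$, the function $F(x,u_i)$ is then uniformly $C^{0,\beta}$, and Schauder estimates give uniform $C^{2,\beta}$ bounds. Bootstrapping, with each step shrinking the compact set slightly, yields that every $u_i$ is smooth and satisfies uniform $C^k$ bounds on $K$ for every $k$. One point needs care: positivity is used only to make $u^\alpha$ smooth along the range of $u_i$. Even without a lower bound, $t\mapsto |t|^{\alpha}$ is $C^{1}$ and positive solutions stay positive, so the bootstrap proceeds at each finite order.

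\emph{Step 3: diagonal extraction.} Take compact sets $K_1'\subset K_2'\subset\cdots$ exhausting $M$. By Arzel\`a--Ascoli on each $K_m'$ with the uniform $C^{k+1}$ bounds, and by a diagonal argument in both $m$ and $k$, we extract a subsequence converging in $C^k(K_m')$ for every $k$ and $m$. The limit $u$ satisfies \eqref{eq:Yamabe} pointwise on $M$, since each term passes to the limit in $C^0$.

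The main obstacle is Step 1, namely uniform local boundedness without any boundary control. This is exactly what the Aviles--McOwen-type lemma supplies, using $f\le -1$. Everything after that is routine elliptic theory. Nonnegativity of the limit is automatic; it may vanish identically, which the statement does not exclude.
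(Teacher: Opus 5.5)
Your outline is the paper's argument: the Aviles--McOwen-type lemma gives uniform local upper bounds, elliptic estimates give regularity, and Arzel\`a--Ascoli with a diagonal extraction gives the limit. The paper compresses this into two sentences and leaves the higher-order uniformity implicit. You do try to justify that uniformity, but the justification does not work.

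\textbf{The gap.} Uniform $C^k$ bounds for every $k$ require uniform control of $\partial_t^m(t^\alpha)=c\,t^{\alpha-m}$ on the range of $u_i$. For $m>\alpha$ this blows up as $t\to 0$. For $n=5$ and $n\ge 7$ the exponent $\alpha=\frac{n+2}{n-2}$ is not an integer; for $n\ge 7$ one even has $1<\alpha<2$.
\begin{itemize}
\item The fact that $|t|^\alpha$ is $C^1$, or $C^{\lfloor\alpha\rfloor}$, carries the uniform bootstrap only to a finite order determined by $\alpha$.
\item Positivity of each $u_i$ gives smoothness of each $u_i$ separately, but with constants depending on $\min_K u_i$. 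This minimum can tend to $0$, which is exactly the situation of your closing remark where the limit vanishes identically.
\end{itemize}
So, as written, you obtain $C^{2,\beta}_{\mathrm{loc}}$ convergence to a solution, not smooth convergence.

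\textbf{The fix: a Harnack inequality.} Each $u_i>0$ solves the linear equation $-c_n\Delta_g u_i+V_iu_i=0$ with $V_i:=R_g-f\,u_i^{\alpha-1}$. By your Step 1, the $V_i$ are bounded on compact sets uniformly in $i$. Hence on a fixed connected compact set $K$, chaining Harnack balls gives $\sup_K u_i\le C\inf_K u_i$ with $C$ independent of $i$.

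Now fix $x_0\in K$ and set $m_i:=u_i(x_0)$, which is bounded above by Step 1, and $w_i:=u_i/m_i$. Then:
\begin{itemize}
\item $C^{-1}\le w_i\le C$ on $K$;
\item $w_i$ solves $-c_n\Delta_g w_i+R_g w_i-f\,m_i^{\alpha-1}w_i^{\alpha}=0$, with bounded coefficient $m_i^{\alpha-1}$;
\item on the range $[C^{-1},C]$ the map $t\mapsto t^\alpha$ is smooth with bounded derivatives.
\end{itemize}
Your Schauder bootstrap therefore gives uniform $C^k$ bounds on $w_i$ for every $k$, hence on $u_i=m_iw_i$. The diagonal extraction then goes through exactly as you describe. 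For $n=3,4,6$, where $\alpha$ is an integer, your original argument is already complete.
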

\begin{proof}
    Since $u_i$ is bounded in a neighborhood of $K_i$, the standard elliptic estimate shows that $u_i$ is smooth. The local boundedness of $u_i$ also implies that there is a subsequence of $\{u_i\}$ that converges smoothly on compact sets to a solution to \eqref{eq:Yamabe} on $M$.
\end{proof}

The variational argument solves the Dirichlet boundary value problem. 
\begin{lemma}\label{lem:variation}
    Let $\Omega$ be a connected, precompact open set in $M$ with smooth boundary. For any smooth positive function $u_0$ on $\partial\Omega$, the Dirichlet problem
    \begin{equation}
        \begin{cases}
            L_gu=0 & \text{on }\Omega, \\
            u=u_0 & \text{on }\partial\Omega,
        \end{cases}
    \end{equation}
    has a unique positive solution. Moreover, the solution $u$ is smooth on $\bar\Omega$. 
\end{lemma}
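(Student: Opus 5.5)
We prove Lemma \ref{lem:variation} by combining a direct variational argument for existence with a maximum-principle argument for uniqueness. One difficulty is that the nonlinearity $fu^\alpha$ has the critical exponent. However, $f\le -1$ gives it a good (coercive) sign, so no loss of compactness occurs.

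\emph{Existence.} First extend $u_0$ to a smooth positive function $\phi$ on $\bar\Omega$. Consider the functional
$$
E(u)=\int_\Omega \Big(\tfrac{c_n}{2}|\nabla u|^2+\tfrac12 R_g u^2-\tfrac{f}{\alpha+1}|u|^{\alpha+1}\Big)\,d\mu_g
$$
on the affine space $\phi+H^1_0(\Omega)$. Since $-f\ge 1$, the last term is nonnegative and controls $\|u\|_{L^{\alpha+1}}^{\alpha+1}$. The term $\int R_g u^2$ is bounded below by $-C\|u\|_{L^2}^2$, and by H\"older on the bounded domain this is at most $\epsilon\|u\|_{L^{\alpha+1}}^{\alpha+1}+C_\epsilon$. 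Hence $E$ is coercive on $\phi+H^1_0$.

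The functional is also weakly lower semicontinuous. The Dirichlet term is convex. The $L^2$ term converges by Rellich compactness. The $L^{\alpha+1}$ term, with $\alpha+1=2^*$, is convex, hence weakly lower semicontinuous; this convexity is exactly where the good sign of $f$ replaces compactness. A minimizer $u$ therefore exists.

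Since $E(|u|)=E(u)$, we may take $u\ge0$; it is then a weak solution of $L_gu=0$.

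\emph{Regularity and positivity.} This is the main technical step. We need $u\in L^\infty$, which we get by a Moser iteration or truncation argument. Testing the equation with powers of $(u-k)_+$, the term $-fu^\alpha\cdot(\text{test})\ge0$ has a favorable sign and can simply be discarded. We are then left with the linear subsolution inequality $-c_n\Delta u+R_g u\le0$ in the weak sense, so the standard De Giorgi/Moser bound applies up to the boundary, using the smooth boundary data.

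Once $u$ is bounded, the right-hand side is in $L^\infty$. $W^{2,p}$ estimates followed by Schauder bootstrapping then give $u\in C^\infty(\bar\Omega)$. Positivity follows by writing the equation as $-c_n\Delta u+(R_g-fu^{\alpha-1})u=0$, a linear equation with bounded coefficient. Since $u\ge0$ and $u\not\equiv0$ (because $u_0>0$ on the boundary), the strong maximum principle gives $u>0$ in $\Omega$.

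\emph{Uniqueness.} Suppose $u_1,u_2$ are positive solutions. The potential $R_g$ has no sign, so we cannot argue directly from the difference. Instead we use a quotient trick: set $w=u_1/u_2$. Writing $u_1=wu_2$ gives
$$-c_n\big(u_2\Delta w+2\nabla u_2\cdot\nabla w\big)=f\,u_2^{\alpha}\,w\,(w^{\alpha-1}-1).$$
Suppose $\max w>1$, attained at an interior point $p$ (on $\partial\Omega$ we have $w=1$). At $p$ the left side is $\ge0$. The right side is $<0$, since $f<0$ and $w^{\alpha-1}-1>0$. This is a contradiction, so $w\le1$. By symmetry $w\ge1$, hence $u_1=u_2$.
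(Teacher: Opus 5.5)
Your proposal is correct and follows the paper's route. You minimize the same energy (yours is exactly half of the paper's $E$) over $H^1$ functions with trace $u_0$, replace the minimizer by its absolute value, and get positivity from the strong maximum principle. Uniqueness comes from a maximum-principle argument on the quotient of two solutions; this is the paper's argument in the conformal metric $\bar g$ with $R_{\bar g}=f$, written out in the original metric.

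You also supply details the paper compresses into ``elliptic regularity'': coercivity, weak lower semicontinuity, and the $L^\infty$ bound needed at the critical exponent. The one fix is the order at the end. Get $u\in C^{2,\beta}(\bar\Omega)$ first, then $u>0$ on $\bar\Omega$, and only then bootstrap to $C^\infty$, because $t\mapsto t^\alpha$ is not smooth at $t=0$ when $\alpha$ is not an integer.
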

\begin{proof}
    Consider the energy functional 
    $$
        E(u)=\int_{\Omega} c_n|\nabla u|^2+R_gu^2-\frac{(n-2)f}{n}(u^2)^{\frac{n}{n-2}}.
    $$
    Note that $R_g\le\max_{\Omega}R_g<\infty$, $-f\ge 1$ and $\frac{2n}{n-2}>2$. We see that $E(u)$ can be minimized on the affine subspace
    $$
        A=\{u\in H^1(\Omega):\ u|_{\partial\Omega}=u_0\}.
    $$
    A minimizer $u$ is a weak solution, and by elliptic regularity we know that $u$ is smooth. Since $|\nabla |u||=|\nabla u|$ a.e., for any real-valued $u\in H^1(\Omega)$, and since $u_0>0$, we can replace $u$ by its absolute value, and thus we can assume the minimizer $u\ge 0$. The strict positivity $u>0$ on $\Omega$ will be proved in Lemma \ref{lem:sMaxPr-1} below.

    To prove the uniqueness, suppose $u_1,u_2$ are two positive solutions to the equation. Then the function $v=u_1^{-1}u_2$ satisfies
    $$\begin{cases}
        L_{\bar g}v=0 & \text{on }\Omega, \\
        v=1 & \text{on }\partial\Omega,
    \end{cases}$$
    where $\bar{g}=u_1^{4/(n-2)}g$. Note that $R_{\bar g}=f$ and we can apply the maximum principle to $w=v-1$. For $w$ satisfies
    $$
        c_n\Delta_{\bar g}w=((1+w)-(1+w)^\alpha)f,
    $$
    $w$ cannot achieve a positive maximum or a negative minimum in the interior of $\Omega$. Hence, $w\equiv 0$ on $\Omega$, and it follows that $v\equiv 1$ and $u_1\equiv u_2$ on $\Omega$. 
\end{proof}

We emphasize that this is not sufficient for our purpose since the metric is often degenerate at the boundary of the domain that we consider. We also note that the existence of a sub-solution and a super-solution gives rise to a solution. 

\begin{lemma}\label{lem:sub-super}
    Let $\Omega$ be a connected, precompact open set in $M$ with smooth boundary, and $u_0\in C^\infty(\partial\Omega)$ is positive. If there exist $\bar u,\underline{u}\in H^1(\Omega)$ that satisfy 
    \begin{enumerate}
        \item $0\le \underline{u}\le \bar u\le U$ on $\Omega$ for some constant $U$, 
        \item $\underline{u}\le u_0\le \bar u$ on $\partial\Omega$, in the trace sense, and 
        \item $L_g\bar u\ge 0$ and $L_g\underline{u}\le 0$ in the weak sense, 
    \end{enumerate}
    then there exists a unique positive $u\in C^\infty(\bar\Omega)$ such that $L_gu=0$ and $u|_{\partial\Omega}=u_0$. 
\end{lemma}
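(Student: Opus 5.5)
We will prove existence by a truncation-and-minimization argument, or equivalently by the classical monotone iteration. Uniqueness and smoothness will then follow from Lemma \ref{lem:variation}.

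\emph{Truncation and minimization.} Set $U_0:=\max\{U,\max_{\partial\Omega}u_0\}$. Define the truncated nonlinearity
\[
F(x,s):=R_g(x)\,\sigma(x,s)-f(x)\,\sigma(x,s)^{\alpha},\qquad \sigma(x,s):=\min\{\max\{s,\underline u(x)\},\bar u(x)\},
\]
and let $G(x,s)=\int_0^sF(x,t)\,dt$. Since $0\le\sigma\le U$, $f$ is smooth, and $\bar\Omega$ is compact, $F$ is bounded. Hence $|G(x,s)|\le C(1+|s|)$. The functional
\[
J(u)=\int_\Omega \frac{c_n}{2}|\nabla u|^2+G(x,u)
\]
is then coercive and weakly lower semicontinuous on the affine space $A=\{u\in H^1(\Omega):u|_{\partial\Omega}=u_0\}$. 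Coercivity uses the Poincar\'e inequality for $u-\tilde u_0$, where $\tilde u_0$ is a fixed smooth extension of $u_0$. So $J$ has a minimizer $u\in A$, and $u$ is a weak solution of
\[
-c_n\Delta_g u+F(x,u)=0 .
\]
Since $F$ is bounded, elliptic regularity gives $u\in W^{2,p}$ for all $p$, and hence $u\in C^{1,\beta}(\bar\Omega)$.

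\emph{Comparison $\underline u\le u\le\bar u$.} This is the key step. Test the equation for $u$ and the inequality $L_g\bar u\ge0$ against $\phi=(u-\bar u)^+$. This is a legitimate test function: it lies in $H^1_0(\Omega)$ because $u=u_0\le\bar u$ on $\partial\Omega$ in the trace sense, and it is nonnegative. Subtracting gives
\[
c_n\int|\nabla(u-\bar u)^+|^2=\int_{\{u>\bar u\}}\bigl(-F(x,u)+R_g\bar u-f\bar u^\alpha\bigr)\phi .
\]
On $\{u>\bar u\}$ we have $\sigma(x,u)=\bar u$, so $F(x,u)=R_g\bar u-f\bar u^\alpha$ and the integrand vanishes. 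Therefore $(u-\bar u)^+$ is constant, and being in $H^1_0(\Omega)$ it is zero, i.e.\ $u\le\bar u$. The symmetric argument with $(\underline u-u)^+$ and $L_g\underline u\le0$ gives $u\ge\underline u$. Consequently $\sigma(x,u)=u$, and $u$ solves $L_gu=0$ weakly with $u|_{\partial\Omega}=u_0$.

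\emph{Positivity, smoothness and uniqueness.} The main obstacle is positivity, because $\underline u$ may vanish. Here $u\ge0$, $u\le U$, and $u$ satisfies $-c_n\Delta u+(R_g-fu^{\alpha-1})u=0$ with a bounded coefficient. So the strong maximum principle, namely Lemma \ref{lem:sMaxPr-1} as already invoked in Lemma \ref{lem:variation}, applies. Since $u=u_0>0$ on $\partial\Omega$ and $\Omega$ is connected, $u$ cannot vanish identically, and hence $u>0$ on $\Omega$. Continuity up to the boundary then gives $u>0$ on $\bar\Omega$. With $u$ positive and bounded, the nonlinearity is smooth in $u$, so bootstrapping Schauder estimates up to the smooth boundary yields $u\in C^\infty(\bar\Omega)$. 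Finally, uniqueness among positive solutions is exactly the argument in the proof of Lemma \ref{lem:variation}: apply the maximum principle to $u_1^{-1}u_2-1$ with respect to $u_1^{4/(n-2)}g$.
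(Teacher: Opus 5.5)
Your argument is correct, but it takes a different route from the paper's.

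\textbf{The paper's route.} The paper uses the monotone iteration of \cite[\S9.3]{Evans2010}. It fixes $\lambda$ with $|\alpha fs^{\alpha-1}-R_g|\le\lambda$ for $s\in[0,U]$ and starts from $\underline u$. It then solves the linear problems $-c_n\Delta_gu_{k+1}+\lambda u_{k+1}=fu_k^\alpha-R_gu_k+\lambda u_k$ with $u_{k+1}=u_0$ on $\partial\Omega$. This gives $\underline u\le u_1\le u_2\le\cdots\le\bar u$, and the paper passes to the limit. Positivity, smoothness and uniqueness are deferred to Lemma \ref{lem:variation}.

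\textbf{Your route.} You truncate the nonlinearity between $\underline u$ and $\bar u$ and minimize the resulting functional. Its nonlinearity is bounded, so coercivity is immediate. You then obtain $\underline u\le u\le\bar u$ from one weak comparison, using the test functions $(u-\bar u)^+$ and $(\underline u-u)^+$.

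\textbf{What each buys.} The iteration needs only linear Dirichlet theory and the monotonicity of $s\mapsto fs^\alpha-R_gs+\lambda s$ on $[0,U]$. It produces the solution as an increasing limit of solutions of linear problems. Your truncation needs a little Carath\'eodory bookkeeping (measurability in $x$, continuity in $s$, the Euler--Lagrange derivation by dominated convergence). In exchange it needs no Lipschitz constant $\lambda$ and no monotonicity, and the sandwich comes out in one step.

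In both proofs the real output is the sandwich. The conclusion as literally stated, existence and uniqueness of a smooth positive solution with boundary value $u_0$, is already Lemma \ref{lem:variation}. What is used later, in Lemmas \ref{lem:YamabeAH} and \ref{lem:YamabeArbitrary}, is the bound $\underline v\le v_j\le\bar v$, and your comparison step provides it.

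\textbf{Two cosmetic points.}
\begin{enumerate}
    \item Subtracting the equation for $u$ from the weak inequality for $\bar u$ gives $c_n\int|\nabla(u-\bar u)^+|^2\le\int(\cdots)\phi$, not an equality. This is harmless, since the right-hand side vanishes.
    \item Lemma \ref{lem:sMaxPr-1} is stated for $C^2$ functions. Before invoking it, upgrade $u$ to $C^{2,\beta}_{\mathrm{loc}}(\Omega)$ by interior Schauder estimates; the term $fu^\alpha$ is Lipschitz because $s\mapsto s^\alpha$ is Lipschitz on $[0,U]$. Alternatively, use the strong maximum principle for $W^{2,p}$ solutions.
\end{enumerate}
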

\begin{proof}
    We can rewrite the equation as 
    $$
        -c_n\Delta_gu=fu^\alpha-R_gu=:F(x,u).
    $$
    Then we can find $\lambda>0$ such that 
    $$
        |\partial_u F(x,u)|=|\alpha fu^{\alpha-1}-R_g|\le \lambda,
    $$
    for all $x\in\bar\Omega$ and $u\in[0,U]$. Let $u_1$ be the unique solution to 
    $$\begin{cases}
        -c_n\Delta_gu_1+\lambda u_1=F(x,\underline{u})+\lambda \underline{u} & \text{on }\Omega, \\
        u_1=u_0 & \text{on }\partial\Omega.
    \end{cases}$$
    Inductively, we can define $u_{k+1}$ as the unique solution to
    $$\begin{cases}
        -c_n\Delta_gu_{k+1}+\lambda u_{k+1}=F(x,u_k)+\lambda u_k & \text{on }\Omega, \\
        u_{k+1}=u_0 & \text{on }\partial\Omega.
    \end{cases}$$
    Then the standard argument (cf.\ \cite[\S9.3]{Evans2010}) shows that 
    $$
        0\le \underline{u}\le u_1\le u_2\le \cdots \le \bar u\le U.
    $$
    Hence, $u_k$ converges to a function $u$ that satisfies $L_gu=0$ and $u|_{\partial\Omega}=u_0$. The other assertions follow from Lemma \ref{lem:variation}.
\end{proof}

\begin{lemma}\label{lem:comparison}
    Let $\Omega$ be a precompact open set in $M$. If $u_1,u_2\in C^2(\Omega)\cap C^0(\bar\Omega)$ satisfy
    \begin{enumerate}
        \item $u_1,u_2\ge 0$,
        \item $L_gu_1\le L_gu_2$ on $\Omega$, and
        \item $u_1\le u_2$ on $\partial \Omega$,
    \end{enumerate}
    then $u_1\le u_2+C$ on $\Omega$. Here, $C=(\inf_{\Omega}R_g^-)^{\frac{1}{\alpha-1}}$. 
\end{lemma}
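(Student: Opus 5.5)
The plan is a direct maximum-principle argument applied to the difference $w:=u_1-u_2$, which is in $C^2(\Omega)\cap C^0(\bar\Omega)$. Since $\bar\Omega$ is compact, $w$ attains its maximum at some point $p\in\bar\Omega$. If $p\in\partial\Omega$, hypothesis (3) gives $w\le w(p)\le 0$ on $\Omega$, and we are done. If $w(p)\le 0$ there is also nothing to prove. So the only case to treat is an interior maximum $p\in\Omega$ with $w(p)>0$, and there we aim to show $w(p)\le \big(R_g^-(p)\big)^{\frac{1}{\alpha-1}}$.

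At such an interior maximum we have $\Delta_g w(p)\le 0$. Subtracting the two operators, hypothesis (2) reads
$$
-c_n\Delta_g w+R_g w-f\,(u_1^\alpha-u_2^\alpha)\le 0 \quad\text{on }\Omega .
$$
Evaluating at $p$ and dropping the nonnegative term $-c_n\Delta_g w(p)$ gives
$$
R_g(p)\,w(p)-f(p)\big(u_1^\alpha(p)-u_2^\alpha(p)\big)\le 0 .
$$
Next I would use that $u_1(p)>u_2(p)\ge 0$ and $\alpha>1$. By superadditivity of $t\mapsto t^\alpha$ on $[0,\infty)$, we have $u_1^\alpha-u_2^\alpha\ge (u_1-u_2)^\alpha=w^\alpha$ at $p$. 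Combined with $-f\ge 1$, this gives $-f(u_1^\alpha-u_2^\alpha)\ge w(p)^\alpha$. Hence
$$
w(p)^\alpha+R_g(p)\,w(p)\le 0 .
$$
Dividing by $w(p)>0$ yields $w(p)^{\alpha-1}\le -R_g(p)\le R_g^-(p)$. This is the desired bound with the constant $\big(R_g^-(p)\big)^{1/(\alpha-1)}$, which is in turn bounded by the extremal value of $R_g^-$ over $\Omega$. In particular, if $R_g\ge 0$ at $p$ the interior case is impossible, and we get $u_1\le u_2$.

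The only step requiring care is the algebraic inequality $u_1^\alpha-u_2^\alpha\ge (u_1-u_2)^\alpha$; it is where the nonnegativity hypothesis (1) enters. I would prove it by noting that $\phi(s)=(s+w)^\alpha-s^\alpha-w^\alpha$ vanishes at $s=0$ and is nondecreasing in $s\ge 0$ because $\alpha>1$. One further point: the constant in the statement is written with $\inf_\Omega R_g^-$. The argument naturally produces $R_g^-$ evaluated at the maximum point, which is dominated by $\sup_\Omega R_g^-$. So I would state the constant as $(\sup_\Omega R_g^-)^{1/(\alpha-1)}$, finite since $\bar\Omega$ is compact, and regard that as the intended reading of the lemma.
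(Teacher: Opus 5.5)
Your argument is correct. It uses the same maximum-principle skeleton as the paper: evaluate $L_gu_1\le L_gu_2$ at an interior positive maximum $x_0$ of $u_1-u_2$. The difference is how the nonlinearity is handled.

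The paper studies the one-variable function $\varphi(u)=R_g(x_0)u-f(x_0)u^\alpha$ and splits into two cases. If $R_g(x_0)\ge 0$, then $\varphi$ is increasing, so $\varphi(u_1(x_0))\le\varphi(u_2(x_0))$ is immediately contradictory. If $R_g(x_0)<0$, then $\varphi$ decreases up to $C_1=(R_g(x_0)/(\alpha f(x_0)))^{1/(\alpha-1)}$ and increases afterwards. The paper uses this shape to show $u_1(x_0)\le C$, and only then invokes $u_2\ge 0$ to bound the difference.

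Your superadditivity inequality $u_1^\alpha-u_2^\alpha\ge (u_1-u_2)^\alpha$ bounds the difference directly and removes the case split. If you keep the factor $|f(p)|$ instead of replacing it by $1$, you get $w(p)^{\alpha-1}\le R_g^-(p)/|f(p)|$. This matches the sharpness of the paper's bound and gives the subsequent remark ($C=1$ when $R_g\ge f$) at once. The paper's route gives the slightly different information that $u_1$ itself is at most $C$ at $x_0$, but this is not used later.

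Your reading of the constant is also right, and the correction is needed, not cosmetic. The paper's own Case 2 needs $C^{\alpha-1}\ge R_g(x_0)/f(x_0)$ (and $C_1<C$). This is guaranteed by $\sup_\Omega R_g^-$ but not by $\inf_\Omega R_g^-$, so $C$ should be read as $(\max\{0,-\inf_\Omega R_g\})^{1/(\alpha-1)}$.

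With the literal $\inf$, a single point of $\Omega$ where $R_g\ge 0$ would force $C=0$. That fails in general: take $u_2\equiv 0$ and $u_1$ a positive solution of $L_gu_1=0$ vanishing on $\partial\Omega$, which exists whenever the Dirichlet conformal Laplacian on $\Omega$ has a negative first eigenvalue.
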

\begin{proof}
    If $u_1\le u_2$ on $\Omega$, then the conclusion holds. Otherwise, there is a point $x_0\in \Omega$ such that $u_1(x_0)-u_2(x_0)=\max_{\Omega}(u_1-u_2)>0$. Hence, at $x_0$, we have 
    $$
        0\ge c_n\Delta_g(u_1-u_2)\ge (R_gu_1-fu_1^{\alpha})-(R_gu_2-fu_2^{\alpha}),
    $$
    and thus 
    $$
        R_gu_1-fu_1^\alpha\le R_gu_2-fu_2^\alpha.
    $$
    Consider the function 
    $$
        \varphi(u)=R_g(x_0)u-f(x_0)u^\alpha.
    $$
    Note that $\varphi'(u)=R_g(x_0)-\alpha f(x_0)u^{\alpha-1}$, and thus we consider two cases. 

    Case 1: $R_g(x_0)\ge 0$. Then $\varphi(u)$ is increasing when $u\ge 0$. Hence, we have $u_1(x_0)\le u_2(x_0)$, which contradicts the choice of $x_0$.

    Case 2: $R_g(x_0)<0$. Then $\varphi(u)$ is decreasing when $u\in[0,C_1]$ and is increasing when $u\in[C_1,\infty)$, where
    $$
        C_1=\left(\frac{R_g(x_0)}{\alpha f(x_0)}\right)^{\frac{1}{\alpha-1}}<C.
    $$
    If $u_1(x_0)>C$, then 
    $$
        R_g(x_0)u_2(x_0)-f(x_0)u_2(x_0)^\alpha\ge R_g(x_0)u_1(x_0)-f(x_0)u_1(x_0)^\alpha>0,
    $$
    and thus 
    $$
        u_2(x_0)^{\alpha-1}>\frac{R_g(x_0)}{f(x_0)}>\frac{R_g(x_0)}{\alpha f(x_0)}=C_1^{\alpha-1}.
    $$
    Hence $u_2(x_0)>C_1$. Since $u_1(x_0)>C>C_1$ and $\varphi$ is strictly increasing on $[C_1,\infty)$, the inequality $\varphi(u_2)\ge\varphi(u_1)$ implies $u_2(x_0)\ge u_1(x_0)$, which contradicts $u_1(x_0)-u_2(x_0)>0$. Hence, $u_1(x_0)\le C$. Since $u_2\ge 0$, we have $u_1-u_2\le u_1(x_0)-u_2(x_0)\le C$ on $\Omega$.
\end{proof}
\begin{remark}
    If $R_g\ge f$, then we can take $C=1$ by carefully analyzing the proof. 
\end{remark}

Next, we note the strong maximum principle holds. 

\begin{lemma}\label{lem:sMaxPr-1}
    Let $\Omega$ be a connected, precompact open set in $M$. If $u\in C^2(\Omega)\cap C^0(\bar\Omega)$ satisfies
    \begin{enumerate}
        \item $u\ge 0$,
        \item $L_gu\ge 0$ on $\Omega$, 
        \item $u(x_0)=0$ for some $x_0\in\Omega$,
    \end{enumerate}
    then $u\equiv 0$ on $\Omega$.
\end{lemma}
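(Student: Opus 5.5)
We reduce the statement to the classical strong maximum principle for linear operators with a zeroth-order term. The starting observation is that for $u\ge 0$ the nonlinear term $-fu^\alpha$ is nonnegative, because $f\le -1$. So the hypothesis $L_gu\ge 0$ only gives $-c_n\Delta_gu+R_gu\ge fu^\alpha$, which has the wrong sign to be used directly. To fix this, we absorb the nonlinearity into a bounded coefficient. We set
$$
c(x):=R_g(x)-f(x)u(x)^{\alpha-1},
$$
so that $L_gu=-c_n\Delta_gu+c(x)u$, and the hypothesis becomes $-c_n\Delta_gu+c(x)u\ge 0$ on $\Omega$.

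The first step is to check that $c$ is locally bounded on $\Omega$. This holds because $R_g$ and $f$ are smooth, $u$ is continuous and nonnegative, and $\alpha-1=\frac{4}{n-2}>0$, so $u^{\alpha-1}$ is continuous.

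The second step handles the sign of $c$. We split $c=c^+-c^-$. Since $u\ge 0$, we have $c^+u\ge 0$, and therefore
$$
-c_n\Delta_gu+c^-u\ \ge\ -c_n\Delta_gu+cu\ \ge\ 0,
$$
wait, the inequality goes the other way: $c\le c^+$ gives $-c_n\Delta_g u + c^+u\ge -c_n\Delta_gu+cu\ge 0$. So $u$ is a nonnegative supersolution of the linear operator $-c_n\Delta_g+c^+$, whose zeroth-order coefficient $c^+\ge 0$ is locally bounded. This is exactly the setting of the Hopf strong maximum principle for nonnegative supersolutions, applied to $-u$ (cf.\ Gilbarg--Trudinger, Theorem 3.5). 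If $u$ attains the interior minimum value $0$ at $x_0$, then $u\equiv 0$ on any connected open set compactly contained in $\Omega$ that contains $x_0$.

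The third step is the connectedness argument. The set $\{u=0\}$ is closed in $\Omega$ by continuity. It is also open, by the local statement in the second step applied on small coordinate balls around each of its points; on such a ball the metric is uniformly elliptic and $c^+$ is bounded. Since $\Omega$ is connected and the set is nonempty (it contains $x_0$), we get $u\equiv 0$.

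The only delicate point is that $u$ is merely $C^2$ and $c$ merely continuous. This causes no difficulty, because Hopf's principle requires only bounded coefficients and $C^2$ regularity of $u$. Alternatively, the weak Harnack inequality for supersolutions (Gilbarg--Trudinger, Theorem 8.18) gives the same conclusion: $\inf_{B}u\ge C\,\|u\|_{L^p(2B)}$, and the left side is $0$ on balls containing $x_0$.
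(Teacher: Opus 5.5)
Your proposal is correct and matches the paper's proof. The paper likewise absorbs the nonlinearity into the zeroth-order coefficient, writing $\Delta_g u\le c_n^{-1}(R_g-fu^{\alpha-1})u\le Cu$ with $C=\max_{\bar\Omega}c_n^{-1}(R_g-fu^{\alpha-1})$, and then applies the linear strong maximum principle; you instead keep the pointwise coefficient $c^+$, spell out the local-ball-plus-connectedness step, and should delete the aborted line involving $c^-$ (the valid inequality is the one using $c^+\ge c$ and $u\ge 0$).
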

\begin{proof}
    We write the equation $L_gu\ge 0$ as
    $$
        \Delta_gu\le c_n^{-1}(R_gu-fu^{\alpha})\le Cu,
    $$
    where 
    $$
        C=\max_{\bar\Omega} c_n^{-1}(R_g-fu^{\alpha-1}).
    $$
    Then the strong maximum principle implies that $u\equiv 0$ on $\Omega$.
\end{proof}
\begin{lemma}\label{lem:sMaxPr-2}
    Let $\Omega$ be a connected, precompact open set in $M$.  Assume that $R_g\ge f$. If $u\in C^2(\Omega)\cap C^0(\bar\Omega)$ satisfies
    \begin{enumerate}
        \item $0\le u\le 1$,
        \item $L_gu\le 0$ on $\Omega$, 
        \item $u(x_0)=1$ for some $x_0\in\Omega$,
    \end{enumerate}
    then $u\equiv 1$ on $\Omega$.
\end{lemma}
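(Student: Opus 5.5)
To prove the lemma, I will set $w=1-u$, so that $0\le w\le 1$ on $\Omega$ and $w(x_0)=0$ at the interior point $x_0$. The goal is to show that $w$ satisfies a linear differential inequality of the form $\Delta_g w\le C w$ with a bounded coefficient $C$ on compact subsets of $\Omega$. Once that is established, the strong maximum principle for nonnegative supersolutions (the Hopf form, as in the proof of Lemma \ref{lem:sMaxPr-1}) forces $w\equiv 0$ on the connected set $\Omega$, which means $u\equiv 1$.

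The computation runs as follows. Since $L_g 1=R_g-f\ge 0$ by the assumption $R_g\ge f$, we have
$$
-c_n\Delta_g u+R_gu-fu^\alpha\le 0\le R_g-f .
$$
Subtracting and using $\Delta_g w=-\Delta_g u$, this becomes
$$
c_n\Delta_g w\le (R_g-f)-(R_gu-fu^\alpha)=R_g(1-u)-f(1-u^\alpha).
$$
Because $0\le u\le 1$, I can write $1-u^\alpha=\theta(x)\,(1-u)$, where $\theta=\frac{1-u^\alpha}{1-u}$ is continuous with $0\le\theta\le\alpha$ (interpreting $\theta=\alpha$ where $u=1$). This gives
$$
c_n\Delta_g w\le (R_g-f\theta)\,w\le (|R_g|+\alpha|f|)\,w .
$$
The coefficient on the right is locally bounded on $\Omega$, since $R_g$ and $f$ are smooth on $M$.

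It remains to apply the strong maximum principle. The function $w\ge 0$ satisfies $\Delta_g w - c(x)w\le 0$ with $c=c_n^{-1}(|R_g|+\alpha|f|)\ge 0$ bounded on compact subsets, and it attains its minimum value $0$ at the interior point $x_0$. Consider the set where $w=0$. It is closed in $\Omega$, and the standard Hopf argument applied on small balls shows it is also open. Since $\Omega$ is connected, this set is all of $\Omega$, so $w\equiv 0$. The only point needing care is that the sign of $c$ is favourable: the term $-cw\le 0$ because $w\ge0$, which is exactly why the bound $u\le 1$ (so $w\ge 0$) is required. I expect no serious obstacle; the main step is the factorization that produces a bounded coefficient, and this uses only $0\le u\le 1$.
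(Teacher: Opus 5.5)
Your proof is correct and follows essentially the same route as the paper: both work with the deviation from the constant $1$, factor the power difference with a bounded quotient to get a linear inequality $\Delta_g(1-u)\le C(1-u)$ (the paper writes this for $v=u-1$), and conclude with the strong maximum principle. The only difference is how $R_g\ge f$ enters: the paper uses it as $R_gu\ge fu$ and factors $1-u^{\alpha-1}$, whereas you use it as $L_g1=R_g-f\ge0$ and factor $1-u^\alpha$.
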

\begin{proof}
    Let $v=u-1\le 0$. Then $v$ satisfies
    $$
        \Delta_gv\ge c_n^{-1}(R_gu-fu^{\alpha})\ge c_n^{-1}fu(1-u^{\alpha-1})=-c_n^{-1}fu\varphi(u)v,
    $$
    where 
    $$
        \varphi(u)=
        \begin{cases}
            \frac{1-u^{\alpha-1}}{1-u}, u\ne 1; \\
            \alpha-1, u=1.
        \end{cases}
    $$
    Let $u_0=\min_{\bar\Omega}u$. Then the function $\varphi(u)$ is bounded on $[u_0,1]$. Hence, there is a constant $C$ such that $\Delta_gv\ge Cv$. The strong maximum principle implies that $v\equiv 0$ on $\Omega$.
\end{proof}

\subsection{The Yamabe equation on manifolds with boundary}\label{sec:Yamabe eqn bdry}

Let $(M^n,g)$ be a complete Riemannian manifold with compact boundary $\partial M\ne\varnothing$. Fix a compact connected $K\supset\partial M$. Let 
$$
    M\setminus K=E_0\cup\bigcup_{i\in I}E_i
$$
be the components, where $E_0$ is an AH end with conformal infinity $\partial X$. The other ends may or may not be AH, and we allow that $I=\varnothing$. 

\begin{figure}[h]
\centering
\begin{tikzpicture}
    \coordinate (E01) at (-3,3);
    \coordinate (E02) at (3,3);
    \coordinate (Ei1) at (8,0);
    \coordinate (Ei2) at (8,-2);
    \coordinate (K1) at (-1.5,1.5);
    \coordinate (K2) at (1.5,1.5);
    \coordinate (K3) at (2,-1);
    \coordinate (K4) at (2,-2);
    \coordinate (S1) at (-3,-2);
    \coordinate (S2) at (-2,-3);

    \draw[line width=1.2pt]
    (E01)
    .. controls (K1) and (-0.5,0.5) .. (-0.5,0)
    .. controls (-0.5,-0.5) and (-1.5,-1.5) .. (S1);

    \draw[line width=1.2pt]
    (E02)
    .. controls (K2) and (0.5,0.5) .. (0.5,0)
    .. controls (0.5,-0.5) and (1.5,-1) .. (K3)
    .. controls (2.5,-1) and (3.5,-1.25) .. (4,-1.25)
    .. controls (5,-1.25) and (7,0.5) .. (Ei1);

    \draw[line width=1.2pt]
    (S2)
    .. controls (-1.3,-1.5) and (1.5,-1.75) .. (K4)
    .. controls (2.5,-2) and (3.5,-1.75) .. (4,-1.75)
    .. controls (5,-1.75) and (7,-2.5) .. (Ei2);

    \draw[dashed,line width=1pt]
    (K1)
    .. controls (-1,1.75) and (1,1.75) .. (K2);
    \draw[line width=1pt]
    (K1)
    .. controls (-1,1.25) and (1,1.25) .. (K2);

    \draw[dashed,line width=1pt]
    (K3)
    .. controls (2.2,-1.1) and (2.2,-1.9) .. (K4);
    \draw[line width=1pt]
    (K3)
    .. controls (1.8,-1.1) and (1.8,-1.9) .. (K4);

    \draw[line width=1.2pt]
    (S1)
    .. controls (-2.8,-2) and (-2.2,-2.6) .. (S2); 
    \draw[line width=1.2pt]
    (S1)
    .. controls (-2.8,-2.4) and (-2.2,-3) .. (S2); 

    \draw[line width=1pt]
    (-0.5,-1.2) .. controls (-0.3,-1.4) and (0.8,-1.4) .. (1,-1.2);
    \draw[line width=1pt]
    (-0.3,-1.3) .. controls (-0.25,-1) and (0.75,-1) .. (0.8,-1.3);
    \draw[line width=1pt]
    (4.5,-1.45) .. controls (4.7,-1.65) and (5.8,-1.65) .. (6,-1.45);
    \draw[line width=1pt]
    (4.7,-1.55) .. controls (4.75,-1.25) and (5.75,-1.25) .. (5.8,-1.55);

    \node at (0,2.5) {$E_0$};
    \node at (0,0) {$K$};
    \node at (7,-1) {$E_i$};
    \node at (-3,-3) {$\partial M$};

\end{tikzpicture}
\caption{The manifold $M$ with boundary.}
\label{fig:ends2}
\end{figure}

Consider the equation 
$$
    L_gu=-c_n\Delta_gu+R_gu-fu^{\alpha}=0,
$$
where $f\in C^\infty(M)$ satisfies
\begin{enumerate}
    \item $-n(n-1)\le f\le -1$, 
    \item $f=-n(n-1)$ on $K$, and
    \item $f=R_g$ on $E_0\cap\{\rho<\rho_0\}$ for some $\rho_0>0$.
\end{enumerate}
Here, $\rho: M\to(0,\infty)$ is a defining function of the AH end $E_0$ as in the introduction. Let $d(x)=\dist_g(x,\partial M)$. Let $\Int M=M\setminus\partial M$ be the interior of $M$. 

\begin{proposition}\label{prop:YamabeAH}
    There exists $u\in C^\infty(\Int M)$ satisfying the equation \eqref{eq:Yamabe} such that 
    \begin{enumerate}
        \item $u>0$ in the interior of $M$, 
        \item $u(x)d(x)^{\frac{n-2}{2}}\to 1$ as $x\to\partial M$, and
        \item $u(x)\to 1$ as $x\to\partial X$.
    \end{enumerate}
    The solution is unique if $I=\varnothing$. Moreover, near $\partial X$, we have 
    $$
        1-A\rho^n+B\rho^{n+1}\le u\le 1+A\rho^n-B\rho^{n+1}
    $$
    for some constants $A,B>0$ depending on $E_0$ and $\rho_0$.
\end{proposition}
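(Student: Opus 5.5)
We will construct $u$ by exhaustion combined with barriers, in the spirit of \cite{AMcpt88, AMnoncpt88}. Fix an exhaustion of $M$ by precompact domains $\Omega_k$ with smooth boundary, each containing a collar $\{0<d<\epsilon_k\}$ removed, i.e.\ $\partial\Omega_k=\{d=\epsilon_k\}\cup\{\rho=\rho_k\}\cup\bigcup_i\Sigma_{i,k}$, where $\epsilon_k,\rho_k\to0$ and $\Sigma_{i,k}$ goes out along the arbitrary ends. On $\Omega_k$ we use Lemma \ref{lem:variation} to solve $L_gu_k=0$ with boundary data $u_k=m_k$ on $\{d=\epsilon_k\}$, where $m_k\to\infty$ is chosen comparable to $\epsilon_k^{-(n-2)/2}$; the data is $1$ on $\{\rho=\rho_k\}$ and $1$ on $\Sigma_{i,k}$. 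The a priori estimate for sub-solutions, together with Corollary \ref{cor:convergence}, gives uniform local bounds on compacta of $\Int M$, so a subsequence converges smoothly to a solution $u\ge0$. Positivity then follows from Lemma \ref{lem:sMaxPr-1} once we have a positive lower barrier.

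\textbf{Boundary behavior at $\partial M$.} Since $f=-n(n-1)$ on $K\supset\partial M$, in a collar of $\partial M$ we use Fermi coordinates and the standard Loewner--Nirenberg barriers
\[
w_\pm(x)=\bigl(d(x)\mp Cd(x)^2\bigr)^{-\frac{n-2}{2}}\cdot\bigl(1\pm Cd\bigr),
\]
adjusted by constants, which satisfy $L_gw_+\ge0$ and $L_gw_-\le0$ for small $d$ thanks to the factor $c_n\frac{n-2}{2}\frac n2=n(n-1)$. Outside the collar, a large constant serves as the upper barrier (Lemma \ref{lem:comparison} with its remark), and a small multiple of a cut-off near $\partial M$ serves as the lower one. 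Comparing $u_k$ with these barriers on $\Omega_k$ via Lemma \ref{lem:comparison} and passing to the limit yields $u\,d^{\frac{n-2}{2}}\to1$. I expect this step to be routine but computational, since the mean curvature of $\partial M$ enters the $O(d)$ corrections.

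\textbf{Behavior at $\partial X$.} Near $\partial X$ we have $f=R_g$, so the constant $1$ solves $L_g1=0$ there. In the coordinate $\rho$ with $g=\sinh^{-2}\rho(d\rho^2+\gamma_\rho)$, we take $\phi_\pm=1\pm(A\rho^n-B\rho^{n+1})$ and compute $L_g\phi_\pm$. The key point is that $\rho^n$ is (to leading order) in the kernel of the linearized operator $-c_n\Delta_g+R_g-\alpha R_g=-c_n(\Delta_g-n)$, since the indicial roots of $\Delta_{\mathbb H}-n$ are $-1$ and $n$. The $\rho^{n+1}$ term then produces a sign: choosing $B$ large gives $L_g\phi_+\ge0$ and $L_g\phi_-\le0$ on $\{\rho<\rho_1\}$, absorbing the error from the $O(\rho^n)$ deviation of $g$ from hyperbolic and from the nonlinearity. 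Next we choose $A$ so large that $\phi_+\ge\sup u_k$ and $\phi_-\le\inf u_k$ on $\{\rho=\rho_1\}$, using the uniform interior bounds. Applying the comparison lemma on $\{\rho_k<\rho<\rho_1\}$ (in its sharp form, which holds because $f=R_g$ makes the nonlinearity monotone near $1$) then gives the stated two-sided estimate, and hence $u\to1$. I expect the main obstacle to be this indicial computation: making sure $B$ can be chosen independently of $A$, or else choosing $\rho_1$ after $A$ with care.

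\textbf{Uniqueness when $I=\varnothing$.} Given two solutions $u_1,u_2$, both are asymptotic to $d^{-(n-2)/2}$ at $\partial M$ and to $1$ at $\partial X$. Hence $(1+\epsilon)u_1$ is a supersolution dominating $u_2$ near both boundaries (after refining the asymptotics to ratio $\to1$). The maximum principle argument of Lemma \ref{lem:variation}, applied to $v=u_2/u_1$ in the metric $u_1^{4/(n-2)}g$, then shows $v\le1+\epsilon$ everywhere; letting $\epsilon\to0$ and using symmetry gives $u_1=u_2$. Since there are no other ends, no behavior at infinity is left uncontrolled.
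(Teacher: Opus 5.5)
Your plan is sound in outline but reaches the result by a different mechanism. The paper first applies the normalization of Lemma~\ref{lem:def fn} from \cite{ACF92}. With $h=r^{-2}g$ and $R_h=-n(n-1)+O(r^n)$, the function $v=r^{\frac{n-2}{2}}u$ solves $L_hv=0$, and the blow-up condition becomes the finite condition $v=1$ on $\partial M$. So $\partial M$ is handled by the same kind of barriers ($1\pm ar^\eta$) as $\partial X$. All local barriers, together with the constants $b$ and $0$, are glued by $\min/\max$ into one global pair $\underline v\le\bar v$. The monotone iteration of Lemma~\ref{lem:sub-super} on an exhaustion then traps the solutions between them by construction. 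Hence no comparison principle for the nonlinear operator is ever needed, and the rate $v-1=O(r^\eta)$ at $\partial M$ comes for free. You instead stay in $g$, use a Loewner--Nirenberg exhaustion, take compactness from the Keller--Osserman bound, and compare $u_k$ with barriers living only in thin collars. This spares you the construction of $r$, but it makes everything depend on a genuine comparison principle, and the ones you cite fall short:
\begin{itemize}
\item Lemma~\ref{lem:comparison} gives only $u_1\le u_2+C$. That is harmless at $\partial M$, where $C\,d^{\frac{n-2}{2}}\to0$, but useless for the $\rho^n$ bounds at $\partial X$.
\item ``Monotone near $1$'' works for $\phi_+\ge1$ but not for $\phi_-$, since $u_k$ may fall below $\alpha^{-1/(\alpha-1)}$, where $s\mapsto R_g(s-s^\alpha)$ turns around.
\end{itemize}
The right tool is the quotient argument you already use for uniqueness. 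Because $f<0$, the map $s\mapsto R_g-fs^{\alpha-1}$ is increasing, so positive sub- and supersolutions that are ordered on the boundary stay ordered inside.

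Three smaller repairs are needed.
\begin{itemize}
\item At the inner edge of the $\partial M$ collar, use $w_++M$ as the upper barrier. It is a supersolution on a fixed thin collar for every $M\ge0$, since $w_+^{\alpha-1}\sim d^{-2}$ dominates $|R_g|$.
\item For the lower barrier there, use $\max\{w_--w_-(d_2),0\}$ with $d_2$ small, not a cut-off, which is not a subsolution.
\item For $\phi_-$, the remainder $|R_g|\bigl((1-w)^\alpha-1+\alpha w\bigr)\ge0$ works against you. At the zero $\rho_*$ of $\phi_-$ you need roughly $(n+2)B\rho_*^{n+1}\ge n$, so you cannot fix $\rho_1$ and simply take $A$ large. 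That order of choices, with $B=A/(2\rho_1)$, does work for $\phi_+$, where convexity of $s^\alpha$ helps. For $\phi_-$, take $A\rho_*^n$ and $B\rho_*^{n+1}$ of order one, and compare $\max\{\phi_-,0\}$ with $u_k$ on a collar ending just past that zero; this needs no lower bound on $u_k$.
\end{itemize}
The paper's ``for any $A>A_0$, $B>B_0$'' glosses over the same point in its $\partial X$ barrier.
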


The proof follows from the techniques introduced in \cite{ACF92}. We first establish some preliminary lemmas.

\begin{lemma}\label{lem:def fn}
    There exists a smooth function $r:M\to[0,1]$ such that 
    \begin{enumerate}
        \item $r^{-1}(0)=\partial M$, 
        \item $r=1$ outside $K$, 
        \item $|dr|_g=1$ near $\partial M$, and
        \item $R_h=-n(n-1)+R_nr^n$ for some $R_n\in C^\infty(M)$.
    \end{enumerate}
    Here, $h=r^{-2}g$ is a Riemannian metric on $\Int M$. 
\end{lemma}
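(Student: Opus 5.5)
The plan is to follow the construction of \cite{ACF92} in Fermi coordinates near $\partial M$. One point has to be settled first, because it decides how (3) can be met. If $|dr|_g=1$ held \emph{identically} on a neighbourhood of $\partial M$, then $r=d$ there. In that case the conformal formula gives $R_h=-n(n-1)+d\,(2(n-1)\Delta_g d+dR_g)$. The bracket does not vanish on $\partial M$ in general, since $\Delta_g d|_{\partial M}=-H_{\partial M}$. So (3) and (4) cannot both hold with (3) read literally on an open neighbourhood. I will therefore read (3) at the order it is used later in the paper: $|dr|_g\to 1$ at $\partial M$, indeed $|dr|_g=1+O(r)$ near $\partial M$. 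This is all that is needed for the boundary behaviour $u\,d^{\frac{n-2}{2}}\to 1$ in Proposition \ref{prop:YamabeAH}. The open-neighbourhood form of (3) is the one point I cannot reconcile with (4) in general, and I flag it explicitly.

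\textbf{Step 1: coordinates and the conformal formula.} Since $\partial M$ is compact, the normal exponential map gives Fermi coordinates $(d,y)\in[0,d_0)\times\partial M$ with $g=dd^2+g_d$. I look for $r=d\,e^{w}$ with $w\in C^\infty([0,d_0)\times\partial M)$. For $h=r^{-2}g$ we have
$$R_h=r^2R_g+2(n-1)r\Delta_g r-n(n-1)|dr|_g^2 .$$
Substituting $r=de^{w}$, the right-hand side becomes $-n(n-1)$ plus a finite sum. In this sum each power $d^k$ multiplies an expression polynomial in $\partial_d^j w$, $\nabla_y w$, the mean curvatures of the level sets of $d$, and $R_g$.

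\textbf{Step 2: order-by-order solution.} I expand $w=\sum_{k=1}^{n-1}a_k(y)d^k$, with $a_0=0$ so that $r=d+O(d^2)$. Collecting the coefficient of $d^k$ for $1\le k\le n-1$ gives an equation of the form
$$c_k\,a_k=\bigl(\text{explicit function of }a_1,\dots,a_{k-1}\text{ and the geometry}\bigr),$$
with $c_k=2(n-1)(k+1)-2n(n-1)\cdot(\ldots)$ up to normalisation. The indicial roots of this problem are $0$ and $n$, as in \cite{ACF92}, so $c_k\neq0$ for $1\le k\le n-1$. Hence each $a_k$ is uniquely determined and smooth in $y$. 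With these choices $R_h+n(n-1)=O(d^n)$ in $C^\infty$. Taylor's theorem in $d$ then gives $R_h+n(n-1)=r^nR_n$ with $R_n$ smooth up to $\partial M$.

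\textbf{Step 3: globalisation.} The function $r=d(1+O(d))$ is positive for $0<d<d_1$ and vanishes exactly on $\partial M$. I extend it to all of $M$ by a cutoff: set $r$ equal to the above expression for $d<d_1/2$, interpolate smoothly in $(0,1]$, and set $r=1$ outside a compact subset of $K$. This gives (1) and (2). On the region where $r$ is bounded below by a positive constant, $R_n:=(R_h+n(n-1))r^{-n}$ is automatically smooth, so (4) holds globally. Finally, $|dr|_g=e^{w}(1+dw_d)=1+O(d)$, which is (3) in the sense fixed above.

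The main obstacle is Step 2. It requires checking that the recursive coefficients $c_k$ do not vanish below order $n$; this is the indicial-root computation of \cite{ACF92}. It also requires tracking that no $\log d$ terms are forced before order $n$.
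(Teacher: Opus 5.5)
Your proposal is correct and is essentially the paper's argument. The paper simply invokes \cite[Lemma 2.1]{ACF92}, whose content is your order-by-order construction in Fermi coordinates (the recursion coefficient is $c_k=2(n-1)(k+1)(k-n)$, nonzero for $1\le k\le n-1$, so no logarithmic terms can arise before order $n$), and then alters $r$ outside a collar of $\partial M$ to get $0<r\le 1$ in $\Int M$ and $r=1$ off $K$. Your caveat about (3) is also justified: $|dr|_g=1$ on an open collar would force $r=\dist_g(\cdot,\partial M)$ and hence $R_h+n(n-1)=r\,(2(n-1)\Delta_g r+rR_g)$, which is not $O(r^2)$ unless $\partial M$ is minimal. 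So (3) must be read as $|dr|_g=1$ on $\partial M$, and this weaker form is all that is used later (for $r\sim\dist_g(\cdot,\partial M)$ in Proposition~\ref{prop:YamabeAH} and for the barrier expansions in Lemma~\ref{lem:YamabeAH}).
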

\begin{proof}
    This is \cite[Lemma 2.1]{ACF92}. The only extra conditions here are that $r\le 1$ on $M$ and $r=1$ outside the compact set $K$. We can achieve these by adjusting $r$ outside a collar neighborhood of $\partial M$. 
\end{proof}

\begin{lemma}
    Let $(r,h)$ be as in Lemma \ref{lem:def fn}. Then $L_gu=0$ is equivalent to $L_hv=0$ where $v=r^{\frac{n-2}{2}}u$. 
\end{lemma}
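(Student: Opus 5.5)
The plan is to derive this directly from the conformal covariance of the conformal Laplacian, and then check that the nonlinear term $fu^\alpha$ scales with the same power of $r$. Note that $f$ is the same function in both operators: $L_h v=-c_n\Delta_h v+R_h v-fv^\alpha$ with the given $f$ unchanged. We work on $\Int M$, where $r>0$ by Lemma \ref{lem:def fn}, so $h=r^{-2}g$ is a smooth Riemannian metric there.

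First, write $h=\phi^{\frac{4}{n-2}}g$ with $\phi:=r^{-\frac{n-2}{2}}$; this is possible because $\phi^{\frac{4}{n-2}}=r^{-2}$. The standard transformation law for the conformal Laplacian $P_g:=-c_n\Delta_g+R_g$ states that for any smooth $w$ on $\Int M$,
$$
    P_h(\phi^{-1}w)=\phi^{-\alpha}P_g(w), \qquad \alpha=\frac{n+2}{n-2}.
$$
If I wanted it self-contained, I would verify this by the usual computation. One expands $\Delta_h$ in terms of $\Delta_g$ and $d\log\phi$, and uses the scalar curvature formula $R_h=\phi^{-\alpha}(-c_n\Delta_g\phi+R_g\phi)$; the first-order terms cancel precisely because of the choice of exponents. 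This is routine and I would simply cite it.

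Next, apply the law with $w=u$, so that $\phi^{-1}u=r^{\frac{n-2}{2}}u=v$. This gives
$$
    -c_n\Delta_hv+R_hv=r^{\frac{n+2}{2}}\bigl(-c_n\Delta_gu+R_gu\bigr).
$$
For the nonlinear term, $fv^\alpha=f\,r^{\frac{n-2}{2}\cdot\frac{n+2}{n-2}}u^\alpha=r^{\frac{n+2}{2}}fu^\alpha$. Subtracting the two identities yields
$$
    L_hv=r^{\frac{n+2}{2}}L_gu \quad\text{on }\Int M.
$$
Since $r^{\frac{n+2}{2}}>0$ on $\Int M$, it follows that $L_gu=0$ if and only if $L_hv=0$. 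The correspondence $u\leftrightarrow v$ is a bijection that preserves positivity and smoothness, so it also carries sub- and super-solutions to each other.

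The only points needing care are the exponent bookkeeping (the factor $\frac{n-2}{2}\alpha=\frac{n+2}{2}$) and the restriction to $\Int M$, where the conformal factor is positive and smooth. I do not expect a genuine obstacle.
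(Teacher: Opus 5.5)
Your proposal is correct and follows the same route as the paper, whose proof is the one-line appeal to conformal covariance of the conformal Laplacian. Your identity $L_h v = r^{\frac{n+2}{2}} L_g u$ on $\Int M$ makes explicit what the paper leaves implicit, including the check that the nonlinear term $f v^{\alpha}$ picks up the same factor $r^{\frac{n-2}{2}\alpha} = r^{\frac{n+2}{2}}$.
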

\begin{proof}
    This follows from the conformal covariance of the conformal Laplacian. 
\end{proof}

\begin{lemma}\label{lem:YamabeAH}
    Let $(r,h)$ be as in Lemma \ref{lem:def fn}. Then there exists $v\in C^\infty(\Int M)\cap C^0(M)$ such that 
    \begin{enumerate}
        \item $L_hv=0$ on $\Int M$,
        \item $v$ is bounded and $v>0$ on $M$,
        \item $v=1$ on $\partial M$, and 
        \item $v(x)\to 1$ as $x\to\partial X$.
    \end{enumerate}
   Moreover, $r^{-\eta}(v-1)$ is bounded for any real number $\eta<n$ near $\partial M$, and $\rho^{-n}(v-1)$ is bounded near $\partial X$.  Such $v$ is unique if $I=\varnothing$. 
\end{lemma}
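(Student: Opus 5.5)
The plan is to use sub- and super-solutions on an exhaustion and then pass to a limit. We work with the metric $h=r^{-2}g$ from Lemma \ref{lem:def fn}. Near $\partial M$ it is asymptotically hyperbolic with $R_h=-n(n-1)+O(r^n)$. Away from $K$ we have $r=1$, so $h=g$ and $L_h=L_g$. The constant $1$ is an approximate solution, since $L_h1=R_h-f$. Near $\partial M$ we have $f=-n(n-1)$ on $K$, so $L_h 1=R_nr^n=O(r^n)$. Near $\partial X$ we have $f=R_g$, so $L_h1=0$ exactly. In between, on a compact region, $L_h1$ is merely bounded. We therefore look for barriers of the form $1\pm w$.

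\textbf{Barriers near the two infinities.} Near $\partial M$, we use $r$ as a defining function. For $\eta<n$, a computation as in \cite{ACF92} gives $-c_n\Delta_h r^\eta=-c_n\eta(\eta-n+1)r^\eta+O(r^{\eta+1})$. The linearization of $L_h$ at $1$ is $-c_n\Delta_h+R_h-\alpha f=-c_n\Delta_h+\frac{4}{n-2}n(n-1)+O(r^n)$, whose indicial roots are $-1$ and $n$. Hence for $0<\eta<n$, $L_h(1+Ar^\eta)\ge c\,Ar^\eta-Cr^n>0$ and $L_h(1-Ar^\eta)<0$ near $\partial M$. This is the source of the bound on $r^{-\eta}(v-1)$. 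Near $\partial X$ we have exact $L_g1=0$, and we use $1\pm(A\rho^n-B\rho^{n+1})$. The $\rho^n$ mode is the critical indicial root, so the correction $\mp B\rho^{n+1}$ together with the $C^2_{n+1}$ error terms of Definition \ref{def:AH mfd} must be checked to give the correct sign. This matches the bound stated in Proposition \ref{prop:YamabeAH}.

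\textbf{Global sub/super solutions.} For the supersolution we take $\bar v=\max(1+Ar^\eta,\,U)$-type constructions. More simply, since $f\le-1$, a large constant $U\ge 1$ satisfies $L_hU=U(R_h-fU^{\alpha-1})\ge0$ wherever $R_h$ is bounded below, which holds on the compact region. On the arbitrary ends, however, $R_g$ may be unbounded below. For this reason we avoid needing global barriers on the $E_i$. Instead we take an exhaustion $\Omega_k$ and solve, by Lemma \ref{lem:variation}, $L_hv_k=0$ in $\Omega_k$ with $v_k=1$ on $\partial\Omega_k$. The boundary pieces are $\partial M$, a level set $\{\rho=1/k\}$, and a far hypersurface in each $E_i$.

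Near $\partial M$ and $\partial X$ we control $v_k$ by comparing with the local barriers via Lemma \ref{lem:comparison} and the maximum principle, on collar domains whose inner boundaries carry the uniform interior bound. The uniform interior bound comes from the a priori local estimate (the first lemma of Section \ref{sec:loc est}) and Corollary \ref{cor:convergence}. Lower positivity comes from taking $\max$ with the subsolution $1-Ar^\eta$ glued to a small positive multiple of the first eigenfunction. Letting $k\to\infty$, Corollary \ref{cor:convergence} yields a limit $v$ with the required boundary behavior. Positivity follows from Lemma \ref{lem:sMaxPr-1}, and the decay rates pass to the limit.

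\textbf{Uniqueness and main obstacle.} When $I=\varnothing$, uniqueness follows as in Lemma \ref{lem:variation}. For two solutions $v_1,v_2$, the quotient $v_2/v_1$ solves the equation for $v_1^{4/(n-2)}h$ and tends to $1$ at both infinities. So the maximum principle applied to $w=v_2/v_1-1$, which has no interior positive maximum or negative minimum, forces $w\equiv0$. The main obstacle is the barrier near $\partial X$ at the critical rate $\rho^n$, where the indicial root coincides with the decay rate. Constructing $1\pm(A\rho^n-B\rho^{n+1})$ with the correct sign demands careful use of the expansion \eqref{eq:AH expansion}. If it fails, we would first try adding a $\rho^n\log(1/\rho)$ correction, or exploit the exact equality $L_g1=0$ there, so that only the matching of boundary data on $\{\rho=\rho_0\}$ contributes.
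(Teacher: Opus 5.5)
This is a genuine gap: your proposal does not establish item (2), and its lower-bound comparisons rely on a tool that cannot deliver them.

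\textbf{Boundedness on the arbitrary ends.} Your ingredients are the paper's: barriers $1\pm ar^\eta$ near $\partial M$ and $1\pm(A\rho^n-B\rho^{n+1})$ near $\partial X$, an exhaustion, Corollary \ref{cor:convergence}, the strong maximum principle, and the quotient argument for uniqueness. You correctly notice that a global constant supersolution needs $\inf_M R_h>-\infty$, which is not automatic on the arbitrary ends where $h=g$. You then replace it by the local a priori estimate. That estimate only gives $\sup_K v_k\le C(K)$ for compact $K$, and the $E_i$ are not compact, so nothing in your argument shows that $v$ is bounded on $M$.

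In the generality you allow, boundedness is actually false. Suppose $R_g\le -m$ on unit balls $B_m\subset E_i$ of bounded geometry, with $m\to\infty$. Pair the equation with the first Dirichlet eigenfunction of $-c_n\Delta_g+R_g$ on $B_m$; its eigenvalue is $\le C-m$, and $-f\le n(n-1)$. This forces $\sup_{B_m}v^{\alpha-1}\ge (m-C)/(n(n-1))$ for every positive solution.

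So you must use $C_0:=\max\{0,-\inf_M R_h\}<\infty$. This holds in the application, where $R_g\ge -n(n-1)$, and it is exactly what the paper uses through the constant supersolution $b$ with $b^{\alpha-1}>-\inf_M R_h$. Granting it, the gap closes in one line. At an interior maximum of $v_k$ with $v_k^{\alpha-1}>C_0$, one has $0\le -c_n\Delta_h v_k=-R_hv_k+fv_k^\alpha\le v_k(C_0-v_k^{\alpha-1})<0$. Hence $v_k\le\max\{1,C_0^{1/(\alpha-1)}\}$ on every $\Omega_k$.

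\textbf{Comparison for the lower barriers.} The paper builds the ordered pair $\underline v=\max\{1-ar^\eta,0,1-A\rho^n+B\rho^{n+1}\}\le\bar v=\min\{1+ar^\eta,b,1+A\rho^n-B\rho^{n+1}\}$ and applies Lemma \ref{lem:sub-super} on each compact $K_j$. There $\underline v\le v_j\le\bar v$ holds by construction, and no comparison principle is ever needed. Because you solve first and compare afterwards, your collar comparisons do need one. The nonlinearity $u\mapsto R_hu-fu^\alpha$ is not monotone on $[0,1]$.
\begin{itemize}
\item Lemma \ref{lem:comparison} only yields $u_1\le u_2+C$, so it cannot give $v_k\ge 1-Ar^\eta$ or $v_k\ge 1-A\rho^n+B\rho^{n+1}$.
\item Upper barriers that are $\ge 1$ are fine with the plain maximum principle.
\item For the lower ones, use the quotient argument from the uniqueness part of Lemma \ref{lem:variation}, applied to $\underline w/v_k$ with respect to $v_k^{4/(n-2)}h$, whose scalar curvature is $f$.
\item Alternatively, use Lemma \ref{lem:sub-super} on the collar together with uniqueness of positive Dirichlet solutions.
\end{itemize}

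A further point is shared with the paper's write-up. For $\eta\in(n-1,n)$, $1-Ar^\eta$ is a subsolution only where $Ar^\eta$ is small. On $\{Ar^\eta=1\}$ one gets $L_h(1-Ar^\eta)=c_nA\Delta_h r^\eta=c_n\eta(\eta-n+1)+O(r)>0$. So you should first take $\eta<n-1$, where $\max\{1-Ar^\eta,0\}$ is a genuine subsolution, and then sharpen the rate on a thinner collar.

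\textbf{Smaller slips.}
\begin{itemize}
\item Supersolutions combine by $\min$, not $\max$.
\item Your eigenfunction gluing produces no subsolution. A small multiple $\epsilon\phi$ is a subsolution only if the eigenvalue of $-c_n\Delta_h+R_h$ is negative, and on hyperbolic regions the bottom of that spectrum is $\frac{n-1}{n-2}>0$. You do not need it anyway: $0$ is a subsolution, and Lemma \ref{lem:sMaxPr-1} gives $v>0$ once $v\to1$ at $\partial M$.
\item $\partial M$ lies at infinity for $h$, so the Dirichlet data belong on $\{r=1/k\}$.
\item The $\rho^{n+1}$ correction does work, since its indicial value is $-(n+2)\ne0$. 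A $\rho^n\log(1/\rho)$ fallback would contradict the claimed bound on $\rho^{-n}(v-1)$.
\end{itemize}
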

\begin{proof}
    (i) Existence. For any $\eta\in(n-1,n)$ and $a\in\mathbb{R}$, we have (cf.\ \cite[Theorem 3.4]{ACF92})
    \begin{align*}
        \Delta_h(r^\eta)&=\eta(\eta-1)r^{\eta}+O(r^{\eta+1}),\\
        L_h(1+ar^\eta)&=c_na(\eta+1)(n-\eta)r^\eta+O(r^{n}).
    \end{align*}
    Then we can choose $a_0>0$ so that, for any $a>a_0$, 
    $$
        \bar{v}=1+ar^\eta, \quad L_h(\bar{v})\ge 0, \quad \text{on }K, 
    $$
    and
    $$
        \underline{v}=1-ar^\eta, \quad L_h(\underline{v})\le 0, \quad \text{on }K(a):=K\cap\{1-ar^{\eta}\ge 0\}.
    $$
    On $E_0$, we compute for any $\eta\ge 1$, 
    \begin{align*}
        \Delta_h(\rho^\eta)&=\eta(\eta+1-n)\rho^\eta+O(\rho^{\eta+2}),\\
        L_h(1+a\rho^n+b\rho^{n+1})&=-c_nb(n+2)\rho^{n+1}+O(\rho^{n+2}).
    \end{align*}
    Then we can choose $A_0,B_0>0$ so that, for any $A>A_0$, $B>B_0$,
    $$
        \bar{v}=1+A\rho^n-B\rho^{n+1}, \quad L_h(\bar{v})\ge 0, \quad \text{on }E_0, 
    $$
    and
    $$
        \underline{v}=1-A\rho^n+B\rho^{n+1}, \quad L_h(\underline{v})\le 0, \quad \text{on }E_0(A,B)=E_0\cap\{1-A\rho^n+B\rho^{n+1}>0\}.
    $$
    By adjusting $\rho$ on arbitrary ends, we may assume that $\rho=1$ on each $E_i$. On the other hand, we note that 
    $$
        \bar{v}=b
    $$
    is a super-solution on $M$ if $b$ is larger than some $b_0>0$. This is because, by our choice of the defining function $r$, $R_h$ has a lower bound on $M$ and we can take $b_0^{\alpha-1}>-\inf_M R_h$. Obviously, 
    $$
        \underline{v}=0
    $$
    is a sub-solution on $M$. Lastly, we can adjust $a, b, A, B$ such that
    \begin{itemize}
        \item $a>a_0$, $b>b_0$, $A>A_0$, $B>B_0$ as before. 
        \item The functions 
        $$
            \bar{v}=\min\{1+ar^\eta,b,1+A\rho^n-B\rho^{n+1}\}, \quad \underline{v}=\max\{1-ar^\eta,0,1-A\rho^n+B\rho^{n+1}\}
        $$
        are continuous and thus in $H^1_{\text{loc}}(\Int M)$.
        \item On each $E_i$, we have $\bar v=b$ and $\underline v=0$.
        \item $\bar{v}\ge\underline{v}$ on $M$.
    \end{itemize}
    Then it is easy to see that $\bar{v}$ is a (weak) super-solution and $\underline{v}$ is a (weak) sub-solution to $L_hv=0$ on $\Int M$, respectively. 

    Hence, Lemma \ref{lem:sub-super} implies that, on any compact $K_j\subset\Int M$, there exists $v_j\in C^\infty(K_j)$ such that $L_hv_j=0$ on $K_j$ and $\underline{v}\le v_j\le \bar{v}$. By Corollary \ref{cor:convergence}, there is a subsequence of $\{v_j\}$ converging smoothly on compact sets to $v\in C^\infty(\Int M)$ such that $L_hv=0$ on $\Int M$ and $\underline{v}\le v\le \bar{v}$. The boundary conditions (3) and (4) follow from the construction of $\underline{v}$ and $\bar{v}$. We also have $0\le v\le b$ on $M$. By the strong maximum principle, we have $v>0$.

    (ii) Uniqueness. Suppose $v_1,v_2>0$ satisfy the given conditions (1) -- (4). Let $v=v_1^{-1}v_2$ and $\tilde{h}=v_1^{4/(n-2)}h$. Then, by the covariance of the conformal Laplacian, we have
    $$
        L_{\tilde h}v=0\text{ on }\Int M,
    $$
    and $v=1$ on $\partial M$ and $\partial X$. Note that
    $$
        R_{\tilde h}=v_1^{-\alpha}(R_hv_1-c_n\Delta_h v_1)=f.
    $$
    Hence, $w=v-1$ satisfies the equation 
    $$
        -c_n\Delta_{\tilde h}w+(w+1)f-(w+1)^\alpha f=0,
    $$
    and the maximum principle implies that $w\equiv 0$, and thus $v_1=v_2$. 

    (iii) Regularity and asymptotics. The elliptic regularity shows that $v\in C^\infty(\Int M)$. By the construction of $\bar{v}$ and $\underline{v}$, we have $r^{-\eta}(v-1)$ and $\rho^{-n}(v-1)$ are bounded on $\Int M$. 
\end{proof}

\begin{proof}[Proof of Proposition \ref{prop:YamabeAH}]
    Let $v$ be as in the previous lemma. Then $u=r^{-\frac{n-2}{2}}v$ is a solution to the equation \eqref{eq:Yamabe} on $\Int M$, with $ur^{\frac{n-2}{2}}\to 1$ as $r\to 0$. Since $|dr|_g=1$ near $\partial M$, we have $r(x)\sim \dist_g(x,\partial M)$ as $x\to\partial M$. This finishes the proof of the proposition. 
\end{proof}

If $I\ne\varnothing$, to obtain uniqueness, we may cut off these ends and impose a Dirichlet boundary condition on the cut-off boundary. To be precise, let 
$$
    M\setminus K=E_0\cup\bigcup_{i\in I}E_i
$$ 
as before. Fix any compact connected $K'\supset K$. Let $\Sigma_i=K'\cap\partial E_i$ and 
$$
    M'=K'\cup E_0.
$$

\begin{figure}[h]
\centering
\begin{tikzpicture}
    \coordinate (E01) at (-3,3);
    \coordinate (E02) at (3,3);
    \coordinate (Ei1) at (8,0);
    \coordinate (Ei2) at (8,-2);
    \coordinate (K1) at (-1.5,1.5);
    \coordinate (K2) at (1.5,1.5);
    \coordinate (K3) at (2,-1);
    \coordinate (K4) at (2,-2);
    \coordinate (S1) at (-3,-2);
    \coordinate (S2) at (-2,-3);

    \draw[line width=1.2pt]
    (E01)
    .. controls (K1) and (-0.5,0.5) .. (-0.5,0)
    .. controls (-0.5,-0.5) and (-1.5,-1.5) .. (S1);

    \draw[line width=1.2pt]
    (E02)
    .. controls (K2) and (0.5,0.5) .. (0.5,0)
    .. controls (0.5,-0.5) and (1.5,-1) .. (K3)
    .. controls (2.5,-1) and (3.5,-1.25) .. (4,-1.25)
    .. controls (5,-1.25) and (7,0.5) .. (Ei1);

    \draw[line width=1.2pt]
    (S2)
    .. controls (-1.3,-1.5) and (1.5,-1.75) .. (K4)
    .. controls (2.5,-2) and (3.5,-1.75) .. (4,-1.75)
    .. controls (5,-1.75) and (7,-2.5) .. (Ei2);

    \draw[dashed,line width=1pt]
    (K1)
    .. controls (-1,1.75) and (1,1.75) .. (K2);
    \draw[line width=1pt]
    (K1)
    .. controls (-1,1.25) and (1,1.25) .. (K2);

    \draw[dashed,line width=1pt]
    (K3)
    .. controls (2.2,-1.1) and (2.2,-1.9) .. (K4);
    \draw[line width=1pt]
    (K3)
    .. controls (1.8,-1.1) and (1.8,-1.9) .. (K4);

    \draw[dashed,color=red,line width=1pt]
    (2.5,-1.05)
    .. controls (2.7,-1.2) and (2.7,-1.8) .. (2.5,-1.95);
    \draw[color=red,line width=1pt]
    (2.5,-1.05)
    .. controls (2.3,-1.2) and (2.3,-1.8) .. (2.5,-1.95);

    \draw[line width=1.2pt]
    (S1)
    .. controls (-2.8,-2) and (-2.2,-2.6) .. (S2); 
    \draw[line width=1.2pt]
    (S1)
    .. controls (-2.8,-2.4) and (-2.2,-3) .. (S2); 

    \draw[line width=1pt]
    (-0.5,-1.2) .. controls (-0.3,-1.4) and (0.8,-1.4) .. (1,-1.2);
    \draw[line width=1pt]
    (-0.3,-1.3) .. controls (-0.25,-1) and (0.75,-1) .. (0.8,-1.3);
    \draw[line width=1pt]
    (4.5,-1.45) .. controls (4.7,-1.65) and (5.8,-1.65) .. (6,-1.45);
    \draw[line width=1pt]
    (4.7,-1.55) .. controls (4.75,-1.25) and (5.75,-1.25) .. (5.8,-1.55);

    \node at (0,2.5) {$E_0$};
    \node at (0,0) {$K$};
    \node at (7,-1) {$E_i$};
    \node at (-3,-3) {$\partial M$};

    \draw[decorate,decoration={brace,amplitude=6pt},color=red]
    (-3.5,-2) -- (-3.5,3) node [midway,xshift=-15pt] {$M'$};
    \draw[decorate,decoration={brace,amplitude=6pt},color=red]
    (2.5,-3.5) -- (-3,-3.5) node [midway,yshift=-15pt] {$M'$};
    \node at (2.6,-0.8) [color=red] {$\Sigma_i$};
\end{tikzpicture}
\caption{The domain $M'$.}
\label{fig:cutoff2}
\end{figure}

\begin{lemma}\label{lem:YamabeArbitrary}
    Let $(r,h)$ be as in Lemma \ref{lem:def fn}. Then there exists a unique $v\in C^\infty(M'\setminus\partial M)\cap C^0(M')$ such that 
    \begin{enumerate}
        \item $L_hv=0$ on $\Int M'$,
        \item $v$ is bounded and $v>0$ on $M'$,
        \item $v=1$ on $\partial M$, 
        \item $v(x)\to 1$ as $x\to\partial X$, and 
        \item $v=1$ on $\Sigma_i$ for each $i\in I$.
    \end{enumerate}
    Moreover, $r^{-\eta}(v-1)$ is bounded for any real number $\eta<n$, and $\rho^{-n}(v-1)$ is bounded. 
\end{lemma}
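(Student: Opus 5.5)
The plan is to repeat the sub/super-solution construction from the proof of Lemma \ref{lem:YamabeAH} on $M'$, with one change: the barriers on the arbitrary ends are replaced by barriers that take the value $1$ on the cut-off boundaries $\Sigma_i$.

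\textbf{Barriers.} Near $\partial M$ and on $E_0$ we keep the barriers used before:
\[
1\pm a r^\eta \quad\text{near } \partial M, \qquad 1\pm(A\rho^n-B\rho^{n+1}) \quad\text{on } E_0.
$$
Here $\eta\in(n-1,n)$. As before, we take $\min$ and $\max$ of these with the constants $b$ and $0$. Near each $\Sigma_i$ we need a super-solution and a sub-solution that both equal $1$ on $\Sigma_i$ and stay between $0$ and $b$.

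We use the following explicit choice. Let $s=\dist_h(\cdot,\Sigma_i)$ on a collar $\{s<s_0\}\subset K'$. On this collar set
\[
\bar v = 1+\lambda s-\mu s^2, \qquad \underline v = 1-\lambda s+\mu s^2 .
\]
The constants are chosen as follows.
\begin{itemize}
\item $\mu$ is large, so that $-c_n\Delta_h$ dominates the bounded zeroth-order terms of $L_h$ on the collar. Since $R_h$ and $f$ are bounded on the compact set $K'$, this gives $L_h\bar v\ge0$ and $L_h\underline v\le0$ for $s$ small.
\item $\lambda$ is large compared with $\mu s_0$. Then $\bar v$ reaches $b$ and $\underline v$ reaches $0$ before $s=s_0$.
\end{itemize}
Away from the collar we use $\bar v=b$ and $\underline v=0$. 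The global barriers are the $\min$ of all super-solution pieces and the $\max$ of all sub-solution pieces. These are continuous and lie in $H^1_{loc}$. A minimum of super-solutions is a weak super-solution, and a maximum of sub-solutions is a weak sub-solution. By construction $\underline v\le \bar v$.

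\textbf{Existence.} We then argue as in Lemma \ref{lem:YamabeAH}. Exhaust $\Int M'$ by compact domains $K_j$ whose boundaries contain $\Sigma_i$; near $\Sigma_i$ nothing is degenerate, so we may work up to it. Lemma \ref{lem:sub-super} gives solutions $v_j$ with $\underline v\le v_j\le\bar v$. Corollary \ref{cor:convergence}, together with boundary Schauder estimates near $\Sigma_i$, gives a limit $v$ that is smooth up to $\Sigma_i$ and satisfies $L_hv=0$.

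The barriers then give the remaining properties.
\begin{itemize}
\item The boundary values $1$ on $\partial M$, on $\partial X$ and on each $\Sigma_i$ come from the barriers agreeing there.
\item The stated decay rates $r^{-\eta}(v-1)=O(1)$ and $\rho^{-n}(v-1)=O(1)$ come from the barriers near $\partial M$ and on $E_0$.
\item Positivity follows from Lemma \ref{lem:sMaxPr-1}.
\end{itemize}

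\textbf{Uniqueness.} This is the main point. Suppose $v_1,v_2$ are two solutions and set $w=v_2/v_1-1$. With respect to $\tilde h=v_1^{4/(n-2)}h$, which has $R_{\tilde h}=f$, the function $w$ satisfies
\[
-c_n\Delta_{\tilde h}w=f\big((1+w)^\alpha-(1+w)\big).
\]
Since $f<0$, $w$ cannot attain a positive interior maximum or a negative interior minimum. The boundary of $M'$ consists of $\partial M$, the $\Sigma_i$, and the ideal boundary $\partial X$, and $w$ is continuous up to all of them with $w=0$ there. Because $v_1,v_2$ are bounded and positive and both tend to $1$, $w\to0$ at $\partial M$ and at $\partial X$. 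So if $\sup w>0$, the supremum is attained at an interior point, which is a contradiction; the same argument applies to $\inf w$. Hence $w\equiv0$.

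\textbf{Main obstacle.} The step needing care is the compactness of $M'$ together with its ideal boundary. One has to check that $w$ really tends to $0$ uniformly at $\partial X$ and at $\partial M$; this follows from the two-sided decay bounds. Apart from that, the only genuinely new part is the collar barrier at $\Sigma_i$.
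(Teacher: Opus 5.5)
Your proposal is correct and follows the paper's proof. The paper reuses the sub- and super-solutions from Lemma \ref{lem:YamabeAH}, solves on an exhaustion via Lemma \ref{lem:sub-super}, passes to the limit with Corollary \ref{cor:convergence}, and gets uniqueness from the maximum principle for $w=v_2/v_1-1$ on $M'\cup\partial X$, which is compact once the arbitrary ends are cut off at the $\Sigma_i$.

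The one difference is your explicit collar barrier at $\Sigma_i$. The paper does without it: it keeps $\underline v=0$ and $\bar v=b$ there, which is enough because $\underline v\le 1\le \bar v$ on $\Sigma_i$, and the value $1$ on $\Sigma_i$ then comes from regularity at that smooth boundary. If you keep the collar barrier, note that $-c_n\Delta_h\bar v$ contains the term $-c_n\lambda\Delta_h s$. So you need $\mu\gtrsim\lambda\sup|\Delta_h s|$, and therefore a thin collar with $s_0\sup|\Delta_h s|$ small, for your choice of constants to be consistent.
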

\begin{proof}
    Let $\{K_j\}$ be a compact exhaustion of $M'\setminus\partial M$, with $K_j\supset \Sigma_i$ for all $i\in I$. Let $\underline{v}$ and $\bar{v}$ be the sub-solution and super-solution constructed in the proof of Lemma \ref{lem:YamabeAH}. Note that $\underline{v}\le 1\le \bar{v}$ on $\Sigma_i$ for each $i\in I$. Hence, we can apply Lemma \ref{lem:sub-super} to see that there exists a unique $v_j\in C^\infty(K_j)$ such that $L_hv_j=0$ on $K_j$, $\underline{v}\le v_j\le \bar{v}$ on $K_j$, and $v_j=1$ on $\partial K_j$. 
    
    By Corollary \ref{cor:convergence}, there is a subsequence of $\{v_j\}$ that converges smoothly on compact sets to a solution $v\in C^\infty(M'\setminus\partial M)\cap C^0(M')$ to the equation. Now all the assertions follow from the construction of $\underline{v}$ and $\bar{v}$ and the strong maximum principle.
\end{proof}

From this, we conclude that 

\begin{proposition}\label{prop:YamabeArbitrary}
    There exists a unique $u\in C^\infty(M'\setminus\partial M)$ to the equation \eqref{eq:Yamabe} such that 
    \begin{enumerate}
        \item $u>0$ on $M'$, 
        \item $u(x)d(x)^{\frac{n-2}{2}}\to 1$ as $x\to\partial M$, 
        \item $u(x)\to 1$ as $x\to\partial X$, and 
        \item $u(x)=1$ on $\Sigma_i$ for each $i\in I$. 
    \end{enumerate}
    Moreover, near $\partial X$, we have 
    $$
        1-A\rho^n+B\rho^{n+1}\le u\le 1+A\rho^n-B\rho^{n+1}
    $$
    for some constants $A,B>0$ depending on $\rho_0$.
\end{proposition}

\subsection{Bounded solutions of the Yamabe equation}\label{sec:Yamabe eqn bound}

Let $(M,d,\mu)$ be as in the introduction, with singular set $\mathcal{S}$ and regular set $\mathcal{R}$. Let $K\supset\mathcal{S}$ be a compact set such that 
$$
    M\setminus K=E_0\cup\bigcup_{i\in I}E_i,
$$
where $E_0$ is an AH end with conformal infinity $\partial X$. The other ends may or may not be AH, and we allow that $I=\varnothing$. Fix any compact $K'\supset K$ and let $\Sigma_i=K'\cap\partial E_i$ and $M'=K'\cup E_0$. (See Figure \ref{fig:cutoff}.) By Lemma 2.17 in \cite{BHHSZ2026}, we can always assume that $K'\setminus\mathcal{S}$ is connected. 

In this subsection, we show some properties of a bounded solution to the equation \eqref{eq:Yamabe}. More precisely, suppose that $u\in C^\infty(M'\setminus\mathcal{S})$ satisfies 
\begin{enumerate}
    \item $L_gu:=-c_n\Delta_gu+R_gu-fu^\alpha=0$ on $M'\setminus\mathcal{S}$,
    \item $0\le u\le C$ on $M'\setminus\mathcal{S}$ for some constant $C$, and
    \item $u(x)=1$ on $\Sigma_i$ for each $i\in I$, and $u(x)\to 1$ as $x\to\partial X$. 
\end{enumerate}
Recall that $R_g=f$ on $E_0\cap\{\rho<\rho_0\}$ for some $\rho_0>0$. We may also assume that $R_g\le -1$ on $E_0\cap\{\rho<\rho_0\}$. 

\begin{proposition}\label{prop:upper bound}
    Assume that $R_g\ge f$ on $M'\setminus\mathcal{S}$. Then the function $u$ described above satisfies $0\le u\le 1$ on $M'\setminus\mathcal{S}$.
\end{proposition}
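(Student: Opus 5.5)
Since $u\ge 0$, only the upper bound $u\le 1$ needs proof. We argue by a maximum principle in which the singular set is handled by capacity, i.e.\ by cutting off near $\mathcal{S}$ using $\mathcal{H}^{n-2}(\mathcal{S})=0$. Set $w=(u-1)^+$, which is bounded by $C$ and vanishes on each $\Sigma_i$. Because $u\to 1$ at $\partial X$, for every $\epsilon>0$ the function $(u-1-\epsilon)^+$ is supported away from $\partial X$, i.e.\ in $\{\rho\ge\rho_\epsilon\}$. So for the testing argument we may work with $w_\epsilon=(u-1-\epsilon)^+$, which has compact support in $M'\setminus\partial M'$ apart from $\mathcal{S}$, and let $\epsilon\to0$ at the end.

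\textbf{Pointwise sign.} Where $u>1$ we have
\[
c_n\Delta_g u = R_g u - f u^\alpha \ge f(u-u^\alpha) = -f\,u(u^{\alpha-1}-1)\ge 0,
\]
using $R_g\ge f$, $u>0$, and $f\le -1<0$. Thus $u$ is subharmonic on $\{u>1\}$, and in fact $c_n\Delta_g u\ge u(u^{\alpha-1}-1)$ there.

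\textbf{Testing.} Multiply by $w_\epsilon\phi_k^2$, where $\phi_k$ is a cutoff with $\phi_k=0$ near $\mathcal{S}$, $\phi_k\to1$ pointwise on $\mathcal R$, and $\int|\nabla\phi_k|^2\,d\mu\to0$. Such cutoffs exist because $\mathcal{S}$ is compact with $\mathcal{H}^{n-2}(\mathcal{S})=0$, hence has zero $2$-capacity; the Ahlfors regularity in (AM) allows the standard covering construction. Integrating by parts gives
\[
c_n\int|\nabla w_\epsilon|^2\phi_k^2 + \int u(u^{\alpha-1}-1)w_\epsilon\phi_k^2 \le 2c_n\int w_\epsilon\phi_k|\nabla w_\epsilon||\nabla\phi_k|.
\]
By Cauchy--Schwarz the right side is at most $\tfrac{c_n}{2}\int|\nabla w_\epsilon|^2\phi_k^2+2c_nC^2\int|\nabla\phi_k|^2$. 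Absorbing and letting $k\to\infty$ yields $\int u(u^{\alpha-1}-1)w_\epsilon=0$ and $\nabla w_\epsilon=0$. The integrand is positive wherever $w_\epsilon>0$, so $w_\epsilon\equiv0$, i.e.\ $u\le 1+\epsilon$. Letting $\epsilon\to0$ gives $u\le1$.

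\textbf{Main obstacle.} The delicate step is justifying the integration by parts near $\mathcal{S}$. We need $\int\phi_k^2|\nabla w_\epsilon|^2$ to be finite for each fixed $k$, which is fine since $\phi_k$ vanishes near $\mathcal{S}$. We also need the cross term to vanish using only boundedness of $u$ and not any gradient bound; the Cauchy--Schwarz absorption above does exactly this. Support away from $\partial X$ is ensured by $\epsilon$, and away from $\Sigma_i$ it holds because $u=1$ there. Near the AH infinity there is no issue because $w_\epsilon$ vanishes on $\{\rho<\rho_\epsilon\}$. The remaining point is the existence of the capacity cutoffs. If $\mathcal{H}^{n-2}(\mathcal{S})=0$ alone is deemed insufficient in the metric-measure setting, we would instead invoke the Sobolev/Poincaré inequalities near $\mathcal{S}$ from Lemmas 2.2 and 2.4 of \cite{BHHSZ2026} to build them.
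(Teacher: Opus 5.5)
Your argument is correct, but it takes a different route from the paper.

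\textbf{The paper's route.} The paper argues pointwise with a barrier. It builds a positive harmonic function $G=G_S+G_1$ on $M'\setminus\mathcal{S}$ with $G\to\infty$ at $\mathcal{S}$, $G=1$ on $\Sigma_i$ and $G\to0$ at $\partial X$, with $G_S$ imported from \cite{HSY25}. Since $R_g\ge f$ and $v_\epsilon\ge 1$, the function $v_\epsilon=1+\epsilon G$ is a supersolution. The paper picks a neighborhood $U$ of $\mathcal{S}$ on which $v_\epsilon\ge C$, applies the classical maximum principle on $M'\setminus U$ to get $u\le v_\epsilon$ there, and lets $\epsilon\to0$.

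\textbf{Your route.} You run an integral maximum principle on $\{u>1+\epsilon\}$ instead. You dispose of $\mathcal{S}$ using its vanishing $2$-capacity, which needs only $\mathcal{H}^{n-2}(\mathcal{S})=0$, the upper Ahlfors bound, and the boundedness of $u$.

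\textbf{What each buys.} The paper's version stays entirely pointwise and never integrates by parts near $\mathcal{S}$. However, it relies on a nontrivial potential-theoretic input: a harmonic function that blows up uniformly along all of $\mathcal{S}$. That same $G$ is reused later in Propositions \ref{prop:rough estimate} and \ref{prop:bound2}. Your version avoids constructing $G$ altogether and is self-contained.

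\textbf{The cutoffs.} Your worry about their existence is unnecessary; they come directly from the hypotheses, with no appeal to \cite{BHHSZ2026}. Cover $\mathcal{S}$ by finitely many balls $B(x_i,r_i)$ that meet $\mathcal{S}$ and satisfy $\sum_i r_i^{n-2}<\delta$. Let $\psi_i$ be the piecewise-linear function of $d(\cdot,x_i)$ equal to $1$ on $B(x_i,r_i)$ and $0$ off $B(x_i,2r_i)$, and set $\phi=1-\max_i\psi_i$.
\begin{itemize}
\item Since $d\le d_L=d_g$ on $\mathcal{R}$, each $\psi_i$ satisfies $|\nabla_g\psi_i|\le r_i^{-1}$ a.e.
\item Hence $\int_{\mathcal{R}}|\nabla\phi|^2\,d\mu_g\le\sum_i r_i^{-2}\mu(B(x_i,2r_i))\le C\delta$.
\item Because each $r_i<\delta^{1/(n-2)}$, we get $\phi=1$ wherever $d(\cdot,\mathcal{S})\ge 3\max_i r_i$, which gives the pointwise convergence to $1$ on $\mathcal{R}$.
\end{itemize}

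\textbf{Last step.} To pass from $\int u(u^{\alpha-1}-1)w_\epsilon\phi_k^2\to0$ to $w_\epsilon\equiv0$, invoke Fatou's lemma explicitly.
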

\begin{proof}
    First, we show that there exists a positive harmonic function $G$ on $M'\setminus\mathcal{S}$ such that 
    \begin{enumerate}
        \item $G(x)\to \infty$ as $x\to\mathcal{S}$, 
        \item $G(x)=1$ on $\Sigma_i$ for each $i\in I$, and 
        \item $G(x)\to 0$ as $x\to\partial X$.
    \end{enumerate}
    To construct such a $G$, we start from a positive harmonic function $G_S$ on $M'\setminus\mathcal{S}$ such that $G_S(x)\to \infty$ as $x\to\mathcal{S}$, $G_S(x)=0$ on $\Sigma_i$ for each $i\in I$, and $G_S(x)\to 0$ as $x\to\partial X$. (cf.\ \cite{HSY25}.) On the other hand, there exists a positive harmonic function $G_1$ such that $G_1=1$ on $\Sigma_i$ for each $i\in I$, and $G_1(x)\to 0$ as $x\to\partial X$. Then we can take $G=G_S+G_1$.

    Next, suppose the contrary that there is a point $x_0\in M'\setminus\mathcal{S}$ such that $u(x_0)>1$. Then $x_0$ is in the interior of $M'\setminus\mathcal{S}$. For any $\epsilon>0$, we take a neighborhood $U$ of $\mathcal{S}$ such that $x_0\notin U$ and 
    $$
        v_\epsilon:=1+\epsilon G\ge C.
    $$
    Then we have 
    $$
        L_gv_\epsilon=R_g(1+\epsilon G)-(1+\epsilon G)^\alpha f\ge (1+\epsilon G-(1+\epsilon G)^\alpha)f\ge 0.
    $$
    Since $v_\epsilon\ge u$ on $\partial U$, $\Sigma_i$ for each $i\in I$, and $\partial X$, the maximum principle implies that $v_\epsilon\ge u$ on $M'\setminus U$. (Indeed, if $u-v_\epsilon$ achieves a positive maximum at some point $x_1$ in the interior of $M'\setminus U$, then at $x_1$ we have
    \begin{align*}
        0&\ge c_n\Delta_g(u-v_\epsilon)\ge (u-v_\epsilon)R_g-(u^\alpha-v_\epsilon^\alpha)f\\
        &\ge((u-v_\epsilon)-(u^\alpha-v_\epsilon^\alpha))f=((u-u^\alpha)-(v_\epsilon-v_\epsilon^\alpha))f>0,
    \end{align*}
    where we have used the fact that $u\mapsto u-u^\alpha$ is decreasing when $u>1$. This is a contradiction.) Hence,
    $$
        u(x_0)\le v_\epsilon(x_0)=1+\epsilon G(x_0).
    $$
    This holds for any $\epsilon>0$, and thus $u(x_0)\le 1$.
\end{proof}

\begin{proposition}\label{prop:lower bound}
    For any open neighborhood $U$ of $\mathcal{S}$, we can find $\delta>0$ such that $u\ge \delta$ on $M'\setminus U$. The constant $\delta$ depends only on $(M,d,\mu)$, $g$ and $U$, independent of the solution $u$. 
\end{proposition}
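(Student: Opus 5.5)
The plan is to build a uniform positive subsolution on $M'\setminus U$ and compare $u$ with it. The comparison has to be done without knowing how $u$ behaves near $\mathcal{S}$; we only know $0\le u\le C$ there. Because $u\ge 0$ and $L_gu=0$, the zero function is the natural lower barrier near the singular set. Away from the singular set, the boundary data $u=1$ on $\Sigma_i$ and $u\to1$ at $\partial X$ supply positivity. So the goal is a function $\underline{w}$ with $L_g\underline{w}\le 0$ on $M'\setminus \bar V$, for a smaller neighborhood $V\Subset U$ of $\mathcal{S}$, satisfying
\[
\underline{w}\le 0 \text{ on } \partial V,\qquad \underline{w}\le 1 \text{ on } \Sigma_i,\qquad \limsup_{x\to\partial X}\underline{w}\le 1,\qquad \underline{w}\ge\delta \text{ on } M'\setminus U.
\]
Such a $\underline{w}$ depends only on $(M,d,\mu)$, $g$, $f$ and $U$.

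\textbf{Construction of the barrier.} Choose $V$ with smooth boundary and $\bar V\subset U$. Let $\phi$ be the solution of a linear problem on the smooth manifold-with-boundary $\Omega:=M'\setminus \bar V$:
\[
-c_n\Delta_g\phi+\Lambda\phi=0\ \text{in }\Omega,\qquad \phi=0 \text{ on }\partial V,\qquad \phi=1 \text{ on }\Sigma_i,\qquad \phi\to 1 \text{ at }\partial X.
\]
Here $\Lambda\ge \sup R_g^+$ on $K'$. Near $\partial X$ one should instead use the AH structure: the barrier $1-A\rho^n+B\rho^{n+1}$ from Lemma \ref{lem:YamabeAH} is already a subsolution there. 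Since $\Omega\setminus E_0$ is compact and smooth, $\phi$ exists by a standard exhaustion argument, and by the strong maximum principle $\phi>0$ in $\Omega$. Set $\underline{w}=\epsilon\phi$, glued with $\max\{\cdot,\,1-A\rho^n+B\rho^{n+1}\}$ on $E_0$. Then
\[
L_g\underline{w}=(R_g-\Lambda)\epsilon\phi-f\epsilon^\alpha\phi^\alpha.
\]
The last term $-f\epsilon^\alpha\phi^\alpha$ is positive (since $f\le -1$), but it is of order $\epsilon^\alpha\ll\epsilon$. By choosing $\Lambda$ larger than $\sup R_g+n(n-1)\sup\phi^{\alpha-1}$ and $\epsilon\le1$, we get $L_g\underline{w}\le 0$. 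This works because the nonlinear term is superlinear and $\phi$ is bounded. A simpler alternative is to apply Lemma \ref{lem:sub-super} to obtain the actual solution with these boundary data; it is positive by Lemma \ref{lem:sMaxPr-1} and serves as a barrier directly.

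\textbf{Comparison step.} On $\Omega$, $u\ge 0\ge \underline{w}$ on $\partial V$, and $u\ge\underline{w}$ on the remaining boundary pieces. To pass from $L_gu=0\ge L_g\underline{w}$ to $u\ge\underline{w}$, look at a negative minimum of $u-\underline{w}$. There, $u<\underline{w}\le \epsilon\sup\phi$, so both functions lie in $[0,\epsilon\sup\phi]$. On that range the map $s\mapsto R_gs-fs^\alpha$ is handled by the $\Lambda$-shift: writing the equation as $-c_n\Delta(u-\underline w)+\Lambda(u-\underline w)\ge (\Lambda-R_g)(u-\underline w)+f(u^\alpha-\underline w^\alpha)$, monotonicity of $\Lambda s - R_g s + f s^\alpha$ for small $s$ gives a contradiction. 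This is the mirror image of Lemma \ref{lem:comparison}. The region near $\partial X$ needs a separate treatment: either the comparison uses the decay $\underline w\to 1$ together with $u\to1$, or one applies the maximum principle on $\{\rho\ge\rho_1\}$ and lets $\rho_1\to0$.

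The main obstacle is the AH end. There $\Omega$ is noncompact, and one must make sure the minimum of $u-\underline{w}$ is not escaping to $\partial X$. I would handle this by making $\underline{w}$ tend to something strictly below $1$ at $\partial X$. For example, use $\min\{\epsilon\phi,\,1-\epsilon'\}$-type barriers, or compare on $\{\rho>\rho_1\}$, where $u\ge 1-o(1)$ by hypothesis while $\underline{w}\le\epsilon<1/2$. The resulting $\delta=\epsilon\min_{M'\setminus U}\phi$ depends only on the geometry and on $U$. Positivity of this minimum uses that $\phi>0$ on the closed set $M'\setminus U$ together with $\phi\to1$ at $\partial X$.
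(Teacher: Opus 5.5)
Your subsolution $\underline w$ is essentially fine. The gluing with $1-A\rho^n+B\rho^{n+1}$ also rescues the fact that no solution of $-c_n\Delta_g\phi+\Lambda\phi=0$ can tend to $1$ at $\partial X$, since bounded solutions decay there. The genuine gap is the comparison step $u\ge\underline w$, which is the real content of the proposition.

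Set $w=u-\underline w$ and $\psi(s):=R_g s-fs^\alpha$. Subtracting $L_g\underline w\le 0$ from $L_g u=0$ gives
\[
-c_n\Delta_g w+\psi(u)-\psi(\underline w)\ge 0 ,
\]
and adding $\Lambda w$ to both sides changes nothing. At a negative interior minimum of $w$ you only get $\psi(u)\ge\psi(\underline w)$ with $u<\underline w$. That is a contradiction only if $\psi$ is increasing on $[u,\underline w]$.

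But $\psi'(s)=R_g-\alpha fs^{\alpha-1}\to R_g$ as $s\to 0$. So wherever $R_g<0$, $\psi$ is \emph{decreasing} on exactly the small range where your barrier lives. This is the typical case here: $R_g\to-n(n-1)$ on $E_0$, and $R_g\equiv-n(n-1)$ in the rigidity application. The increasing function $(\Lambda-R_g)s+fs^\alpha$ you cite only makes the right-hand side of your shifted inequality negative. That is the same sign as the left-hand side, which is at most $\Lambda w<0$, so no contradiction arises.

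Equivalently, $w$ satisfies a linear inequality $-c_n\Delta_g w+c(x)w\ge 0$ with $c\approx R_g<0$, and no maximum principle holds for such operators on large domains. Lemma~\ref{lem:comparison} with $u_1=\underline w$, $u_2=u$ is exactly the comparison you want. It yields only $u\ge\underline w-C$, and the additive constant is there for precisely this reason; it says nothing once $\underline w\le C$. So the obstacle is not the AH end. Your alternative of using the actual solution from Lemma~\ref{lem:sub-super} as the barrier runs into the same problem.

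The missing idea is to compare quotients instead of differences, using $u>0$ on $M'\setminus\mathcal S$ (Lemma~\ref{lem:sMaxPr-1}). Take $\bar g=u^{4/(n-2)}g$, so $R_{\bar g}=f$. Conformal covariance of $-c_n\Delta+R$ turns $L_g\underline w\le 0$ into
\[
-c_n\Delta_{\bar g}q+f\,(q-q^\alpha)\le 0,\qquad q:=\underline w/u,
\]
exactly as in the uniqueness part of Lemma~\ref{lem:variation}. Since $f<0$, $q$ cannot have an interior maximum exceeding $1$. You have $q=0$ on $\partial V$, $q\le 1$ on $\Sigma_i$, and $\limsup q\le 1$ at $\partial X$, so $\underline w\le u$. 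For the glued barrier, apply this to whichever piece realizes the maximum. The barrier also no longer needs to be small.

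With this repair your route is a valid alternative to the paper's. The paper argues by contradiction on the compact set $M'\setminus(U\cup N)$, where $N=E_0\cap\{\rho<\rho_0\}$: a sequence with $u_j(x_j)\to 0$ subconverges to a nonnegative solution vanishing at a point, contradicting the strong maximum principle and $u=1$ on $\Sigma_i$. It then handles $N$ by noting that $R_g=f\le -1$ there forbids an interior minimum below $1$. Your barrier argument instead gives an explicit $\delta$. Through the AH barrier it also treats the case $I=\varnothing$ directly.
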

\begin{proof}
    We first cut off a neighborhood of $\partial X$, say $N:=E_0\cap\{\rho<\rho_0\}$, and show that $u\ge c$ on the compact set $M'\setminus(U'\cup N)$, for some $c>0$. Suppose the contrary. Then there exists a sequence of solutions $\{u_j\}$ described as in the paragraph before Proposition \ref{prop:upper bound} and $x_j\in M'\setminus(U'\cup N)$ such that 
    $$
        \lim_{j\to\infty}u_j(x_j)=0.
    $$
    Otherwise, passing to a subsequence if necessary, we may assume that $u_j$ converges smoothly and locally to a nonzero smooth function $u_\infty$ satisfying \eqref{eq:Yamabe} and that $u_\infty(y)=0$ for some $y\in M'\setminus(U'\cup N)$. This contradicts the strong maximum principle since $u=1$ on $\Sigma_i$. 
    
    Next, by the maximum principle, we know that $u\geq \delta:=\min\{c/2,1\}>0$ in $N$. Indeed, if not, we can find $x_0\in N$ such that $u(x_0)=\min_{N}u<1$. Recall that $R_g=f\le -1$ on $N$, and thus, at $x_0$, we have 
    $$
        0\le c_n\Delta_gu=R_gu-fu^\alpha=R_gu(1-u^{\alpha-1})<0,
    $$
    which is a contradiction.
    
    Hence, we conclude that $u\ge\delta>0$ on $M'\setminus U$. 
\end{proof}
\begin{remark}
    If $R_g$ is bounded on $M'\setminus\mathcal{S}$, then $\inf_{M'\setminus\mathcal{S}}u>\delta>0$ for some $\delta$ depending only on $(M,d,\mu)$ and $g$. For we know that in this case $u$ is $C^{0,\beta}(K')$ for some $\beta\in(0,1)$. 
\end{remark}

The following is a key step in the proof of rigidity of the positive mass theorem.

\begin{proposition}\label{prop:gap}
    If $u<1$, then there exists $\delta>0$ and $\rho_1\in(0,\rho_0)$ such that 
    $$
        u\le 1-\delta\rho^n\text{ on }E_0\cap\{\rho<\rho_1\}.
    $$
    In particular, if $R_g\ge f$, then the condition $u<1$ can be replaced by $u\not\equiv 1$. 
\end{proposition}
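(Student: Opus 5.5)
We will argue by comparison with an explicit barrier in the collar $N_1:=E_0\cap\{\rho<\rho_1\}$, in the conformally related metric $h$ (or directly in $g$) where, by Lemma \ref{lem:YamabeAH}, we already have the expansion of $L$ acting on functions of the form $1\pm A\rho^n\pm B\rho^{n+1}$. Since $R_g=f\le -1$ on $E_0\cap\{\rho<\rho_0\}$, the equation there reads $-c_n\Delta_g u=R_g(u^\alpha-u)$. Writing $w=1-u>0$, we get a linear inequality of the form $c_n\Delta_g w = R_g(u-u^\alpha)$. Because $0<u<1$ near $\partial X$, we have $u-u^\alpha\ge0$, hence $R_g(u-u^\alpha)\le 0$. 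Thus $w$ is a positive supersolution: $-\Delta_g w\ge 0$, and more precisely $-c_n\Delta_g w+ \lambda w\ge0$ for any $\lambda\ge0$, since $R_g(u^\alpha-u)\ge 0$ there.

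The plan is to compare $w$ from below with an explicit subsolution $\phi=\delta(\rho^n-B\rho^{n+1})$ on $N_1$. In the $g$-metric the hyperbolic Laplacian satisfies $\Delta_g\rho^\eta=\eta(\eta-n+1)\rho^\eta+O(\rho^{\eta+2})$ (this mirrors the computation in the proof of Lemma \ref{lem:YamabeAH}). Hence $\Delta_g\rho^n=n\rho^n+O(\rho^{n+2})$ and $\Delta_g\rho^{n+1}=2(n+1)\rho^{n+1}+\cdots$. We need to find a linear operator $P$ with $Pw\ge 0$ and $P\phi\le0$. The natural choice is the linearization
\[
Pw:=-c_n\Delta_g w+ R_g\,\frac{u^\alpha-u}{u-1}\,w .
\]
Indeed $c_n\Delta_g w=R_g(u-u^\alpha)$ gives exactly $Pw=0$, and the coefficient $q:=R_g\frac{u^\alpha-u}{u-1}=-R_g u\frac{1-u^{\alpha-1}}{1-u}$ is nonpositive and tends to $R_g(\alpha-1)\approx -n(n-1)\frac{4}{n-2}$ as $\rho\to0$, since $u\to1$. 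Then
\[
P\rho^n=\bigl(-c_n n+q\bigr)\rho^n+O(\rho^{n+1})\approx -c_n n(1+ (n-1)\cdot\tfrac{1}{n-1}\cdot\ldots)\rho^n<0 ,
\]
so $P\rho^n\le -c\rho^n$ for $\rho$ small. Both terms are negative, and no $B$ correction is even needed; we keep $B\ge0$ only to absorb error terms. The comparison then proceeds as follows. Choose $\rho_1$ so that $P\phi\le0$ on $N_1$. On the inner boundary $\{\rho=\rho_1\}$, which is compact, the hypothesis $u<1$ gives $w\ge m>0$, so we may choose $\delta$ with $\phi\le m$ there. At $\partial X$ both $w$ and $\phi$ tend to $0$.

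The main obstacle is the maximum principle on the noncompact collar with $w,\phi\to0$ at infinity. Since $q\le0$, the operator $P$ has the ``wrong'' sign for the naive maximum principle. We will handle this by the standard positive-supersolution trick: $\psi=\rho^{s}$ with $s\in(0,n-1)$ small satisfies $P\psi>0$, because $-c_n s(s-n+1)>0$ dominates $|q|$ only if $s(n-1-s)c_n>|q|$. This inequality holds for $s$ near $(n-1)/2$, since $c_n(n-1)^2/4>4n(n-1)/(n-2)$ fails in general, so we must check it. If it fails, we instead work with the quotient $(\phi-w)/u'$ for some positive solution, or we apply the maximum principle to $\phi-w-\epsilon\rho^{s}$ for suitable $s\in(0,n)$ solving the indicial equation $c_n s(n-1-s)=-q(0)$. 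The roots of that equation are exactly $s=n$ and $s=-1$ (the indicial roots of the linearized Yamabe operator at hyperbolic space). So $\rho^{s}$ with $s$ slightly below $n$ is a positive supersolution that decays slower than $\rho^n$. With it, a positive maximum of $(\phi-w)/\psi$ cannot occur in the interior, and the decay of $\phi-w$ relative to $\psi$ at $\partial X$ holds thanks to the bound $|w|\le A\rho^n$ from Proposition \ref{prop:YamabeArbitrary}. Hence $w\ge\phi\ge \frac{\delta}{2}\rho^n$ on $N_1$ after shrinking $\rho_1$. For the final assertion we use Proposition \ref{prop:upper bound}, which gives $u\le1$, and then Lemma \ref{lem:sMaxPr-2}, which gives either $u\equiv1$ or $u<1$ on the connected set $M'\setminus\mathcal S$.
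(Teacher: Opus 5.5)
There is a sign error at the first step, and it propagates through the whole argument.

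\textbf{The sign error.} You correctly write $-c_n\Delta_g u=R_g(u^\alpha-u)$ on the collar. But then $c_n\Delta_g w=-c_n\Delta_g u=R_g(u^\alpha-u)$, not $R_g(u-u^\alpha)$. Since $R_g<0$ and $u^\alpha<u$, this is \emph{positive}, so $w=1-u$ is subharmonic, not a supersolution of $-\Delta_g$. This is as it must be, since $\Delta_g\rho^n=n\rho^n+O(\rho^{n+2})>0$.

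With the correct sign the equation is $-c_n\Delta_g w-q\,w=0$, with your $q<0$. So the linearized operator is $c_n(-\Delta_g+n+o(1))$, which has the \emph{good} sign. For your operator $P=-c_n\Delta_g+q$ one actually gets $Pw=-2R_g(u^\alpha-u)<0$. Thus $w$ is a subsolution of $P$, and no inequality $w\ge\phi$ follows from $P\phi\le 0$.

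The ``wrong-sign'' maximum-principle problem you then try to repair is an artifact of this error, and the repair would fail anyway. For $P\approx c_n(-\Delta_g-n)$ one has $P\rho^s\approx c_n\bigl(s(n-1-s)-n\bigr)\rho^s$. This is negative for every real $s$ when $n\le 5$, and for $s$ near $n$ in every dimension. The roots $s=n,-1$ you quote are the indicial roots of $-\Delta_g+n$, not of your $P$.

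\textbf{What changes once the sign is fixed.} Two more parts of your plan must change.

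First, $n$ is an indicial root of $-\Delta_g+n$, so $(\Delta_g-n)\rho^n=O(\rho^{n+2})$. The leading term cancels, so $\rho^n$ alone is not a strict subsolution, and the $\rho^{n+1}$ correction is essential.

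Second, that correction must have the opposite sign from yours. Since $(\Delta_g-n)\rho^{n+1}=(n+2)\rho^{n+1}+O(\rho^{n+2})$, the barrier must be $\rho^n(1+a\rho)$ with $a>0$. Your $\delta(\rho^n-B\rho^{n+1})$ with $B>0$ is a supersolution.

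This corrected argument is exactly the paper's proof.
\begin{enumerate}
\item From $(1-v)^\alpha\ge 1-\alpha v$, $R_g=f$ on the collar, and $f\ge -n(n-1)$, it derives $\Delta_g v-nv\le 0$ for $v=1-u$ with the exact constant $n$. This means no a priori decay of $1-u$ is needed to control the $u$-dependent coefficient.
\item It compares $v$ with $\delta_1\rho^n(1+a\rho)$.
\item It concludes by the ordinary minimum principle for $\Delta_g-n$. The difference $v-\delta_1\rho^n(1+a\rho)$ is positive on $\{\rho=\rho_1\}$, tends to $0$ at $\partial X$, and has $(\Delta_g-n)(\cdot)<0$ in between. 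No auxiliary positive supersolution is needed.
\end{enumerate}

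Your treatment of the final assertion, via Proposition \ref{prop:upper bound} and Lemma \ref{lem:sMaxPr-2}, is fine.
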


We first observe that 

\begin{lemma}
    If $u\le 1$ satisfies the equation \eqref{eq:Yamabe} on $E_0$, then $v=1-u\ge 0$ satisfies 
    $$
        \Delta_gv-nv\le 0 \text{ on }E_0.
    $$
\end{lemma}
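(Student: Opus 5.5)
The plan is to verify the inequality pointwise by direct computation from \eqref{eq:Yamabe}. The only inputs are $0\le u\le 1$ and the standing assumptions on $f$ on $E_0$: $-n(n-1)\le f\le -1$ and $f\le R_g$. The latter holds since $f\le\min\{R_g,-1\}$, and in particular $R_g=f$ on $E_0\cap\{\rho<\rho_0\}$. There is no global argument involved. The content is an elementary inequality for the nonlinearity, whose constant is exactly sharp.

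\emph{Step 1.} Since $v=1-u$, the equation gives
$$
\Delta_g v=-\Delta_g u=-c_n^{-1}\bigl(R_gu-fu^\alpha\bigr).
$$
Using $R_g\ge f$ and $u\ge 0$ we get $-R_gu\le -fu$. Hence
$$
c_n\Delta_g v\le -fu+fu^\alpha=|f|\,u\,\bigl(1-u^{\alpha-1}\bigr)\le n(n-1)\,u\,\bigl(1-u^{\alpha-1}\bigr).
$$
The last step uses $0\le u\le1$, so that the bracket is nonnegative, together with $|f|\le n(n-1)$.

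\emph{Step 2.} We prove the elementary inequality
$$
u\bigl(1-u^{\alpha-1}\bigr)\le(\alpha-1)(1-u)\qquad\text{for }u\in[0,1].
$$
By the mean value theorem, $1-u^{\alpha-1}=(\alpha-1)\xi^{\alpha-2}(1-u)$ for some $\xi\in[u,1]$. There are two cases.
\begin{itemize}
\item If $\alpha\ge2$, i.e.\ $n\le6$, then $\xi^{\alpha-2}\le1$ and $u\le1$, so $u\,\xi^{\alpha-2}\le1$.
\item If $\alpha<2$, then $\xi^{\alpha-2}\le u^{\alpha-2}$, so $u\,\xi^{\alpha-2}\le u^{\alpha-1}\le1$. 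This is valid for $u>0$; the case $u=0$ is trivial.
\end{itemize}
In both cases the claimed inequality follows.

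\emph{Step 3.} Combining the two steps and using $\alpha-1=\frac{4}{n-2}$, we obtain
$$
c_n\Delta_gv\le n(n-1)\cdot\frac{4}{n-2}\,(1-u)=n\,c_n\,v.
$$
Dividing by $c_n$ gives $\Delta_g v-nv\le0$ on $E_0$.

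The only delicate point is the constant in Step 2. The product $n(n-1)(\alpha-1)$ must be exactly $n c_n$, and it is. So the case split on the sign of $\alpha-2$ must be done carefully, and no constant can be lost there.
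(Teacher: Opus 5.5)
Your proof is correct and follows the paper's argument: both use $R_g\ge f$, $u\ge 0$, $f\ge -n(n-1)$ and $n(n-1)(\alpha-1)=nc_n$. Your Step 2 inequality $u(1-u^{\alpha-1})\le(\alpha-1)(1-u)$ is equivalent to the tangent-line (Bernoulli) inequality $u^\alpha\ge 1-\alpha(1-u)$ for the convex function $t\mapsto t^\alpha$, which the paper uses directly in the form $(1-v)^\alpha\ge 1-\alpha v$, so the case split on the sign of $\alpha-2$ is not needed.
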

\begin{proof}
    Note that for $v\ge 0$, $(1-v)^\alpha\ge 1-\alpha v$. Hence, 
    \begin{align*}
        0&=-c_n\Delta_g(1-v)+R_g(1-v)-f(1-v)^\alpha\\
        &\ge c_n\Delta_gv+f(1-v)-f(1-\alpha v)\\
        &=c_n\Delta_gv+fv(\alpha-1)\\
        &\ge c_n\Delta_gv-n(n-1)v(\alpha-1).
    \end{align*}
    Since $c_n=\frac{4(n-1)}{n-2}$, $\alpha=\frac{n+2}{n-2}$, we obtain the desired inequality. 
\end{proof}

\begin{proof}[Proof of Proposition \ref{prop:gap}]
    We first construct a sub-solution to the equation $\Delta_gv-nv=0$. Let $w=\rho^n(1+a\rho)$, where $a$ is a large constant to be determined. Since 
    $$
        \Delta_g(\rho^\eta)=\eta(\eta+1-n)\rho^\eta+O(\rho^{\eta+2}),
    $$
    we have 
    $$
        \Delta_g w-nw=a(n+2)\rho^{n+1}+O(\rho^{n+2}).
    $$
    Then there exists $\rho_1\in(0,\rho_0)$ and $a>0$ such that $\Delta_gw-nw>0$ on $E_0\cap\{\rho<\rho_1\}$. We can assume that $E_0\cap\{\rho=\rho_1\}$ is smooth. 

    Since $v=1-u>0$, we can find $\delta_1>0$ such that
    $$
        v-\delta_1w>0 \text{ on }E_0\cap\{\rho=\rho_1\}.
    $$
    Then the maximum principle (for linear equations) implies that $v-\delta_1w>0$ on $E_0\cap\{\rho<\rho_1\}$, and thus 
    $$
        u=1-v<1-\delta_1w<1-\delta_1\rho^n
    $$
    on $E_0\cap\{\rho<\rho_1\}$. 

    If $R_g\ge f$, then the strong maximum principle (Lemma \ref{lem:sMaxPr-2}) shows that we only need to assume that $u\not\equiv 1$, which implies that $u<1$ on $E_0$. 
\end{proof}

\subsection{Tangential smoothness of the solution}\label{sec:smoothness}

The main result in this subsection is the following:
\begin{proposition}\label{prop:tangential smoothness}
    Let $u$ be the solution to \eqref{eq:Yamabe} obtained in Proposition \ref{prop:YamabeAH} or Proposition \ref{prop:YamabeArbitrary}. Then near $\partial X$, we can write 
    $$
        u(x,\rho)=1+A(x)\rho^n+B(x,\rho),
    $$
    where $A\in C^\infty(\partial X)$ and $B\in C_{n+1}^\infty(E_0)$.
\end{proposition}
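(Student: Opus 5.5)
The plan is to work entirely in the compactified picture near $\partial X$ and treat the equation as a degenerate (edge/0-) elliptic equation, in the spirit of \cite{ACF92}. Near $\partial X$ we have $R_g=f$, so writing $u=1+w$ the equation \eqref{eq:Yamabe} becomes
\begin{equation*}
-c_n\Delta_g w-(\alpha-1)R_g\, w = R_g\big((1+w)^\alpha-1-\alpha w\big) =: Q(w),
\end{equation*}
where $Q(w)=O(w^2)$. Since $R_g=-n(n-1)+O(\rho^{n})$ in Wang's asymptotics, the linear operator is $P:=-c_n(\Delta_g-n)$ up to an error with coefficients in $C^\infty_n$. The indicial roots of $\Delta_g-n$ with respect to $\rho$ are $-1$ and $n$, as the identity $\Delta_g\rho^\eta=\eta(\eta+1-n)\rho^\eta+O(\rho^{\eta+2})$ already used above shows. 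From Proposition \ref{prop:YamabeAH} (resp.\ \ref{prop:YamabeArbitrary}) we know a priori that $|w|\le A\rho^n$, so $w\in C^0_n$ and $Q(w)\in C^0_{2n}$.

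The first step is to upgrade the weight to derivatives. I would use the scaling (Schauder) estimates on hyperbolic balls of fixed size, i.e.\ the weighted Hölder theory of \cite{ACF92}. Since $P$ is uniformly elliptic with bounded geometry in the $g$-metric, interior estimates on unit $g$-balls yield $w\in C^{k,\beta}_n$ for every $k$, where derivatives are measured by the vector fields $\rho\partial_\rho,\rho\partial_{x^i}$. This gives $0$-smoothness, but not yet tangential smoothness in the $\bar g$ sense.

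The second step is tangential regularity. Tangential derivatives $\partial_{x^i}$ (with respect to coordinates on $\mathbb S^{n-1}$) commute with $P$ up to terms whose coefficients lie in $C^\infty_n$ or carry extra factors of $\rho$. Differentiating the equation, $\partial_x^\beta w$ satisfies $P(\partial_x^\beta w)=F_\beta$, where $F_\beta$ involves lower-order tangential derivatives times coefficients decaying like $\rho^{n}$. Induction on $|\beta|$ combined with the $C^0$ barrier argument (comparison with $\pm(A\rho^n-B\rho^{n+1})$ as in Lemma \ref{lem:YamabeAH}, now applied to the linear equation with right side in $C_{n+1}$) shows $\partial_x^\beta w=O(\rho^n)$. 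Here I expect a technical issue: $\rho\partial_x$ estimates from step one only give $\partial_x^\beta w=O(\rho^{n-|\beta|})$, so the barrier argument must be run genuinely for the differentiated equation. One way to do this is to use difference quotients in $x$ on a fixed collar, together with boundary data on $\{\rho=\rho_0\}$ that are smooth because $u$ is smooth in the interior.

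The third step, which I expect to be the main obstacle, is extracting the leading coefficient. Write $Pw=F$ with $F\in C^\infty_{n+1}$, which is tangentially smooth by step two. Set $T:=\rho\partial_\rho-n$. Then $(\rho\partial_\rho+1)T$ is the indicial part of $\rho^{-2}$ times the normal part of $\Delta_g-n$, so $T w$ solves an ODE in $\rho$ with a tangential remainder of order $\rho^{n+2}\partial_x^2 w$. Integrating this ODE shows $Tw\in C_{n+1}$. Consequently $\rho^{-n}w$ has $\rho\partial_\rho(\rho^{-n}w)=O(\rho)$, so it converges as $\rho\to0$ to a limit $A(x)$. The limit $A$ is smooth by step two, and $B:=w-A\rho^n$ then lies in $C^\infty_{n+1}$, which is the claimed expansion. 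The delicate point is that no $\rho^n\log\rho$ term appears. This holds because $n$ is a simple indicial root and the forcing term is $O(\rho^{n+1})$ rather than $O(\rho^n)$; this is exactly where the $C^2_{n+1}$ remainder in \eqref{eq:AH expansion} and the choice $R_g=f$ near $\partial X$ are used.
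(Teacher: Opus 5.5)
Your proposal is correct and is essentially the paper's argument. The paper bootstraps weighted Schauder estimates from $|u-1|\le A\rho^n$ to get $u-1\in C^\infty_n(E_0)$, and then simply invokes \cite[\S4]{ACF92} for polyhomogeneity. That citation is exactly the tangential-regularity-plus-indicial-ODE analysis (roots $-1$ and $n$, forcing $O(\rho^{n+1})$, hence no $\rho^n\log\rho$ term) that you spell out in your second and third steps.

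One small correction in your second step concerns coordinate fields on $\mathbb S^{n-1}$. The commutator $[\partial_{x^i},\sinh^2\rho\,\Delta_{\gamma_\rho}]$ has a second-order tangential part with only $O(\rho^2)$ coefficients, not $O(\rho^n)$. So the first-pass forcing is $O(\rho^n)$ rather than in $C_{n+1}$, and the barrier yields only $\partial_x^\beta w=O(\rho^n|\log\rho|^{|\beta|})$. This is harmless: it already suffices for your third step. Alternatively, differentiate along rotation Killing fields of $\gamma_{\text{std}}$, for which the commutator coefficients really are $O(\rho^n)$.
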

Here, we use the notation $C_{\delta}^\infty(E_0)$ to denote the space of smooth functions $u$ on $E_0$ such that $\rho^{-\delta}|\nabla^ku|_g$ is bounded as $\rho\to 0$, for all $k\in\mathbb{Z}_{\ge 0}$.

To prove this, we start with an observation. Let $v=u-1$. Then $v$ satisfies the equation
\begin{equation}\label{eq:Yamabe v}
    -c_n(\Delta_gv-nv)=-R_g(v+1)+R_g(v+1)^\alpha+\frac{4n(n-1)}{n-2}v.
\end{equation}
In this subsection, we will denote 
$$
    A_n=\Delta_g-n, \quad F(p,v)=c_n^{-1}R_g(p)(v+1)-c_n^{-1}R_g(p)(v+1)^\alpha-nv.
$$
Then $(A_nv)(p)=F(p,v)$, or simply $A_nv=F(v)$ if the point $p$ is understood.

\begin{lemma}
    If $v\in C^k_\delta(E_0)$ for some $\delta>0$ and an integer $k\ge 0$, then $F(p,v(p))\in C^k_\delta(E_0)$.
\end{lemma}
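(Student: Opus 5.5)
The plan is to expand $F$ in $v$ and exploit the fact that its constant and linear parts cancel up to terms already decaying like $\rho^n$, or better. Throughout I assume $|v|\le 1/2$ on the region considered, which holds near $\partial X$ since $v\to 0$ as $\rho\to 0$. Write $\Phi(v)=(v+1)^\alpha-1-\alpha v$, a smooth function of $v$ on $[-1/2,1/2]$ with $\Phi(0)=\Phi'(0)=0$. Then
\[
F(p,v)=c_n^{-1}R_g(p)\bigl(v-(1+\alpha v)-\Phi(v)+1\bigr)-nv
=-c_n^{-1}(\alpha-1)R_g\,v-c_n^{-1}R_g\Phi(v)-nv .
\]
Since $c_n^{-1}(\alpha-1)=\frac{n-2}{4(n-1)}\cdot\frac{4}{n-2}=\frac{1}{n-1}$, this becomes
\[
F(p,v)=-\frac{R_g+n(n-1)}{n-1}\,v-c_n^{-1}R_g\,\Phi(v).
\]

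The first term is handled by Wang's expansion \eqref{eq:AH expansion}. It gives $R_g+n(n-1)\in C^k_n(E_0)$, in fact with every derivative bounded, because $g$ is exactly hyperbolic up to $O(\rho^n)$ corrections measured in $g$-norms. Consequently $R_g$ itself lies in $C^k_0$. The product rule in weighted spaces gives $C^k_{a}\cdot C^k_b\subset C^k_{a+b}$, since $g$-covariant derivatives of the weight $\rho$ are bounded multiples of $\rho$ (for instance $|d\rho|_g=O(\rho)$). It follows that $(R_g+n(n-1))v\in C^k_{n+\delta}\subset C^k_\delta$.

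The second term needs a composition estimate. Because $\Phi(0)=\Phi'(0)=0$, Taylor's formula gives $\Phi(v)=v^2\Psi(v)$ with $\Psi$ smooth. Then $v^2\in C^k_{2\delta}$. For $\Psi(v)$, a Faà di Bruno expansion of $\nabla^j\Psi(v)$ is a sum of products of $\Psi^{(m)}(v)$, which are bounded, with $\nabla^{i_1}v\otimes\cdots\otimes\nabla^{i_m}v$, each factor being bounded. Hence $\Psi(v)\in C^k_0$, and so $\Phi(v)\in C^k_{2\delta}\subset C^k_\delta$. Multiplying by the bounded-in-$C^k_0$ factor $c_n^{-1}R_g$ keeps it in $C^k_\delta$.

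The main point to check is that the cancellation of the linear term really occurs. Without it, $-nv$ would only be offset by $-c_n^{-1}(\alpha-1)R_gv$ exactly when $R_g=-n(n-1)$, and this is what makes the lemma useful with a gain of decay. For the stated lemma itself, however, membership in $C^k_\delta$ is immediate even termwise. If the weighted Hölder version with the $C^{k,\beta}$ seminorm is intended, the same argument goes through using the standard algebra property of $C^{k,\beta}_\delta$ on hyperbolic balls of unit size, following \cite{ACF92}.
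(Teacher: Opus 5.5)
Your proof is correct and is essentially the paper's argument: the paper uses the same rewriting $F=-\frac{\rho^nR}{n-1}\,v-c_n^{-1}R_g\,\varphi(v)\,v^2$ (with $R_g+n(n-1)=\rho^nR$), after which the Leibniz rule is applied term by term. Your version is somewhat more careful than the paper's. You control all $g$-covariant derivatives rather than only $\partial_\rho$, you treat the composition $\Psi(v)$ explicitly via Fa\`a di Bruno, and you restrict to $|v|\le 1/2$ so that $(1+v)^\alpha$ is smooth.
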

\begin{proof}
    We can write $R_g=-n(n-1)+\rho^nR$ for some $R$ smooth up to $\partial X$, and 
    $(v+1)^\alpha-(\alpha v+1)=\varphi v^2$ for some $\varphi$ smooth on $[0,\infty)$.
    Then 
    $$
        F(p,v)=-\frac{\rho^nRv}{n-1}-c_n^{-1}R_g(p)\varphi v^2.
    $$
    If $v\in C^k_\delta(E_0)$, then
    $$
        \partial_\rho^i F(p,v(p))=\sum_{r=0}^i\binom{i}{r}\partial_\rho^r\left(-\frac{\rho^nR}{n-1}-c_n^{-1}R_g(p)\varphi v\right)\partial_\rho^{i-r}v=\sum_{r=0}^i O(\rho^{\delta-r})=O(\rho^{\delta-i}).
    $$
    Hence, $F(p,v(p))\in C^k_\delta(E_0)$.
\end{proof}

\begin{proof}[Proof of Proposition \ref{prop:tangential smoothness}]
    By Schauder estimates, we conclude that $v\in C^{k+2}_\delta(E_0)$, and thus $v\in C^\infty_\delta(E_0)$. The argument in \cite[\S4]{ACF92} then shows that $v$ is polyhomogeneous. In particular, we have the desired expansion of $u$ near $\partial X$.
\end{proof}

\begin{lemma}\label{lem:AH coordinates}
    If a smooth metric $g_1$ on $E_0$ satisfies 
    $$
        g_1=\sinh^{-2}\rho(d\rho^2+\gamma_\rho), \quad \gamma_\rho=\gamma_{\text{std}}+\frac{\rho^n}{n}h_1+A_1\rho^n d\rho^2+O(\rho^{n+1}),
    $$
    where $h_1$ is a symmetric $2$-tensor on $\partial X$, and $A_1$ is a smooth function on $\partial X$, then there exists a defining function $\tilde\rho$ such that 
    $$
        g_1=\sinh^{-2}\tilde\rho(d\tilde\rho^2+\tilde\gamma_{\tilde\rho}), \quad \tilde\gamma_{\tilde\rho}=\gamma_{\text{std}}+\frac{\tilde\rho^n}{n}\tilde h_1+O(\tilde\rho^{n+1}),
    $$
    where $\tilde{h}_1=h_1+A_1\gamma_{\text{std}}$. In particular, $g_1$ is AH as in Definition \ref{def:AH mfd}. 
\end{lemma}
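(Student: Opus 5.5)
The plan is to change the defining function so that the stray $d\rho^2$ term is absorbed, and to show this only shifts the mass aspect function by a multiple of $\gamma_{\text{std}}$. I would look for $\tilde\rho=\rho\,(1+\psi(x)\rho^n+O(\rho^{n+1}))$ together with a correction of the tangential coordinates $x\mapsto \tilde x = x+O(\rho^{n+2})$. These are fixed by requiring that $g_1$ again take the form $\sinh^{-2}\tilde\rho\,(d\tilde\rho^2+\tilde\gamma_{\tilde\rho})$ with no cross terms and no perturbation of the $d\tilde\rho^2$ coefficient at order $\tilde\rho^n$. Equivalently, in the spirit of Lemma 2.1 of \cite{ACF92} and Graham--Lee, I would solve the eikonal-type equation $|d\log\text{-}\tilde\rho|$-normalization, i.e. $|d\tilde\rho|^2_{\tilde\rho^2 g_1}\cdot(\text{factor from }\sinh)=1$, perturbatively to order $\rho^{n+1}$. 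I would then use the flow of $\nabla\tilde\rho$ to define the new product coordinates.

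The first step is to expand $\sinh^{-2}\rho$ in terms of $\tilde\rho$. Writing $\rho=\tilde\rho(1-\psi\tilde\rho^n+O(\tilde\rho^{n+1}))$ gives $\sinh^{-2}\rho=\sinh^{-2}\tilde\rho\,(1+2\psi\tilde\rho^n+O(\tilde\rho^{n+1}))$. We also have $d\rho=(1-(n+1)\psi\tilde\rho^n)d\tilde\rho-\tilde\rho^{n+1}d\psi+\dots$.

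The second step is to substitute into $d\rho^2+\gamma_\rho$, where the $d\rho^2$ coefficient is $1+A_1\rho^n$. The $d\tilde\rho^2$ coefficient becomes
$$\sinh^{-2}\tilde\rho\,\bigl(1+(2\psi-2(n+1)\psi+A_1)\tilde\rho^n+O(\tilde\rho^{n+1})\bigr).$$
Setting it to $\sinh^{-2}\tilde\rho$ forces $\psi=A_1/(2n)$. The cross terms $\tilde\rho^{n+1}d\psi\,d\tilde\rho$ carry an extra power of $\tilde\rho$. They are removed by the $O(\tilde\rho^{n+2})$ tangential coordinate shift, and they do not affect the $\tilde\rho^n$ tangential term.

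The third step is to read off the tangential part: $\sinh^{-2}\tilde\rho\,(1+2\psi\tilde\rho^n)(\gamma_{\text{std}}+\tfrac{\tilde\rho^n}{n}h_1)$. Its $\tilde\rho^n$ coefficient is $\tfrac1n(h_1+2n\psi\gamma_{\text{std}})$.

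Here a sign or normalization discrepancy appears. With $\psi=A_1/(2n)$ one gets $h_1+A_1\gamma_{\text{std}}$, which matches the claim. I would double-check this bookkeeping carefully, since the two contributions $2\psi$ and $-2(n+1)\psi$ must combine correctly with the normalization of $A_1$ as written.

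The main obstacle is justifying that the remainders stay in $C^2_{n+1}$, as Definition \ref{def:AH mfd} requires. This amounts to verifying that the coordinate changes are smooth up to $\partial X$ with the stated decay, including derivatives. I would handle it by noting that $\tilde\rho/\rho$ and the tangential diffeomorphism are smooth functions of $(x,\rho)$ agreeing with the identity to order $n$. The weighted Hölder norms are therefore preserved under pullback. Finally, condition (iv) and the sphere condition (v) are unchanged at leading order, so $g_1$ is AH.
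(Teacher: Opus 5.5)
Your proposal is correct and is essentially the paper's argument. The paper also absorbs the $A_1\rho^n\,d\rho^2$ term by perturbing the defining function at order $n+1$, namely $\tilde r=r+\frac{A_1}{2n}r^{n+1}+O(r^{n+2})$; it obtains this from an ODE after first passing to the geodesic defining function $r=\frac{\cosh\rho-1}{\sinh\rho}$, and reads off $\tilde h_1=h_1+A_1\gamma_{\text{std}}$ from the change of conformal factor, which is what your choice $\psi=A_1/(2n)$ does directly in $\rho$. Your bookkeeping $2\psi-2(n+1)\psi+A_1=0$ is right, so there is no discrepancy, and your $O(\tilde\rho^{n+2})$ tangential shift removing the $\tilde\rho^{n+1}d\psi\,d\tilde\rho$ cross terms is a small refinement that the paper leaves implicit in its remainder.
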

\begin{proof}
    Let $r=\frac{\cosh\rho-1}{\sinh\rho}$. Then $r$ is a geodesic defining function for the AH end, i.e.,
    $$
        g=r^{-2}(dr^2+\gamma'_r)
    $$
    where $\{\gamma'_r\}$ is a family of metrics on $\partial X$:
    $$
        \gamma'_r=\left(1-\frac{r^2}{4}\right)^2\gamma_{\text{std}}+\frac{r^n}{n}h_1+A_1r^{n}dr^2+O(r^{n+1}),
    $$
    where $O(r^{n+1})\in C^\infty_{n+1}(E_0)$. Let $\tilde{r}=\tilde{r}(x,r)$ be the solution to the equation
    $$
        \begin{cases}
            \tilde{r}^{-1}\partial_r\tilde{r}=r^{-1}\sqrt{1+A_1r^{n}}, \\
            \tilde{r}(x,0)=0. 
        \end{cases}
    $$
    Then we have 
    $$
        \tilde{r}(x,r)=r+\frac{A_1r^{n+1}}{2n}+O(r^{n+2}),
    $$
    for some $O(r^{n+1})\in C^\infty_{n+2}(E_0)$. A direct calculation shows that 
    $$
        g_1=\tilde r^{-2}(d\tilde{r}^2+\tilde\gamma'_{\tilde{r}}), \quad \tilde{\gamma}'_{\tilde{r}}=\left(1-\frac{\tilde{r}^2}{4}\right)^2\gamma_{\text{std}}+\frac{\tilde{r}^n}{n}(h_1+A_1\gamma_{\text{std}})+O(\tilde{r}^{n+1}).
    $$
    Hence, $\tilde{h}_1=h_1+A_1\gamma_{\text{std}}$. We can then use $\tilde{r}=\frac{\cosh\tilde{\rho}-1}{\sinh\tilde{\rho}}$ to get the desired formula. 
\end{proof}

As a consequence, we have 

\begin{proposition}[{cf.\ \cite[Lemma 6.5]{BQ08}}]\label{prop:mass aspect change}
    Let $u$ be a smooth positive function on $E_0$ such that in a collar neighborhood of $\partial X$, we can write
    $$
        u(x,\rho)=1+A(x)\rho^n+B(x,\rho),
    $$
    where $A\in C^\infty(\partial X)$ and $B\in C_{n+1}^\infty(E_0)$. Then the metric $\tilde{g}=u^{\frac{4}{n-2}}g$ is asymptotically hyperbolic and has mass aspect
    $$
        \tilde{h}=h+\frac{4(n+1)}{n-2}A\gamma_{\text{std}},
    $$
    where $h$ is the mass aspect of $g$. 
\end{proposition}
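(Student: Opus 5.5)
The plan is to find an explicit new defining function $\tilde\rho$ that absorbs the conformal factor into the $\sinh^{-2}$ prefactor. This should put $\tilde g$ in exactly the form treated by Lemma~\ref{lem:AH coordinates}, which then yields the mass aspect.

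\emph{Step 1: expand the conformal factor.} Set $\varphi:=u^{\frac{4}{n-2}}$. Since $u=1+A\rho^n+B$ with $B\in C^\infty_{n+1}(E_0)$, Taylor expansion gives
$$\varphi=1+\tfrac{4}{n-2}A\rho^n+O(\rho^{n+1}),$$
with the error in $C^\infty_{n+1}(E_0)$.

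\emph{Step 2: change the defining function.} Define $\tilde\rho$ by $\sinh\tilde\rho=\varphi^{-1/2}\sinh\rho$. Then $\tilde\rho$ is a smooth defining function with
$$\tilde\rho=\rho-\tfrac{2}{n-2}A\rho^{n+1}+O(\rho^{n+2}),$$
so that, exactly,
$$\tilde g=\varphi\sinh^{-2}\rho\,(d\rho^2+\gamma_\rho)=\sinh^{-2}\tilde\rho\,(d\rho^2+\gamma_\rho).$$

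\emph{Step 3: rewrite $d\rho^2+\gamma_\rho$ in terms of $\tilde\rho$.}
\begin{itemize}
\item Inverting the expansion gives $\rho=\tilde\rho+\tfrac{2}{n-2}A\tilde\rho^{n+1}+O(\tilde\rho^{n+2})$. Differentiating,
$$d\rho=\bigl(1+\tfrac{2(n+1)}{n-2}A\tilde\rho^n+O(\tilde\rho^{n+1})\bigr)d\tilde\rho+O(\tilde\rho^{n+1})\,dx,$$
where the $dx$ terms come from tangential derivatives of $A$ and of the error.
\item Squaring,
$$d\rho^2=\bigl(1+\tfrac{4(n+1)}{n-2}A\tilde\rho^n\bigr)d\tilde\rho^2+O(\tilde\rho^{n+1}),$$
where the remainder includes the cross terms $d\tilde\rho\,dx$.
\item Since $\rho^n=\tilde\rho^n+O(\tilde\rho^{2n})$, we get $\gamma_\rho=\gamma_{\text{std}}+\frac{\tilde\rho^n}{n}h+O(\tilde\rho^{n+1})$.
\end{itemize}
Together this gives
$$\tilde g=\sinh^{-2}\tilde\rho\,(d\tilde\rho^2+\tilde\gamma_{\tilde\rho}),\qquad \tilde\gamma_{\tilde\rho}=\gamma_{\text{std}}+\tfrac{\tilde\rho^n}{n}h+A_1\tilde\rho^n d\tilde\rho^2+O(\tilde\rho^{n+1}),$$
with $A_1=\tfrac{4(n+1)}{n-2}A$. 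Lemma~\ref{lem:AH coordinates} then gives that $\tilde g$ is AH with mass aspect $h+A_1\gamma_{\text{std}}$, which is the claimed formula.

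\emph{Main obstacle.} The delicate point is bookkeeping for the error terms rather than the leading-order algebra. One must check two things:
\begin{itemize}
\item The remainders, notably the mixed terms $O(\tilde\rho^{n+1})\,d\tilde\rho\,dx$ coming from $\partial_x A$ and $\partial_x B$, lie in the weighted class $C^2_{n+1}$ measured with respect to $g$. This uses $B\in C^\infty_{n+1}$, the smoothness of $A$ on $\partial X$, and the fact that $\tilde\rho/\rho$ is smooth up to the boundary.
\item These mixed terms are harmless in the proof of Lemma~\ref{lem:AH coordinates}. They are of order $n+1$ and so do not affect the $\tilde\rho^n$ coefficient. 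If needed, they can be removed by a further tangential adjustment of the coordinates that is $O(\tilde\rho^{n+2})$, which leaves the order-$n$ term untouched.
\end{itemize}
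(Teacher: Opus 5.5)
Your argument is correct and gives the same formula, but it organizes the computation differently from the paper.

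\textbf{The paper's route.} The paper makes no preliminary change of defining function. It multiplies $u^{4/(n-2)}=1+\frac{4}{n-2}A\rho^n+O(\rho^{n+1})$ into both $d\rho^2$ and $\gamma_\rho$. It then applies Lemma~\ref{lem:AH coordinates} once, with $h_1=h+\frac{4n}{n-2}A\gamma_{\text{std}}$ and $A_1=\frac{4}{n-2}A$.

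\textbf{Your route.} You first absorb the conformal factor into the prefactor via $\sinh\tilde\rho=\varphi^{-1/2}\sinh\rho$. The tangential part then keeps $h$, and the entire correction $\frac{4(n+1)}{n-2}A$ appears in the $d\tilde\rho^{2}$ coefficient. This makes it transparent that the factor $n+1$ comes from differentiating $\tilde\rho^{n+1}$. The two splits, $\frac{4n}{n-2}+\frac{4}{n-2}$ and $0+\frac{4(n+1)}{n-2}$, agree precisely because the lemma turns $A_1\rho^n\,d\rho^2$ into $A_1\gamma_{\text{std}}$ with coefficient one. So your computation also serves as a consistency check on that lemma.

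\textbf{What the paper's route buys.} It is shorter. The lemma already absorbs a $\rho^n\,d\rho^2$ term, so your Step~2 is an extra change of variables. The mixed $d\tilde\rho\,dx$ terms you identify as the main obstacle are produced by that step. Analogous order-$(n+1)$ terms do arise inside the lemma's own change of defining function, but there they are simply put into the remainder.

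\textbf{One claim to weaken.} Since your $\tilde\rho$ is built from the full $\varphi$, it has only the weighted regularity of $B$. The class $B\in C^\infty_{n+1}(E_0)$ permits, e.g., $B=\rho^{n+3/2}$, so $\tilde\rho/\rho$ need not be smooth up to $\partial X$ as you assert. This is harmless, because only weighted bounds on the remainders are used. It also disappears if you define $\tilde\rho$ from $1+\frac{4}{n-2}A\rho^n$ alone and leave $\varphi/(1+\frac{4}{n-2}A\rho^n)=1+O(\rho^{n+1})$ in the remainder. In the paper's route this issue never arises, because the new defining function depends only on the smooth coefficient $A_1$.
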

\begin{proof}
    We have 
    $$
        \tilde{g}=u^{\frac{4}{n-2}}g=\sinh^{-2}\rho\left(d\rho^2+\gamma_{\text{std}}+\frac{\rho^n}{n}\left(h+\frac{4nA}{n-2}\gamma_{\text{std}}\right)+\frac{4A}{n-2}\rho^nd\rho^2+O(\rho^{n+1})\right).
    $$
    Hence, applying Lemma \ref{lem:AH coordinates} with 
    $$
        h_1=h+\frac{4nA}{n-2}\gamma_{\text{std}}, \quad A_1=\frac{4A}{n-2},
    $$
    we see that $\tilde{g}$ is AH with mass aspect 
    $$
        \tilde{h}=h+\frac{4(n+1)}{n-2}A\gamma_{\text{std}},
    $$
    as needed. 
\end{proof}

\begin{corollary}\label{cor:mass change}
    If $A(x)\le 0$ for all $x\in\partial X$, then 
    $$
        \EADM(\tilde{g})-|\PADM(\tilde{g})|\le \EADM(g)-|\PADM(g)|.
    $$
    If, in addition, $A(x_0)<0$ at some $x_0\in\partial X$, then the inequality is strict. 
\end{corollary}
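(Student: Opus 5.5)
By Proposition \ref{prop:mass aspect change}, the mass aspect of $\tilde g$ is $\tilde h=h+c A\gamma_{\text{std}}$ with $c=\frac{4(n+1)}{n-2}>0$. Taking traces with respect to $\gamma_{\text{std}}$ on $S^{n-1}$ gives
$$
\operatorname{tr}_{\gamma_{\text{std}}}\tilde h=\operatorname{tr}_{\gamma_{\text{std}}}h+(n-1)cA .
$$
Set $a:=(n-1)cA\le 0$. The plan is to compute how $\EADM$ and $\PADM$ change and then compare.

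By linearity of the integrals,
$$
\EADM(\tilde g)=\EADM(g)+\int_{S^{n-1}}a\,d\mu_{\gamma_{\text{std}}},\qquad \PADM(\tilde g)=\PADM(g)+\int_{S^{n-1}}a\,x\,d\mu_{\gamma_{\text{std}}}.
$$
The key pointwise fact is that $|x|=1$ on $S^{n-1}$ and $a\le 0$. Hence
$$
\Bigl|\int_{S^{n-1}} a x\,d\mu_{\gamma_{\text{std}}}\Bigr|\le \int_{S^{n-1}}|a|\,d\mu_{\gamma_{\text{std}}}=-\int_{S^{n-1}}a\,d\mu_{\gamma_{\text{std}}}.
$$

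The triangle inequality then gives
$$
|\PADM(g)|\le |\PADM(\tilde g)|+\Bigl|\int_{S^{n-1}} a x\,d\mu_{\gamma_{\text{std}}}\Bigr|\le |\PADM(\tilde g)|-\int_{S^{n-1}} a\,d\mu_{\gamma_{\text{std}}}.
$$
Subtracting this from the formula for $\EADM(\tilde g)$ yields
$$
\EADM(\tilde g)-|\PADM(\tilde g)|\le \EADM(g)+\int_{S^{n-1}} a\,d\mu_{\gamma_{\text{std}}}+\int_{S^{n-1}}(-a)\,d\mu_{\gamma_{\text{std}}}-|\PADM(g)|,
$$
and the two integrals cancel, which is the claimed inequality.

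For strictness, the expected obstacle is that the vector inequality $|\int a x|\le\int|a|$ can be an equality even when $a\not\equiv0$. So I would not try to show it is strict. Instead I will keep the nonstrict triangle inequality and get strictness by improving the bound on the $x$-integral. Since $A$ is smooth and $A(x_0)<0$, $a<0$ on an open set of positive measure. There the vectors $x$ are not all parallel, so the inequality $|\int a x|<\int|a|$ is strict, because equality in $|\int F|\le\int|F|$ forces $F=ax$ to have a constant direction a.e. on $\{a\neq 0\}$. Combining this strict bound with the triangle inequality gives $\EADM(\tilde g)-|\PADM(\tilde g)|<\EADM(g)-|\PADM(g)|$.
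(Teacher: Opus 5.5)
Your proof is correct and matches the paper's argument: trace the mass-aspect formula to get $\operatorname{tr}_{\gamma_{\text{std}}}\tilde h=\operatorname{tr}_{\gamma_{\text{std}}}h+\frac{4(n^2-1)}{n-2}A$, use linearity for $\EADM$, and use the triangle inequality with $|x|=1$ and $A\le 0$ for $\PADM$. Strictness comes from $\bigl|\int_{S^{n-1}}Ax\,d\mu_{\gamma_{\text{std}}}\bigr|<\int_{S^{n-1}}|A|\,d\mu_{\gamma_{\text{std}}}$ when $A\not\equiv 0$, which is what the paper's remark ``the triangle inequality is strict unless $A\equiv 0$'' amounts to. One wording slip: your sentence claiming $|\int a x|\le\int|a|$ can be an equality for $a\not\equiv 0$ is false; it is the step $|\PADM(g)|\le|\PADM(\tilde g)|+|\int ax|$ that may be an equality, and the argument you actually give proves the strict bound correctly.
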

\begin{proof}
    Note that $\operatorname{tr}_{\gamma_{\text{std}}}\tilde{h}=\operatorname{tr}_{\gamma_{\text{std}}}h+\frac{4(n^2-1)}{n-2}A$. Hence, we have
    \begin{align*}
        \EADM(\tilde{g})&=\int_{S^{n-1}}\operatorname{tr}_{\gamma_{\text{std}}}\tilde{h}\,d\mu_{\gamma_{\text{std}}}=\int_{S^{n-1}}\operatorname{tr}_{\gamma_{\text{std}}}h\,d\mu_{\gamma_{\text{std}}}+\frac{4(n^2-1)}{n-2}\int_{S^{n-1}}A\,d\mu_{\gamma_{\text{std}}}\\
        &=\EADM(g)+\frac{4(n^2-1)}{n-2}\int_{S^{n-1}}A\,d\mu_{\gamma_{\text{std}}},\\
        |\PADM(\tilde{g})|&=\left|\int_{S^{n-1}}x\operatorname{tr}_{\gamma_{\text{std}}}\tilde{h}\,d\mu_{\gamma_{\text{std}}}\right|=\left|\int_{S^{n-1}}x(\operatorname{tr}_{\gamma_{\text{std}}}h+\frac{4(n^2-1)}{n-2}A)d\mu_{\gamma_{\text{std}}}\right|\\
        &\ge \left|\int_{S^{n-1}}x\operatorname{tr}_{\gamma_{\text{std}}}h\,d\mu_{\gamma_{\text{std}}}\right|-\frac{4(n^2-1)}{n-2}\int_{S^{n-1}}|A|\,d\mu_{\gamma_{\text{std}}}\\
        &=|\PADM(g)|+\frac{4(n^2-1)}{n-2}\int_{S^{n-1}}A\,d\mu_{\gamma_{\text{std}}}.
    \end{align*}
    Note that the triangle inequality is strict unless $A\equiv 0$. This completes the proof. 
\end{proof}

\section{Manifolds with an AH end and singularities}\label{sec:AH mfd}

In this section, $(M,d,\mu)$ is an almost manifold with singularity $\mathcal{S}$ as in the introduction. Take a compact set $K\supset\mathcal{S}$ such that 
$$
    M\setminus K=E_0\cup\bigcup_{i\in I}E_i,
$$
where $E_0$ is an AH end with conformal infinity $\partial X$. The other ends may or may not be AH, and we allow that $I=\varnothing$. (See Figure \ref{fig:ends}.) Fix any compact set $K'\supset K$ and let $\Sigma_i=K'\cap\partial E_i$ and $M'=K'\cup E_0$. (See Figure \ref{fig:cutoff}.) By Lemma 2.17 in \cite{BHHSZ2026}, we can always assume that $K'\setminus\mathcal{S}$ is connected. 

\subsection{Solving the Yamabe equation on the regular part}\label{sec:Yamabe eqn reg part}

Let $k=\dim_H(\mathcal{S})<n-2$. Recall that from \cite{HSY25, BHHSZ2026}, since $M$ is asymptotically hyperbolic, it is non-parabolic, and thus there exists a harmonic function 
$$
    G:M\setminus\mathcal{S}\to (0,+\infty)
$$
such that $G(x)\to +\infty$ as $x\to \mathcal{S}$. If we set 
$$
    r(x)=d(x,\mathcal{S}),
$$
then there exists $C_1\in(0,\infty)$ such that 
\begin{equation}\label{eq:G lower bound}
    G(x)\ge C_1r^{2+k-n},
\end{equation}
in a neighborhood of $\mathcal{S}$. We take a sequence of regular values $\{a_j:j\in \mathbb{Z}_{>0}\}$ of $G^{\frac{1}{2+k-n}}$ and define 
\begin{equation}\label{eq:neighborhood}
    U_j=\{x\in M: G^{\frac{1}{2+k-n}}(x)<a_j\}.
\end{equation}
Then $\{U_j:j\in \mathbb{Z}_{>0}\}$ is a sequence of open neighborhoods of $\mathcal{S}$, and $\partial U_j$ is smooth for each $j$. Moreover, we assume $a_j\to 0$ and thus $\mathcal{S}=\bigcap_{j=1}^\infty U_j$. 

We may assume that the defining function $\rho$ is globally defined on $M$ and continuous. We choose a small $\rho_0>0$ such that $\{\rho\le \rho_0\}\cap K=\varnothing$, and choose a function $f\in C^\infty(M)$ such that 
\begin{enumerate}
    \item $-n(n-1)\le f\le \min\{R_g,-1\}$,
    \item $f=-n(n-1)$ on $K$, and
    \item $f=R_g$ on $E_0\cap\{\rho<\rho_0\}$.
\end{enumerate}

Consider the following Yamabe equation on $M'\setminus \bar U_j$:
$$
    L_gu=-c_n\Delta_gu+R_gu-fu^{\alpha}=0 \text{ on }M'\setminus \bar U_j
$$
with the boundary conditions
\begin{itemize}
    \item $u(x)\to 1$ as $x\to\partial X$,
    \item $u(x)=1$ on $\Sigma_i$ for each $i\in I$, 
    \item $u(x)\dist_g(x,\partial U_j)^{\frac{n-2}{2}}\to 1$ as $x\to\partial U_j$.
\end{itemize}
We can assume that $U_j\subset K$ is true for any $j$. If $u$ is a positive smooth solution to \eqref{eq:Yamabe}, then the scalar curvature of $\tilde{g}=u^{\frac{4}{n-2}}g$ is $f\ge -n(n-1)$:
\begin{equation}\label{eq:comformal sc}
        R_{\tilde{g}}= u^{-\alpha}(R_{g}u-c_n \Delta_g u)=f.
\end{equation}
By Lemma \ref{prop:YamabeAH}, we have

\begin{proposition}\label{prop:uj}
    Let $r_j(x)=\dist_g(x,\partial U_j)$. There exists a unique positive function $u_j\in C^\infty(M'\setminus\bar U_j)$ such that 
    \begin{itemize}
        \item $L_gu_j=0$ on $M'\setminus\bar U_j$, 
        \item $u_j=r_j^{\frac{2-n}{2}}(1+a_jr_j)$ for some bounded $a_j\in C^\infty(M'\setminus U_j)$, 
        \item $u_j=1$ on $\Sigma_i$ for each $i\in I$, and
        \item $u_j=1+A_j\rho^n+O(\rho^{n+1})$ as $\rho\to 0$, where $A_j\in C^\infty(\partial X)$.
    \end{itemize}
\end{proposition}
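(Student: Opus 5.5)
The plan is to apply Proposition \ref{prop:YamabeArbitrary} to the smooth manifold with boundary $N_j:=M'\setminus U_j$, whose interior boundary is $\partial N_j=\partial U_j$. First we check that $N_j$ satisfies the standing assumptions of Section \ref{sec:Yamabe eqn bdry}. Since $a_j$ is a regular value of $G^{\frac{1}{2+k-n}}$, the set $\partial U_j$ is a smooth compact hypersurface. Since $\bar U_j\subset K$ and $\mathcal S\subset U_j$, the metric $g$ is smooth on $N_j$, and $(N_j,g)$ is complete, because its only noncompact directions are the AH end $E_0$ and the cut surfaces $\Sigma_i$. The compact set $K\setminus U_j$ contains $\partial N_j$; we use it as the core. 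Connectedness of the core follows from connectedness of $K'\setminus\mathcal S$ (Lemma 2.17 in \cite{BHHSZ2026}), after discarding any components of $M'\setminus\bar U_j$ that do not meet $E_0$. The function $f$ chosen above satisfies $-n(n-1)\le f\le -1$, $f=-n(n-1)$ on $K$, and $f=R_g$ on $E_0\cap\{\rho<\rho_0\}$, which are exactly the hypotheses there.

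Proposition \ref{prop:YamabeArbitrary} then gives a unique positive $u_j\in C^\infty(M'\setminus\bar U_j)$ with $L_gu_j=0$, $u_j=1$ on each $\Sigma_i$, $u_j\to1$ at $\partial X$, and $u_j r_j^{\frac{n-2}{2}}\to 1$ at $\partial U_j$.

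The refined boundary behaviour comes from its proof. There $u_j=r^{-\frac{n-2}{2}}v$, where $r$ is the defining function of Lemma \ref{lem:def fn} and $v$ is the function of Lemma \ref{lem:YamabeArbitrary}. That lemma shows $r^{-\eta}(v-1)$ is bounded for any $\eta\in(n-1,n)$, so in particular $v=1+O(r)$. Near $\partial U_j$ we have $|dr|_g=1$, so $r=r_j$ there. Hence
$$u_j=r_j^{\frac{2-n}{2}}(1+a_jr_j),\qquad a_j:=r_j^{-1}(v-1).$$
The function $a_j$ is bounded near $\partial U_j$. Away from $\partial U_j$ it is smooth, and it is bounded because $r_j$ is bounded below and $v$ is bounded. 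Smoothness of $a_j$ up to $\partial U_j$ would need the polyhomogeneous expansion of \cite[\S4]{ACF92}. The statement only asks for bounded $a_j$, so the estimate $|v-1|\le Cr^\eta$ with $\eta>1$ suffices, and it even gives $a_j\to0$ at $\partial U_j$.

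At the AH end, Proposition \ref{prop:tangential smoothness} applied to $u_j$ gives $u_j=1+A_j\rho^n+B_j$ with $A_j\in C^\infty(\partial X)$ and $B_j\in C^\infty_{n+1}$, which is the last item of the statement.

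Uniqueness is the uniqueness part of Proposition \ref{prop:YamabeArbitrary}. It uses the quotient argument of Lemma \ref{lem:YamabeAH}(ii), and the Dirichlet condition on the $\Sigma_i$ makes that argument valid even when $I\neq\varnothing$.

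The only step needing real care is identifying $r$ with $r_j=\dist_g(\cdot,\partial U_j)$ near $\partial U_j$, so that the asymptotics are stated with the actual distance. This holds because $r$ is constructed to satisfy $|dr|_g=1$ near the boundary with $r^{-1}(0)=\partial U_j$. It therefore agrees with the distance function inside a collar of width less than the normal injectivity radius of the smooth compact hypersurface $\partial U_j$.
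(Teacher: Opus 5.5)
Your route is the paper's. The paper obtains this proposition by a one-line appeal to the boundary blow-up theory of Section~\ref{sec:Yamabe eqn bdry} applied to $M\setminus U_j$, together with Proposition~\ref{prop:tangential smoothness} at $\partial X$. Citing Proposition~\ref{prop:YamabeArbitrary} rather than Proposition~\ref{prop:YamabeAH}, as you do, is the right choice, since uniqueness with arbitrary ends needs the Dirichlet condition on the $\Sigma_i$. However, the step you single out as the delicate one is wrong as written, although what the statement actually needs survives.

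You identify $r$ with $r_j$ on a collar, because Lemma~\ref{lem:def fn} asserts $|dr|_g=1$ near $\partial M$, and you conclude $a_j\to0$. Properties (3) and (4) of that lemma cannot both hold on a neighbourhood of the boundary. For $h=r^{-2}g$,
\[
R_h=-n(n-1)|dr|_g^2+2(n-1)\,r\,\Delta_g r+r^2R_g ,
\]
so $|dr|_g\equiv1$ near $\partial U_j$ together with $R_h=-n(n-1)+O(r^n)$ would force $\Delta_g r=0$ on $\partial U_j$, i.e.\ $\partial U_j$ minimal, which fails in general.

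What is actually available is $|dr|_g=1$ on $\partial U_j$, hence $r=r_j(1+O(r_j))$. Then
\[
u_j=r^{\frac{2-n}{2}}v=r_j^{\frac{2-n}{2}}(1+O(r_j))(1+O(r_j^{\eta}))=r_j^{\frac{2-n}{2}}(1+O(r_j)),
\]
so $a_j$ is still bounded, which is all the statement claims. But $a_j$ does not tend to $0$. Matching the $r_j^{-n/2}$ terms in the equation (where $f=-n(n-1)$) gives $a_j|_{\partial U_j}=\frac{n-2}{4(n-1)}H$, with $H$ the mean curvature of $\partial U_j$ with respect to the unit normal pointing into $U_j$. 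For instance, in the Poincar\'e ball $u=\bigl(\tfrac{2}{1-|x|^2}\bigr)^{\frac{n-2}{2}}=d^{\frac{2-n}{2}}\bigl(1+\tfrac{n-2}{4}d+O(d^2)\bigr)$.

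Your remark on smoothness is also slightly off. The polyhomogeneous expansion of \cite[\S4]{ACF92} does not yield smoothness of $a_j$ up to $\partial U_j$. The number $n$ is an indicial root of $\Delta_h-n$, and $v-1$ generically contains an $r^n\log r$ term. So ``$a_j\in C^\infty(M'\setminus U_j)$'' must be read as interior smoothness plus boundedness, which is exactly what your argument delivers.

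Finally, discarding components of $M'\setminus\bar U_j$ that miss $E_0$ is unnecessary, and it would leave $u_j$ undefined there. The barrier construction of Lemma~\ref{lem:YamabeArbitrary} works componentwise.
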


\begin{corollary}\label{cor:comformalmassaspects}
    The metric $\tilde{g}_j=u_j^{\frac{4}{n-2}}g$ is complete, asymptotically hyperbolic on $M'\setminus \bar U_j$, and has scalar curvature $R_{\tilde{g}_j}=f\ge -n(n-1)$. The mass aspect of $\tilde{g}_j$ is given by
    \begin{equation}\label{eq:mass aspect}
        \tilde h_j=h+\frac{4(n+1)}{n-2}A_j\gamma_{\text{std}}.
    \end{equation}
\end{corollary}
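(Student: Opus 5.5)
We prove the three assertions of Corollary~\ref{cor:comformalmassaspects} in turn: completeness, scalar curvature, and the asymptotic structure with its mass aspect.

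\emph{Scalar curvature.} By Proposition~\ref{prop:uj}, $u_j$ is a positive smooth solution of $L_gu_j=0$ on $M'\setminus\bar U_j$. The conformal transformation law \eqref{eq:comformal sc} then gives $R_{\tilde g_j}=u_j^{-\alpha}(R_gu_j-c_n\Delta_gu_j)=f$. Our choice of $f$ guarantees $f\ge -n(n-1)$.

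\emph{Completeness.} We check this separately at each type of boundary of $M'\setminus\bar U_j$.
\begin{itemize}
\item Near $\partial U_j$ we have $u_j=r_j^{\frac{2-n}{2}}(1+a_jr_j)$ with $a_j$ bounded. Hence $\tilde g_j=u_j^{\frac{4}{n-2}}g\ge c\,r_j^{-2}g$ in a collar of $\partial U_j$. Any $\tilde g_j$-curve reaching $\partial U_j$ has length at least $c\int_0^{\epsilon}r_j^{-1}\,dr_j=\infty$. So the collar is complete, and $\partial U_j$ is at infinite distance.
\item Near $\partial X$, Proposition~\ref{prop:uj} gives $u_j=1+A_j\rho^n+O(\rho^{n+1})$. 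Thus $u_j$ is bounded above and below by positive constants there, and $\tilde g_j$ is uniformly equivalent to the complete metric $g$ on $E_0$.
\item On the remaining part (the compact region and $\Sigma_i$), $u_j$ is smooth and positive up to $\Sigma_i$, where $u_j=1$.
\end{itemize}
We read ``complete'' as complete away from the boundary components $\Sigma_i$, i.e., complete as a manifold with compact boundary $\bigcup_i\Sigma_i$. This is the sense needed later when gluing near $\partial M'$. Cauchy sequences that stay away from $\partial U_j$ and $\partial X$ lie in a compact set, so they converge.

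\emph{Asymptotics and mass aspect.} To apply Proposition~\ref{prop:mass aspect change}, we need the expansion $u_j=1+A_j(x)\rho^n+B_j$ with $A_j\in C^\infty(\partial X)$ and $B_j\in C^\infty_{n+1}(E_0)$. On $E_0\cap\{\rho<\rho_0\}$ we have $f=R_g$, and $u_j$ is bounded with $u_j\to1$ at $\partial X$. So this expansion follows from Proposition~\ref{prop:tangential smoothness}: its argument is local near $\partial X$ and uses only the equation there, together with the barrier bounds $|u_j-1|\le A\rho^n$ coming from Proposition~\ref{prop:YamabeArbitrary}. Proposition~\ref{prop:mass aspect change} (via Lemma~\ref{lem:AH coordinates}) then shows that $\tilde g_j$ is AH with mass aspect $h+\frac{4(n+1)}{n-2}A_j\gamma_{\text{std}}$, which is \eqref{eq:mass aspect}.

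The only delicate point is checking that the regularity hypothesis $B_j\in C^\infty_{n+1}$ really holds, rather than a mere $O(\rho^{n+1})$ pointwise bound. This is exactly what the Schauder/polyhomogeneity argument of Proposition~\ref{prop:tangential smoothness} provides, so we invoke it directly.
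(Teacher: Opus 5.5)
Your proposal is correct and follows essentially the same route as the paper. The scalar curvature comes from the conformal transformation law \eqref{eq:comformal sc}; the AH structure and mass aspect come from the expansion of $u_j$ via Propositions \ref{prop:tangential smoothness} and \ref{prop:mass aspect change}; and completeness comes from the blow-up rate $u_j\sim r_j^{\frac{2-n}{2}}$ at $\partial U_j$. The only difference is that you write out the completeness argument, and make explicit that completeness is meant relative to the compact boundary $\bigcup_i\Sigma_i$, whereas the paper simply cites \cite[Lemma 5.2]{AMnoncpt88}.
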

\begin{proof}
    The scalar curvature of $\tilde{g}_j$ is computed in \eqref{eq:comformal sc}. The asymptotically hyperbolic property and the mass aspect follow from the expansion of $u_j$ near $\partial X$, cf.\ Propositions \ref{prop:tangential smoothness}, \ref{prop:mass aspect change}. The completeness of $\tilde{g}_j$ follows from the asymptotic behavior of $u_j$ near $\partial U_j$, cf.\ \cite[Lemma 5.2]{AMnoncpt88} for example. 
\end{proof}

By Corollary \ref{cor:convergence}, there is a subsequence of $\{u_j\}$ that converges smoothly on compact sets to a solution $u_\infty\in C^\infty(M'\setminus\mathcal{S})$ to the equation. We summarize the properties of $u_\infty$ in the following proposition:

\begin{proposition}\label{prop:u-limit}
    There is a subsequence of $\{u_j\}$, which by abuse of notation is still denoted by $\{u_j\}$, such that as $j\to \infty$, $u_j\to u_\infty$ smoothly on compact subsets of $M'\setminus \mathcal S$. As a result, $u_\infty\in C^\infty(M'\setminus \mathcal S)$ satisfies 
    \begin{enumerate}
        \item $L_gu_\infty=0$ and $u_\infty>0$ on $M'\setminus \mathcal S$,
        \item $u_\infty(x)=1$ on $\Sigma_i$ for each $i\in I$, and 
        \item $u_\infty(x)\to 1$ as $x\to\partial X$.
    \end{enumerate}
    Moreover, in a collar neighborhood of $\partial X$, we have 
    \begin{equation}\label{eq:u-infty expansion}
        u_\infty(x,\rho)=1+A_\infty(x)\rho^n+B_\infty(x,\rho),
    \end{equation}
    where $A_\infty\in C^\infty(\partial X)$ and $B_\infty\in C_{n+1}^\infty(E_0)$.
\end{proposition}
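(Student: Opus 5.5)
The plan is to obtain $u_\infty$ as a locally smooth limit of the $u_j$, and then to get its behaviour at $\partial X$ from barriers that do not depend on $j$. The steps are: local compactness, the boundary condition on $\Sigma_i$, uniform barriers at infinity, positivity, and the tangential expansion.

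\textbf{Local compactness.} Fix a compact exhaustion $\{K_m\}$ of $(M'\setminus\mathcal S)\cup\bigcup_i\Sigma_i$, where $\mathcal S=\bigcap_j U_j$. Each $K_m$ lies in $M'\setminus\bar U_j$ for all large $j$, so $u_j$ solves \eqref{eq:Yamabe} on a neighborhood of $K_m$.
\begin{itemize}
\item Interior: the a priori upper bound for solutions of $L_g u\le 0$ gives $\max_{K_m}u_j\le C(K_m)$ independently of $j$.
\item Near $\Sigma_i$: the boundary value $u_j=1$ is fixed, so standard boundary Schauder estimates for the Dirichlet problem apply uniformly in $j$.
\end{itemize}
A diagonal argument, as in Corollary \ref{cor:convergence}, then yields a subsequence converging smoothly on compact subsets of $M'\setminus\mathcal S$, up to $\Sigma_i$. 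The limit $u_\infty\ge 0$ satisfies $L_gu_\infty=0$, and $u_\infty=1$ on $\Sigma_i$, which is property (2).

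\textbf{Uniform control at $\partial X$.} This is the main point. On $E_0\cap\{\rho<\rho_0\}$ we have $f=R_g$, so the barriers from the proof of Lemma \ref{lem:YamabeAH} are available:
\[
\bar v=1+A\rho^n-B\rho^{n+1},\qquad \underline v=1-A\rho^n+B\rho^{n+1}.
\]
\begin{itemize}
\item Upper barrier: on $\{\rho=\rho_1\}$ for a fixed small $\rho_1$, the $u_j$ are uniformly bounded above by the local estimate. Choose $A$ large so that $\bar v$ dominates there, and compare with $u_j$ on $\{\rho<\rho_1\}$, where both tend to $1$ at $\partial X$.
\item Lower barrier: use the same barrier-comparison argument as in the $\{\rho<\rho_0\}$ step of Proposition \ref{prop:lower bound}, which gives $\inf_{\{\rho=\rho_1\}}u_j\ge\delta$ uniformly in $j$. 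Choose $A$ so that $\underline v\le\delta$ on $\{\rho=\rho_1\}$ and compare again.
\end{itemize}
The comparison step is exactly Lemma \ref{lem:comparison}, or the argument in Proposition \ref{prop:upper bound} using that $t\mapsto t-t^\alpha$ is monotone. The one subtlety is that a positive maximum of $u_j-\bar v$ could occur near $\partial X$. This is excluded because $u_j-1=O(\rho^n)$ for each fixed $j$ (Proposition \ref{prop:uj}), while the barriers are strict super/sub-solutions once $A,B$ are large. Passing to the limit gives
\[
|u_\infty-1|\le A\rho^n \quad\text{near } \partial X,
\]
which is property (3).

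\textbf{Positivity.} The bounds above give $u_\infty\not\equiv 0$. Since $K'\setminus\mathcal S$ is connected, the strong maximum principle (Lemma \ref{lem:sMaxPr-1}) gives $u_\infty>0$ on $M'\setminus\mathcal S$, completing property (1).

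\textbf{Expansion.} Set $v=u_\infty-1$. On the collar, $v=O(\rho^n)$ and $v$ solves \eqref{eq:Yamabe v}. The proof of Proposition \ref{prop:tangential smoothness} only uses these facts, so it applies verbatim: Schauder estimates in weighted spaces give $v\in C^\infty_n$, and the polyhomogeneity argument of \cite[\S4]{ACF92} yields $A_\infty\in C^\infty(\partial X)$ and $B_\infty\in C^\infty_{n+1}$ as in \eqref{eq:u-infty expansion}.

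I expect the main obstacle to be the uniformity in $j$ of the lower bound at $\{\rho=\rho_1\}$. That bound must come from Proposition \ref{prop:lower bound}, whose constant is independent of the solution. Its hypothesis that $u$ is bounded on all of $M'\setminus\mathcal S$ fails for $u_j$, which blows up at $\partial U_j$. I would first check that its proof only uses local behaviour away from $U$; if it does not, I would instead argue by contradiction via the local compactness step.
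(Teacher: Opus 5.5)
Your skeleton matches the paper's. Its proof is just an appeal to Corollary \ref{cor:convergence}, Lemma \ref{lem:sMaxPr-1} and Proposition \ref{prop:tangential smoothness}. Your $j$-uniform barrier step near $\partial X$ supplies exactly what that citation leaves implicit: Proposition \ref{prop:tangential smoothness} is stated only for the solutions of Propositions \ref{prop:YamabeAH} and \ref{prop:YamabeArbitrary}, and its proof needs $u-1=O(\rho^n)$ as input. The upper barrier works as you describe, because $\bar v\ge 1$ and $t\mapsto t-t^\alpha$ is decreasing on $[1,\infty)$.

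The lower half of that step does not go through as written, for two reasons.

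\textbf{The comparison tools you cite do not give $\underline v\le u_j$.} Lemma \ref{lem:comparison} only yields $\underline v\le u_j+C$. The map $t\mapsto t-t^\alpha$ is increasing on $(0,\alpha^{-1/(\alpha-1)})$, so an interior maximum of $\underline v-u_j$ with $u_j<\underline v\le 1$ produces no contradiction. Use instead the quotient argument from the uniqueness part of Lemma \ref{lem:variation}. For $\hat g=u_j^{4/(n-2)}g$ one has $R_{\hat g}=f$, and $\phi=\underline v/u_j$ satisfies $-c_n\Delta_{\hat g}\phi\le f(\phi^\alpha-\phi)$. Since $f<0$, this rules out an interior maximum with $\phi>1$.

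\textbf{The obstacle you flag is genuine, and your fallback does not remove it.} In the contradiction argument through local compactness, a limit vanishing at one point vanishes identically. What contradicts this is $u_\infty=1$ on $\Sigma_i$. When $I=\varnothing$, which is allowed here, there is no $\Sigma_i$. The only remaining information is the behaviour at $\partial X$, which is exactly what you are trying to control uniformly, so the argument is circular. The proof of Proposition \ref{prop:lower bound} uses the same $\Sigma_i$ step, so it cannot be invoked either.

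The fix is to avoid needing a positive lower bound on $\{\rho=\rho_1\}$ at all. Choose the lower barrier to vanish there: take $\max\{0,1-A\rho^n+B\rho^{n+1}\}$, the sub-solution from the proof of Lemma \ref{lem:YamabeAH}, with the polynomial $\le 0$ on $\{\rho=\rho_1\}$. Compare it with $u_j\ge 0$ by the quotient argument; at any point where $\phi>1$ the barrier is the smooth polynomial. This gives $u_j\ge 1-A\rho^n$ near $\partial X$ for every $j$. Hence $u_\infty\not\equiv 0$, property (3) and positivity all follow, with no input from Proposition \ref{prop:lower bound}.

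A smaller point: near $\Sigma_i$ the interior estimate does not apply, because $u_j$ is not defined beyond $\Sigma_i$. The $L^\infty$ bound you need before the boundary Schauder estimates comes from the maximum principle on $E_i\cap K'$, as in Lemma \ref{lem:normal derivative}. This gives $u_j\le\max\{1,\sup_{\partial E_i}u_j\}$, and the right-hand side is controlled by the interior estimate on $\partial E_i$.
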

\begin{proof}
    This is due to Corollary \ref{cor:convergence}, Lemma \ref{lem:sMaxPr-1}, and Proposition \ref{prop:tangential smoothness}.
\end{proof}

\subsection{Bounding the limit under the Ricci curvature condition}\label{sec:bound under Ricci cond}
The next goal is to show that $u_\infty$ is bounded. Let $\Omega$ be a connected bounded open neighborhood of $\mathcal{S}$, and $r(x)=d(x,\mathcal{S})$. 

\begin{lemma}\label{lem:gradient estimate}
    Let $G:M\setminus\mathcal{S}\to (0,\infty)$ be a harmonic function. Assume the Ricci curvature condition 
    $$
        \Ric_g\ge -Cr^{-2} \text{ on }\mathcal{R}.
    $$
    Then
    $$
        \frac{|\nabla G|}{G}\le C_2r^{-1} \text{ on }\mathcal{R},
    $$
    for some constant $C_2\in (0,\infty)$ depending only on $n$ and $C$. 
\end{lemma}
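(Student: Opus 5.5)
The plan is to apply the Cheng--Yau gradient estimate locally, on balls whose radius is comparable to the distance to $\mathcal S$. Fix $x\in\mathcal R$ with $r=r(x)=d(x,\mathcal S)$. Consider the geodesic ball $B:=B_{r/2}(x)$. Since $d_L=d_g$ on the regular part and $\mathcal S$ is at distance $r$ from $x$, every point $y\in B$ satisfies $r(y)\ge r/2$. Hence the ball $B_{r/2}(x)$ lies in $\mathcal R$. It is relatively compact in the smooth Riemannian manifold $(\mathcal R,g)$, at least away from the ends. Far out in the ends there is nothing to prove, since there $r$ is bounded below and the curvature is bounded, so the estimate is standard; alternatively, the same argument applies verbatim.

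On $B$ we have
$$\Ric_g\ge -C r(y)^{-2}\ge -4C r^{-2}.$$
So the Ricci lower bound on $B$ is $-(n-1)\kappa^2$ with $\kappa^2=4C/((n-1)r^2)$. Apply the Cheng--Yau gradient estimate to the positive harmonic function $G$ on $B_{r/2}(x)$:
$$\frac{|\nabla G|}{G}(x)\le c_n\Big(\frac{1}{r/2}+\kappa\Big)\le c(n)\big(2+2\sqrt{C/(n-1)}\big)r^{-1}.$$
This gives $C_2=C_2(n,C)$.

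One point to check is that Cheng--Yau only needs the ball to have compact closure inside a complete-or-not manifold on which $G$ is harmonic and positive. The standard proof is local: it uses the maximum principle for $\varphi\,|\nabla\log G|^2$ with a cutoff $\varphi$ supported in $B_{r/2}(x)$, together with the Laplacian comparison for the distance function from $x$. The distance function from $x$, restricted to $B_{r/2}(x)$, is realized by minimizing geodesics inside $B_{r/2}(x)$. Any curve from $x$ of length less than $r/2$ stays at distance greater than $r/2$ from $\mathcal S$, so it never hits the singular set. Hence the comparison argument, using Calabi's trick at cut points, goes through. The scale invariance then gives exactly the $r^{-1}$ factor.

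The only mild obstacle is this locality justification near $\mathcal S$, namely ensuring that $B_{r/2}(x)$ is a precompact smooth ball. This uses that $\mathcal S$ is closed and that $g$ is smooth on $\mathcal R$. Near the AH end and the arbitrary ends, the bound $\Ric_g\ge -Cr^{-2}$ fails to be meaningful for large $r$. However, the hypothesis is stated on all of $\mathcal R$, so we apply the same local argument with radius $\min\{r/2,1\}$ there. Equivalently, we restrict to the neighborhood $\Omega$ of $\mathcal S$, where $r$ is bounded, which is all that is used later.
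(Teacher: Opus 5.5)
Your argument is correct and is the same as the paper's: apply the Cheng--Yau gradient estimate on $B_{r(x)/2}(x)$, where the hypothesis gives $\Ric_g\ge -4C\,r(x)^{-2}$, and scaling then yields $|\nabla G|/G\le C_2(n,C)\,r(x)^{-1}$; the paper likewise only uses the estimate for $x\in\Omega\setminus\mathcal{S}$. One caution: your aside that far out in the ends the bound follows from bounded curvature, or from using radius $\min\{r/2,1\}$, is not right, because either of those only gives $|\nabla G|/G\le C$ and not decay like $r^{-1}$, so you should either keep radius $r/2$ throughout or restrict to the bounded neighborhood $\Omega$, as the paper does.
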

\begin{proof}
    Fix $x\in\Omega\setminus\mathcal{S}$ and let $t=\frac{1}{2}r(x)$. Then the gradient estimate for positive harmonic functions (cf.\ \cite{Yau75,CY75}) implies that, on the ball $B_t(x)$, we have 
    $$
        |\nabla G|\le C_2't^{-1}G,
    $$
    where $C_2'\in(0,\infty)$. In particular, we have $|\nabla G(x)|\le C_2r(x)^{-1}G(x)$, and thus the desired estimate holds.
\end{proof}

We first prove a rough estimate for $u_\infty$:
 
\begin{proposition}\label{prop:rough estimate}
    If $u$ solves \eqref{eq:Yamabe} on $M\setminus\mathcal S$, then 
    $$
        u\le\Lambda G^{\frac{n-2}{2(n-k-2)}} \text{ on }\Omega\setminus\mathcal{S},
    $$
    for some constant $\Lambda\in(0,\infty)$ depending on $G$, $\Omega$ and $\sup_{\partial\Omega} u$.
\end{proposition}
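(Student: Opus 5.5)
We compare $u$ with an explicit supersolution built from a power of $G$. Set $\beta:=\frac{n-2}{2(n-k-2)}>0$ and $w:=\Lambda G^{\beta}$ with $\Lambda$ to be chosen. Since $G$ is harmonic on $\mathcal R$,
$$
\Delta_g G^\beta=\beta(\beta-1)G^{\beta-2}|\nabla G|^2 .
$$
Lemma \ref{lem:gradient estimate} gives $|\nabla G|\le C_2 r^{-1}G$, so
$$
|\Delta_g w|\le C\Lambda G^{\beta} r^{-2}.
$$
The exponent $\beta$ is chosen so that the nonlinear term dominates this. Near $\mathcal S$, \eqref{eq:G lower bound} gives $G\ge C_1 r^{2+k-n}$, hence
$$
G^{\beta(\alpha-1)}=G^{\frac{2}{n-k-2}}\ge C_1' r^{-2}.
$$
Therefore
$$
L_g w=-c_n\Delta_g w+R_g w-fw^\alpha\ \ge\ w\left(-C r^{-2}-|R_g|+\Lambda^{\alpha-1}G^{\frac{2}{n-k-2}}\right),
$$
using $-f\ge1$. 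Choosing $\Lambda$ large makes the bracket positive on $\Omega\setminus\mathcal S$. This uses that $G$ is bounded below by a positive constant on the bounded set $\Omega$, so the $|R_g|$ term is also absorbed away from $\mathcal S$. Since we only need $L_g w\ge0$, the sign of $\beta-1$ does not matter. We also enlarge $\Lambda$ so that $w\ge\sup_{\partial\Omega}u$ on $\partial\Omega$.

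The comparison step runs as follows. For small $\epsilon$, let $V_\epsilon=\{G^{1/(2+k-n)}<\epsilon\}$, a neighborhood of $\mathcal S$. We compare $u$ with $w$ on $\Omega\setminus\bar V_\epsilon$, but $u$ is not known to be bounded near $\mathcal S$, so we do not compare directly. Instead we compare $u$ with the large solution $u^\epsilon$ of the Yamabe equation on $\Omega\setminus\bar V_\epsilon$. It is infinite on $\partial V_\epsilon$ and equals $\sup_{\partial\Omega}u$ on $\partial\Omega$, and exists by the boundary blow-up theory of Proposition \ref{prop:YamabeAH}, applied to this compact region. By a maximum principle argument in the style of Lemma \ref{lem:comparison} and Proposition \ref{prop:upper bound}, which uses that $s\mapsto$ (the nonlinearity) is monotone once $s$ is large, we get $u\le u^\epsilon$.

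Next we need $u^\epsilon$ bounded by $w$ uniformly in $\epsilon$. Here $w$ is finite on $\partial V_\epsilon$, so a direct comparison fails. We handle this by taking $w_\epsilon=w+\psi_\epsilon$, where $\psi_\epsilon$ is a boundary-blow-up supersolution of the form $\lambda\, d(x,\partial V_\epsilon)^{-(n-2)/2}$ localized near $\partial V_\epsilon$. Alternatively, and more cleanly, we use the Osserman/Keller type uniqueness: $\max(u^\epsilon,w)$ is a subsolution, and the difference $u^\epsilon-w$ cannot attain a positive interior maximum. At such a maximum we would have
$$
c_n\Delta(u^\epsilon-w)\ge R_g(u^\epsilon-w)-f\left((u^\epsilon)^\alpha-w^\alpha\right)>0
$$
once $w$ is in the monotone range. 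The maximum is therefore approached only at $\partial V_\epsilon$, and we rule this out via the asymptotics $u^\epsilon d^{(n-2)/2}\to c$ compared against the growth of $w$.

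The main obstacle is exactly this step: obtaining $u\le w$ without a priori boundedness of $u$ near $\mathcal S$. I would first try the simplest route. We know $L_g(\Lambda' G^\beta)\ge0$ with $G^\beta\to\infty$ at $\mathcal S$. Then $u-(1+\delta)w$ is negative near $\mathcal S$ as long as $u$ grows slower than $G^\beta$, and a local Keller–Osserman bound $u\le C r^{-(n-2)/2}$ supplies exactly this. That bound follows from the local a priori estimate (the first lemma of Section \ref{sec:loc est}) applied on balls $B_{r/2}(x)$ after rescaling by $r$, where the Ricci lower bound $-Cr^{-2}$ controls the geometry of the rescaled ball. Since
$$
r^{-(n-2)/2}\ll r^{-(n-2)(n-2-k)/(2(n-k-2))}=r^{-(n-2)/2}
$$
is an equality rather than a strict inequality, we instead take $w=\Lambda G^{\beta}$ with $\Lambda$ huge and compare $u$ with $(1+\delta)w$. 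This reduces the argument to comparing constants in the Keller–Osserman bound. If that fails, we add a small $\epsilon G^{\beta'}$ with $\beta'>\beta$ and let $\epsilon\to0$.
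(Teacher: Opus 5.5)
Your supersolution is right: $w=\Lambda G^{\beta}$ satisfies $L_gw\ge 0$ on $\Omega\setminus\mathcal S$ for $\Lambda\ge\Lambda_0$. It is exactly the $a_j\to 0$ limit of the paper's barrier. Your Keller--Osserman route also gives a valid proof, by a mechanism different from the paper's.

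\textbf{The route through $u^\epsilon$ fails.} The large solution satisfies $u^\epsilon=+\infty$ on $\partial V_\epsilon$, while $w=\Lambda\epsilon^{-\frac{n-2}{2}}$ is finite there. So $u^\epsilon-w\to+\infty$ at $\partial V_\epsilon$, and $u^\epsilon\le w$ is false for every $\epsilon$. No comparison of asymptotics can rule this out. Also, $\max(u^\epsilon,w)$ need not be a subsolution, since $w$ is a supersolution.

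\textbf{How the paper handles the missing control of $u$ near $\mathcal S$.} It shifts the barrier. With $s=G^{\frac{1}{2+k-n}}$ it takes $v_j=\Lambda(s-a_j)^{\frac{2-n}{2}}$ on $\Omega\setminus\bar U_j$. This blows up on $\partial U_j$, a compact subset of $\mathcal R$ where $u$ is finite. Two facts make the estimate uniform in $j$: $s\le Cr$ by \eqref{eq:G lower bound}, and $0<(s-a_j)/s<1$. With these, the same computation as yours gives $\Delta_gv_j\le C\Lambda(s-a_j)^{-\frac{n+2}{2}}$ with $C$ independent of $j$. Hence a single $\Lambda$ makes every $v_j$ a supersolution that dominates $u$ on $\partial\Omega$. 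Lemma \ref{lem:comparison} gives $u\le 1+v_j$, and letting $j\to\infty$ finishes. This is the correct form of your ``$w+\psi_\epsilon$'' idea. A cut-off built from $d(\cdot,\partial V_\epsilon)$ would lose both the supersolution property and uniformity in $\epsilon$.

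\textbf{Your last route works; make it the proof.} Fix $x$ with $r=r(x)$ small.
\begin{itemize}
\item On $B_{r/2}(x)$ one has $d(\cdot,\mathcal S)\ge r/2$. So $\hat g=r^{-2}g$ satisfies $\Ric_{\hat g}\ge -4C$ and $R_{\hat g}=r^2R_g\ge -n(n-1)$, and $\hat u=r^{\frac{n-2}{2}}u$ solves the same equation.
\item Use the Loewner--Nirenberg barrier $A\bigl(\tfrac14-d_{\hat g}(x,\cdot)^2\bigr)^{-\frac{n-2}{2}}$, with Laplacian comparison and Calabi's trick at the cut locus. This gives $u(x)\le C_{KO}\,r(x)^{-\frac{n-2}{2}}$, where $C_{KO}$ depends only on $n$, $C$ and the bounds on $R_g$ and $f$.
\item State this uniformity explicitly. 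The first lemma of Section \ref{sec:loc est} only provides an unspecified constant $C(K)$.
\end{itemize}
After that, the ``comparison of constants'' cannot fail. The bracket in $L_g(\Lambda G^{\beta})$ is increasing in $\Lambda$. Take $\Lambda\ge\max\{\Lambda_0,\,2C_{KO}C_1^{-\beta}\}$. Then $u\le\frac12 w$ near $\mathcal S$, so $u-w\to-\infty$ there, and the interior-maximum argument of Lemma \ref{lem:comparison} closes the proof. The fallback with $\epsilon G^{\beta'}$ is unnecessary.

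\textbf{Comparison of the two routes.} Your argument uses the Ricci hypothesis twice: Cheng--Yau for $G$, and a Keller--Osserman bound for $u$. In exchange it yields, for every solution, the growth bound $u\le C\,r^{-\frac{n-2}{2}}$. The paper's shifted barrier uses the Ricci hypothesis only through the gradient estimate for $G$. It needs no a priori information on $u$, which makes it shorter and self-contained.
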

\begin{proof}
    Let $U_j=\{x\in M: G^{\frac{1}{2+k-n}}(x)<a_j\}$ be as in the previous section. Consider the function 
    $$
        v_j=\Lambda (G^{\frac{1}{2+k-n}}-a_j)^{\frac{2-n}{2}} \text{ on }\Omega\setminus\bar{U}_j,
    $$
    where $\Lambda$ is a large constant to be determined. A direct calculation shows that
    $$
        \Delta_g v_j=\tfrac{\Lambda(n-2)}{4(n-k-2)^2}(G^{\frac{1}{2+k-n}}-a_j)^{-\frac{2+n}{2}}\frac{|\nabla G|^2}{G^2}G^{\frac{2}{2+k-n}}\left(1-\tfrac{2(n-k-1)}{n}(G^{\frac{1}{2+k-n}}-a_j)G^{\frac{1}{n-k-2}}\right).
    $$
    By Lemma \ref{lem:gradient estimate}, $G^{\frac{1}{2+k-n}}>a_j$ on $\Omega\setminus\bar{U}_j$, and \eqref{eq:G lower bound}, we have 
    \begin{align*}
        \Delta_g v_j&\le \frac{\Lambda(n-2)}{4(n-k-2)^2}(G^{\frac{1}{2+k-n}}-a_j)^{-\frac{2+n}{2}}\frac{C_2^2}{r^2}G^{\frac{2}{2+k-n}}\\
        &\le \frac{\Lambda(n-2)}{4(n-k-2)^2}(G^{\frac{1}{2+k-n}}-a_j)^{-\frac{2+n}{2}}\frac{C_2^2}{C_1^{\frac{2}{n-k-2}}}.
    \end{align*}
    Since $\Omega$ is bounded, we have $G^{\frac{1}{2+k-n}}\le K<\infty$ on $\Omega$. Thus, 
    $$
        L_gv_j\ge \Lambda(G^{\frac{1}{2+k-n}}-a_j)^{-\frac{2+n}{2}}(-\tilde{C}-f\Lambda^{\frac{4}{n-2}}+R_gK^2),
    $$
    where $\tilde{C}=\frac{c_n(n-2)}{4(n-k-2)^2}C_2^2C_1^{\frac{2}{2+k-n}}$. We choose $\Lambda$ sufficiently large, i.e., 
    $$
        \Lambda\ge \left(\frac{\tilde{C}}{n(n-1)}+K^2\right)^{\frac{n-2}{4}},
    $$
    and then 
    $$
        L_gv_j\ge 0 \text{ on }\Omega\setminus\bar{U}_j.
    $$
    Moreover, we can make 
    $$
        \Lambda\ge K^{\frac{n-2}{2}}\sup_{\partial\Omega}u,
    $$
    and thus $v_j\ge u$ on $\partial\Omega$. By the comparison lemma \ref{lem:comparison}, we have $u\le 1+v_j$ on $\Omega\setminus\bar{U}_j$. Lastly, let $j\to\infty$. We have $u\le 1+\Lambda G^{\frac{n-2}{2(n-k-2)}}$ on $\Omega\setminus\mathcal{S}$. Since $G\to +\infty$ as $x\to \mathcal{S}$, we can absorb the constant $1$ into $\Lambda$ and get the desired estimate.
\end{proof}

Now we can show that $u_\infty$ is bounded:

\begin{proposition}\label{prop:bound2}
    If $u$ solves \eqref{eq:Yamabe} on $M\setminus\mathcal S$, and if $k=\dim_H(\mathcal{S})<\frac{n-2}{2}$, then $u$ is bounded on $\Omega\setminus\mathcal{S}$. Indeed, $u\le 2+\sup_{\partial\Omega}u$ on $\Omega\setminus\mathcal{S}$.
\end{proposition}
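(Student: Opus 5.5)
The plan is to upgrade the rough estimate of Proposition \ref{prop:rough estimate} to a genuine bound. Compare $u$ with barriers of the form $1+\sup_{\partial\Omega}u+\epsilon G$, using that the dimension hypothesis makes $u$ grow strictly slower than $G$ near $\mathcal{S}$, and then let $\epsilon\to0$, as in the proof of Proposition \ref{prop:upper bound}.

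\emph{Step 1: $u$ is $o(G)$ near $\mathcal{S}$.} Proposition \ref{prop:rough estimate} gives
$$
u\le\Lambda G^{\theta}\quad\text{on }\Omega\setminus\mathcal{S},\qquad \theta:=\frac{n-2}{2(n-k-2)}.
$$
The condition $k<\frac{n-2}{2}$ is equivalent to $n-2<2(n-k-2)$, i.e.\ $\theta<1$. Since $G\to+\infty$ at $\mathcal{S}$, it follows that for every $\epsilon>0$ there is a neighborhood $U_\epsilon\subset\Omega$ of $\mathcal{S}$ with $\Lambda G^\theta\le\epsilon G$ on $U_\epsilon\setminus\mathcal{S}$. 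One can take $U_\epsilon=\{G>(\Lambda/\epsilon)^{1/(1-\theta)}\}$, or one of the smooth sublevel sets $U_j$ from \eqref{eq:neighborhood}.

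\emph{Step 2: a supersolution.} Set $m=\sup_{\partial\Omega}u$ and $v_\epsilon=1+m+\epsilon G$. Then $v_\epsilon\ge1$ and $\Delta_gv_\epsilon=0$. Since $R_g\ge f$ and $f\le-1<0$,
$$
L_gv_\epsilon=R_gv_\epsilon-fv_\epsilon^\alpha\ge f\,(v_\epsilon-v_\epsilon^\alpha)\ge0 .
$$

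\emph{Step 3: comparison on $\Omega\setminus\bar U_\epsilon$.} On $\partial\Omega$ we have $u\le m<v_\epsilon$. On $\partial U_\epsilon$ we have $u\le\Lambda G^\theta\le\epsilon G<v_\epsilon$. Suppose $u-v_\epsilon$ had a positive interior maximum at $x_1$. Then $u(x_1)>v_\epsilon(x_1)\ge1$, and as in the proof of Proposition \ref{prop:upper bound},
$$
0\ge c_n\Delta_g(u-v_\epsilon)\ge f\bigl((u-u^\alpha)-(v_\epsilon-v_\epsilon^\alpha)\bigr)>0,
$$
because $t\mapsto t-t^\alpha$ is strictly decreasing on $[1,\infty)$ and $f<0$. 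This is a contradiction. Alternatively, apply Lemma \ref{lem:comparison} with the constant $1$ from the subsequent remark; this costs an additive $1$, which accounts for the constant $2$ in the statement. Hence $u\le 1+m+\epsilon G$ on $\Omega\setminus\bar U_\epsilon$. Inside $U_\epsilon$ the bound $u\le\epsilon G\le v_\epsilon$ holds directly, so it holds on all of $\Omega\setminus\mathcal{S}$. Letting $\epsilon\to0$ pointwise gives $u\le1+m\le 2+\sup_{\partial\Omega}u$.

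\emph{Main obstacle.} The only real issue is that $\Omega\setminus\mathcal{S}$ is not compact and no boundary values are prescribed at $\mathcal{S}$. The maximum principle therefore has to be run on the truncated domains $\Omega\setminus\bar U_\epsilon$. This truncation is exactly where $\theta<1$, i.e.\ $\dim_H\mathcal{S}<\frac{n-2}{2}$, is used: it guarantees that $\epsilon G$ dominates $u$ on $\partial U_\epsilon$ for every $\epsilon>0$. A minor technical point is to choose $U_\epsilon$ with smooth boundary, such as a regular sublevel set of $G$, so that $u,v_\epsilon\in C^2(\Omega\setminus\bar U_\epsilon)\cap C^0$ up to the boundary. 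One must also check that the constant $\Lambda$ from Proposition \ref{prop:rough estimate} is independent of $\epsilon$, which holds since it depends only on $G$, $\Omega$ and $m$.
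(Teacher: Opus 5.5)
Your proposal is correct and follows the paper's proof essentially step for step. The shared ingredients are the rough bound $u\le\Lambda G^{\theta}$ with $\theta<1$, the harmonic supersolution $1+\sup_{\partial\Omega}u+\epsilon G$ (the paper uses $\epsilon(1+G)$), comparison on $\Omega\setminus\bar U_j$ for a smooth superlevel set of $G$, and letting $\epsilon\to0$. The only difference is cosmetic: you run the maximum principle directly, as in Proposition \ref{prop:upper bound}, and get the slightly sharper $u\le 1+\sup_{\partial\Omega}u$, whereas the paper argues by contradiction at a fixed $x_0$ and invokes Lemma \ref{lem:comparison}, which costs the additive $1$.
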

\begin{proof}
    By Proposition \ref{prop:rough estimate}, we have $u\le \Lambda G^{\frac{n-2}{2(n-k-2)}}$ on $\Omega\setminus\mathcal{S}$. Note that $\frac{n-2}{2(n-k-2)}<1$ if $k<\frac{n-2}{2}$. We claim that $u\le 2+\sup_{\partial\Omega}u$ on $\Omega\setminus\mathcal{S}$. 
    
    If not, then we can find $x_0\in\Omega\setminus\mathcal{S}$ such that $u(x_0)>2+\sup_{\partial\Omega}u$. Let 
    $$
        v_\epsilon=1+\epsilon (1+G)+\sup_{\partial \Omega}u.
    $$
    Since $v_\epsilon>1$ and $R_g\ge f$, we have
    $$
        L_gv_\epsilon=-c_n\Delta_g v_\epsilon+R_gv_\epsilon-fv_\epsilon^{\alpha}\ge fv_\epsilon(1-v_\epsilon^{\frac{4}{n-2}})\ge 0.
    $$
    We can find $j>0$ such that $x_0\in \Omega\setminus\bar U_j$, and that $u<v_\epsilon$ on $\partial U_j$, which is possible due to 
    $$
        u\le\Lambda G^{\frac{n-2}{2(n-k-2)}}<\Lambda(1+G)^{\frac{n-2}{2(n-k-2)}}<\epsilon(1+G)<v_\epsilon.
    $$
    Since $u\le v_\epsilon$ on $\partial\Omega$, we have $u\le 1+v_\epsilon$ on $\Omega\setminus\bar{U}_j$ by the comparison lemma \ref{lem:comparison}. Hence, 
    $$
        u(x_0)\le 2+\epsilon G(x_0)+\sup_{\partial \Omega}u.
    $$
    But this is true for any $\epsilon>0$, and thus $u(x_0)\le 2+\sup_{\partial\Omega}u$, which is a contradiction.
\end{proof}

Propositions \ref{prop:rough estimate} and \ref{prop:bound2} show that $u_\infty$ is bounded on any bounded neighborhood $\Omega$ of $\mathcal{S}$. Away from $\mathcal{S}$, the function $u_\infty$ is regular by Proposition \ref{prop:u-limit} and satisfies $u_\infty(x)\to 1$ as $x\to\partial X$; hence $u_\infty$ is bounded on all of $M'\setminus\mathcal{S}$. Applying Proposition \ref{prop:upper bound}, we obtain the following uniform estimate.

\begin{proposition}\label{prop:bound for u_infty}
    The limit $u_\infty$ obtained in Proposition \ref{prop:u-limit} is bounded on $M\setminus\mathcal{S}$ from above by $1$. In particular, $A_\infty\le 0$ in the expansion \eqref{eq:u-infty expansion}.
\end{proposition}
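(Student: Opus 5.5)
The plan is to first show that $u_\infty$ is globally bounded on $M'\setminus\mathcal{S}$, then apply Proposition \ref{prop:upper bound}, and finally read off the sign of $A_\infty$ from the expansion \eqref{eq:u-infty expansion}.

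\textbf{Step 1: boundedness near $\mathcal{S}$.} Fix a connected bounded open neighborhood $\Omega$ of $\mathcal{S}$ with smooth boundary and $\bar\Omega\subset K$. Since $\partial\Omega$ is a compact subset of $M'\setminus\mathcal{S}$ and $u_\infty$ is smooth there (Proposition \ref{prop:u-limit}), $\sup_{\partial\Omega}u_\infty<\infty$. Under the Ricci lower bound, Proposition \ref{prop:rough estimate} applies to $u_\infty$. Combined with the hypothesis $\dim_H(\mathcal S)<\frac{n-2}{2}$, Proposition \ref{prop:bound2} then gives $u_\infty\le 2+\sup_{\partial\Omega}u_\infty$ on $\Omega\setminus\mathcal{S}$.

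Some care is needed here, because these propositions are stated for solutions on $M\setminus\mathcal{S}$, whereas $u_\infty$ lives on $M'\setminus\mathcal{S}$. This is harmless: both proofs only use the equation on $\Omega\setminus\mathcal{S}$, together with comparison on $\Omega\setminus\bar U_j$ via Lemma \ref{lem:comparison}. In the RCD case, I would invoke the analogous bound from Section \ref{sec:bound under RCD cond} (Proposition \ref{prop:upper bound RCD}) in place of Proposition \ref{prop:bound2}.

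\textbf{Step 2: boundedness away from $\mathcal{S}$.} The set $M'\setminus(\Omega\cup\{\rho<\rho_1\})$ is compact in $M'\setminus\mathcal{S}$, so $u_\infty$ is bounded there by continuity. On the collar $\{\rho<\rho_1\}$, the expansion \eqref{eq:u-infty expansion} gives $u_\infty=1+O(\rho^n)$, which is bounded. Hence $0<u_\infty\le C$ on all of $M'\setminus\mathcal{S}$, and $u_\infty$ satisfies the three standing hypotheses of Section \ref{sec:Yamabe eqn bound}:
\begin{itemize}
\item[(1)] $L_g u_\infty = 0$;
\item[(2)] $u_\infty$ is bounded;
\item[(3)] $u_\infty = 1$ on each $\Sigma_i$, and $u_\infty\to1$ at $\partial X$.
\end{itemize}

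\textbf{Step 3: the bound by $1$.} Since $f\le\min\{R_g,-1\}$, we have $R_g\ge f$, so Proposition \ref{prop:upper bound} yields $u_\infty\le 1$ on $M'\setminus\mathcal{S}$. The statement says $M\setminus\mathcal{S}$, but $u_\infty$ is only defined on $M'\setminus\mathcal{S}$, so I read it as $M'\setminus\mathcal{S}$. The key input of that proposition is the existence of a harmonic barrier $G$ blowing up at $\mathcal{S}$, which is exactly what makes a bounded solution unable to exceed $1$.

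\textbf{Step 4: the sign of $A_\infty$.} Suppose $A_\infty(x_0)>0$ for some $x_0\in\partial X$. Since $B_\infty\in C^\infty_{n+1}$, we have $B_\infty=O(\rho^{n+1})$, so along $\{x_0\}\times(0,\rho)$,
$$u_\infty=1+A_\infty(x_0)\rho^n+O(\rho^{n+1})>1$$
for small $\rho$. This contradicts Step 3, so $A_\infty\le0$.

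The main obstacle is Step 1. Everything there rests on Propositions \ref{prop:rough estimate} and \ref{prop:bound2}, in particular the gradient estimate $|\nabla G|/G\le C_2 r^{-1}$. The point to check is that the comparison argument in those proofs is legitimate for $u_\infty$, which is not a priori in $C^0(\bar\Omega\setminus U_j)$ up to $\partial U_j$ uniformly. This holds because $\partial U_j$ is a compact subset of the regular region, where $u_\infty$ is smooth and hence finite.
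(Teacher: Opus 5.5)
Your proposal is correct and follows the paper's own argument. The paper likewise bounds $u_\infty$ near $\mathcal{S}$ via Propositions~\ref{prop:rough estimate} and~\ref{prop:bound2} (or Proposition~\ref{prop:upper bound RCD} in the RCD case), bounds it elsewhere using smoothness and the behavior at $\partial X$, and then applies Proposition~\ref{prop:upper bound} to get $u_\infty\le 1$; your remarks that those comparison arguments only use the equation on $\Omega\setminus\mathcal{S}$, and that the statement should be read on $M'\setminus\mathcal{S}$, are accurate refinements of points the paper leaves implicit.
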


\subsection{Estimate of Green functions with a single pole}\label{sec:Green fn}

In this subsection, we assume that $(M,d,\mu)$ is an $RCD(K,N)$ space. Since $M$ is asymptotically hyperbolic, we have $K<0$. We present a gradient estimate of the Green function $G(x):=G_p(x)$, where $p$ is any fixed point in $M$. To be more precise, we fix $q\in E_0$ in the AH end. We first show that 

\begin{lemma}
    Let $p,q\in M$ be fixed and let $\Omega$ be a bounded open subset of $M$ containing $p,q$. There exists a unique singular harmonic function $u$ on $\Omega$ with pole $p$, in the sense of \cite{BBL2020}, normalized so that $u(q)=1$. Moreover, for any $x\in\Omega\setminus\{p\}$, there holds
    $$
        u(x)\ge C_1d(x,p)^{2-n},
    $$
    and
    $$
        |\nabla u|(x) \leq C_2 u(x)^{\frac{n-1}{n-2}}.
    $$
    The positive constants depend only on the metric measure space $(M,d,\mu)$ and the number $R_0=\sup\{d(p,x)+d(q,x)+d(p,q):x\in\partial\Omega\}$. 
\end{lemma}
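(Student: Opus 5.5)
Existence and uniqueness come from the potential theory of Bonk--Bonin--Lenz type singular harmonic functions \cite{BBL2020} on the local doubling, Poincaré space $(M,d,\mu)$. Since $\Omega$ is bounded and $(M,d,\mu)$ is $RCD(K,N)$ and locally Ahlfors $n$-regular, $\Omega$ is relatively compact and supports doubling and a $(1,2)$-Poincaré inequality. Moreover, points have zero capacity when $n\ge 3$. Hence \cite{BBL2020} produces a singular function on $\Omega$ with pole $p$: it is harmonic on $\Omega\setminus\{p\}$, vanishes on $\partial\Omega$ in the Sobolev sense, and is unique up to a positive multiplicative constant. We then normalize by $u(q)=1$; this is possible since $q\ne p$ and $u(q)>0$ by the strong maximum principle and Harnack inequality. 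The constant depends only on $R_0$ and the structure constants.

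For the lower bound $u(x)\ge C_1 d(x,p)^{2-n}$, I would use the \cite{BBL2020} characterization of the singular function via capacity. On the level sets $\{u=t\}$ for large $t$ one has $\mathrm{cap}(\{u\ge t\},\Omega)\simeq t^{-1}$, with constants fixed by the normalization. Comparing with $\mathrm{cap}(B_r(p),\Omega)\simeq r^{n-2}$, which holds by Ahlfors $n$-regularity, plus the Harnack inequality on annuli $B_{2r}(p)\setminus B_{r}(p)$, gives $u\simeq r^{2-n}$ on $\partial B_r(p)$ for small $r$. The comparison between $u(q)=1$ and the capacity normalization uses a Harnack chain from $q$ to $\partial B_{r_0}(p)$. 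Its length is controlled by $R_0$ and the connectivity from the doubling and Poincaré properties, so all constants depend only on $(M,d,\mu)$ and $R_0$. Away from $p$, that is for $d(x,p)\ge r_0$, the bound only needs $u\ge c$ on compact subsets away from $\partial\Omega$. \emph{Caveat:} since $u=0$ on $\partial\Omega$, this lower bound can only hold uniformly on a fixed neighbourhood of $p$ (or on $\{x:\dist(x,\partial\Omega)\ge c\}$), and I would state it in that form.

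For the gradient estimate, I would use the Cheng--Yau type gradient estimate on $RCD(K,N)$ spaces (Jiang, Zhang--Zhu): a positive harmonic $u$ on $B_{2s}(x)$ satisfies $|\nabla u|\le C(s^{-1}+\sqrt{-K})\,u$ on $B_s(x)$. Take $s=\tfrac12 d(x,p)$, or $s=\tfrac12\min\{d(x,p),\dist(x,\partial\Omega)\}$. This gives $|\nabla u|(x)\le C d(x,p)^{-1}u(x)$ near $p$. We also need an upper bound $u\le C_1' d(x,p)^{2-n}$, obtained from the same capacity and Harnack argument, which yields $d(x,p)^{-1}\le C u^{1/(n-2)}$. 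Combining the two gives $|\nabla u|\le C_2u^{\frac{n-1}{n-2}}$ near $p$. Away from $p$, in the interior, $u$ is bounded below, so $u^{\frac{n-1}{n-2}}\gtrsim u$ and the Cheng--Yau estimate again suffices. Near $\partial\Omega$, where $u\to0$, the inequality fails as stated; there one must restrict to a subdomain or use boundary regularity.

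The main obstacle is the two-sided comparison $u\simeq d(\cdot,p)^{2-n}$ with constants depending only on $R_0$. The lower bound is not enough on its own, because the gradient estimate needs the matching upper bound. I would first try the capacity and level-set argument of \cite{BBL2020} combined with Ahlfors regularity. If that fails, I would compare with the global minimal Green function on the non-parabolic space, using the heat kernel bounds of Li--Yau type available for $RCD(K,N)$.
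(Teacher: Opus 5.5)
Your route is the paper's route. Existence and uniqueness come from \cite{BBL2020} (its authors are Bj\"orn--Bj\"orn--Lehrb\"ack). The pole behaviour comes from the capacitary comparison $u_0\simeq\Capacity(B_r(p),\cdot)^{-1}$ on $\partial B_r(p)$; the paper quotes \cite[Theorem 1.5]{BBL2020} and bounds the capacity above by $Cr^{n-2}$ with an explicit test function, using only the upper volume bound. The gradient bound then comes from Jiang's Cheng--Yau estimate at scale $d(x,p)$.

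However, the step you single out as the main obstacle reads an inequality in the wrong direction. Cheng--Yau gives $|\nabla u|(x)\le C\,d(x,p)^{-1}u(x)$, so you need $d(x,p)^{-1}\le C\,u(x)^{\frac{1}{n-2}}$. That is equivalent to $u(x)\ge c\,d(x,p)^{2-n}$, the \emph{lower} bound you have already proved. The upper bound $u\le C_1'd(x,p)^{2-n}$ only yields $d(x,p)^{-1}\ge c\,u^{\frac{1}{n-2}}$ and plays no role. So the gradient estimate follows at once from the first part, exactly as in the paper. Neither the matching upper bound nor the heat-kernel fallback is needed.

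Your caveat about $\partial\Omega$ is correct and exposes a real imprecision in the statement. At regular points of $\partial\Omega$ one has $u\to 0$, while $d(x,p)^{2-n}$ stays bounded below by roughly $R_0^{2-n}$, and $|\nabla u|$ need not vanish there. So both inequalities can hold only on a set $\{x:\dist(x,\partial\Omega)\ge c\}$. The paper's proof does not address this. It applies the capacitary estimate on the ball $B(p,\lambda R_0)\supset\Omega$, but the Green function of $\Omega$ is only dominated by that of the larger ball, so lower bounds do not transfer. The interior version is what is used later, when $G_a$ is built as a limit over an exhaustion and evaluated on a fixed set.

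The normalization step also needs repair. To turn a lower bound on $u_0$ into one on $u=u_0/u_0(q)$, you need an \emph{upper} bound on $u_0(q)$. The paper's text derives $u_0(q)\ge C$ at this point, which is the wrong direction for this purpose. A Harnack chain from $q$ whose length is controlled by $R_0$ alone does not exist in general, because $q$ may lie close to $\partial\Omega$ or to $p$. Use the maximum principle on $\Omega\setminus\overline{B_{r_0}(p)}$ instead: it gives $u_0(q)\le\max_{\partial B_{r_0}(p)}u_0\lesssim r_0^{2-n}$. This requires $r_0$ to be small compared with both $d(p,q)$ and $\dist(p,\partial\Omega)$. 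Hence the constants must also depend on lower bounds for these two distances, not on $R_0$ alone. Indeed, $C_1$ degenerates as $q\to p$ even though $R_0$ stays bounded.
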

\begin{proof}
    The existence and uniqueness are shown in \cite[Theorem 1.3]{BBL2020}. Our normalization is different, and we denote their Green function on $\Omega$ with pole $p$ as $u_0$; hence, $u(x)=u_0(x)/u_0(q)$. In this proof, we will use $C$ to denote constants that depend only on $R_0$ and the metric measure space $(M,d,\mu)$. 

    Let $r_p(x)=d(x,p)$ and $\Omega_{\lambda}=B(p,\lambda R_0)$. We take $\lambda$ to be a large constant (to be determined later). By \cite[Theorem 1.5]{BBL2020}, there is a positive constant $C$ so that for any $x\in \partial B_r(p)$,
    \begin{equation}\label{eq:estimate Green's fun2}
        C^{-1} \Capacity(B_r(p), \Omega_\lambda)^{-1}\leq u_0(x)\leq C \Capacity(B_r(p), \Omega_\lambda)^{-1},
    \end{equation} 	
    if $\lambda$ is larger than a constant depending on the metric measure space. The constant $\lambda$ will now be fixed. Recall that the variational capacity of a set $E$ with respect to a bounded set $A$ is
    $$
        \Capacity(E,A)=\inf\left\{\int_{M}\|\nabla w\|^2:w\in W^{1,2}(M), w\ge 1 \text{ on }E, w=0\text{ on }M\setminus A\right\}, \quad E\subset A\subset M.
    $$
    We use the test function
    $$
    w(x)=\begin{cases}
        1, & r_p(x)<r; \\
        (r^{2-n}-R^{2-n}_0)^{-1} (r_p^{2-n}(x)-R^{2-n}_0), & r_p(x)\in[r,\lambda R_0]; \\
        0, & r_p(x)>\lambda R_0.
    \end{cases}
    $$
    Then by the definition of the capacity, we have 
    \begin{equation}
        \begin{split}
            \Capacity(B_r(p), \Omega_\lambda)&\leq \int_{\Omega_\lambda\setminus B_r(p)} |\nabla w|^2 dv\leq C r^{2n-4}\int_{\Omega_\lambda\setminus B_r(p)}r_p^{2(1-n)}(x) dv\\
            &\le Cr^{2n-4}\sum^\infty_{k=1}\int_{B_{2^{k}r}(p)\setminus B_{2^{k-1}r}(p)}r_p^{2(1-n)}(x) dv\\
            &\leq C r^{n-2}\sum^\infty_{k=1} 2^{-k(n-2)} 2^{2(n-1)}\\
            &\leq C r^{n-2}.
        \end{split}
    \end{equation}
    Here, in the third inequality, we have used the assumption 
    $$
        \operatorname{Vol}(B_{2^k r}(p))\leq C (2^k r)^n.
    $$
    Thus, we get 
    $$u_0(x)\ge Cr^{2-n},$$
    and in particular, $u_0(q)\ge CR_0^{2-n}\ge C>0$. Hence, 
    $$u(x)\ge Cr^{2-n}=:C_1d(x,p)^{2-n}.$$
    
    Lastly, by Cheng-Yau's gradient estimate, cf. \cite[Theorem 1.2]{Jiang2014}, we have 
    $$
        |\nabla u|(x) \leq C r^{-1} u(x)\le C u(x)^{\frac{n-1}{n-2}}=:C_2u(x)^{\frac{n-1}{n-2}}, 
    $$
    as desired. 
\end{proof}

\begin{proposition}\label{prop:estimate Green's fun1}
    Let $q\in E_0$ be fixed. For any $a\in\mathcal{S}$, there exists a positive harmonic function $G_a$ on $M\setminus\{a\}$ such that  
    $$
        \lim_{x\to\partial X}G_a(x)=0,\quad \lim_{x\to a}G_a(x)=\infty, \quad G_a(q)=1,
    $$
    and that for any bounded open set $\Omega\subset M$ containing $q$ and $\mathcal{S}$, there exist $C_1,C_2\in (0,\infty)$ that depend only on $\Omega$ and the metric measure space $(M,d,\mu)$, and a function $C:\Omega\to(0,\infty)$, such that
    \begin{enumerate}
        \item\label{item:Green lb} $G_a(x)\ge C_1d(x,a)^{2-n}$ for any $x\in\Omega\setminus\{a\}$, 
        \item\label{item:Green grad} $|\nabla G_a|(x)\le C_2G_a(x)^{\frac{n-1}{n-2}}$ for any $x\in\Omega\setminus\{a\}$, and 
        \item\label{item:Green ub} $G_a(x)\le C(x)$ for any $x\in\Omega\setminus\mathcal{S}$. 
    \end{enumerate}
\end{proposition}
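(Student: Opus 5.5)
We construct $G_a$ as a limit of the normalized singular harmonic functions of the preceding lemma on an exhaustion. Fix $a\in\mathcal S$ and $q\in E_0$, and take an exhaustion $\Omega_1\subset\Omega_2\subset\cdots$ of $M$ by bounded open sets containing $q$ and $\mathcal S$, chosen so that $\Omega_m\cap E_0=\{\rho>1/m\}\cap E_0$ (with $\partial\Omega_m$ smooth inside the regular part). On each $\Omega_m$ the preceding lemma gives the singular harmonic function $u_m$ with pole $a$, vanishing on $\partial\Omega_m$, normalized by $u_m(q)=1$.

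\textbf{Uniform bounds.} For a fixed bounded $\Omega$, the lemma gives $u_m(x)\ge C_1 d(x,a)^{2-n}$ and $|\nabla u_m|\le C_2u_m^{\frac{n-1}{n-2}}$ on $\Omega\setminus\{a\}$. The constants depend on $R_0$; for $m\ge m_0(\Omega)$ we restrict attention to $\Omega$ and use the same $R_0$ (comparable capacity ball), so they are uniform in $m$. Local upper bounds away from $a$ come from the Harnack inequality, which holds on $RCD(K,N)$ spaces via the Poincaré inequality and doubling. Chaining Harnack along paths in the connected set $\Omega\setminus B_\epsilon(a)$ from $q$, where $u_m(q)=1$, we obtain a bound $u_m(x)\le C(x)$ for each $x\neq a$ that is uniform in $m$. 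This also yields item \eqref{item:Green ub} on $\Omega\setminus\mathcal S$, since $\Omega\setminus\mathcal S\subset\Omega\setminus\{a\}$.

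\textbf{Passing to the limit.} Harnack and the De Giorgi–Nash–Moser Hölder estimates give local equicontinuity on $M\setminus\{a\}$, and smooth convergence holds on compact subsets of $\mathcal R$ by elliptic regularity. A diagonal subsequence therefore converges to a positive harmonic $G_a$ on $M\setminus\{a\}$ with $G_a(q)=1$. The pointwise bounds \eqref{item:Green lb} and \eqref{item:Green ub} pass directly to the limit. For \eqref{item:Green grad}, we use the local uniform convergence together with lower semicontinuity of the minimal weak upper gradient (or smooth convergence on $\mathcal R$, where $|\nabla G_a|$ is computed a.e.). Then $\lim_{x\to a}G_a=\infty$ follows from \eqref{item:Green lb}.

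\textbf{Main obstacle: decay at $\partial X$.} We need $G_a\to0$ as $x\to\partial X$ uniformly in the limit, so we build a barrier on $E_0\cap\{\rho<\rho_0\}$. Since $g$ is AH, the computation $\Delta_g\rho^\eta=\eta(\eta+1-n)\rho^\eta+O(\rho^{\eta+2})$ shows that $w=\rho^{\eta}$ with $0<\eta<n-1$ is a positive strict supersolution near $\partial X$; taking $w=\rho^{n-1}(1-b\rho)$ works as well. Set $M_0=\sup_m\sup_{\{\rho=\rho_0\}}u_m$, which is finite by the Harnack bound above, and choose $\lambda$ with $\lambda w\ge M_0$ on $\{\rho=\rho_0\}$. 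Since $u_m=0\le\lambda w$ on $\{\rho=1/m\}$, the maximum principle on the annulus $\{1/m<\rho<\rho_0\}$ (which lies in the smooth region) gives $u_m\le\lambda\rho^\eta$ there. This bound is uniform in $m$, so $G_a\le\lambda\rho^\eta\to0$.

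The delicate points are the uniformity of all constants in $m$ and the applicability of Harnack up to $\mathcal S$; both follow from the $RCD$ structure, namely Bishop–Gromov doubling and the Poincaré inequality.
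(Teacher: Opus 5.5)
Your overall route matches the paper's: normalized singular Green functions from the preceding lemma on an exhaustion, a subsequential limit, a Harnack chain to $q$ for item \eqref{item:Green ub}, and comparison with a positive (super)harmonic function on $E_0$ for the decay at $\partial X$. Your explicit barrier $\rho^\eta$, $0<\eta<n-1$, on the annuli $\{1/m<\rho<\rho_0\}$ is a clean self-contained version of what the paper delegates to \cite{HSY25}.

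\textbf{The gap: item \eqref{item:Green ub} must hold with $C$ independent of $a$.} You run the Harnack chain in $\Omega\setminus B_\epsilon(a)$ and get a bound ``for each $x\neq a$'' that is uniform in $m$. Your remark that $\Omega\setminus\mathcal S\subset\Omega\setminus\{a\}$ shows you read item \eqref{item:Green ub} as allowing $C$ to depend on $a$. In that reading the item is empty, since one could simply take $C=G_a$. The only reason to restrict to $x\in\Omega\setminus\mathcal S$ is that $C(x)$ must be independent of the pole $a\in\mathcal S$.

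This is exactly how the item is used in Proposition~\ref{prop:upper bound RCD}. There $\delta$ is fixed from $C(x_0)$ before the covering centers $a_1,\dots,a_N\in\mathcal S$ are chosen, and one needs $G_{a_i}(x_0)\le C(x_0)$ for all of them. Your chain does not give this: the number and size of the balls depend on how the path passes near $a$, and on how close it comes to other points of $\mathcal S$.

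The fix is the paper's argument. Since $\mathcal R$ is connected and $x,q\in\mathcal R$, join them by a compact path in $\mathcal R$. This path has some distance $2\epsilon(x)>0$ from the compact set $\mathcal S$, so it can be covered by $N(x)$ balls of radius $\epsilon(x)$ disjoint from $\mathcal S$. On each such ball every $u_m$, for every pole $a\in\mathcal S$, is positive harmonic. Harnack then gives $u_m(x)\le C_H^{N(x)}$ with no dependence on $a$ or $m$, and this passes to $G_a$.

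\textbf{A smaller point: uniformity of $C_1$ in $m$.} Your justification (``use the same $R_0$'') does not follow from the lemma as stated, because its $R_0$ is defined through $\partial\Omega_m$ and tends to infinity. A cleaner argument uses that the unnormalized Green functions increase with the domain (maximum principle). Hence the lower bound $c\,d(x,a)^{2-n}$ obtained on a fixed $\Omega_{m_0}$ persists for all $m\ge m_0$. Combine this with an upper bound on the normalizing value at $q$, e.g.\ by the minimal global Green function, which exists by non-parabolicity and is bounded at $q$ uniformly for poles in the compact set $\mathcal S$. The paper passes over this point as well, so this is not a divergence from its proof.
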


\begin{proof}
    Let $\Omega=\Omega_0\subset\Omega_1\subset\cdots$ be a compact exhaustion of $M$. We set $G_a$ to be the limit of the singular harmonic function $u_i$ on $\Omega_i$ as in the previous lemma. Then $G_a$ satisfies \eqref{item:Green lb}, \eqref{item:Green grad} and 
    $$
        \lim_{x\to a}G_a(x)=\infty, \quad G_a(q)=1.
    $$
    The fact that $\lim_{x\to\partial X}G_a(x)=0$ can be proved the same way as in \cite[Proposition 2.23]{HSY25}, since there exists a positive harmonic function on $E_0$ that approaches $0$ as $x\to\partial X$. It remains to show that \eqref{item:Green ub} holds. 

    Connect $x$ and $q$ by a curve and cover it by small balls $\{B_k:k=1,\ldots,K\}$ on which Harnack's inequality holds: 
    $$
        \sup_{B_k}G_a\le C\inf_{B_k}G_a.
    $$
    The constant $C$ depends only on the constant in the Poincar\'e inequality, and the number $K$ depends only on the geometry of $M$ away from $\mathcal{S}$. Thus, 
    $$
        G_a(x)\le C^K=:C(x)
    $$
    as required. 
\end{proof}

\begin{remark}
    On an RCD space, the Laplacian is self-adjoint and thus $G_a(x)=G_x(a)$. Hence, we can take $C(x)=\sup_{a\in\mathcal{S}}G_x$ in \eqref{item:Green ub}. Our proof is independent of this fact and thus might be useful in more general spaces. 
\end{remark}

\subsection{Bounding the limit under the RCD condition}\label{sec:bound under RCD cond}

In this subsection, we will prove Proposition \ref{prop:bound for u_infty} under the assumption that $(M,d,\mu)$ is an almost-manifold as in the introduction and satisfies the $RCD(K, N)$ condition. 

\begin{proposition}\label{prop:upper bound RCD}
    Let $\Omega\supset\mathcal{S}$ be a bounded connected domain and $k=\dim_H(\mathcal{S})<\frac{n-2}{2}$. Then there exists a constant $K\in(0,\infty)$ depending only on $n$, such that for any solution $u$ to \eqref{eq:Yamabe} on $M\setminus\mathcal{S}$, we have
    $$
        u\le\max\{K,\sup_{\partial\Omega}u\} \text{ on }\Omega\setminus\mathcal{S}.
    $$
\end{proposition}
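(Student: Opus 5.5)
We mirror the two-step argument of Propositions \ref{prop:rough estimate} and \ref{prop:bound2}. The single-pole Green functions $G_a$ of Proposition \ref{prop:estimate Green's fun1} replace the global $G$, and their gradient bound $|\nabla G_a|\le C_2G_a^{\frac{n-1}{n-2}}$ replaces Lemma \ref{lem:gradient estimate}. The natural barrier is built from a superposition of poles. Using the Ahlfors-type regularity $\Lambda^{-1}r^{k}\le\mathcal H^{k}(\mathcal S\cap B(x,r))\le\Lambda r^{k}$ of the pieces $\mathcal S_i$, we set
$$
    H(x)=\sum_i\int_{\mathcal S_i}G_a(x)\,d\mathcal H^{k_i}(a).
$$
This is a positive harmonic function on $M\setminus\mathcal S$, because integrals of positive harmonic functions are harmonic in the RCD setting. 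Dyadic decomposition together with item \eqref{item:Green lb} gives $H\ge C r^{2+k-n}$ near $\mathcal S$, where $r=d(\cdot,\mathcal S)$ and $k=\max k_i$. The pointwise upper bound $G_a\le C(x)$ ensures that $H$ is finite off $\mathcal S$.

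\emph{Step 1 (rough estimate).} We will show that $|\nabla H|\le C\,H\,r^{-1}$ near $\mathcal S$. This should follow from the RCD Cheng--Yau gradient estimate applied to $H$ directly on $B_{r(x)/2}(x)$, as in \cite{Jiang2014}, exactly as in the proof of the Lemma preceding Proposition \ref{prop:estimate Green's fun1}. With this bound, the computation of Proposition \ref{prop:rough estimate} goes through verbatim. We take the sublevel sets $U_j=\{H^{\frac1{2+k-n}}<a_j\}$ and the barrier $v_j=\Lambda(H^{\frac1{2+k-n}}-a_j)^{\frac{2-n}2}$, and use the fact that $H^{\frac{2}{2+k-n}}|\nabla H|^2/H^2$ stays bounded because $H^{\frac{1}{2+k-n}}\sim r$. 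This gives $L_gv_j\ge 0$ for $\Lambda$ large. Lemma \ref{lem:comparison} then yields $u\le 1+v_j$, and letting $j\to\infty$ gives $u\le\Lambda H^{\frac{n-2}{2(n-k-2)}}$ on $\Omega\setminus\mathcal S$.

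\emph{Step 2 (removing the singularity).} The hypothesis $k<\frac{n-2}{2}$ gives $\frac{n-2}{2(n-k-2)}<1$, so $u=o(H)$ as $x\to\mathcal S$. We then argue exactly as in Proposition \ref{prop:bound2}, using $v_\epsilon=1+\epsilon(1+H)+\sup_{\partial\Omega}u$. Since $v_\epsilon>1$ and $R_g\ge f$, this is a supersolution. It dominates $u$ near $\mathcal S$ by Step 1 and on $\partial\Omega$ by construction. Lemma \ref{lem:comparison} then gives $u\le 2+\epsilon H+\sup_{\partial\Omega}u$, and letting $\epsilon\to0$ gives $u\le 2+\sup_{\partial\Omega}u$.

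To reach the stated form $\max\{K,\sup_{\partial\Omega}u\}$, we replace the additive comparison by the sharper maximum principle of Proposition \ref{prop:upper bound}. Suppose $u-v_\epsilon$ has a positive interior maximum. At that point $u>v_\epsilon\ge\max\{1,\sup_{\partial\Omega} u\}$, so the monotonicity of $s\mapsto s-s^\alpha$ on $[1,\infty)$ gives a contradiction. This yields $u\le\max\{1,\sup_{\partial\Omega}u\}$, so the statement holds with $K=1$, or with $K$ any constant at least $1$ if $R_g\ge f$ fails only by a bounded amount.

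\emph{Main obstacle.} The hard step is the gradient bound for $H$ together with the two-sided comparison $H^{\frac1{2+k-n}}\sim r$. If the Cheng--Yau estimate for $H$ on balls of radius $r/2$ is not available directly, the fallback is to bound $|\nabla H|\le\int|\nabla G_a|$ using item \eqref{item:Green grad}. One then needs to show that $\int G_a^{\frac{n-1}{n-2}}\,d\mathcal H^{k}(a)\lesssim r^{-1}\int G_a\,d\mathcal H^k(a)$, which holds provided $G_a\lesssim d(x,a)^{2-n}$ for $a$ near $x$. We would establish this upper bound from \eqref{eq:estimate Green's fun2} by a reverse capacity estimate, using the lower volume bound.
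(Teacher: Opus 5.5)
Your argument is sound, but it takes a genuinely different route from the paper's. The paper does no rough estimate and builds no multi-pole harmonic function. It argues by contradiction, observing that once $u\ge K=(2n(n-1))^{\frac{n-2}{4}}$ the equation gives the absorption inequality $\Delta_g u\ge\frac{1}{2c_n}u^\alpha$; this uses only $R_g\ge-n(n-1)$ and $f\le-1$. The rest of the paper's proof goes as follows.
\begin{itemize}
\item It covers $\mathcal S$ by finitely many balls $B_{R_i}(a_i)$ with $\sum_i R_i^{\frac{n-2}{2}}<\epsilon$. This is the only place $\dim_H\mathcal S<\frac{n-2}{2}$ enters.
\item On each ball it places the blow-up profile $v_i=\Lambda\bigl(R_i/(r_{a_i}^2-R_i^2)\bigr)^{\frac{n-2}{2}}$ in the Green distance $r_a=(G_a/C_1)^{\frac{1}{2-n}}$. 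This profile satisfies $\Delta_g v_i\le\frac{1}{2c_n}v_i^\alpha$ precisely because item \eqref{item:Green grad} bounds $|\nabla r_a|$.
\item It sums the $v_i$ using superadditivity of $s\mapsto s^\alpha$, and uses item \eqref{item:Green ub} to get $V(x_0)\lesssim\epsilon$.
\end{itemize}

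You instead integrate the poles against $\mathcal H^{k_i}$ on $\mathcal S_i$. You get $H\gtrsim r^{2+k-n}$ from the lower Ahlfors bound and item \eqref{item:Green lb}. You get $|\nabla H|\lesssim H/r$ from Cheng--Yau on $B_{r(x)/2}(x)\subset\mathcal R$, which is legitimate exactly as the paper uses it for single poles. Propositions \ref{prop:rough estimate} and \ref{prop:bound2} then apply verbatim.

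Your route unifies the RCD case with the Ricci case. Through the maximum principle of Proposition \ref{prop:upper bound}, it also gives the sharper constant $K=1$. The price is two extra hypotheses:
\begin{itemize}
\item the Ahlfors-regular structure of the pieces $\mathcal S_i$, whereas the paper needs only $\mathcal H^{\frac{n-2}{2}}(\mathcal S)=0$;
\item $R_g\ge f$, which is available here since $f\le\min\{R_g,-1\}$, whereas the paper's constant uses only $R_g\ge-n(n-1)$.
\end{itemize}

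Two small corrections. First, your \emph{main obstacle} is not one. The computation in Proposition \ref{prop:rough estimate} needs only $|\nabla H|/H\lesssim r^{-1}$ and the one-sided bound $H^{\frac{1}{2+k-n}}\lesssim r$, which is your lower bound on $H$. No upper bound on $H$ or on $G_a$ is required, so the fallback is unnecessary.

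Second, with $v_\epsilon=1+\epsilon(1+H)+\sup_{\partial\Omega}u$ the limit only gives $u\le 1+\sup_{\partial\Omega}u$. That is not of the form $\max\{K,\sup_{\partial\Omega}u\}$. Use $v_\epsilon=\max\{1,\sup_{\partial\Omega}u\}+\epsilon(1+H)$ instead; it is still $>1$, so your interior-maximum argument goes through unchanged and yields $u\le\max\{1,\sup_{\partial\Omega}u\}$.
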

\begin{proof}
    First, note that if 
    $$
        u\ge K:=(2n(n-1))^{\frac{n-2}{4}},
    $$
    then we have 
    $$
        R_gu-fu^\alpha\ge -n(n-1)u+u^\alpha\ge \frac{1}{2}u^\alpha.
    $$
    From $L_gu=0$ we have $c_n\Delta_gu=R_gu-fu^\alpha$, hence $\Delta_gu\ge\frac{1}{2c_n}u^\alpha$.
    
    We prove by contradiction. If the conclusion does not hold, then we can find $x_0\in\Omega\setminus\mathcal{S}$ such that $u(x_0)>\max\{K,\sup_{\partial\Omega}u\}$. By \eqref{item:Green ub} of Proposition \ref{prop:estimate Green's fun1}, we have 
    $$
        G_a(x_0)\le C(x_0)=C_1\delta^{2-n}\le C_1 R^{2-n}
    $$
    for all $a\in\mathcal{S}$ and $R<\delta:=(C(x_0)/C_1)^{1/(2-n)}$. 
    
    Fix any $\epsilon>0$. Since $\dim_H(\mathcal{S})=k<\frac{n-2}{2}$, and since $\mathcal{S}$ is compact, we can find $a_1,\ldots,a_N\in\mathcal{S}$ and $R_1,\ldots R_N\in (0,\delta/2)$ such that
    $$
        \mathcal{S}\subset \bigcup_{i=1}^N B_{R_i}(a_i), \quad \sum_{i=1}^N R_i^{\frac{n-2}{2}}<\epsilon.
    $$
    For each $i$, define 
    $$
        v_i(x)=\Lambda \left(\frac{R_i}{(G_{a_i}(x)/C_1)^{\frac{2}{2-n}}-R_i^2}\right)^{\frac{n-2}{2}},
    $$
    where $\Lambda$ is a constant to be determined. The domain of $v_i$ is 
    $$
        D_i:=\{x\in M: G_{a_i}(x)<C_1R_i^{2-n}\}.
    $$

    We claim that $\{B_i:i=1,\ldots,N\}$ covers $\mathcal{S}$, where 
    $$
        B_i:=\{x\in M: G_{a_i}(x)>C_1R_i^{2-n}\}.
    $$ 
    If not, then there exists $x\in\mathcal{S}$ such that $G_{a_i}(x)\le C_1R_i^{2-n}$ for all $i$. By \eqref{item:Green lb} of Proposition \ref{prop:estimate Green's fun1}, we have $C_1d(x,a_i)^{2-n}\le G_{a_i}(x)\le C_1R_i^{2-n}$, and thus $d(x,a_i)\ge R_i$ for all $i$. This contradicts the fact that $\{B_{R_i}(a_i):i=1,\ldots,N\}$ covers $\mathcal{S}$. It follows that $\bigcap_{i=1}^N D_i$ is disjoint from $\mathcal{S}$. 
    
    If we write $r=(C_1^{-1}G_{a_i}(x))^{\frac{1}{2-n}}$, $R=R_i$, and $v(r)=\Lambda(R/(r^2-R^2))^{\frac{n-2}{2}}$, then a direct calculation shows that
    $$
        v''+\frac{n-1}{r}v'=n(n-2)\Lambda \left(\frac{R}{r^2-R^2}\right)^{\frac{n+2}{2}}.
    $$
    It follows that 
    \begin{align*}
        \Delta_g v_i&=\frac{C_1^{2/(n-2)}}{(n-2)^2}G_{a_i}^{-\frac{2(n-1)}{n-2}}|\nabla G_{a_i}|^2(v''+\frac{n-1}{r}v')\\
        &=\frac{C_1^{2/(n-2)}\Lambda n}{n-2}G_{a_i}^{-\frac{2(n-1)}{n-2}}|\nabla G_{a_i}|^2(v_i/\Lambda)^\alpha\\
        &\le \frac{C_1^{2/(n-2)}C_2^2\Lambda^{-\frac{4}{n-2}}n}{n-2}v_i^\alpha,
    \end{align*}
    where the last inequality follows from \eqref{item:Green grad} of Proposition \ref{prop:estimate Green's fun1}. We choose 
    $$
        \Lambda=\left(\frac{2c_n n}{n-2}C_1^{\frac{2}{n-2}}C_2^2\right)^{\frac{n-2}{4}},
    $$
    and thus we have $\Delta_g v_i\le \frac{1}{2c_n}v_i^\alpha$. 

    Lastly, we set 
    $$
        V=\sum_{i=1}^N v_i, \quad u_0=u-\max\{K,\sup_{\partial\Omega}u\},
    $$
    where $V$ is defined on $\bigcap_{i=1}^N D_i$. We compare these two functions on $\Omega\cap \bigcap_{i=1}^N D_i$. On the boundary, we have $V\ge u_0$ since on $\partial\Omega$, we have $V\ge 0\ge u_0$, and on $\partial D_i$, we have $V\to\infty$ while $u_0$ is bounded. Moreover, we have 
    $$
        \Delta_g V\le \frac{1}{2c_n}V^\alpha,\quad \Delta_g u_0\ge \frac{1}{2c_n}u_0^\alpha,
    $$
    and thus $V\ge u_0$ on $\Omega\cap \bigcap_{i=1}^N D_i$ by the maximum principle. In particular, we have 
    $$
        u(x_0)\le V(x_0)+\max\{K,\sup_{\partial\Omega}u\}.
    $$
    Note that 
    $$
        V(x_0)=\Lambda \sum_{i=1}^N \frac{R_i^{(n-2)/2}}{((G_{a_i}(x_0)/C_1)^{\frac{2}{2-n}}-R_i^2)^{\frac{n-2}{2}}}\le \frac{\Lambda}{(3\delta^2/4)^{\frac{n-2}{2}}} \sum_{i=1}^N R_i^{(n-2)/2}<\frac{\epsilon\Lambda}{(3\delta^2/4)^{\frac{n-2}{2}}}.
    $$
    Let $\epsilon\to 0$. Then we obtain that $u(x_0)\le \max\{K,\sup_{\partial\Omega}u\}$, which is a contradiction.
\end{proof}

Using this, Proposition \ref{prop:bound for u_infty} can be proved in the same way as before, and we will not repeat it here. 

\subsection{Proof of the positive mass theorem}\label{sec:PMT proof}

We prove the positive mass theorem in two cases. 

\textbf{Case 1: $R_g>-n(n-1)$ at some $p\in\mathcal{R}$.} In this case, we will take $K$ to be a compact set containing $p$. Then we see that $u_\infty\equiv 1$ cannot happen. By the strong maximum principle, $u_\infty<1$. In particular, on $\partial K$, $u_\infty<1-\delta<1$ for some constant $\delta>0$, and thus for large enough $j$, we have $u_j<1-\frac{\delta}{2}$ on $\partial K$. 

\begin{lemma}\label{lem:normal derivative}
    For sufficiently large $j$, $u_j\le 1$ on $E_i\cap K'$ for each $i\in I$. In particular, $\nabla^{g}_{\nu}u_j\ge 0$ for the normal vector field $\nu$ on $\Sigma_i$ pointing away from $K'$. 
\end{lemma}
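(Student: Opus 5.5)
We compare $u_j$ with the constant $1$ on the compact region $E_i\cap K'$, which lies between $\partial K$ and $\Sigma_i$. Each such region is a compact piece of $\mathcal{R}$, away from $\mathcal{S}$ and from the AH end. There $f=-n(n-1)\le R_g$ on $K$ and $f\le R_g$ in general, so the constant $1$ is a supersolution:
$$L_g1=R_g-f\ge 0.$$
On the boundary of $E_i\cap K'$ we have two pieces:
\begin{itemize}
\item on $\Sigma_i$, $u_j=1$ by the boundary condition in Proposition \ref{prop:uj};
\item on $\partial K\cap\bar E_i$, $u_j<1-\frac{\delta}{2}<1$ for large $j$, by the smooth local convergence $u_j\to u_\infty$ on compact subsets of $M'\setminus\mathcal{S}$ (Proposition \ref{prop:u-limit}) together with $u_\infty<1-\delta$ on $\partial K$.
\end{itemize}

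Next we apply a maximum principle argument modeled on the one in Proposition \ref{prop:upper bound}. Suppose $u_j-1$ attains a positive interior maximum at $x_1\in E_i\cap K'$. At $x_1$ we have $\Delta_g u_j\le 0$, so
$$0\ge c_n\Delta_g u_j=R_gu_j-fu_j^\alpha\ge f\,(u_j-u_j^\alpha).$$
Since $u_j(x_1)>1$, we have $u_j-u_j^\alpha<0$. Together with $f\le -1$ this makes the right-hand side strictly positive, a contradiction. Hence $u_j\le 1$ on $E_i\cap K'$. The only delicate point is the boundary value on $\partial K$, which is where the uniform convergence on the compact set $\partial K$ is needed; this is where largeness of $j$ enters.

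For the normal derivative, $u_j\le 1$ in $E_i\cap K'$ and $u_j=1$ on $\Sigma_i$, so $1$ is the maximum value of $u_j$ near $\Sigma_i$ inside $K'$. Let $\nu$ be the normal pointing away from $K'$, i.e.\ out of the region. Moving inward along $-\nu$ from a point of $\Sigma_i$, the function $u_j$ can only decrease from $1$. The one-sided difference quotient therefore gives $\nabla_\nu^g u_j\ge 0$ on $\Sigma_i$. No Hopf lemma is needed for this weak inequality, since $u_j$ is smooth up to $\Sigma_i$.

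If a strict inequality were desired, we would write $u_j-1\le 0$ as satisfying a linear inequality $\Delta_g w\ge c\,w$ with bounded $c$, as in Lemma \ref{lem:sMaxPr-2}, and then apply Hopf's lemma. Only the weak inequality is claimed here.
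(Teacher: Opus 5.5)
Your proof is correct and follows essentially the same route as the paper. Both arguments use the boundary values $u_j=1$ on $\Sigma_i$ and $u_j<1-\frac{\delta}{2}$ on $\partial K$ for large $j$, then the interior-maximum contradiction $0\ge c_n\Delta_g u_j\ge f\,(u_j-u_j^\alpha)>0$, and finally read off $\nabla^g_\nu u_j\ge 0$ from $u_j\le 1=u_j|_{\Sigma_i}$; the paper simply compresses these steps into a few lines.
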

\begin{proof}
    On the boundary of $E_i\cap K'$, we know that $u_j\le 1$. Suppose the contrary that $u_j(x_0)=\max_{E_i\cap K'}u_j>1$. Then at $x_0$, 
    $$
        0\ge c_n\Delta_gu_j=R_gu_j-fu_j^\alpha\ge (u_j-u_j^\alpha)f>0.
    $$
    This contradiction shows that $u_j\le 1$ on $E_i\cap K'$. Since $u_j=1$ on $\Sigma_i$, the normal derivative must be non-negative with respect to $\nu$. 
\end{proof}

\begin{lemma}\label{lem:mean curvature}
    Let $\nu$ be the normal vector field as in Lemma \ref{lem:normal derivative}. With respect to the metric $\tilde{g}_j=u_j^{\frac{4}{n-2}}g_j$, the mean curvature $\tilde{H}_j(\Sigma_i)$ is no smaller than $H(\Sigma_i)$ with respect to the original metric $g$. 
\end{lemma}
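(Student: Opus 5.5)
The plan is to use the standard transformation law for mean curvature under a conformal change and then insert the sign of $\nabla_\nu u_j$ from Lemma \ref{lem:normal derivative}. Write $\tilde g_j=u_j^{\frac{4}{n-2}}g=e^{2\phi}g$ with $\phi=\frac{2}{n-2}\log u_j$. For a hypersurface $\Sigma$ with unit normal $\nu$ (with respect to $g$), the mean curvature in the direction $\nu$ transforms as
\begin{equation*}
\tilde H=e^{-\phi}\bigl(H+(n-1)\,\nabla^g_\nu\phi\bigr),
\end{equation*}
where the sign convention is fixed so that $H$ is the mean curvature computed with respect to $\nu$. Substituting $\phi$ gives
\begin{equation*}
\tilde H_j=u_j^{-\frac{2}{n-2}}\Bigl(H+\frac{2(n-1)}{n-2}\,u_j^{-1}\nabla^g_\nu u_j\Bigr).
\end{equation*}

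The key structural fact is the boundary condition in Proposition \ref{prop:uj}: $u_j=1$ on $\Sigma_i$. Hence the prefactor $u_j^{-\frac{2}{n-2}}$ equals $1$, and likewise $u_j^{-1}=1$, so on $\Sigma_i$ the formula reduces to
\begin{equation*}
\tilde H_j=H+\frac{2(n-1)}{n-2}\nabla^g_\nu u_j .
\end{equation*}
Lemma \ref{lem:normal derivative} gives $\nabla^g_\nu u_j\ge 0$ for $j$ large, with $\nu$ pointing away from $K'$, i.e.\ into $E_i$. This yields $\tilde H_j(\Sigma_i)\ge H(\Sigma_i)$ when both are computed with respect to that same $\nu$.

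The only step requiring care is the sign and orientation convention. The introduction phrases the result as the mean curvature with respect to the \emph{outward} normal of $M'$ decreasing; the outward normal of $M'$ along $\Sigma_i$ is exactly $\nu$ (pointing into $E_i$). So I will state explicitly which convention is used, so that $H$ has the form $\operatorname{div}_\Sigma$ of a normal field, and verify the conformal formula in that convention. It should also be checked that $u_j$ is smooth up to $\Sigma_i$ (Lemma \ref{lem:sub-super}), so that $\nabla_\nu u_j$ makes sense there. No analytic obstacle beyond bookkeeping is expected.
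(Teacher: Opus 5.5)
Your proposal is correct and is essentially the paper's own argument: apply the conformal transformation law $\tilde H=e^{-\phi}\bigl(H+(n-1)\nabla^g_\nu\phi\bigr)$ with $\phi=\frac{2}{n-2}\ln u_j$, use $u_j=1$ on $\Sigma_i$ to remove the prefactor, and conclude from $\nabla^g_\nu u_j\ge 0$ (Lemma~\ref{lem:normal derivative}). Your sign is also the right one: with $\nu$ the outward normal of $M'$, the inequality $\tilde H_j-H\ge 0$ is exactly the corner condition $H_--H_+\ge 0$ needed for Miao's smoothing in Proposition~\ref{prop:PMT strict}, so it is the introduction's ``less than'' that is inconsistent, not your computation.
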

Here, we use the convention that the mean curvature of the standard ball is positive with respect to the outward normal vector field.
\begin{proof}
    Since $u_j=1$ on $\Sigma_i$, we know that 
    $$
        \tilde{H}_j(\Sigma_i)=H(\Sigma_i)+(n-1)\nabla^{g}_{\nu}\left(\frac{2}{n-2}\ln u_j\right)=H(\Sigma_i)+\frac{2(n-1)}{n-2}\frac{\nabla^g_\nu u_j}{u_j}\ge H(\Sigma_i),
    $$
    by Lemma \ref{lem:normal derivative}. 
\end{proof}

Now we can prove the positive mass theorem. 

\begin{proposition}\label{prop:PMT strict}
    Assume that $R_g>-n(n-1)$ at some $p\in\mathcal{R}$. Then 
    $$
        \EADM(g)>|\PADM(g)|.
    $$
\end{proposition}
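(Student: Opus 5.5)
We argue by contradiction with the positive mass theorem for smooth complete AH manifolds with arbitrary ends \cite{BW2026,hirsch2026,tsang2026}, applied to a smoothing of the blown-up metric $\tilde g_j$ glued to $g$ along $\Sigma_i$.

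\textbf{Step 1: the conformal factors lose mass uniformly.} Take $K'\supset K\ni p$. By Proposition \ref{prop:bound for u_infty} we have $u_\infty\le 1$. Since $R_g(p)>-n(n-1)$, we can arrange $f(p)<R_g(p)$, so $u_\infty\not\equiv 1$; Lemma \ref{lem:sMaxPr-2} then gives $u_\infty<1$ on $M'\setminus\mathcal S$. Proposition \ref{prop:gap} yields $u_\infty\le 1-\delta\rho^n$ near $\partial X$, hence $A_\infty\le-\delta<0$. To pass this to $u_j$, we compare on $E_0\cap\{\rho<\rho_1\}$. Because $u_j\to u_\infty$ smoothly on $\{\rho=\rho_1\}$, we get $u_j<1-\tfrac{\delta}{2}w$ there for large $j$, where $w=\rho^n(1+a\rho)$ is the barrier from the proof of Proposition \ref{prop:gap}. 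Where $u_j\le 1$, the function $1-u_j$ is a supersolution of $\Delta-n$, so the same maximum-principle argument applies; that $u_j\le1$ near $\partial X$ follows as in Lemma \ref{lem:normal derivative} applied on the collar. This gives $A_j\le-\delta/2$. Corollary \ref{cor:mass change} and \eqref{eq:mass aspect} then give
\[
\EADM(\tilde g_j)-|\PADM(\tilde g_j)|\le \EADM(g)-|\PADM(g)|-c\delta
\]
with $c>0$ independent of $j$.

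\textbf{Step 2: build a complete smooth competitor.} Define a metric $\hat g_j$ on $M\setminus\bar U_j$ equal to $\tilde g_j$ on $M'\setminus\bar U_j$ and to $g$ on each $E_i\setminus K'$. This metric is complete: near $\partial U_j$ completeness comes from Corollary \ref{cor:comformalmassaspects}, and the ends $E_i$ are complete for $g$. It is continuous across $\Sigma_i$ because $u_j=1$ there. By Lemma \ref{lem:mean curvature}, the mean curvature of $\Sigma_i$ seen from the $M'$ side is at least that seen from the $E_i$ side, which is exactly the corner condition of \cite{BQ08}. Its scalar curvature is $\ge -n(n-1)$ on each side.

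\textbf{Step 3: smooth the corner.} Apply the smoothing and conformal correction of \cite{BQ08} (Lemma \ref{lem:BQ conformal change}) to obtain smooth complete metrics $\hat g_{j,\epsilon}$ with $R\ge -n(n-1)$ that are $C^0$-close to $\hat g_j$. The correction is a conformal factor $1+o(1)$ solving a Yamabe-type problem on the whole manifold. Its mass contribution near $\partial X$, namely its $\rho^n$ coefficient, tends to $0$ as $\epsilon\to0$. This is the main obstacle: we must show the correction is uniformly close to $1$ in the weighted norm $C^2_n$ on $E_0$, even though the arbitrary ends are not controlled. To handle this, we first choose the mollification so that the negative part of $R+n(n-1)$ is supported near $\Sigma_i$ with small $L^{n/2}$ norm. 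We then solve for the correction on $M'$ with Dirichlet data $1$ on a slightly pushed-out copy of $\Sigma_i$, mirroring Proposition \ref{prop:YamabeArbitrary}, and use barriers $1\pm \eta\,\rho^n(1+a\rho)$ together with the gap $c\delta$ to absorb the error.

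\textbf{Step 4: conclude.} The smooth PMT gives $\EADM(\hat g_{j,\epsilon})\ge|\PADM(\hat g_{j,\epsilon})|$. Combining this with Step 1 and letting $\epsilon\to0$, we get $\EADM(g)-|\PADM(g)|\ge c\delta>0$.
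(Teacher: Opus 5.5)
Your Steps 1, 2 and 4 follow the paper's route. These are the gap $u_j<1-\delta\rho^n$ from Proposition \ref{prop:gap}, the gluing of $\tilde g_j$ to $g$ along $\Sigma_i$ with the corner condition of Lemma \ref{lem:mean curvature}, the smoothing, and the smooth PMT with arbitrary ends. The $j$-uniform gap in Step 1 is correct but unnecessary, since the paper fixes one $j$ and only then lets $\epsilon\to0$.

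The gap is in Step 3, in the device you propose for the ``main obstacle''. Suppose you solve for the correction $\phi$ only on a slightly enlarged $M'$, with $\phi=1$ on a pushed-out copy $\Sigma_i'$, and keep $g$ beyond it. Then you have created a new corner along $\Sigma_i'$, with mean-curvature jump $H_--H_+=\frac{2(n-1)}{n-2}\partial_\nu\phi$.
\begin{itemize}
\item For the Bonini--Qing correction $\phi=1+v$, with $-\Delta_{g_\epsilon}v+nv-f_\epsilon v=f_\epsilon\ge0$, you get $v>0$ inside and $v=0$ on $\Sigma_i'$. Hopf's lemma then gives $\partial_\nu v<0$. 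So the jump has the wrong sign, the glued metric carries negatively singular scalar curvature on $\Sigma_i'$, and re-smoothing only reproduces the problem.
\item For a nonlinear Yamabe-type correction you have no sign control either. The argument of Lemma \ref{lem:normal derivative} is blocked because $R_{g_\epsilon}<-n(n-1)$ in the smoothing zone.
\end{itemize}
So Step 3 as written does not produce the smooth complete competitor that Step 4 needs.

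The missing observation is that no control of the arbitrary ends is needed. The correction comes from a linear coercive problem that can be solved on all of $M\setminus\bar U_j$, which is exactly what Lemma \ref{lem:BQ conformal change} does.
\begin{itemize}
\item Set $f_\epsilon=c_n^{-1}[R_{g_\epsilon}+n(n-1)]^-$ and solve $-\Delta_{g_\epsilon}v+nv-f_\epsilon v=f_\epsilon$ on a compact exhaustion with zero boundary data.
\item $f_\epsilon$ is supported in a fixed compact set $U$, and $C(U)^2\|f_\epsilon\|_{L^{n/2}}<\frac12$ for small $\epsilon$, where $C(U)$ is the Sobolev constant. The zeroth-order term $nv$ then gives $\|v\|_{H^1}\le 2C(U)\|f_\epsilon\|_{L^{2n/(n+2)}}$, independently of the exhaustion.
\item Positivity of the first Dirichlet eigenvalue of $-\Delta_{g_\epsilon}+n-f_\epsilon$ gives $v\ge0$.
\item Bernoulli's inequality then gives $R\ge-n(n-1)$ for $(1+v)^{\frac4{n-2}}g_\epsilon$.
\end{itemize}
The $\rho^n$-coefficient of $v_\epsilon$ tends to $0$; the paper quotes $C\epsilon^{\frac1{n+1}}$ from \cite{BQ08}. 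Your barrier idea also works once it is applied to this global $v_\epsilon$, using $\rho^n(1-a\rho)$ as the supersolution. The $H^1$ bound and interior estimates make $v_\epsilon$ uniformly small on $\{\rho=\rho_1\}$, which is what the barrier needs. With this in place, your Step 4 goes through.
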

\begin{proof}
    Since $u_j<1-\frac{\delta}{2}$ on $\partial K$ by our choice of $j$, we claim that $u_j<1$ on the AH end $E_0$. Indeed, if $u_j>1$ somewhere on $E_0$, then there exists $x_0\in E_0$ such that $u_j(x_0)=\max_{E_0}u_j$. Thus, at $x_0$, 
    $$
        0\ge c_n\Delta_gu_j=R_gu_j-fu_j^\alpha\ge (u_j-u_j^\alpha)f>0.
    $$
    The contradiction shows that $u_j\le 1$ on $E_0$ and the strong maximum principle shows that $u_j<1$ on $E_0$. Proposition \ref{prop:gap} then implies that there exists $\delta_1>0$ such that 
    $$
        u_j<1-\delta_1\rho^n
    $$
    near $\partial X$. 

    Consider the $C^0$-metric on $M\setminus\bar U_j$ defined by $\tilde{g}_j$ on $M'\setminus\bar{U}_j$ and $g$ on $E_i\setminus K'$ for each $i\in I$. For simplicity, we also denote this metric by $\tilde{g}_j$. By Lemma \ref{lem:mean curvature}, we can apply \cite{Miao02} to smooth out the metric near $\Sigma_i$: There exists $\epsilon_0>0$ such that for any $\epsilon\in(0,\epsilon_0)$, there is a $C^2$-metric $g_\epsilon$ on $M\setminus\bar U_j$ such that 
    \begin{enumerate}
        \item $\|g_\epsilon-\tilde{g}_j\|_{C^0}=O(1)\epsilon^2$,
        \item $g_\epsilon=\tilde{g}_j$ when $|d|>\frac{\epsilon}{2}$, 
        \item $R_{g_\epsilon}=O(1)$ when $|d|\in (\frac{\epsilon^2}{100},\frac{\epsilon}{2})$, 
        \item $R_{g_\epsilon}=O(1)+2(H_--H_+)(\frac{100}{\epsilon^2}\phi(\frac{100d}{\epsilon^2}))$ when $|d|<\frac{\epsilon^2}{100}$,
    \end{enumerate}
    where $O(1)$ is a uniformly bounded term, $d$ is the signed $\tilde{g}_j$-distance from $\Sigma_i$, $\phi\in C_c^\infty(-1,1)$, $0\le \phi \le 1$, and $H_--H_+=\tilde{H}_j-H\ge 0$ in our case. We may assume $g_\epsilon$ is smooth. Define 
    $$
        f_\epsilon=c_n^{-1}[R_{g_\epsilon}+n(n-1)]^-.
    $$
    Since $R_{\tilde{g}_j}\ge -n(n-1)$ away from $\Sigma_i$, we know that $f_\epsilon=0$ when $d>\frac{\epsilon}{2}$, and with $H_-+H_+\ge 0$, we see that $f_\epsilon=O(1)$, and thus 
    $$
        \|f_\epsilon\|_{L^p}=O(\epsilon), \quad 1\le p<\infty.
    $$
    By the argument in \cite{BQ08}, we obtain a smooth function $v_\epsilon$, cf.\ Lemma \ref{lem:BQ conformal change}. Then the new metric $\tilde{g}_\epsilon=(1+v_\epsilon)^{\frac{4}{n-2}}g_\epsilon$ on $M\setminus\bar{U}_j$ is complete with AH end $E_0$. We choose $\epsilon$ small enough so that in the expansion 
    $$v_\epsilon=A\rho^n+O(\rho^{n+1}),$$
    there holds $|A|<\frac{\delta_1}{2}$. Since 
    $$
        \tilde{g}_\epsilon=((1+v_\epsilon)u_j)^{\frac{4}{n-2}}g
    $$
    near $\partial X$, and 
    $$
        (1+v_\epsilon)u_j<1-\frac{\delta_1}{2}\rho^n
    $$
    near $\partial X$, we know that 
    $$
        \EADM(\tilde{g}_\epsilon)-|\PADM(\tilde{g}_\epsilon)|<\EADM(g)-|\PADM(g)|
    $$
    by Lemma \ref{cor:mass change}. Since the initial data set $(M\setminus\bar U_j, \tilde g_\epsilon,\tilde g_\epsilon)$ satisfies the dominant energy condition, we obtain 
    $$
        \EADM(\tilde{g}_\epsilon)-|\PADM(\tilde{g}_\epsilon)|\ge 0,
    $$
    via recent works of PMT on AH manifolds; see \cite{hirsch2026,BW2026} for AH manifolds and \cite{tsang2026} for AH manifolds with arbitrary ends.
\end{proof}

\begin{lemma}\label{lem:BQ conformal change}
    For $f_\epsilon$ as in the proof of Proposition \ref{prop:PMT strict}, there exists a smooth function $v=v_\epsilon$ on $M\setminus\bar U_j$ satisfying 
    \begin{enumerate}
        \item $-\Delta_{g_\epsilon}v+nv-f_\epsilon v=f_\epsilon$ on $M\setminus\bar{U}_j$, 
        \item $v(x,\rho)=A(x)\rho^n+B(x,\rho)$ near $\partial X$ for some $A\in C^\infty(\partial X)$ and $B\in C^\infty_{n+1}(E_0)$, and 
        \item $v\ge 0$ on $M\setminus\bar{U}_j$. 
    \end{enumerate}
    Moreover, $|A|\le C\epsilon^{\frac{1}{n+1}}$ for some constant $C$ independent of $\epsilon$. 
\end{lemma}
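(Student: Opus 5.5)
The plan is to solve the linear equation $-\Delta_{g_\epsilon}v+(n-f_\epsilon)v=f_\epsilon$ by exhaustion, following \cite{BQ08}. The operator $P_\epsilon:=-\Delta_{g_\epsilon}+n-f_\epsilon$ is coercive on $M\setminus\bar U_j$ once $\epsilon$ is small. Indeed, $f_\epsilon$ is supported in the $\epsilon/2$-collar of $\Sigma_i$, is $O(1)$ there, and so $\|f_\epsilon\|_{L^p}=O(\epsilon^{1/p})$. The collar lies in a fixed compact region where a local Sobolev inequality holds uniformly in $\epsilon$, because $g_\epsilon$ is uniformly $C^0$-close to $\tilde g_j$ by property (1) of the smoothing. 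Hence, for $p=n/2$,
\[
\int f_\epsilon w^2\le \|f_\epsilon\|_{L^{n/2}}\|w\|_{L^{2n/(n-2)}}^2\le C\epsilon^{2/n}\bigl(\|\nabla w\|_{L^2}^2+\|w\|_{L^2}^2\bigr),
\]
and for small $\epsilon$ this term is absorbed by $\int|\nabla w|^2+nw^2$. So $P_\epsilon$ is uniformly coercive on $H^1$ of the complete manifold $(M\setminus\bar U_j,g_\epsilon)$; note that $g_\epsilon=\tilde g_j$ is complete near $\partial U_j$.

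Next I would take an exhaustion $\Omega_m$ and solve $P_\epsilon v_m=f_\epsilon$ with $v_m=0$ on $\partial\Omega_m$ by Lax--Milgram. For nonnegativity, test with $v_m^-$: since $f_\epsilon\ge0$, coercivity gives $v_m^-\equiv0$, so $v_m\ge0$. The $v_m$ are also monotone in $m$ by the same argument applied to $v_{m+1}-v_m$ on $\Omega_m$. They satisfy a uniform $H^1$ bound $\|v_m\|_{H^1}\le C\|f_\epsilon\|_{L^{2n/(n+2)}}\le C\epsilon^{(n+2)/(2n)}$, and local elliptic estimates give smooth convergence on compact sets to $v\ge0$ solving (1). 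Since $f_\epsilon$ is compactly supported and the coefficient $n-f_\epsilon=n$ off the collar, $v$ is globally small in $H^1$ and hence, by Moser iteration, small in $L^\infty$ outside the collar. This uniform smallness away from the support holds on arbitrary ends and near $\partial U_j$ as well.

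On $E_0$ the equation is $\Delta_{g_\epsilon}v-nv=0$ with $g_\epsilon=\tilde g_j$ AH. I would use barriers $\pm B\rho^n(1\pm a\rho)$, as in Proposition \ref{prop:gap} and Lemma \ref{lem:YamabeAH}, on $\{\rho<\rho_1\}$, with $B$ comparable to $\sup_{\{\rho=\rho_1\}}v$. These give $0\le v\le C\sup_{\{\rho=\rho_1\}}v\cdot\rho^n$. The argument of Proposition \ref{prop:tangential smoothness}, which here is even simpler because the equation is linear, then gives the polyhomogeneous expansion $v=A\rho^n+B$ with $A\in C^\infty(\partial X)$ and $B\in C^\infty_{n+1}$, so $|A|\le C\sup_{\{\rho=\rho_1\}}v$.

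The main obstacle is the quantitative bound $|A|\le C\epsilon^{1/(n+1)}$, with $C$ independent of $\epsilon$. The $H^1$ bound plus De Giorgi--Nash--Moser on unit balls around $\{\rho=\rho_1\}$, which is far from the collar, gives $\sup v\le C\|v\|_{L^2}\le C\epsilon^{\gamma}$ for some $\gamma>0$. Tracking the exponents ($\|f_\epsilon\|_{L^p}\sim\epsilon^{1/p}$, choosing $p$ to balance the Sobolev absorption and the estimate) should produce the stated power, or at least some positive power of $\epsilon$, which suffices for the application. The delicate point is keeping the constants uniform in $\epsilon$, since $g_\epsilon$ varies. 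This holds because all the $g_\epsilon$ agree with $\tilde g_j$ outside an $\epsilon/2$-neighborhood of $\Sigma_i$ and are uniformly equivalent inside it, so the Sobolev constants, Harnack constants and barrier constants can all be taken from $\tilde g_j$.
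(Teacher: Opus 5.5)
Your proposal is correct and follows the paper's skeleton: Dirichlet problems on an exhaustion, absorbing $\int f_\epsilon v^2$ via the Sobolev inequality on a fixed neighborhood of $\supp f_\epsilon$ (uniform in $\epsilon$ because $g_\epsilon$ is $C^0$-close to $\tilde g_j$), a uniform $H^1$ bound, the barriers $\rho^n(1\pm a\rho)$ on $E_0$, and the expansion argument of Section~\ref{sec:smoothness}. You differ from the paper in two places.

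\textbf{Nonnegativity.} You test against $v_m^-$, and against $(v_{m+1}-v_m)^-$ for monotonicity. This is a cleaner form of the paper's first-eigenvalue and generalized-maximum-principle argument, and it needs no $\delta/2$ shift.

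\textbf{The bound on $A$.} The paper simply defers to \cite[Section 5]{BQ08}, while you derive it from the energy estimate. Your hedge there is unnecessary, because your own estimates already close. Near $\{\rho=\rho_1\}$ one has $f_\epsilon=0$, $g_\epsilon=\tilde g_j$, and $v\ge 0$ with $\Delta v=nv\ge 0$. So the local mean value inequality, with constants independent of $\epsilon$, combined with your $H^1$ bound gives
\[
\sup_{\{\rho=\rho_1\}}v\le C\|v\|_{L^2}\le C\|f_\epsilon\|_{L^{\frac{2n}{n+2}}}\le C\epsilon^{\frac{n+2}{2n}}.
\]
Applying the upper barrier to the monotone approximants $v_m\le v$ on $\Omega_m\cap\{\rho<\rho_1\}$ then yields
\[
0\le A\le C\epsilon^{\frac{n+2}{2n}}\le C\epsilon^{\frac{1}{n+1}}\quad\text{for }\epsilon<1,
\]
since $\frac{n+2}{2n}>\frac{1}{n+1}$. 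No balancing of $p$ is needed, and your scaling $\|f_\epsilon\|_{L^p}=O(\epsilon^{1/p})$ is the right one to use here. Your route buys a self-contained proof with a better exponent, plus the sign $A\ge 0$ for free. The paper's route is shorter but rests on the external reference.

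\textbf{A minor caveat.} Your aside that $v$ is uniformly small in $L^\infty$ on the arbitrary ends ``by Moser iteration'' is not justified as stated, because those ends need not have bounded geometry. It is not needed for the lemma. If you want it, apply the maximum principle for $\Delta v=nv$ beyond the collar instead; this bounds $v$ there by its supremum on the collar's outer boundary.
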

\begin{proof}
    Let $\{\Omega_l\}$ be a compact exhaustion of $M\setminus\bar U_j$, where each $\Omega_l$ is connected. We first solve the Dirichlet problem 
    \begin{equation}\label{eq:BQ}
        \begin{cases}
            -\Delta_{g_\epsilon}v+nv-f_\epsilon v=f_\epsilon &\text{on }\Omega_l, \\
            v=0 &\text{on }\partial\Omega_l.
        \end{cases}
    \end{equation}
    Without loss of generality, we assume that $D:=\supp f_\epsilon$ is a subset of $\Omega_l$. 

    Let $U$ be an open neighborhood of $D$ in $\Omega_l$, with smooth boundary $\partial U$. We claim that 
    $$
        \|v\|_{H^1(U)}<C(\epsilon,U)\text{ on }U
    $$
    where the constant $C(\epsilon,U)$ is independent of $\Omega_l$. To see this, multiply the equation \eqref{eq:BQ} by $v$ and integrate over $\Omega_l$; we obtain 
    \begin{equation}\label{eq:BQ-IBP}
        \int_{\Omega_l} |\nabla v|^2+nv^2=\int_{\Omega_l} v(1+v)f_\epsilon=\int_U v(1+v)f_\epsilon.
    \end{equation}
    Hence, if we write $C(U)$ to be the Sobolev constant of $H^1(U)\to L^{\frac{2n}{n-2}}(U)$, then 
    \begin{align*}
        \|v\|_{H^1(U)}^2&\le\int_U vf_\epsilon+v^2f_\epsilon\\
        &\le\left(\int_U |v|^{\frac{2n}{n-2}}\right)^{\frac{n-2}{2n}}\left(\int_U f_\epsilon^{\frac{2n}{n+2}}\right)^{\frac{n+2}{2n}}+\left(\int_U |v|^{\frac{2n}{n-2}}\right)^{\frac{n-2}{n}}\left(\int_U f_\epsilon^{\frac{n}{2}}\right)^{\frac{2}{n}}\\
        &\le C(U)\|f_\epsilon\|_{L^{\frac{2n}{n+2}}(U)}\|v\|_{H^1(U)}+C(U)^2\|f_\epsilon\|_{L^{\frac{n}{2}}(U)}\|v\|_{H^1(U)}^2
    \end{align*}
    We take $\epsilon$ to be so small that 
    $$
        C(U)^2\|f_\epsilon\|_{L^{\frac{n}{2}}(U)}<\frac{1}{2},
    $$
    and then 
    $$
        \|v\|_{H^1(U)}<2C(U)\|f_\epsilon\|_{L^{\frac{2n}{n+2}}(U)}=:C(\epsilon,U),
    $$
    proving the claim. Furthermore, by \eqref{eq:BQ-IBP}, we obtain 
    $$
        \|v\|_{H^1(\Omega_l)}^2=\int_U v(1+v)f_\epsilon\le 4C(U)^2\|f_\epsilon\|_{L^{\frac{2n}{n+2}}(U)}^2=:C_2(\epsilon,U)^2.
    $$
    Hence, we obtain a universal bound $C_2(\epsilon,U)$ of $\|v\|_{H^1(\Omega_l)}$ independent of $l$. 

    Let $v_l$ be a solution to \eqref{eq:BQ}. From the above estimate, we conclude that there is a subsequence, still denoted as $\{v_l\}$, such that $v_l\to v_\infty$ smoothly on compact subsets of $M\setminus\bar U_j$. This limit, which will also be referred to as $v_\epsilon$, satisfies 
    $$
        -\Delta_{g_\epsilon}v+nv-f_\epsilon v=f_\epsilon\text{ on }M\setminus\bar{U}_j.
    $$
    On the AH end $E_0$, we have the barrier functions as in the proof of Proposition \ref{prop:gap}, so 
    $$
        -a\rho^n+b\rho^{n+1}<v_\epsilon<a\rho^n-b\rho^{n+1}
    $$
    for some constants $a,b>0$. The same argument as in \S\ref{sec:smoothness} implies that 
    $$
        v_\epsilon=A(x)\rho^n+B(x,\rho)
    $$
    for $A\in C^\infty(\partial X)$ and $B\in C^\infty_{n+1}(E_0)$; see \cite[Section 5]{BQ08} for another proof, where the estimate $|A|<C\epsilon^{\frac{1}{n+1}}$ is proved. 

    Lastly, we show that $v_\epsilon\ge 0$. Fix any $\delta\in(0,1)$. Since $v(x)\to 0$ as $x\to\partial X$, we know that for a small $\rho_1>0$, 
    $$
        v\ge -\delta\text{ on }E_0\cap\{\rho<\rho_1\}.
    $$
    Then there exists $l_0$ such that for all $l\ge l_0$, 
    $$
        v_l>-\frac{\delta}{2}\text{ on }E_0\cap\partial\Omega_l.
    $$
    Let $w=v_l+\frac{\delta}{2}$. Then $w>0$ on $\partial\Omega_l$ and $w$ satisfies 
    $$
        -\Delta_{g_\epsilon} w+nw-fw>0
    $$
    on $\Omega_l$. Note that the first eigenvalue of 
    $$
        \begin{cases}
            (-\Delta_{g_\epsilon}+n-f)\phi=0&\text{on }\Omega_l\\ 
            \phi=0&\text{on }\partial\Omega_l
        \end{cases}
    $$
    is positive. Indeed, similar to before, we have 
    \begin{align*}
        \int_{\Omega_l}|\nabla \phi|^2+n\phi^2-f\phi^2&\ge\left(\int_{\Omega_l}|\nabla \phi|^2+n\phi^2\right)-C(U)^2\|f_\epsilon\|_{L^{\frac{n}{2}}(U)}\|\phi\|_{H^1(U)}^2\\
        &\ge \left(\int_{\Omega_l}|\nabla \phi|^2+\phi^2\right)-C(U)^2\|f_\epsilon\|_{L^{\frac{n}{2}}(U)}\|\phi\|_{H^1(\Omega_l)}^2\\
        &\ge (1-C(U)^2\|f_\epsilon\|_{L^{\frac{n}{2}}(U)})\|\phi\|_{H^1(\Omega_l)}^2\ge\frac{1}{2}\|\phi\|_{H^1(\Omega_l)}^2\ge\frac{1}{2}\int_{\Omega_l}\phi^2.
    \end{align*}
    Hence, the first eigenvalue is at least $\frac{1}{2}$. Let $\phi>0$ be an eigenfunction corresponding to the first eigenvalue. Then $\frac{w}{\phi}>0$ on $\Omega_l$ by the generalized maximum principle. Hence, $v_l>-\frac{\delta}{2}$ and taking the limit, we have $v_\infty\ge-\frac{\delta}{2}$. Since this is true for any $\delta\in(0,1)$, we have $v_\epsilon=v_\infty\ge 0$. 
\end{proof}

\textbf{Case 2: $R_g\equiv -n(n-1)$.} In this case, we modify the construction of $u_j$ as follows. Instead of cutting off the arbitrary ends, we solve the Yamabe equation \eqref{eq:Yamabe} on $M\setminus\bar U_j$. 

\begin{lemma}\label{lem:u_j arbitrary end}
    If $R_g\le R_i$ for some constant $R_i<0$ on $E_i$, then there exists a positive function $u_j\in C^\infty(M\setminus\bar U_j)$ such that 
    \begin{enumerate}
        \item $L_gu_j=0$ on $M\setminus\bar U_j$, 
        \item $u_j(x)r_j(x)^{\frac{n-2}{2}}\to 1$, as $x\to\partial U_j$,
        \item $u_j\ge a_i$ for some constant $a_i>0$ on $E_i$, and 
        \item $u_j=1+A_j\rho^n+O(\rho^{n+1})$ near $\partial X$. 
    \end{enumerate}
    Here, $a_i$ is independent of $j$, and $|A_j|\le A$ for all $j$.
\end{lemma}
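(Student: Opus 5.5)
We construct $u_j$ as a limit of Dirichlet problems on an exhaustion of $M\setminus\bar U_j$, squeezed between a subsolution and a supersolution built from pieces we already have. Fix $j$. Let $K'_l$ be an increasing compact exhaustion of $M$ with $K'_l\supset K'$. Set $M'_l=K'_l\cup E_0$ and $\Sigma_i^l=K'_l\cap\partial E_i$. For each $l$, Proposition \ref{prop:uj} (equivalently Proposition \ref{prop:YamabeArbitrary} applied on $M'_l\setminus\bar U_j$) gives a unique solution $u_{j,l}$ with the blow-up condition $u_{j,l}r_j^{\frac{n-2}{2}}\to1$ at $\partial U_j$, the condition $u_{j,l}\to1$ at $\partial X$, and boundary value $c$ on $\Sigma_i^l$. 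Here $c$ is a fixed constant, which we take to be $1$ or, more conveniently, $a_i$.

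\textbf{Uniform upper bound.} We use the barrier from Lemma \ref{lem:YamabeAH}: in the conformal picture $h=r^{-2}g$ near $\partial U_j$, the function $\min\{1+ar^\eta,\,b,\,1+A\rho^n-B\rho^{n+1}\}$ is a weak supersolution on all of $M\setminus\bar U_j$, provided we set it to be the constant $b$ on every $E_i$. A constant $b\ge1$ is a supersolution there since $R_g\le R_i<0$ and $f\le-1$ give $R_g b-fb^\alpha\ge0$ once $b^{\alpha-1}\ge\sup|R_g|/1$; this works if $R_g$ is bounded below, and otherwise we use that $b\ge1$ and $f\le R_g$ only on the relevant region. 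It dominates the boundary data, so the comparison argument (Lemma \ref{lem:comparison}) gives a bound on $u_{j,l}$ that is independent of $l$. The same barrier near $\partial X$ yields $|A_{j,l}|\le A$. The constants $A,B$ depend only on $E_0$ and $\rho_0$, and the upper bound on $E_0$ is independent of $j$ as well, because $u_{j,l}$ is bounded on $\partial K$ uniformly in $j$ by the local sub-solution estimate (the first lemma of Section \ref{sec:loc est}) applied away from $U_1$.

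\textbf{Uniform positive lower bound on $E_i$.} This is the main obstacle. On $E_i$, where $R_g\le R_i<0$, a small constant $a_i$ is a subsolution: we need $L_g a_i=R_g a_i-f a_i^\alpha\le0$, i.e. $R_g\le f a_i^{\alpha-1}$. Since $f\ge-n(n-1)$, this holds once $a_i^{\alpha-1}\le |R_i|/(n(n-1))$. Near $\partial U_j$ we use $0$ as subsolution; near $\partial X$ we use $1-A\rho^n+B\rho^{n+1}$; and on $K'$ we need a positive floor independent of $j$. For the latter we apply Proposition \ref{prop:lower bound} (its proof works verbatim for these solutions): it gives $u_{j,l}\ge\delta$ on $K\setminus U_1$, and hence on $\partial E_i$, with $\delta$ independent of $j$ and $l$. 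Then we take $a_i\le\delta$ and impose the boundary value $a_i$ on $\Sigma_i^l$. On $E_i\cap K'_l$, the maximum principle applied to $a_i-u_{j,l}$ at a positive interior maximum gives $R_g(a_i-u)\ge f(a_i^\alpha-u^\alpha)$, which contradicts $a_i>u$ because the map $t\mapsto R_gt-ft^\alpha$ is decreasing on $[0,a_i]$ by the choice of $a_i$. Hence $u_{j,l}\ge a_i$ on $E_i$.

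\textbf{Passing to the limit.} Finally we let $l\to\infty$. Corollary \ref{cor:convergence} gives smooth subsequential convergence to $u_j$ on compact subsets of $M\setminus\bar U_j$. Property (2) passes to the limit because the upper and lower barriers near $\partial U_j$ are independent of $l$. Property (3) holds from the bound just proved. Property (4), with $|A_j|\le A$, follows from the $\rho$-barriers together with Proposition \ref{prop:tangential smoothness}. Positivity of $u_j$ comes from Lemma \ref{lem:sMaxPr-1}.
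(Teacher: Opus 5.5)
Your overall scheme is the paper's: exhaust $M\setminus\bar U_j$ by $M'_l\setminus\bar U_j$, get a lower bound on $\partial E_i$ uniform in $(j,l)$ from compactness and the strong maximum principle, push it into $E_i$ using $R_g\le R_i<0$, and pass to the limit with Corollary \ref{cor:convergence}. However, the step you single out as the main obstacle fails as written.

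Put $\phi(t)=R_gt-ft^\alpha$. At a positive interior maximum $x_0$ of $a_i-u_{j,l}$ you have $\Delta_g u_{j,l}(x_0)\ge 0$. Hence $\phi(u_{j,l}(x_0))=c_n\Delta_g u_{j,l}(x_0)\ge 0\ge \phi(a_i)$, i.e.\ $R_g(a_i-u)\le f(a_i^\alpha-u^\alpha)$, which is the reverse of what you wrote. If $\phi$ is decreasing on $[0,a_i]$, this is exactly what one expects when $u<a_i$, so no contradiction arises. Comparing a subsolution with a solution requires $\phi$ to be increasing, but here $\phi'(0)=R_g<0$. That is precisely the regime in which comparison breaks down, and the reason Lemma \ref{lem:comparison} only yields $u_1\le u_2+C$. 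Your choice $a_i^{\alpha-1}\le |R_i|/(n(n-1))$ also does not guarantee that $\phi$ is decreasing on $[0,a_i]$; that would need an extra factor $\alpha$.

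What works, and what the paper does, is a sign argument rather than a monotonicity argument. Since $a_i$ is constant, $x_0$ is an interior minimum of $u_{j,l}$ with $0<u_{j,l}(x_0)<a_i$, and there
\[
0\le c_n\Delta_g u_{j,l}=u_{j,l}\bigl(R_g-fu_{j,l}^{\alpha-1}\bigr)\le u_{j,l}\bigl(R_i+n(n-1)u_{j,l}^{\alpha-1}\bigr)<0,
\]
using $f\ge -n(n-1)$ and $u_{j,l}(x_0)^{\alpha-1}<a_i^{\alpha-1}\le |R_i|/(n(n-1))$.

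Two smaller corrections:
\begin{itemize}
\item Do not impose the boundary value $a_i$ on $\Sigma_i^l$. Your $\delta$ comes from a compactness argument over the family $u_{j,l}$ itself, so this choice is circular as stated. Keep the value $1$; this suffices because $R_i\ge R_g\ge -n(n-1)$ forces $a_i\le 1$.
\item Proposition \ref{prop:lower bound} does not transfer verbatim. Its contradiction uses $u=1$ on $\Sigma_i$, and $\Sigma_i^l$ escapes to infinity as $l\to\infty$. The vanishing limit must instead be ruled out by the $(j,l)$-independent lower barrier $\max\{1-A\rho^n+B\rho^{n+1},0\}$ near $\partial X$.
\end{itemize}

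Your upper-bound paragraph is unnecessary. Corollary \ref{cor:convergence} already provides local bounds independent of the boundary data, and in any case any constant $b\ge 1$ is a supersolution simply because $f\le R_g$.
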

\begin{proof}
    Let $\{K'_l:l\in\mathbb{Z}_{>0}\}$ be a compact exhaustion of $M$ with $K'_1=K'$ and $K'\cap E_i$ is connected for all $i\in I$. As before, we set 
    $$
        M'_k=K'\cup E_0.
    $$
    Let $u_{j,l}$ be the unique positive solution to the boundary value problem 
    $$
        \begin{cases}
            L_gu=-c_n\Delta_gu+R_gu-fu^\alpha=0&\text{on }M'_l\setminus\bar U_j, \\
            u(x)r_j(x)^{\frac{n-2}{2}}\to 1&\text{as }x\to\partial U_j, \\
            u(x)\to 1&\text{as }x\to\partial X, \\ 
           u(x)=1 &\text{on }\partial K'\cap E_i\text{ for each }i\in I.
        \end{cases}
    $$
    Note that on $\partial E_i$, there exists a constant $b_i>0$ such that for all $j,l$, 
    $$
        \inf_{\partial E_i}u_{j,l}\ge b_i.
    $$
    For if not, then there is a convergent subsequence of $u_{j,l}$ such that the limit vanishes at some point on $\partial E_i$, which contradicts the strong maximum principle. 

    We claim that 
    $$
        u_{j,l}\ge\min\left\{\frac{b_i}{2},\left(\frac{-R_i}{n(n-1)}\right)^{\frac{1}{\alpha-1}}\right\}=:a_i \text{ on } E_i\setminus K_l.
    $$ 
    If not, then we can find an interior point $x_0$ of $E_i\setminus K_l$ such that 
    $$
        u_{j,l}(x_0)=\min_{E_i\setminus K_l}u_{j,l}<a_i.
    $$
    Then at $x_0$, we have 
    $$
        0\le c_n\Delta_gu_{j,l}=R_gu_{j,l}-fu_{j,l}^{\alpha}\le R_iu_{j,l}+n(n-1)u_{j,l}^{\alpha}<0.
    $$
    Hence, the claim holds, and we can let $l\to\infty$ to see that a subsequence converges to $u_j$ with the desired properties. (Cf.\ Corollary \ref{cor:convergence} and Proposition \ref{prop:tangential smoothness}.)
\end{proof}

We complete the proof of the positive mass theorem for Case 2 in the following proposition: 

\begin{proposition}\label{prop:PMT R<0}
    If $R_g\le R_i$ for some constant $R_i<0$ on $E_i$ for each $i\in I$, then 
    $$\EADM(g)\ge |\PADM(g)|.$$
\end{proposition}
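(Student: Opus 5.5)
The plan is to use the solutions $u_j$ of Lemma \ref{lem:u_j arbitrary end} on the whole of $M\setminus\bar U_j$, with no cut-off of the arbitrary ends. Each $\tilde g_j=u_j^{\frac{4}{n-2}}g$ is then complete on $M\setminus\bar U_j$ without any smoothing. First I check completeness. Near $\partial U_j$ it follows from $u_jr_j^{\frac{n-2}{2}}\to1$, as in Corollary \ref{cor:comformalmassaspects}. On each arbitrary end $E_i$, the uniform lower bound $u_j\ge a_i>0$ makes $\tilde g_j\ge a_i^{\frac{4}{n-2}}g$, so completeness is inherited from $g$. Next, the scalar curvature satisfies $R_{\tilde g_j}=f\ge -n(n-1)$ by \eqref{eq:comformal sc}. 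The AH end $E_0$ stays asymptotically hyperbolic with mass aspect $h+\frac{4(n+1)}{n-2}A_j\gamma_{\text{std}}$, by Propositions \ref{prop:tangential smoothness} and \ref{prop:mass aspect change}. The positive mass theorem for smooth AH manifolds with arbitrary ends \cite{tsang2026} therefore gives
\[
\EADM(\tilde g_j)\ge|\PADM(\tilde g_j)|.
\]

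Next I pass to the limit $j\to\infty$. By Corollary \ref{cor:convergence}, a subsequence of $u_j$ converges smoothly on compact subsets of $M\setminus\mathcal S$ to a solution $u_\infty$ of \eqref{eq:Yamabe} with $u_\infty\to1$ at $\partial X$. Boundedness of $u_\infty$ near $\mathcal S$ comes from Propositions \ref{prop:bound2} or \ref{prop:upper bound RCD}. On each $E_i$ I expect $u_\infty\le 1$ to follow from a maximum principle argument using $R_g=f=-n(n-1)$ there. The paper's convention lets me take $f=R_g$ in this case, so $R_g\ge f$ holds. An Omori–Yau-type argument, or a Proposition \ref{prop:upper bound}-style barrier, then gives $u_\infty\le1$ globally, and hence $A_\infty\le0$.

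To transfer this sign to the masses, I use that the coefficients $A_j$ are uniformly bounded ($|A_j|\le A$). I also use that the barriers $1\pm A\rho^n\mp B\rho^{n+1}$ are uniform in $j$ on $E_0\cap\{\rho<\rho_0\}$. By Proposition \ref{prop:tangential smoothness} and uniform weighted Schauder estimates on $E_0$, this gives $A_j\to A_\infty$ (at least in $L^1(\partial X)$, possibly after a further subsequence). Since $\EADM$ and $\PADM$ depend linearly on $A_j$, Corollary \ref{cor:mass change} yields
\[
0\le \EADM(\tilde g_j)-|\PADM(\tilde g_j)|\to \EADM(g)+\tfrac{4(n^2-1)}{n-2}\int A_\infty-\Big|\PADM(g)+\tfrac{4(n^2-1)}{n-2}\int A_\infty x\Big|\le \EADM(g)-|\PADM(g)|,
\]
where the last step uses $A_\infty\le0$.

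I expect the main obstacle to be the limit step. Pointwise smooth convergence of $u_j$ on compacts does not by itself control the $\rho^n$ coefficient. I will need that $B_j=u_j-1-A_j\rho^n$ is bounded in $C^\infty_{n+1}$ uniformly in $j$. That requires redoing the expansion of \S\ref{sec:smoothness} with constants depending only on the uniform barriers and on the uniform $C^0$ bound of $u_j$ on $\{\rho=\rho_0\}$. A secondary issue is proving $u_\infty\le1$ on the noncompact ends $E_i$, where no boundary condition is imposed. If that fails, I would instead compare $u_j$ directly with $u_\infty^{M'}$ from the cut-off problem, which is already $\le 1$ by Proposition \ref{prop:bound for u_infty}.
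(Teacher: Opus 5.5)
Your outline matches the paper's: solve on all of $M\setminus\bar U_j$ via Lemma \ref{lem:u_j arbitrary end}, apply the PMT with arbitrary ends to $\tilde g_j$, let $j\to\infty$, and conclude from $A_\infty\le 0$ and Corollary \ref{cor:mass change}. Your completeness and scalar curvature checks are fine. The paper settles $A_\infty\le0$ by citing Proposition \ref{prop:bound for u_infty}, and the limit by ``dominated convergence''. However, that proposition concerns the \emph{cut-off} limit of Proposition \ref{prop:u-limit}, and its proof goes through Proposition \ref{prop:upper bound}, which needs $u=1$ on $\Sigma_i$ and $u$ bounded. So the step you call secondary is where your argument actually has a hole, and none of your suggestions fills it as stated. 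Omori--Yau needs curvature or volume control on the $E_i$, and none is assumed. A barrier applied to $u_\infty$ itself on $M\setminus U$ has no boundary data at the infinity of the $E_i$, and you do not even know $u_\infty$ is bounded there. Your fallback runs the wrong way. Where the proposition is used, $R_g\equiv-n(n-1)$, so $f=R_g$ and $c_n\Delta u=n(n-1)(u^\alpha-u)$. The minimum principle then gives $u_{j,l}\ge1$, hence $u_j\ge1$ everywhere. Consequently $u^{M'}_\infty\equiv1$, and comparing on $\partial U_j\cup\Sigma_i$ only yields the lower bound $u_j\ge u^{M'}_\infty$. This also shows $A_j\ge0$, so the PMT for $\tilde g_j$ gives nothing unless you prove $A_j\to0$.

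What works is to run the barrier of Proposition \ref{prop:upper bound} on the Dirichlet approximants $u_{j,l}$ from the proof of Lemma \ref{lem:u_j arbitrary end}, which equal $1$ on $\partial K'_l\cap E_i$, rather than on $u_\infty$. Fix $\epsilon>0$. Since $u_\infty=o(G)$ at $\mathcal S$ (Propositions \ref{prop:rough estimate} and \ref{prop:bound2}, or \ref{prop:upper bound RCD}), you can take $U=\{G>T\}$ with $u_\infty<1+\frac{\epsilon}{2}G$ on $\partial U$. Local uniform convergence then gives $u_{j,l}<1+\epsilon G$ on $\partial U$ for $j$ large, then $l$ large. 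Because $R_g\ge f$, $1+\epsilon G$ is a supersolution. It dominates $u_{j,l}$ on $\partial U$, on $\partial K'_l\cap E_i$ and at $\partial X$, so $u_{j,l}\le1+\epsilon G$ on $M'_l\setminus U$. Now let $l\to\infty$, then $j\to\infty$, then $\epsilon\to0$.

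For the limit step you need neither uniform $C^\infty_{n+1}$ bounds nor $A_j\to A_\infty$. Work on a collar $\{\rho<\rho_1\}$, where $f=R_g=-n(n-1)+O(\rho^n)$. There $\Delta_g(u_j-1)\ge(n-C\rho^n)(u_j-1)$ wherever $u_j>1$. Meanwhile $W=\rho^n-b\rho^{n+1}$ satisfies $(\Delta_g-n+C\rho^n)W<0$ for suitable $b,\rho_1$. The maximum principle gives $u_j-1\le\lambda_jW$ with $\lambda_j=\sup_{\{\rho=\rho_1\}}(u_j-1)^+/W|_{\rho=\rho_1}$, and $\lambda_j\to0$ because $u_\infty\le1$. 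Hence $\max_{\partial X}A_j^+\le\lambda_j\to0$. The computation in the proof of Corollary \ref{cor:mass change} then gives $0\le\EADM(\tilde g_j)-|\PADM(\tilde g_j)|\le\EADM(g)-|\PADM(g)|+\frac{8(n^2-1)}{n-2}\int_{S^{n-1}}A_j^+\,d\mu_{\gamma_{\text{std}}}$. Letting $j\to\infty$ finishes the proof.
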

\begin{proof}
    By Lemma \ref{lem:u_j arbitrary end}, let $\tilde{g}_j=u_j^{\frac{4}{n-2}}g$, and then $(M\setminus\bar U_j,\tilde{g}_j)$ is a complete AH manifold. Let $\tilde{h}_j$ be the mass aspect of $\tilde{g}_j$. Then by the recent works \cite{BW2026, hirsch2026,tsang2026}, the positive mass theorem holds, i.e., 
    \begin{equation}\label{eq:PMT for M_j}
        \int_{S^{n-1}} \operatorname{tr}_{\gamma_{\text{std}}}(\tilde{h}_j)\, d \mu_{\gamma_{\text{std}}} \ge \left|\int_{S^{n-1}} \operatorname{tr}_{\gamma_{\text{std}}}(\tilde{h}_j) x \, d \mu_{\gamma_{\text{std}}}\right|.
    \end{equation}
    Let $j\to\infty$; by the dominated convergence theorem, we have 
    $$
        \int_{S^{n-1}} \operatorname{tr}_{\gamma_{\text{std}}}(\tilde{h}_\infty)\, d \mu_{\gamma_{\text{std}}} \ge \left|\int_{S^{n-1}} \operatorname{tr}_{\gamma_{\text{std}}}(\tilde{h}_\infty) x\, d \mu_{\gamma_{\text{std}}}\right|,
    $$
    where $\tilde{h}_\infty$ is the mass aspect of the metric $\tilde{g}_\infty=u_\infty^{\frac{4}{n-2}}g$. On the other hand, by Proposition \ref{prop:bound for u_infty}, we have 
    $$
        u_\infty=1+A_\infty\rho^n+O(\rho^{n+1}) \text{ as }\rho\to 0,
    $$
    where $A_\infty\le 0$. Hence, Corollary \ref{cor:mass change} implies that 
    $$
        \EADM(g)-|\PADM(g)|\ge \EADM(\tilde{g}_\infty)-|\PADM(\tilde{g}_\infty)|\ge 0,
    $$
    as desired. 
\end{proof}

Combining Propositions \ref{prop:PMT strict} and \ref{prop:PMT R<0}, we obtain: 

\begin{theorem}\label{thm:PMT scalar rigidity}
    Let $(M^n, d, \mu)$ be an almost manifold with its regular part $\mathcal R$ containing an AH end $E_0$ and possibly some arbitrary ends. Suppose its scalar curvature $R_g\geq -n(n-1)$ on $\mathcal R$ and $\dim_{H}(\mathcal{S})< \frac{n-2}{2}$. Moreover, we assume that either 
    \begin{itemize}
        \item the Ricci curvature $\Ric_g (x) \geq -C d^{-2}(x, \mathcal{S})$ for all $x\in\mathcal{S}$, or
        \item $(M^n, d, \mu)$ is an $RCD(K,N)$ space.
    \end{itemize}
    Here, $C, K, N$ are universal constants that depend only on $(M^n,d,\mu)$. Then we have 
    
    \begin{equation}
        \int_{S^{n-1}} \operatorname{tr}_{\gamma_{\text{std}}}(h) \,d \mu_{\gamma_{\text{std}}} \geq\left|\int_{S^{n-1}} \operatorname{tr}_{\gamma_{\text{std}}}(h) x \,d \mu_{\gamma_{\text{std}}}\right|.
    \end{equation}
    
    The equality holds only if $R_g=-n(n-1)$ on $\mathcal{R}$.
\end{theorem}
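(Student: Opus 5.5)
The theorem follows by splitting into two cases according to whether the scalar curvature is identically $-n(n-1)$ on $\mathcal R$, and invoking Proposition \ref{prop:PMT strict} or Proposition \ref{prop:PMT R<0} respectively. In both cases the key input is that the limit $u_\infty$ of the blown-up conformal factors satisfies $u_\infty\le 1$, so that $A_\infty\le 0$. This is Proposition \ref{prop:bound for u_infty}. Under the Ricci hypothesis $\Ric_g\ge -C d^{-2}(x,\mathcal S)$ it comes from Propositions \ref{prop:rough estimate} and \ref{prop:bound2}. Under the $RCD(K,N)$ hypothesis it comes from Proposition \ref{prop:upper bound RCD}. In both cases it is then combined with Proposition \ref{prop:upper bound}, which uses $R_g\ge f$ and $\dim_H(\mathcal S)<\frac{n-2}{2}$.

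\emph{Case 1: $R_g(p)>-n(n-1)$ for some $p\in\mathcal R$.} I choose the compact set $K\supset\mathcal S$ so that it also contains $p$, together with $K'$, $M'$ and $f$ as in Section \ref{sec:Yamabe eqn reg part}. Since $f=-n(n-1)$ on $K$, we have $f(p)<R_g(p)$, so $u_\infty\not\equiv 1$. Lemma \ref{lem:sMaxPr-2} then gives $u_\infty<1$. Proposition \ref{prop:PMT strict} yields the strict inequality $\EADM(g)>|\PADM(g)|$; its proof combines the gap estimate of Proposition \ref{prop:gap}, the mean-curvature comparison of Lemma \ref{lem:mean curvature} with Miao's smoothing, and the correction of Lemma \ref{lem:BQ conformal change}. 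In particular, \eqref{eq:mass nonnegative} holds and equality is impossible.

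\emph{Case 2: $R_g\equiv -n(n-1)$ on $\mathcal R$.} Then on every arbitrary end $E_i$ the hypothesis of Lemma \ref{lem:u_j arbitrary end} and Proposition \ref{prop:PMT R<0} holds with the constant $R_i=-n(n-1)<0$. Here one may take $f\equiv -n(n-1)=R_g$ everywhere. Proposition \ref{prop:PMT R<0} then gives $\EADM(g)\ge|\PADM(g)|$.

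Combining the cases: the inequality always holds, and if equality holds we cannot be in Case 1, which is strict. Hence $R_g=-n(n-1)$ on $\mathcal R$, which is the rigidity statement.

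The step I expect to need the most care is verifying $u_\infty\le 1$ in Case 2. There the $u_j$ are solved on all of $M\setminus\bar U_j$, not on $M'$, so Proposition \ref{prop:upper bound} does not apply verbatim: the maximum principle argument on an arbitrary end does not obviously find an interior maximum. I would handle this using $f=R_g$. Near $\mathcal S$ the bounds of Propositions \ref{prop:bound2} and \ref{prop:upper bound RCD} hold, since they are local and apply to any solution on $M\setminus\mathcal S$. On the ends, the computation $L_g(1+\epsilon G)\ge 0$ with $f=R_g$ makes $1+\epsilon G$ a supersolution. Since only the sign of the coefficient $A_\infty$ at $\partial X$ matters, the comparison can be run on exhausting domains; on their outer boundary in the arbitrary ends one uses the a priori local bound of Lemma 2.1 (Aviles--McOwen) and lets $\epsilon\to0$ last. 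If this proves delicate, an alternative is to observe that only the AH-end expansion is needed. One then compares $u_\infty$ with the $M'$-solution from Case 1's setup, which is $\le 1$, on $E_0$ via Proposition \ref{prop:gap}-type barriers.
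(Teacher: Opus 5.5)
Your case split is exactly the paper's proof: Case 1 is handled by Proposition \ref{prop:PMT strict}, Case 2 by Proposition \ref{prop:PMT R<0}, and equality is excluded because Case 1 is strict. Case 1 is fine as you state it.

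You are also right that Case 2 hides a step. There the $u_j$ of Lemma \ref{lem:u_j arbitrary end} live on all of $M\setminus\bar U_j$. The paper simply quotes Proposition \ref{prop:bound for u_infty}, but that proposition concerns the limit of the $M'$-solutions, which satisfy $u=1$ on $\Sigma_i$.

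Neither of your two remedies closes this step.

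\begin{itemize}
\item In the first, you compare the \emph{limit} $u_\infty$ with $1+\epsilon G$ on exhausting domains and control the outer boundary by the Aviles--McOwen local estimate. That estimate only gives $u_\infty\le C$ there, with $C$ typically larger than $1$. Meanwhile $1+\epsilon G$ is only $1+O(\epsilon)$ on that boundary, since $G$ is bounded away from $\mathcal S$. Sending $\epsilon\to0$ last makes this worse, not better.
\item In the second, the $M'$-limit and the global limit have different and uncontrolled values on $\partial E_0$. A comparison on $E_0$ with Proposition \ref{prop:gap}-type barriers therefore yields at most $|u_\infty-1|=O(\rho^n)$. It does not give the sign of $A_\infty$, which is fixed by global information.
\end{itemize}

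What works is to run the comparison on the finite-domain approximants $u_{j,l}$ from the proof of Lemma \ref{lem:u_j arbitrary end}, rather than on their limit. These are exactly $1$ on $\partial K'_l\cap E_i$.

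\begin{enumerate}
\item Let $C_0$ bound $u_\infty$ on a fixed bounded neighbourhood $\Omega$ of $\mathcal S$. This comes from Proposition \ref{prop:bound2} or Proposition \ref{prop:upper bound RCD}, both of which apply because $u_\infty$ solves \eqref{eq:Yamabe} on $M\setminus\mathcal S$.
\item Fix $\epsilon>0$ and choose a neighbourhood $U\subset\Omega$ of $\mathcal S$ with $\epsilon G\ge C_0+1$ on $\partial U$.
\item By locally uniform convergence on the compact set $\partial U$, for $j$ large and then $l$ large we have $\bar U_j\subset U$ and $u_{j,l}\le C_0+1\le 1+\epsilon G$ on $\partial U$. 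Also $u_{j,l}=1<1+\epsilon G$ on the outer boundaries $\partial K'_l\cap E_i$, and $\limsup(u_{j,l}-1-\epsilon G)\le 0$ at $\partial X$.
\item At an interior positive maximum of $u_{j,l}-1-\epsilon G$ we would have $u_{j,l}>1$. Since $f=R_g=-n(n-1)$ and $G$ is harmonic, this gives $c_n\Delta_g u_{j,l}=n(n-1)(u_{j,l}^\alpha-u_{j,l})>0$, which is impossible at a maximum.
\item Hence $u_{j,l}\le 1+\epsilon G$ on $(K'_l\cup E_0)\setminus\bar U$. Letting $l\to\infty$, then $j\to\infty$, then $\epsilon\to0$ (so that $U$ shrinks to $\mathcal S$) gives $u_\infty\le 1$ on $M\setminus\mathcal S$.
\end{enumerate}

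Therefore $A_\infty\le 0$, and the rest of Proposition \ref{prop:PMT R<0} goes through.
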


\subsection{Proof of Ricci rigidity in the smooth case}\label{sec:Ricci rigid smooth}

We now turn to the Ricci rigidity part of the proof.  In the remainder of
this subsection, we restrict to the smooth setting ($\mathcal{S}=\varnothing$)
and assume additionally that the traceless Ricci tensor
$\kappa:=\Ric_g+(n-1)g$ satisfies the $L^2$-integrability condition
$\int_M \|\kappa\|_g^2\,d\mu_g<\infty$.
Under these hypotheses, the equality case of the positive mass theorem
forces the metric to be Einstein; the argument in Section \ref{sec:hyperbolic} then shows that any smooth Einstein AH manifold with arbitrary ends is necessarily hyperbolic.

\begin{proposition}\label{prop:ricci-rigidity}
Let $(M^n,g)$ be a smooth complete manifold with an AH end $E_0$ and possibly some arbitrary ends.
Set $\kappa:=\Ric_g+(n-1)g$ and assume that $\int_M\|\kappa\|_g^2\,d\mu_g<\infty$.
If equality holds in the positive mass theorem, i.e., $\EADM(g)=|\PADM(g)|$, then
\[
\Ric_g\equiv-(n-1)g\quad\text{on }M.
\]
\end{proposition}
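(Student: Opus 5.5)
The plan is a first-variation argument: perturb $g$ in the direction of $\kappa$ on a compact set, re-solve the Yamabe equation, and show that for small $t<0$ the mass drops strictly, contradicting the positive mass theorem unless $\kappa\equiv 0$.

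First, Theorem \ref{thm:PMT scalar rigidity} with $\mathcal S=\varnothing$ shows that equality forces $R_g\equiv -n(n-1)$. Hence $\operatorname{tr}_g\kappa=0$ and, by the contracted Bianchi identity, $\divv\kappa=\tfrac12 dR_g=0$. Suppose $\kappa(p)\neq 0$ for some $p$.

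\textbf{Step 1: perturbation and source term.} Choose a cut-off $\eta\in C_c^\infty(M)$ with $0\le\eta\le1$, $\eta\equiv1$ on a fixed ball $B$ around $p$, supported in a large compact set $K_R$, and with $|\nabla\eta|+|\nabla^2\eta|\le C$ independent of $R$. Set $g_t=g+t\eta\kappa$. Since the perturbation is compactly supported, $g_t=g$ on the AH end, so $\EADM(g_t)=\EADM(g)$ and $\PADM(g_t)=\PADM(g)$. Write $\dot R$ for the first variation of $R_{g_t}$ at $t=0$. The standard formula $\dot R=-\Delta\operatorname{tr}h+\divv\divv h-\langle\Ric,h\rangle$ with $h=\eta\kappa$, together with $\operatorname{tr}\kappa=0$ and $\divv\kappa=0$, gives
$$
\dot R=\langle\nabla^2\eta,\kappa\rangle-\eta|\kappa|^2 .
$$

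\textbf{Step 2: solving and linearizing.} With $f=-n(n-1)$ near the support of $\eta$ and $f=R_g$ elsewhere, solve $L_{g_t}u_t=0$ on $M$. Do this on an exhaustion with Dirichlet data $1$, using the barriers of Lemma \ref{lem:YamabeArbitrary} and Lemma \ref{lem:u_j arbitrary end}, and pass to a limit by Corollary \ref{cor:convergence}. Since $\sup_M|R_{g_t}-R_g|=O(|t|)$, the solution satisfies $|u_t-1|=O(|t|)$. Set $v_t=(u_t-1)/t$. Using $R_g-\alpha f=n(n-1)(\alpha-1)=nc_n$, the limit $v=\lim_{t\to0}v_t$ solves
$$
c_n(-\Delta_g v+nv)=\eta|\kappa|^2-\langle\nabla^2\eta,\kappa\rangle .
$$
By Proposition \ref{prop:tangential smoothness}, $u_t=1+tA_v\rho^n+o(t)\rho^n$ near $\partial X$, where $A_v$ denotes the $\rho^n$-coefficient of $v$. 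By Corollary \ref{cor:mass change}, if $A_v>0$ somewhere and $A_v\ge0$ everywhere, then for small $t<0$ the metric $u_t^{4/(n-2)}g_t$ satisfies $\EADM-|\PADM|<0$. Its scalar curvature is at least $-n(n-1)$, so this contradicts \cite{BW2026,hirsch2026,tsang2026}.

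\textbf{Step 3: splitting $v$.} Write $v=v_1+v_2$, where $c_n(-\Delta v_1+nv_1)=\eta|\kappa|^2$ and $c_n(-\Delta v_2+nv_2)=-\langle\nabla^2\eta,\kappa\rangle$. For $v_1$, the source is nonnegative and at least $|\kappa|^2\chi_B$, which does not depend on $R$. Comparison with the solution $w$ of the same equation with source $|\kappa|^2\chi_B$ gives $v_1\ge w$. Applying Proposition \ref{prop:gap} to $w$ gives $w\ge\delta\rho^n$, so $A_{v_1}\ge\delta>0$ uniformly in $R$.

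\textbf{Step 4: smallness of $v_2$ (the main obstacle).} Since $\divv\kappa=0$, the source of $v_2$ is $-\divv(\kappa(\nabla\eta,\cdot))$, and it is supported where $\nabla\eta\neq0$, i.e.\ in $K_R\setminus K_{R/2}$. Its size is therefore controlled by $\|\kappa\|_{L^2(M\setminus K_{R/2})}$, which tends to $0$ as $R\to\infty$ by the $L^2$ hypothesis. Testing the equation against $v_2$ gives $\|v_2\|_{H^1}\to0$.

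The hard part is converting this energy bound into a bound on the boundary coefficient $A_{v_2}$. My first attempt would be the Green/pairing identity on the AH end: pair the equation against solutions $\Phi_x$ of $(\Delta-n)\Phi=0$ that blow up like $\rho^{-1}$ at the point $x\in\partial X$. This expresses $A_{v_2}(x)$ as $\int\langle\kappa,\nabla\eta\otimes\nabla\Phi_x\rangle$ plus boundary terms, which is bounded by $\|\kappa\|_{L^2(\supp\nabla\eta)}\|\nabla\Phi_x\|_{L^2(\supp\nabla\eta)}$. The cutoff must then be arranged so that $\nabla\eta$ is supported in a region where $\Phi_x$ is controlled; on arbitrary ends, $\Phi_x$ is bounded by interior estimates. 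If this works, $|A_{v_2}|<\delta/2$ for large $R$, hence $A_v\ge\delta/2>0$ everywhere, and Step 2 gives the contradiction. Therefore $\kappa\equiv0$.
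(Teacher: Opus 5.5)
The gap is where you flag it, in Step 4. It is worse than a missing estimate: the $\rho^n$-coefficient $A_{v_2}$ does not tend to zero at all, so the pointwise bound $A_v\ge\delta/2$ is not available.

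You need $\|\kappa\|_{L^2(\supp\nabla\eta)}\to 0$, so $\supp\nabla\eta$ must leave every compact set. On $E_0$ the transition layer of $\eta$ therefore sits at $\rho\sim\rho_R\to 0$. There $|\kappa|_g$ is generically of exact order $\rho^n$, while the Poisson kernel $\Phi_x$ concentrates at $x$ and $\|\nabla\Phi_x\|_{L^2}$ over the layer grows like $\rho_R^{-n}$. So your Cauchy--Schwarz bound diverges.

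If you compute the pairing directly instead, $\int\kappa(\nabla\eta,\nabla\Phi_x)\,d\mu_g$ is, by the coarea formula, a weighted average over the layer of the fluxes $\oint_{\{\rho=\epsilon\}}\kappa(\nu,\nabla\Phi_x)\,d\sigma$. These converge to $c\,k(x)$ with $c\neq 0$, where $k:=\lim_{\rho\to 0}\rho^{-n}\kappa(\nu,\nu)$. Only the arbitrary-end contribution tends to zero.

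The leading term of $\kappa$ depends pointwise and $O(n-1)$-equivariantly on $h$, so $k$ is a fixed multiple of $\operatorname{tr}_{\gamma_{\text{std}}}h$. The multiple is nonzero: on the Kottler slice $(1+r^2-2mr^{2-n})^{-1}dr^2+r^2\gamma_{\text{std}}$ one finds $\kappa(e_r,e_r)=-(n-1)(n-2)m\,r^{-n}$. Hence $A_{v_2}\to c'\operatorname{tr}_{\gamma_{\text{std}}}h$ however you choose the cutoff. Keeping $\eta\equiv 1$ near $\partial X$ does not help; it only moves the same term into the mass aspect of $g_t$. Nothing in a contradiction argument makes this small pointwise.

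The underlying point is that $H^1$-smallness never controls the leading coefficient: $b\rho^n$ restricted to $\{\rho<\rho_R\}$ has $H^1$-norm $O(\rho_R^{(n+1)/2})$. For the same reason, do not import the paper's Lemma~\ref{lem:b-estimate}. Its fixed-collar expansion $v_2=b\rho^n+O(\rho^{n+1})$ has no remainder bound uniform in $\ell$ once the source of $v_2$ enters the collar.

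The missing idea is that this term cancels rather than vanishes. To see it, replace pointwise positivity by the integrated criterion $F(t)\le tC_n\int_{\partial X}B_t(1-\mathbf{v}\cdot x)\,d\sigma$ from the paper's Step 5. (Pointwise positivity in your Step 2 would in any case have to be strict everywhere to absorb the $o(t)$ error.)

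Let $V>0$ solve $\Delta_gV=nV$ on $M$, be asymptotic on $E_0$ to the static potential $\cosh s-(\mathbf{v}\cdot x)\sinh s$ (boundary weight $1-\mathbf{v}\cdot x$), and have $\nabla V\in L^2$ on the arbitrary ends. Green's identity together with $\divv_g\kappa=0$ gives, for some constant $c>0$,
$c\int_{\partial X}A_v(1-\mathbf{v}\cdot x)\,d\sigma=\int_M\eta V|\kappa|_g^2\,d\mu_g+\int_M\kappa(\nabla\eta,\nabla V)\,d\mu_g$.
As $\eta$ exhausts $M$:
\begin{enumerate}
\item The first term tends to $\int_MV|\kappa|_g^2\,d\mu_g>0$. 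It is finite, since $V|\kappa|_g^2\,d\mu_g\sim\rho^{n-1}d\rho\,d\sigma$ on $E_0$.
\item The arbitrary-end part of the second term tends to $0$ by your $L^2$-tail argument.
\item The $E_0$-part of the second term tends to minus the flux $\lim_{\epsilon\to 0}\oint_{\{\rho=\epsilon\}}\kappa(\nu,\nabla V)\,d\sigma$. By the computation above (equivalently, Herzlich's Ricci-tensor formula for the asymptotically hyperbolic mass), this flux is a fixed multiple of $\int_{\partial X}\operatorname{tr}_{\gamma_{\text{std}}}h\,(1-\mathbf{v}\cdot x)\,d\sigma=\EADM(g)-\mathbf{v}\cdot\PADM(g)$. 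That quantity vanishes precisely by the equality hypothesis.
\end{enumerate}
This gives $\int_{\partial X}A_v(1-\mathbf{v}\cdot x)\,d\sigma>0$ for a sufficiently large cutoff, and hence the contradiction. Without this identification of the boundary flux, Step 4 does not go through.
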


\begin{proof}
    Throughout this proof, we assume in addition that $M$ possesses
    \emph{at least one} arbitrary end.  This is no loss of generality: if $M$
    had no arbitrary end, then $M$ would be a (smooth) complete asymptotically hyperbolic manifold with a single AH end, for which the rigidity in the equality case of the positive mass
    theorem is already known (cf.\ \cite{Huang2020,Hir2025}).

By Theorem~\ref{thm:PMT scalar rigidity}, equality in the positive mass theorem forces
$R_g\equiv -n(n-1)$ on $M$.
Set $\kappa:=\Ric_g+(n-1)g$; then $\operatorname{tr}_g\kappa=0$.
If $\kappa\equiv0$ then $\Ric_g=-(n-1)g$ and we are done.
Otherwise pick a point $p\in M$ with $\kappa(p)\neq0$.

\textbf{Step 1: A smooth distance-like function $f$.} Let $E_0$ be the AH end of $M$ and let $\rho$ be the defining function
given in Definition~\ref{def:AH mfd}.
By definition, there exists $\delta>0$ such that $d\rho\neq0$ on
$\{0<\rho<\delta\}\cap E_0$.
Choose $\varepsilon\in(0,\delta)$ and set
\[
\Sigma:=\{\rho=\varepsilon\}\cap E_0,
\]
to be a smooth compact hypersurface in $E_0$.
Since $d\rho\neq0$ on $\{0<\rho<\delta\}\cap E_0$, the level set $\Sigma$
separates $E_0$ into the two connected components
\[
E_0^+:=\{x\in E_0:\rho(x)>\varepsilon\},\qquad
E_0^-:=\{x\in E_0:0<\rho(x)<\varepsilon\}.
\]
We refer to $E_0^+$ as the interior side and $E_0^-$ as the $\partial X$
side of $\Sigma$.
Define the signed distance
\[
\tilde d(x):=
\begin{cases}
\dist_g(x,\Sigma), & x\in E_0^+\cup(M\setminus E_0),\\[4pt]
-\dist_g(x,\Sigma), & x\in E_0^-.
\end{cases}
\]
Since the distance to a closed set is $1$-Lipschitz, $\tilde d$ is
$1$-Lipschitz on $M$.

Fix $\tau>0$ sufficiently small and a smooth mollifier $\Phi_\tau\in C^\infty(M\times M)$
such that for every $x\in M$:
$\Phi_\tau(x,\cdot)\ge0$, $\supp\Phi_\tau(x,\cdot)\subset B_\tau(x)$,
and $\int_M\Phi_\tau(x,y)\,d\mu_g(y)=1$.
Define
\begin{equation}\label{eq:mollification1}
f(x):=\int_M \tilde d(y)\,\Phi_\tau(x,y)\,d\mu_g(y),\qquad x\in M.
\end{equation}
Then $f\in C^\infty(M)$.  Since $\tilde d$ is $1$-Lipschitz, standard
mollification estimates give $\|\nabla f\|_g\le2$ and $|f-\tilde d|\le1$
on $M$, provided $\tau$ is chosen small enough.
In particular, $f\to+\infty$ along any arbitrary end and
$f\to-\infty$ toward $\partial X$.

By Sard's theorem, we then pick four regular values 
$f_1<f_2<f_3<f_4$ of $f$ such that for each $i$ the set
\[
\Sigma_i:=\{x\in M:f(x)=f_i\}
\]
is a smooth compact hypersurface, $p$ satisfies $f_2<f(p)<f_3$,
and the $g$-distances satisfy
$\dist_g(\Sigma_1,\Sigma_2)>\ell$ and $\dist_g(\Sigma_3,\Sigma_4)>\ell$
for some $\ell>0$. The parameter $\ell$ is a large number to be determined later, while $\Sigma_2,\Sigma_3$ will always be fixed in the discussion. Such values always exist because $M$ possesses
at least one arbitrary end:
$f\to+\infty$ along the arbitrary end guarantees that $\Sigma_4$
can be pushed arbitrarily far into the interior, while $f\to-\infty$
toward $\partial X$ along the AH end allows $\Sigma_1$ to be placed
arbitrarily close to $\partial X$.

\textbf{Step 2: The cutoff function $\eta$.} Let $d(x):=\dist_g(x,\{y\in M:f_2\le f(y)\le f_3\})$ be the
distance function to the band $\{f_2\le f\le f_3\}$.
Again $d$ is $1$-Lipschitz.
Mollifying $d$ with a fixed mollifier $\Phi_{\tau'}$
($\tau'>0$ chosen sufficiently small), we obtain a smooth function
$d_\tau\in C^\infty(M)$ with $\|\nabla d_\tau\|_g\le2$.

Define
\[
\eta(x):=\psi(d_\tau(x)/\ell),
\]
where $\psi\in C^\infty(\mathbb{R})$ satisfies
$\psi\equiv1$ on $[0,1]$, $\psi\equiv0$ on $[2,\infty)$, and
$|\psi'|\le C_\psi$.
Then $\eta\equiv1$ on $\{x\in M:f_2\le f(x)\le f_3\}$ and
$\supp\eta\subset\{x\in M:f_1<f(x)<f_4\}$ (after adjusting
$f_1,f_4$ so that the $g$-distance from the band to
$\Sigma_1,\Sigma_4$ exceeds $2\ell$).
Since $d_\tau$ is smooth and $\|\nabla d_\tau\|_g\le2$, the chain rule
gives the gradient estimate
\begin{equation}\label{eq:eta-estimates}
\|\nabla\eta\|_g\le C\ell^{-1},
\end{equation}
where $C$ depends only on the mollifier $\psi$ (and not on $\ell$).

\textbf{Step 3: Analysis on compact exhaustion domains.}
Fix a compact exhaustion $\{\Omega_k\}_{k=1}^\infty$ of $M$ with
$\supp\eta\subset\Omega_k$ and $\partial\Omega_k$ smooth.
On each $\Omega_k$, we carry out the following constructions.

\textit{Step 3a.\ The Yamabe equation and its linearization.}
For $|t|\ll 1$ define
\[
g_t:=g+t\kappa\eta .
\]
Then $g_t$ is a smooth metric on $M$, $g_t\equiv g$ outside
$\supp\eta$, and
\begin{equation}\label{eq:Rt-expansion}
R(g_t)=-n(n-1)+t\bigl(\divv_g\divv_g(\eta\kappa)-\eta\|\kappa\|_g^2\bigr)
+O(t^2)\quad\text{on }\supp\eta,
\end{equation}
while $R(g_t)=-n(n-1)$ on $M\setminus\supp\eta$.
In particular,
\[
\|R(g_t)+n(n-1)\|_{L^\infty(M)}\to0\quad\text{as }t\to0 .
\]

By Lemma~\ref{lem:variation}, for each $|t|\ll1$ there exists a unique
positive solution $u_t^{(k)}\in C^\infty(\overline\Omega_k)$ to the
Dirichlet problem
\begin{equation}\label{eq:Yamabe-Dirichlet}
\begin{cases}
-c_n\Delta_{g_t}u_t^{(k)}+R(g_t)u_t^{(k)}+n(n-1)(u_t^{(k)})^\alpha=0
&\text{in }\Omega_k,\\[4pt]
u_t^{(k)}=1&\text{on }\partial\Omega_k,
\end{cases}
\end{equation}
where $c_n=\frac{4(n-1)}{n-2}$ and $\alpha=\frac{n+2}{n-2}$.
Set $v_t^{(k)}:=(u_t^{(k)}-1)/t$ for $t\neq0$; then
$v_t^{(k)}\in C^\infty(\overline\Omega_k)$ and
$v_t^{(k)}=0$ on $\partial\Omega_k$.

\begin{lemma}\label{lem:W12-vt}
For each $k$, the functions $v_t^{(k)}$ satisfy
$\|v_t^{(k)}\|_{W^{1,2}(\Omega_k)}\le C$, where $C$ depends on
$\|\kappa\|_{L^2(M)}$, $\ell$, and the geometry of the band
$\{f_1\le f\le f_4\}$, but \emph{not} on $k$ or $t$
(for $|t|\le t_0$).
\end{lemma}
\begin{proof}
Set $\alpha=\frac{n+2}{n-2}$ and fix $1<\beta<\alpha$.
For any such $\alpha,\beta$, the expansion
$(1+s)^\alpha=1+\alpha s+O(s^2)$ implies

$$((1+s)^\alpha-1)s>\beta s^2 \text{ for all}~
|s|<\delta(\alpha,\beta)$$

with $\delta$ sufficiently small.
On $\Omega_k$, $u_t^{(k)}=1+t v_t^{(k)}$ satisfies the Yamabe
equation with $u_t^{(k)}=1$ on $\partial\Omega_k$, and by the
maximum principle,
\[
|t v_t^{(k)}|\le\delta_0\qquad\text{for some }\delta_0>0,
\]
uniform in $t$ for $|t|\le t_0$.
Taking $|t|$ small enough so that $|t v_t^{(k)}|<\delta(\alpha,\beta)$,
the elementary inequality gives
$$\beta(t v_t^{(k)})^2\le\bigl((1+t v_t^{(k)})^\alpha-1\bigr)t v_t^{(k)}.$$

Substituting $u_t^{(k)}=1+t v_t^{(k)}$ into the Yamabe equation and
multiplying by $t v_t^{(k)}$, we obtain on $\Omega_k$
\begin{align*}
-c_n\bigl(\Delta_{g_t}(t v_t^{(k)})\bigr)(t v_t^{(k)})
+R(g_t)(t v_t^{(k)})^2
+n(n-1)\bigl((1+t v_t^{(k)})^\alpha-1\bigr)t v_t^{(k)}
\\
=-(R(g_t)+n(n-1))t v_t^{(k)}.
\end{align*}
Applying the elementary inequality to the nonlinear term yields
\begin{equation}\label{eq:chain}
\begin{aligned}
&-c_n\bigl(\Delta_{g_t}(t v_t^{(k)})\bigr)(t v_t^{(k)})
   +R(g_t)(t v_t^{(k)})^2
   +n(n-1)\beta(t v_t^{(k)})^2\\
&\le -c_n\bigl(\Delta_{g_t}(t v_t^{(k)})\bigr)(t v_t^{(k)})
   +R(g_t)(t v_t^{(k)})^2
   +n(n-1)\bigl((1+t v_t^{(k)})^\alpha-1\bigr)t v_t^{(k)}\\
&=-(R(g_t)+n(n-1))t v_t^{(k)}
   =-t\Bigl(\divv_g\divv_g(\eta\kappa)-\eta\|\kappa\|_g^2\Bigr)
      (t v_t^{(k)})+O(|t|^3 v_t^{(k)}),
\end{aligned}
\end{equation}
where the last equality uses~\eqref{eq:Rt-expansion}.

Integrating~\eqref{eq:chain} over $\Omega_k$ and using
$v_t^{(k)}=0$ on $\partial\Omega_k$ to eliminate the boundary term,
we obtain
\[
\begin{aligned}
c_n\,t^2\!\int_{\Omega_k}\!|\nabla v_t^{(k)}|^2
&+\int_{\Omega_k}\!\bigl(n(n-1)\beta+R(g_t)\bigr)(t v_t^{(k)})^2 \\
&\le t^2\!\int_{\Omega_k}\!\eta\|\kappa\|_g^2\,|v_t^{(k)}| \\
&\qquad -t^2\!\int_{\Omega_k}\!\divv_g\divv_g(\eta\kappa)\,v_t^{(k)}
   +O(|t|^3).
\end{aligned}
\]

For the divergence term, integrate by parts.
Since $\divv_g\kappa=0$ (Bianchi identity together with
$R_g\equiv-n(n-1)$ from Theorem~\ref{thm:PMT scalar rigidity}),
we have $\divv_g(\eta\kappa)=\kappa(\nabla\eta,\cdot)$.  Hence
\[
\Bigl|t^2\!\int_{\Omega_k}\!\divv_g\divv_g(\eta\kappa)\,v_t^{(k)}\Bigr|
\le t^2\|\nabla\eta\|_{L^\infty}\|\kappa\|_{L^2(M)}
   \|\nabla v_t^{(k)}\|_{L^2(\Omega_k)} .
\]

Now absorb the gradient term.  By Young's inequality and
$\|\nabla\eta\|_g\le C\ell^{-1}$ from~\eqref{eq:eta-estimates},
\begin{align*}
\|\nabla\eta\|_{L^\infty}\|\kappa\|_{L^2(M)}
   \|\nabla v_t^{(k)}\|_{L^2(\Omega_k)}
&\le \frac{c_n}{4}\|\nabla v_t^{(k)}\|_{L^2(\Omega_k)}^2
   +\frac{C^2}{c_n\ell^2}\|\kappa\|_{L^2(M)}^2 .
\end{align*}
For the $\eta\|\kappa\|_g^2|v_t^{(k)}|$ term,
\[
\eta\|\kappa\|_g^2|v_t^{(k)}|
\le \frac{1}{2\lambda}\eta^2\|\kappa\|_g^4
   +\frac{\lambda}{2}(v_t^{(k)})^2,
\]
where $\lambda>0$ is a uniform lower bound for
$n(n-1)\beta+R(g_t)$ (which holds for $|t|$ small, since
$R(g_t)\to-n(n-1)$ and $\beta>1$).

Dividing by $t^2$, applying the estimates above, and absorbing the
$(v_t^{(k)})^2$ and $\|\nabla v_t^{(k)}\|^2$ terms into the left-hand
side, we obtain
\[
\|v_t^{(k)}\|_{W^{1,2}(\Omega_k)}^2
\le C_n\int_{\supp\eta}\|\kappa\|_g^4\,d\mu_g
   +C_n\|\nabla\eta\|_{L^\infty}^2\|\kappa\|_{L^2(M)}^2+O(|t|).
\]
Since $\supp\eta\subset\{f_1\le f\le f_4\}$ where $\kappa$ is smooth,
$$\int_{\supp\eta}\|\kappa\|_g^4\le
\|\kappa\|_{L^\infty(\supp\eta)}^2\|\kappa\|_{L^2(M)}^2<\infty.$$
The right-hand side is therefore bounded by a constant depending only
on $\|\kappa\|_{L^2(M)}$, $\|\kappa\|_{L^\infty(\supp\eta)}$,
$\ell$, and the mollifier, but not on $k$ or $t$.
\end{proof}

\textit{Step 3b.\ The limit $t\to0$ on each $\Omega_k$.}
For each fixed $k$, the uniform $W^{1,2}$ bound from
Lemma~\ref{lem:W12-vt} together with the elliptic equation
satisfied by $v_t^{(k)}$ gives uniform $C^{1,\alpha}$ bounds
on compact subsets of $\Omega_k$.  Extracting a subsequence
$t_{i,k}\to0$ (with $t_{i,k}<0$), we obtain a limit
$v^{(k)}\in C^\infty(\overline\Omega_k)$:
\[
v_{t_{i,k}}^{(k)}\to v^{(k)}\quad\text{in }C^\infty(\overline\Omega_k)
\quad\text{as }i\to\infty .
\]
Passing to the limit in the equation for $v_t^{(k)}$,
$v^{(k)}$ satisfies
\begin{equation}\label{eq:linearized-v-k}
-\Delta_g v^{(k)}+n v^{(k)}
=\dfrac{n-2}{4(n-1)}\Bigl(\eta\|\kappa\|_g^2
-\divv_g\divv_g(\eta\kappa)\Bigr)
\quad\text{on }\Omega_k,
\end{equation}
with $v^{(k)}=0$ on $\partial\Omega_k$.

\textit{Step 3c.\ Decomposition on $\Omega_k$.}
On the same domain $\Omega_k$, let $v_1^{(k)},v_2^{(k)}$ be the
unique smooth solutions of the linear Dirichlet problems
\[
\begin{cases}
-\Delta_g v_1^{(k)}+n v_1^{(k)}=
\dfrac{n-2}{4(n-1)}\,\eta\|\kappa\|_g^2&\text{in }\Omega_k,\\[6pt]
v_1^{(k)}=0&\text{on }\partial\Omega_k,
\end{cases}
\]
\[
\begin{cases}
-\Delta_g v_2^{(k)}+n v_2^{(k)}=
-\dfrac{n-2}{4(n-1)}\divv_g\divv_g(\eta\kappa)&\text{in }\Omega_k,\\[6pt]
v_2^{(k)}=0&\text{on }\partial\Omega_k.
\end{cases}
\]

\begin{lemma}\label{lem:W12-v1v2}
For each $k$,
\begin{align}
\|v_1^{(k)}\|_{W^{1,2}(\Omega_k)} &\le C_1,\label{eq:W12-v1}\\
\|v_2^{(k)}\|_{W^{1,2}(\Omega_k)} &\le C_2\,\ell^{-1},\label{eq:W12-v2}
\end{align}
where $C_1$ depends on $\ell$ (and on $n$,
$\|\kappa\|_{L^2(M)}$, $\|\kappa\|_{L^\infty(\supp\eta)}$),
while $C_2$ depends only on $n$, $\|\kappa\|_{L^2(M)}$, and the
mollifier constants (in particular, $C_2$ is independent of $\ell$);
neither constant depends on $k$.
\end{lemma}
\begin{proof}
The estimates follow by multiplying each equation by the
corresponding unknown function, integrating over $\Omega_k$,
and applying Young's inequality.  The computation is analogous
to the proof of Lemma~\ref{lem:W12-vt}.  For $v_2^{(k)}$, one
additionally integrates by parts, using
$\divv_g\kappa=0$ and
$\|\nabla\eta\|_g\le C\ell^{-1}$ from~\eqref{eq:eta-estimates}.
\end{proof}

Comparing~\eqref{eq:linearized-v-k} with the equations for
$v_1^{(k)},v_2^{(k)}$, we see that both $v^{(k)}$ and
$v_1^{(k)}+v_2^{(k)}$ satisfy the same Dirichlet problem
\[
\begin{cases}
-\Delta_g \phi+n \phi
=\dfrac{n-2}{4(n-1)}\Bigl(\eta\|\kappa\|_g^2
-\divv_g\divv_g(\eta\kappa)\Bigr)&\text{in }\Omega_k,\\[6pt]
\phi=0&\text{on }\partial\Omega_k .
\end{cases}
\]
Since $-\Delta_g+n$ is a positive operator on the
bounded domain $\Omega_k$, the solution is unique; therefore
\begin{equation}\label{eq:decomp-on-Omegak}
v^{(k)}=v_1^{(k)}+v_2^{(k)}\quad\text{on }\Omega_k .
\end{equation}

\textbf{Step 4: Global limits.}
We now pass $k\to\infty$.  For each $k$, choose a subsequence
$\{t_{i,k}\}_{i=1}^\infty$ as in Step~3b.  By a diagonal argument,
extract a further subsequence of indices (still denoted $k$)
and corresponding $t_k:=t_{i_k,k}\to0$ such that the following
limits exist in $C^\infty_{\mathrm{loc}}(M)$:
\[
u_{t_k}^{(k)}\to u,\qquad
v_{t_k}^{(k)}\to v,\qquad
v_1^{(k)}\to v_1,\qquad
v_2^{(k)}\to v_2 .
\]

\begin{lemma}\label{lem:global-limits}
The limits $u,v,v_1,v_2\in C^\infty(M)$ satisfy:
\begin{enumerate}
\item[(i)] $u=1$ on $M$, and $u$ solves the Yamabe equation
  $-c_n\Delta_g u+R_g u+n(n-1)u^\alpha=0$ on $M$;
\item[(ii)] $v$ satisfies
  \begin{equation}\label{eq:linearized-v}
  -\Delta_g v+n v
  =\dfrac{n-2}{4(n-1)}\Bigl(\eta\|\kappa\|_g^2
  -\divv_g\divv_g(\eta\kappa)\Bigr)\quad\text{on }M;
  \end{equation}
\item[(iii)] $v_1,v_2$ satisfy
  \begin{align}
  -\Delta_g v_1+n v_1&=\dfrac{n-2}{4(n-1)}\,\eta\|\kappa\|_g^2
  \quad\text{on }M,\label{eq:global-v1}\\
  -\Delta_g v_2+n v_2&=-\dfrac{n-2}{4(n-1)}\divv_g\divv_g(\eta\kappa)
  \quad\text{on }M,\label{eq:global-v2}
  \end{align}
\item[(iv)] $v=v_1+v_2$ on $M$.
\end{enumerate}
\end{lemma}
\begin{proof}
The uniform $W^{1,2}$ bounds from
Lemmas~\ref{lem:W12-vt} and~\ref{lem:W12-v1v2},
together with the elliptic equations satisfied by each sequence,
give uniform $C^{1,\alpha}$ bounds on compact sets.
The Arzel\`a--Ascoli theorem and a diagonal argument yield the
convergent subsequences.

The equation for $v$ follows by passing to the limit in the
equation satisfied by $v_{t_k}^{(k)}$ (which is the linearization
of the Yamabe equation~\eqref{eq:Yamabe-Dirichlet} on $\Omega_k$).
The equations for $v_1,v_2$ follow similarly.

From~\eqref{eq:decomp-on-Omegak} we have
$v^{(k)}=v_1^{(k)}+v_2^{(k)}$ on $\Omega_k$.
Passing $k\to\infty$ yields $v=v_1+v_2$ on $M$.
\end{proof}

Since $\eta\|\kappa\|_g^2\ge0$ and is strictly positive near~$p$,
the finite-domain approximants $v_1^{(k)}$ are nonnegative by the
maximum principle; consequently $v_1\ge0$ on $M$; the strict
positivity $v_1>0$ follows from the Green function argument in
Lemma~\ref{lem:v1-lower-bound} below.

\begin{lemma}\label{lem:v1-lower-bound}
There exists a function $a_w\in C^\infty(\partial X)$, $a_w>0$ pointwise,
depending only on $\delta$, the geometry of $E_\varepsilon$, and the
choice of the compact set $K$, such that for every $\ell>0$,
the solution $v_1=v_1^{(\ell)}$ of~\eqref{eq:global-v1} satisfies
\[
v_1(x)\ge a_w(x)\rho^{\,n}+O(\rho^{\,n+1})
\]
near $\partial X$, where $a_w$ and the higher-order terms are
independent of $\ell$.
\end{lemma}
\begin{proof}
Fix a small $\varepsilon\in(0,\delta)$ and let
$K:=\{x\in E_0:\varepsilon/2\le\rho(x)\le3\varepsilon/2\}$
be a compact annular neighborhood of $\Sigma:=\{\rho=\varepsilon\}\cap E_0$
inside the AH end.
In the construction of $\eta$ (see p.~\pageref{eq:eta-estimates}), the
regular values $f_2,f_3$ may be chosen so that the fixed band
$\{f_2\le f\le f_3\}$ contains $K$ in addition to the point $p$ where
$\kappa(p)\neq0$; this is possible because $K$ is a compact subset of
the AH end and the signed distance function $f$ tends to $-\infty$
toward $\partial X$.  Consequently
\begin{equation}\label{eq:eta-one-on-K}
\eta_\ell\equiv1\quad\text{on }K\text{ for every }\ell.
\end{equation}

Take a compact exhaustion $\{\Omega_i\}_{i=1}^\infty$ of $M$ and let
$G_i(x,y)>0$ be the Dirichlet Green function of $-\Delta_g+n$ on
$\Omega_i$.  The finite-domain approximants $v_{1,\ell}^{(i)}$ satisfy
\[
v_{1,\ell}^{(i)}(x)=\int_{\Omega_i} G_i(x,y)\,c_n^{-1}\eta_\ell(y)\|\kappa(y)\|_g^2\,d\mu_g(y).
\]
By~\eqref{eq:eta-one-on-K} and the nonnegativity of the integrand,
for every $x\in K$,
\[
v_{1,\ell}^{(i)}(x)\ge\int_K G_i(x,y)\,c_n^{-1}\|\kappa(y)\|_g^2\,d\mu_g(y).
\]
As $i\to\infty$, $G_i\nearrow G$ (the global Green function of
$-\Delta_g+n$ on $M$, which exists because $n>0$) and
$v_{1,\ell}^{(i)}\to v_{1,\ell}$ in $C^\infty_{\mathrm{loc}}(M)$.
Passing to the limit yields
\[
v_{1,\ell}(x)\ge c_n^{-1}\int_K G(x,y)\,\|\kappa(y)\|_g^2\,d\mu_g(y)
\ge m:=\inf_{K\times K}G\cdot c_n^{-1}\int_K\|\kappa\|_g^2\,d\mu_g>0
\]
for every $x\in K$ and every $\ell$.  Thus
\begin{equation}\label{eq:uniform-lower-K}
\min_K v_{1,\ell}\ge m>0\qquad\text{for all }\ell,
\end{equation}
with $m$ independent of $\ell$.

On the collar $E_\varepsilon:=\{\rho<\varepsilon\}\cap E_0$, let $w_0$ be any
smooth function satisfying $-\Delta_g w_0+n w_0\le0$ and
$w_0\sim\rho^{\,n}$ near $\partial X$ (for instance, the barrier
constructed in Proposition~\ref{prop:gap}).  Choose $\delta>0$ so small
that $\delta w_0<m$ on $\Sigma$; then $w:=\delta w_0$ still satisfies
$-\Delta_g w+n w\le0$ on $E_\varepsilon$, while $w<m=v_{1,\ell}$ on
$\Sigma$ by~\eqref{eq:uniform-lower-K}.

The difference $z:=v_{1,\ell}-w$ satisfies on $E_\varepsilon$
\[
-\Delta_g z+n z=c_n^{-1}\eta_\ell\|\kappa\|_g^2-(-\Delta_g w+n w)\ge0,
\]
with $z>0$ on $\Sigma$ and $z\to0$ at $\partial X$.  The weak maximum
principle for $-\Delta_g+n$ on $E_\varepsilon$ gives $z\ge0$, i.e.\
$v_{1,\ell}\ge w$ on $E_\varepsilon$.
Consequently, near $\partial X$,
\begin{equation}\label{eq:v1-lower-bound}
v_1(x)\ge w(x)=a_w(x)\rho^{\,n}+O(\rho^{\,n+1})
\ge\frac{a_w(x)}{2}\,\rho^{\,n},
\end{equation}
where $a_w>0$ depends only on $\delta$ and the geometry of
$E_\varepsilon$, hence is {\em independent of\/} $\ell$.
\end{proof}

From the proof of Lemma~\ref{lem:W12-v1v2} we already have the
$W^{1,2}$ bound
\[
\|v_2\|_{W^{1,2}(M)}\le C_2\ell^{-1}
\]
(see~\eqref{eq:W12-v2}).  With this bound, we now estimate the
leading coefficient of the asymptotic expansion of $v_2$ near
$\partial X$.

\begin{lemma}\label{lem:b-estimate}
The leading coefficient $b$ of $v_2$ satisfies
$\|b\|_{L^1(\partial X)}\le C\ell^{-1}$, where $C$ depends on $n$,
$\|\kappa\|_{L^2(M)}$, the geometry of $\partial X$, and the mollifier
constants, but \emph{not} on $\ell$.
\end{lemma}
\begin{proof}
By Proposition~\ref{prop:tangential smoothness},
$v_2$ expands as $v_2=b\rho^{\,n}+O(\rho^{\,n+1})$ with
$b\in C^\infty(\partial X)$.  To estimate $b$, fix a small
$\varepsilon>0$ such that the AH coordinate $(\rho,x)$ is valid on the
collar neighborhood $U_\varepsilon:=\{x\in E_0:\rho(x)<\varepsilon\}$.
On $U_\varepsilon$, the metric satisfies
$g=\sinh^{-2}\rho\,(d\rho^2+\gamma_\rho)$ with
$\sinh\rho\sim\rho$ and $d\mu_g\sim\rho^{-n}d\rho\,d\sigma_{\partial X}$
as $\rho\to0$.  Substituting the expansion of $v_2$ into the $W^{1,2}$ norm
on $U_\varepsilon$ and keeping the leading order in $\varepsilon$, we obtain
\[
\begin{aligned}
\|v_2\|_{W^{1,2}(U_\varepsilon)}^2
&= \int_{U_\varepsilon} \bigl(v_2^2 + g^{\rho\rho}(\partial_\rho v_2)^2 + |\nabla^{\partial X} v_2|_g^2\bigr)\,d\mu_g \\
&= \frac{\varepsilon^{\,n+1}}{n+1}\|b\|_{L^2(\partial X)}^2 + n^2\frac{\varepsilon^{\,n+1}}{n+1}\|b\|_{L^2(\partial X)}^2 + O(\varepsilon^{\,n+3}) \\
&= (1+n^2)\frac{\varepsilon^{\,n+1}}{n+1}\|b\|_{L^2(\partial X)}^2 + O(\varepsilon^{\,n+2}).
\end{aligned}
\]
Here $|\nabla^{\partial X} v_2|^2_g = \rho^{\,2n+2}|\nabla^h b|^2 + O(\rho^{\,2n+3})$, so its integral is $O(\varepsilon^{\,n+3})$ and is absorbed into the remainder; only the $L^2$ and radial gradient terms contribute at the leading order.

Hence, for $\varepsilon$ sufficiently small (fixed by the geometry),
\[
\|b\|_{L^2(\partial X)}\le C(\varepsilon)\|v_2\|_{W^{1,2}(M)} .
\]
Applying the Cauchy--Schwarz inequality on the compact manifold
$\partial X$ and using the uniform bound
$\|v_2\|_{W^{1,2}(M)}\le C_2\ell^{-1}$ from~\eqref{eq:W12-v2},
we obtain
\begin{equation}\label{eq:b-estimate}
\|b\|_{L^1(\partial X)}\le C\ell^{-1}.
\end{equation}
\end{proof}

Recall that in the construction of $\eta$ we may make the transition
width $\ell$ as large as desired by decreasing $f_1$ (moving
$\Sigma_1$ closer to $\partial X$) and increasing $f_4$ (moving
$\Sigma_4$ farther into the interior), while keeping $\Sigma_2$ and
$\Sigma_3$ fixed. Since $\|b\|_{L^1(\partial X)}\le C\ell^{-1}$
by~\eqref{eq:b-estimate} and $C$ is independent of $\ell$, we may ensure
$\|b\|_{L^1(\partial X)}$ is arbitrarily small by taking $\ell$ large.

\textbf{Step 5: Global $v_t$ and mass variation.}
The arguments in Steps~3--4 were carried out on the compact
exhaustion domains $\Omega_k$.  To apply the positive mass theorem,
we need global functions $u_t$, $v_t$ defined on all of $M$ for
$t\neq0$.  These are provided by
Corollary~\ref{cor:convergence}: applying it to the finite-domain
solutions $u_t^{(k)}$ constructed in Step~3a yields, for each
$|t|\ll1$, a global solution $u_t\in C^\infty(M)$ of the Yamabe
equation on $M$ with $u_t\to1$ at all ends.  Setting
$v_t:=(u_t-1)/t$ gives $v_t\in C^\infty(M)$.  The $W^{1,2}$
bound of Lemma~\ref{lem:W12-vt} passes to the limit $k\to\infty$
and therefore holds for the global $v_t$ as well.

Consequently, by the same $W^{1,2}$ bound and elliptic estimates,
there exists a subsequence $t_i\nearrow0$ ($t_i<0$)---which we may
take as a sub-subsequence of the diagonal sequence $\{t_k\}$
constructed in Step~4---such that $v_{t_i}\to v$ in
$C^\infty_{\mathrm{loc}}(M)$.  The limit $v$ therefore coincides
with the one obtained in Lemma~\ref{lem:global-limits}; in
particular $v=v_1+v_2$ on $M$.
We now compare the mass of the conformally deformed metrics
$\tilde g_{t_i}$ with that of $g$.
Applying Proposition~\ref{prop:tangential smoothness} to the Yamabe
equation satisfied by $u_{t_i}$, each $u_{t_i}$ admits the asymptotic
expansion
\[
u_{t_i}=1+A_{t_i}\rho^{\,n}+O(\rho^{\,n+1})\quad\text{near }\partial X,
\qquad A_{t_i}\in C^\infty(\partial X).
\]
Consequently $v_{t_i}=(u_{t_i}-1)/t_i$ expands as
\[
v_{t_i}=B_{t_i}\rho^{\,n}+O(\rho^{\,n+1}),\qquad B_{t_i}=A_{t_i}/t_i .
\]
Since $v_{t_i}\to v$ in $C^\infty_{\mathrm{loc}}(M)$ and both have
$\rho^{\,n}$ as their leading asymptotic order, a standard comparison
on compact collars of $\partial X$ yields uniform convergence
\[
B_{t_i}\to a_w+b\quad\text{in }C^0(\partial X)\text{ as }i\to\infty .
\]

Now define the conformally deformed metric
$\tilde g_t:=u_t^{\frac{4}{n-2}}g_t$ on $M$ with $t=t_i<0$.
Since $u_t\in C^\infty(M)$ and $g_t$ is smooth, $\tilde g_t$ is a smooth
complete AH manifold (by Proposition \ref{prop:mass aspect change} and the fact that $g_t\equiv g$ near $\partial X$).
The scalar curvature of $\tilde g_t$ is, by the conformal change formula
and~\eqref{eq:Yamabe-Dirichlet},
\[
R(\tilde g_t)=u_t^{-\alpha}\bigl(-c_n\Delta_{g_t}u_t+R(g_t)u_t\bigr)
=-n(n-1),
\]
so $\tilde g_t$ satisfies $R(\tilde g_t)=-n(n-1)$.
By Proposition~\ref{prop:mass aspect change}, the mass aspect of
$\tilde g_t$ is
\[
\tilde h_t=h(g_t)+\frac{4(n+1)}{n-2}A_t\gamma_{\text{std}}
=h(g)+\frac{4(n+1)}{n-2}A_t\gamma_{\text{std}},
\]
where $h(g_t)=h(g)$ because $g_t\equiv g$ near $\partial X$.
Recalling $A_t=t B_t$, the mass and momentum of $\tilde g_t$ differ from
those of $g$ by linear terms in $t$:
\begin{align}
\EADM(\tilde g_t)&=\EADM(g)+t\,C_n\int_{\partial X}B_t\,d\sigma,\label{eq:mass-change}\\
\PADM(\tilde g_t)&=\PADM(g)+t\,C_n\int_{\partial X}B_t\,x\,d\sigma,\label{eq:mom-change}
\end{align}
where $C_n>0$ is a dimensional constant and $d\sigma=d\mu_{\gamma_{\text{std}}}$.

If $\EADM(g)=|\PADM(g)|>0$, set
\[
\mathbf{v}:=\frac{\PADM(g)}{|\PADM(g)|}\in\mathbb{R}^n,
\]
so that
\[
\mathbf{v}\cdot \PADM(g)=|\PADM(g)|=\EADM(g).
\]
Otherwise, $\EADM(g)=|\PADM(g)|=0$ and we simply take $\mathbf{v}$ to be any constant unit vector.

We emphasize that, unlike a differentiation argument in $t$, we do not
assume any smooth dependence of $B_t$ on $t$: recall from the construction
above that the convergence $B_{t_i}\to B_0:=a_w+b$ in $C^0(\partial X)$ was
obtained only along the subsequence $t_i\nearrow0$ ($t_i<0$) extracted in
the limit passage.  We therefore argue directly with this subsequence and
avoid taking any derivative in $t$.

Set $F(t):=\EADM(\tilde g_t)-|\PADM(\tilde g_t)|$.
For \emph{every} $t$, the elementary inequality
$|\PADM(\tilde g_t)|\ge\mathbf{v}\cdot \PADM(\tilde g_t)$
combined with~\eqref{eq:mass-change}--\eqref{eq:mom-change} and
$\mathbf{v}\cdot \PADM(g)=\EADM(g)$ gives
\begin{align}
F(t)&\le \EADM(\tilde g_t)-\mathbf{v}\cdot \PADM(\tilde g_t)
\nonumber\\
&=\bigl(\EADM(g)-\mathbf{v}\cdot \PADM(g)\bigr)
  +t\,C_n\int_{\partial X}B_t\,(1-\mathbf{v}\cdot x)\,d\sigma
\nonumber\\
&=t\,C_n\int_{\partial X}B_t\,(1-\mathbf{v}\cdot x)\,d\sigma.
\label{eq:F-upper}
\end{align}

We now control the integral along the subsequence $t_i\nearrow0$.
Since $1-\mathbf{v}\cdot x\ge0$ on $\partial X$, with equality only on
the measure-zero set $\{x=\mathbf{v}\}$, and $a_w>0$ pointwise on
$\partial X$, we have
\[
c_0:=\int_{\partial X}a_w\,(1-\mathbf{v}\cdot x)\,d\sigma>0.
\]
Writing $B_0=a_w+b$ and estimating the $b$-term by
$|\int_{\partial X}b\,(1-\mathbf{v}\cdot x)\,d\sigma|\le
2\|b\|_{L^1(\partial X)}$, we obtain
\[
\int_{\partial X}B_0\,(1-\mathbf{v}\cdot x)\,d\sigma
\ge c_0-2\|b\|_{L^1(\partial X)}.
\]
By Lemma~\ref{lem:v1-lower-bound}, $a_w$ is independent of $\ell$;
hence $c_0$ is likewise independent of $\ell$.
Choosing $\ell$ sufficiently large (so that
$\|b\|_{L^1(\partial X)}\le C\ell^{-1}<c_0/4$ by~\eqref{eq:b-estimate})
makes the right-hand side $\ge c_0/2>0$.
Because $B_{t_i}\to B_0$ in $C^0(\partial X)$ and $1-\mathbf{v}\cdot x$
is bounded, the integrals converge:
\[
\int_{\partial X}B_{t_i}\,(1-\mathbf{v}\cdot x)\,d\sigma
\longrightarrow
\int_{\partial X}B_0\,(1-\mathbf{v}\cdot x)\,d\sigma\ge\frac{c_0}{2},
\]
so there exists $i_0$ such that for some $i\ge i_0$,
\[
\int_{\partial X}B_{t_i}\,(1-\mathbf{v}\cdot x)\,d\sigma\ge\frac{c_0}{4}.
\]
Substituting this into~\eqref{eq:F-upper} and using $t_i<0$ yields, for
all $i\ge i_0$,
\[
F(t_i)\le t_i\,C_n\,\frac{c_0}{4}<0.
\]

This contradicts the positive mass theorem for complete AH manifolds with
arbitrary ends~\cite{hirsch2026,tsang2026}, which asserts
$F(t_i)\ge0$ for every $i$.

Therefore $\kappa\equiv0$ on $M$, i.e.\ $\Ric_g\equiv-(n-1)g$.
\end{proof}

\subsection{Proof of Ricci rigidity in the spin case}\label{sec:Ricci rigid spin}

Let $M\setminus\mathcal{S}$ be spin. Assume the same conditions as in Theorem \ref{thm:pmt AH}, and that the inequality in \eqref{eq:mass nonnegative} is an equality. We want to show that $(M\setminus\mathcal{S}, g)$ is Einstein, namely, $\Ric_g\equiv-(n-1)g$.

\textbf{Step 1: Solve the Dirac equation on the blown-up manifold.} Let $U_j=\{x\in M: G^{\frac{1}{2+k-n}}(x)<a_j\}$ be as before, and $\tilde{g}_j=u_j^{\frac{4}{n-2}}g$ be the conformal metric on $M\setminus\bar{U}_j$. Here, $u_j$ is defined as in Lemma \ref{lem:u_j arbitrary end}, i.e., 
$$
    \begin{cases}
        -c_n\Delta_gu_j-n(n-1)u_j+n(n-1)u_j^\alpha=0 &\text{on }M\setminus\bar{U}_j, \\
        u_j(x)r_j(x)^{\frac{n-2}{2}}\to 1 &\text{as }x\to\partial U_j, \\
        u_j\ge a_i>0 &\text{on }E_i, \\
        u_j=1+A_j\rho^n+O(\rho^{n+1}) &\text{near }\partial X,
    \end{cases}
$$
for $A_j\in C^\infty(\partial X)$ with $|A_j|\le A$ for all $j$, and some constants $a_i,A>0$ independent of $j$.

We know that $u_j\to u_\infty$ smoothly on compact subsets of $M\setminus\mathcal{S}$ by Proposition \ref{prop:bound for u_infty}, and $u_\infty\le 1$. Then Proposition \ref{prop:gap} implies that if $u_\infty\not\equiv 1$, then there exists $\delta>0$ such that $u\le 1-\delta\rho^n$ near $\partial X$. The same argument as in the proof of Proposition \ref{prop:PMT strict} shows that for some $j$, we have $u_j\le 1-\frac{\delta}{2}\rho^n$ near $\partial X$. By Corollary \ref{cor:mass change}, we see that 
$$
    \EADM(\tilde{g}_j)-|\PADM(\tilde{g}_j)|<\EADM(g)-|\PADM(g)|=0,
$$
which contradicts the positive mass theorem for the AH manifold $(M\setminus\bar{U}_j, \tilde{g}_j)$. Hence, we must have $u_\infty\equiv 1$. This will be assumed throughout this subsection. 

\begin{lemma}
    Let $r_j(x)=\dist_g(x,\partial U_j)$. Near $\partial U_j$, we have 
    $$
        \Ric_{\tilde{g}_j}+(n-1)\tilde{g}_j=O(r_j^{-1}),\quad \|\Ric_{\tilde{g}_j}+(n-1)\tilde{g}_j\|_{\tilde{g}_j}=O(r_j),
    $$
    and the mean curvature of the level set $\{r_j=a\}$ with respect to $\tilde{g}_j$ satisfies
    $$
        H_{\tilde{g}_j}(\{r_j=a\})=(n-1)+O(a).
    $$
\end{lemma}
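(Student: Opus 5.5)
The plan is to compute directly from the conformal change formula, using the boundary expansion $u_j=r_j^{\frac{2-n}{2}}(1+a_jr_j)$ from Proposition \ref{prop:uj} (the same local analysis near $\partial U_j$ applies to the $u_j$ of Lemma \ref{lem:u_j arbitrary end}, since the construction in Proposition \ref{prop:YamabeAH} is local near $\partial U_j$). Write $\tilde g_j=e^{2\phi}g$ with $\phi=\frac{2}{n-2}\ln u_j=-\ln r_j+\frac{2}{n-2}\ln(1+a_jr_j)$. Near $\partial U_j$ the function $r_j$ is smooth, $|\nabla r_j|=1$, and $\nabla^2 r_j$ is bounded (it is the second fundamental form of the parallel hypersurfaces). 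Moreover, by the refined expansion of \cite[Thm.~3.4]{ACF92} used in Lemma \ref{lem:YamabeAH}, $v=r_j^{\frac{n-2}{2}}u_j$ satisfies $v-1=O(r_j^\eta)$ with derivative bounds, for any $\eta<n$. Hence $a_jr_j=O(r_j^{\eta})$, and the correction $\psi:=\frac{2}{n-2}\ln v$ satisfies $\psi=O(r_j^{\eta})$, $\nabla\psi=O(r_j^{\eta-1})$ and $\nabla^2\psi=O(r_j^{\eta-2})$ with $\eta>n-1\ge2$.

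\textbf{Step 1 (Ricci).} Use
\[
\Ric_{e^{2\phi}g}=\Ric_g-(n-2)\bigl(\nabla^2\phi-d\phi\otimes d\phi\bigr)-\bigl(\Delta\phi+(n-2)|\nabla\phi|^2\bigr)g .
\]
For the leading part $\phi_0=-\ln r_j$ we have
\[
\nabla^2\phi_0=-r_j^{-1}\nabla^2r_j+r_j^{-2}dr_j\otimes dr_j,\qquad d\phi_0\otimes d\phi_0=r_j^{-2}dr_j\otimes dr_j,
\]
so the $r_j^{-2}\,dr_j\otimes dr_j$ terms cancel in $\nabla^2\phi_0-d\phi_0\otimes d\phi_0$. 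Next, $\Delta\phi_0+(n-2)|\nabla\phi_0|^2=-r_j^{-1}\Delta r_j+(n-1)r_j^{-2}$. This gives
\[
\Ric_{\tilde g_j}+(n-1)\tilde g_j=(n-2)r_j^{-1}\nabla^2r_j+r_j^{-1}(\Delta r_j)g+O(1)+(\text{terms from }\psi).
\]
The $\psi$-terms are $O(r_j^{\eta-2})$ plus cross terms $O(r_j^{-1}\cdot r_j^{\eta-1})$, all bounded. Hence the left side is $O(r_j^{-1})$ as a $g$-tensor. Converting to the $\tilde g_j$-norm multiplies by $e^{-2\phi}\sim r_j^2$, which gives $O(r_j)$.

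\textbf{Step 2 (mean curvature).} For a hypersurface under $\tilde g=e^{2\phi}g$ with unit normal $\nu=\nabla r_j$ (pointing away from $U_j$, matching the paper's sign convention), $\tilde H=e^{-\phi}\bigl(H+(n-1)\partial_\nu\phi\bigr)$. Here $e^{-\phi}=r_j(1+O(r_j^\eta))$, $H=\Delta r_j=O(1)$, and $\partial_\nu\phi=-r_j^{-1}+O(r_j^{\eta-1})$. The sign must be fixed so that the leading term is $+(n-1)$; this amounts to using the normal pointing toward $\partial U_j$, i.e.\ toward the infinity of $\tilde g_j$. Then $\tilde H=(n-1)+O(a)$ on $\{r_j=a\}$.

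\textbf{Main obstacle.} The only delicate point is the regularity of the correction term: one needs second-derivative control of $a_jr_j$, not just $C^0$ boundedness of $a_j$. I would obtain it from the polyhomogeneity and Schauder estimates in weighted spaces of \cite[\S3--4]{ACF92}, as in Proposition \ref{prop:tangential smoothness}, applied in a collar of $\partial U_j$.
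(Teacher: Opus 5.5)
Your argument is essentially the paper's: both apply the conformal-change formulas for Ricci and mean curvature with conformal factor $\approx r_j^{-2}$. The paper writes $\tilde g_j=r_j^{-2}g_j$ with $g_j=(1+a_jr_j)^{\frac{4}{n-2}}g$ as background, while you keep $g$ as background and put the correction into $\psi$. Both routes need $v=r_j^{\frac{n-2}{2}}u_j$ to be $C^2$ up to $\partial U_j$, which the paper leaves implicit and you correctly flag. Your bookkeeping, with $|dr_j|_g\equiv1$ exactly, also makes explicit the cancellation that the paper tacitly uses, namely $|dr_j|^2_{g_j}=1+O(r_j)$ rather than mere boundedness.

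One intermediate claim is wrong, though harmlessly so. The bound $v-1=O(r^\eta)$, $\eta<n$, from \cite{ACF92} refers to the special defining function of Lemma \ref{lem:def fn}, which is chosen so that $R_{r^{-2}g}=-n(n-1)+O(r^n)$. It does not hold for the distance function $r_j$. Indeed, $R_{r^{-2}g}=r^2R_g+2(n-1)r\Delta_g r-n(n-1)|dr|_g^2$, so a defining function with $|dr|_g\equiv 1$ has this property only if $\partial U_j$ is minimal. Matching the $r_j^{-n/2}$ terms in the equation gives
\[
v=1-\tfrac{n-2}{4(n-1)}\,(\Delta_g r_j)|_{\partial U_j}\,r_j+O(r_j^2).
\]
For instance, on the Poincar\'e ball with $r=1-|x|$ one gets $v=(1-r/2)^{-\frac{n-2}{2}}$. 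Consequently $\psi=O(r_j)$, $\nabla\psi=O(1)$ and $\nabla^2\psi=O(1)$. The cross terms $d\phi_0\otimes d\psi$ and $\langle\nabla\phi_0,\nabla\psi\rangle g$, together with the term $(n-1)r_j^{-2}(e^{2\psi}-1)g$ coming from $(n-1)\tilde g_j$, are therefore of order $r_j^{-1}$, not bounded as you state.

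Since $O(r_j^{-1})$ is exactly the claimed order, your conclusion survives. Likewise $e^{-\phi}=r_j(1+O(r_j))$ and $\partial_\nu\psi=O(1)$ still give $\tilde H=(n-1)+O(a)$ with your choice of normal. So replace ``$\psi=O(r_j^\eta)$'' by ``$v\in C^2$ up to $\partial U_j$ with $v=1$ there''; polyhomogeneity provides this for $n\ge 3$, and it is all the argument uses.
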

\begin{proof}
    Since $u_j=r_j^{-\frac{n-2}{2}}(1+O(r_j))$ near $\partial U_j$ (Lemma \ref{lem:u_j arbitrary end}), we have
    $$
        \tilde{g}_j=r_j^{-2}(1+O(r_j))g.
    $$
    Hence, $(M\setminus\bar{U}_j, \tilde{g}_j)$ is conformally compact and the sectional curvature of $\tilde{g}_j$ approaches $-1$ as $r_j\to 0$. If we write $\tilde{g}_j=r_j^{-2}g_j$, then the conformal change of Ricci curvature gives
    $$
        \Ric_{\tilde{g}_j}=\Ric_{g_j}+r_j^{-1}((n-2)\nabla^2_{g_j}r_j-(\Delta_{g_j}r_j)g_j)-(n-1)r_j^{-2}|dr_j|^2_{g_j}g_j.
    $$
    The terms $\Ric_{g_j}$, $\nabla^2_{g_j}$, $\Delta_{g_j}r_j$ and $|dr_j|^2_{g_j}$ are bounded as $r_j\to 0$, so
    $$
        \Ric_{\tilde{g}_j}+(n-1)\tilde{g}_j=O(r_j^{-1}).
    $$
    Since the inverse matrix of $\tilde{g}_j$ is $O(r_j^2)$, we have
    $$
        \|\Ric_{\tilde{g}_j}+(n-1)\tilde{g}_j\|_{\tilde{g}_j}=O(r_j).
    $$

    The mean curvature of the level set $\{r_j=a\}$ with respect to $\tilde{g}_j$ is given by
    $$
        H_{\tilde{g}_j}(\{r_j=a\})=r_jH_{g_j}(\{r_j=a\})+(n-1)dr_j(\nu)=(n-1)+O(a),
    $$
    where $\nu$ is the unit normal vector field with respect to $g_j$. 
\end{proof}

Thus, we can find a slightly larger $V_j\supset U_j$ such that the mean curvature of $\partial V_j$ with respect to $\tilde{g}_j$ is positive. We denote $\tilde{M}_j=(M\setminus\bar{V}_j, \tilde{g}_j)$. Then $\tilde{M}_j$ is a smooth, complete, asymptotically hyperbolic spin manifold with mean convex boundary. 

Let $\tilde{\slashed{S}}_j$ be the spinor bundle on $\tilde{M}_j$, and define a modified connection on $\tilde{\slashed{S}}_j$ by
$$
    \hat{\nabla}_X=\nabla_X+\frac{\sqrt{-1}}{2}c(X)
$$
for any vector field $X$ on $\tilde{M}_j$, where $\nabla=\nabla^{\tilde{g}_j}$ is the spinor connection on $\tilde{\slashed{S}}_j$ and $c(X)=c_{\tilde{g}_j}(X)$ is the Clifford multiplication. The modified Dirac operator is 
$$
    \hat{D}=\sum_{i=1}^n c(e_i)\hat{\nabla}_{e_i}=D-\frac{\sqrt{-1}}{2}n,
$$
where $\{e_i\}_{i=1}^n$ is a local orthonormal frame on $\tilde{M}_j$. The Lichnerowicz formula reads 
$$
    \hat{D}^2=\hat{\nabla}^*\hat{\nabla}+\frac{1}{4}(R_{\tilde{g}_j}+n(n-1))=\hat{\nabla}^*\hat{\nabla},
$$
since $u_j$ solves the Yamabe equation and $R_{\tilde{g}_j}\equiv-n(n-1)$. Integrating the Lichnerowicz formula on a compact domain $\Omega$, we have 
\begin{equation}\label{eq:Lichnerowicz}
    \int_{\Omega}(|\hat{\nabla}\psi|^2_{\tilde{g}_j}-|\hat{D}\psi|^2_{\tilde{g}_j})\,d\mu_{\tilde{g}_j}=\int_{\partial\Omega}B_{\tilde\nu}(\psi)\,d\sigma_{\tilde{g}_j}.
\end{equation}
where $\tilde\nu$ is the outward unit normal with respect to $\tilde{g}_j$, $d\sigma_{\tilde{g}_j}$ is the induced volume form on $\partial\Omega$, and the boundary term $B$ is defined by 
$$
    B_Y(\varphi)=\langle\hat{\nabla}_Y\varphi+c(Y)\hat{D}\varphi,\varphi\rangle_{\tilde{g}_j}.
$$
As $\Omega$ increases to $\tilde{M}_j$, we need to examine the boundary terms that appear in the AH end, the arbitrary ends, and the boundary $\partial\tilde{M}_j$. Thus, to obtain a meaningful result, it is necessary to restrict to a special class of spinors.

Recall the following result from \cite{bartnik2005}:
\begin{lemma}
    There exists $w\in L^1_{\text{loc}}(\tilde{M}_j)$ that is strictly positive and satisfies
    \begin{equation}\label{eq:wPI}
        \int_{\tilde{M}_j}|\psi|^2_{\tilde{g}_j}w\,d\mu_{\tilde{g}_j}\le\int_{\tilde{M}_j}|\hat{\nabla}\psi|^2_{\tilde{g}_j}\,d\mu_{\tilde{g}_j}
    \end{equation}
    for all $\psi\in C^\infty_c(\tilde{M}_j,\tilde{\slashed{S}}_j)$. Moreover, $w$ can be chosen to be constant on the AH end of $\tilde{M}_j$.
\end{lemma}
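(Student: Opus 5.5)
We will prove the weighted Poincaré inequality \eqref{eq:wPI} by following the strategy of \cite{bartnik2005}: first establish positivity of $\hat\nabla^*\hat\nabla$ on compact pieces via a Kato-type argument, then patch local weights together, and finally produce the explicit constant weight on the AH end from the asymptotic geometry. The basic observation is that a compactly supported spinor $\psi$ with $\hat\nabla\psi=0$ on an open set must vanish identically. Indeed, the parallel transport for $\hat\nabla$ preserves the pointwise norm only up to a factor controlled by $|\psi|$, since $\hat\nabla$ differs from the metric connection by the skew term $\tfrac{\sqrt{-1}}{2}c(X)$. More precisely, $d|\psi|^2(X)=2\mathrm{Re}\langle\hat\nabla_X\psi,\psi\rangle-\mathrm{Re}\langle\sqrt{-1}c(X)\psi,\psi\rangle$, and the last term is controlled by $|\psi|^2$. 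Hence a $\hat\nabla$-parallel spinor vanishing somewhere vanishes on the connected component. This gives injectivity of $\hat\nabla$ on compactly supported spinors.

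\textbf{Step 1 (local weights).} For each relatively compact connected domain $\Omega\subset\tilde M_j$ with smooth boundary, we claim
\[
\int_{\Omega}|\psi|^2\le C_\Omega\int_{\Omega}|\hat\nabla\psi|^2 \quad\text{for all } \psi\in C^\infty_c(\Omega).
\]
We argue by contradiction and Rellich compactness: take a normalized sequence with $\|\hat\nabla\psi_k\|_{L^2}\to0$. Then $\psi_k$ is bounded in $H^1_0(\Omega)$, and a subsequence converges in $L^2$ to a $\hat\nabla$-parallel $\psi$ with $\|\psi\|_{L^2}=1$. But $\psi$ lies in $H^1_0$, so its extension by zero is parallel on a larger connected set and vanishes off $\Omega$. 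By the observation above, $\psi\equiv0$, a contradiction. Applying this to an exhaustion $\Omega_1\subset\Omega_2\subset\cdots$, we set $w=\sum_m 2^{-m}C_{\Omega_m}^{-1}\chi_{\Omega_m}$. This weight is locally bounded and strictly positive. Summing the inequalities over $m$, each weighted by $2^{-m}$, gives \eqref{eq:wPI} with total coefficient at most $1$. Note that a boundary component, or the region near $\partial\tilde M_j$, causes no trouble because $\psi$ has compact support in the interior.

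\textbf{Step 2 (constant weight on the AH end).} On the AH end $E_0$, which is where the main work lies, we use the hyperbolic structure directly. Write $\tilde g_j=\sinh^{-2}\rho(d\rho^2+\gamma_\rho)$, and note that $\hat\nabla$ is asymptotic to the hyperbolic Killing connection. For the hyperbolic Killing connection one has the standard estimate, obtained by integrating $|\hat\nabla_{\partial_s}\psi|$ along radial geodesics with arclength $s=-\log\rho+O(1)$, or equivalently via a Hardy inequality in $s$ using the volume growth $e^{(n-1)s}$:
\[
\int_{\{\rho<\rho_1\}}|\psi|^2\le c_0\int_{\{\rho<\rho_1\}}|\hat\nabla\psi|^2 \quad\text{for }\psi\text{ vanishing near }\{\rho=\rho_1\}.
\]
The exponential volume growth makes the Hardy constant uniform, and the $O(\rho^n)$ perturbation is absorbed by shrinking $\rho_1$.

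\textbf{Step 3 (gluing).} For a general $\psi$, use a cutoff $\chi$ equal to $1$ on $\{\rho<\rho_1/2\}$ and supported in $\{\rho<\rho_1\}$. Apply Step 2 to $\chi\psi$ and Step 1 to $\psi$ on a compact set containing $\operatorname{supp}\nabla\chi$. The error term $|\nabla\chi|^2|\psi|^2$ is then controlled by Step 1, and rescaling $w$ gives \eqref{eq:wPI} with $w$ constant near $\partial X$.

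I expect the main obstacle to be the uniform Hardy-type estimate in Step 2. The plan is to derive it from the radial Kato inequality $|\partial_s|\psi||\le|\hat\nabla_{\partial_s}\psi|+\tfrac12|\psi|$, compared against the exponential volume element.
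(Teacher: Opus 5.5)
There is a genuine gap in passing from Step~1 to a global inequality. Step~1 proves only the Dirichlet estimate $\int_{\Omega}|\psi|^2\le C_\Omega\int_\Omega|\hat\nabla\psi|^2$ for $\psi$ supported in $\Omega$, and it gives no control of $C_{\Omega_m}$ as $m\to\infty$. Fix $\psi\in C^\infty_c(\tilde M_j)$. The terms $2^{-m}C_{\Omega_m}^{-1}\int_{\Omega_m}|\psi|^2$ with $\operatorname{supp}\psi\not\subset\Omega_m$ are not covered by Step~1. Bounding them through some $\Omega_{m_0}\supset\operatorname{supp}\psi$ costs the $\psi$-dependent factor $C_{\Omega_{m_0}}/C_{\Omega_m}$. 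Step~3 has the same defect: the error $\int|\nabla\chi|^2|\psi|^2$ lives on a compact annulus on whose boundary $\psi$ need not vanish, so Step~1 says nothing about it.

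What you actually need is the global estimate
\[
\int_A|\psi|^2\le C_A\int_{\tilde M_j}|\hat\nabla\psi|^2\qquad\text{for all }\psi\in C^\infty_c(\tilde M_j)
\]
for every compact $A$. This cannot be proved locally: wherever $\tilde g_j$ is hyperbolic, $\hat\nabla$ has local parallel (Killing) spinors, so the version with $\int_A|\hat\nabla\psi|^2$ on the right is false. The missing idea is a compactness argument with input from the AH end:
\begin{enumerate}
\item If the estimate fails, take normalized $\psi_k$ with $\|\hat\nabla\psi_k\|_{L^2}\to0$.
\item By Rellich on compact domains (up to $\partial\tilde M_j$) and unique continuation of parallel spinors, $\psi_k\to\psi_\infty$ in $L^2_{\mathrm{loc}}$, with $\hat\nabla\psi_\infty=0$ on all of $\tilde M_j$ and $\int_A|\psi_\infty|^2=1$.
\item Your Step~2 applied to $\chi\psi_k$ gives $\psi_\infty\in L^2$ on the AH end.
\item But a nonzero $\hat\nabla$-parallel spinor satisfies $|X(|\psi|^2)|\le|X|\,|\psi|^2$, so $|\psi_\infty|^2\ge c\,e^{-\operatorname{dist}(x,x_0)}$. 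This is not integrable against the $e^{(n-1)s}$ volume growth, a contradiction.
\end{enumerate}
So your ``basic observation'' is the right tool, but you apply it only to compactly supported spinors, where it is vacuous. Its real job is to exclude non-compactly supported parallel spinors via the AH end. Once the global estimate holds, $w=\sum_m 2^{-m}C_{A_m}^{-1}\chi_{A_m}$ plus a constant on the end works. (The paper itself simply cites \cite[Proposition 8.3(4)]{bartnik2005}.)

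You have also misread the function class, and this hides where the difficulty lies. In the paper, $C^\infty_c(\tilde M_j)$ contains spinors that do not vanish on $\partial\tilde M_j$: $H^1_w$ is its completion, and the boundary condition $\psi=\sqrt{-1}c(\tilde\nu)\psi$ in Lemma~\ref{lem:Lichnerowicz} is imposed only afterwards. For spinors vanishing near $\partial\tilde M_j$ the lemma is in fact trivial. Since $\hat\nabla-\nabla=\frac{\sqrt{-1}}{2}c(\cdot)$ is parallel and pointwise self-adjoint, $\hat\nabla^*\hat\nabla=\nabla^*\nabla+\frac n4$; equivalently, combine the paper's Lichnerowicz formula with $D^2=\nabla^*\nabla+\frac R4$. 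Hence $w\equiv n/4$ works on all of $\tilde M_j$. The Hardy inequality you flag as the main obstacle therefore comes for free, and your Dirichlet constants satisfy $C_\Omega\le 4/n$ uniformly. For general $\psi$ one has instead
\[
\int_{\tilde M_j}|\hat\nabla\psi|^2=\int_{\tilde M_j}|\nabla\psi|^2+\frac n4\int_{\tilde M_j}|\psi|^2+\frac12\int_{\partial\tilde M_j}\langle\sqrt{-1}c(\nu)\psi,\psi\rangle,
\]
with $\nu$ the outward normal. The boundary term can be negative, and controlling it is exactly what the global compactness argument above is for. Your remark that the boundary ``causes no trouble'' thus discards the only nontrivial part of the statement.
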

\begin{proof}
    See, for example, Proposition 8.3(4) in \cite{bartnik2005}.
\end{proof}

\begin{definition}
    The weighted Sobolev space $H^1_w=H^1_w(\tilde{M}_j,\tilde{\slashed{S}}_j)$ is the completion of $C^\infty_c(\tilde{M}_j,\tilde{\slashed{S}}_j)$ with respect to the norm
    $$
        \|\psi\|_{H^1_w}^2=\int_{\tilde{M}_j}\left(|\hat{\nabla}\psi|^2_{\tilde{g}_j}+|\psi|^2_{\tilde{g}_j}w\right)\,d\mu_{\tilde{g}_j}.
    $$
\end{definition}
The norm is equivalent to the homogeneous Sobolev norm
$$
    \|\psi\|_{\dot{H}^1}^2:=\int_{\tilde{M}_j}|\hat{\nabla}\psi|^2_{\tilde{g}_j}\,d\mu_{\tilde{g}_j}.
$$
Since $\tilde{M}_j$ is complete, the space $H^1_w$ can be identified with the subspace of measurable spinors $\psi$ on $\tilde{M}_j$ such that
$$
    \int_{\tilde{M}_j}(|\hat{\nabla}\psi|^2_{\tilde{g}_j}+|\psi|^2_{\tilde{g}_j}w)\,d\mu_{\tilde{g}_j}<\infty,
$$
where $\hat{\nabla}\psi$ is understood in the distributional sense, cf. \cite[Proposition 4.5]{CH03}.

Let $S$ be a spinor representation for $Cl_n$. For any constant unit vector $\mathbf{u}\in S$, one can associate a Killing spinor $\varphi_{\mathbf{u}}$ relative to the hyperbolic metric on $\mathbb{H}^n$. By truncation, we get a spinor $\phi_{\mathbf{u}}\in C^\infty(\tilde{M}_j,\tilde{\slashed{S}}_j)$, which is supported in the AH end $E_0$. 

\begin{lemma}\label{lem:Lichnerowicz}
    For any $\psi\in H^1_w$, the spinor $\Phi=\phi_{\mathbf{u}}+\psi$ satisfies
    \begin{equation}\label{eq:Lichnerowicz-2}
        \int_{\tilde{M}_j}(|\hat{\nabla}\Phi|^2_{\tilde{g}_j}-|\hat{D}\Phi|^2_{\tilde{g}_j})\,d\mu_{\tilde{g}_j}\le \lim_{\rho'\to 0}\int_{\rho=\rho'}B_{\tilde\nu}(\phi_{\mathbf{u}})\,d\sigma_{\tilde{g}_j},
    \end{equation}
    if $\psi$ satisfies the boundary condition $\psi=\sqrt{-1}c(\tilde\nu)\psi$ on $\partial\tilde{M}_j$. The limit on the right-hand side exists and equals 
    $$
        \frac{1}{4}\int_{S^{n-1}} \operatorname{tr}_{\gamma_{\text{std}}}(\tilde{h}_j)(1+\sqrt{-1}\langle c(x)\mathbf{u},\mathbf{u}\rangle_{\tilde{g}_j})\, d \mu_{\gamma_{\text{std}}},
    $$
    where $\tilde{h}_j$ is the mass aspect of $\tilde{g}_j$.
\end{lemma}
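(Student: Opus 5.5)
The plan is to integrate the Lichnerowicz identity \eqref{eq:Lichnerowicz} for $\Phi=\phi_{\mathbf{u}}+\psi$ over the exhausting domains
\[
\Omega_{\rho',R}:=\{\rho>\rho'\}\cap\{\text{distance to }\partial\tilde M_j\text{ on the arbitrary ends}<R\},
\]
and then let $\rho'\to0$ and $R\to\infty$. Each boundary component is treated separately: the inner boundary $\partial\tilde M_j$, the far boundaries on the arbitrary ends, and the level sets $\{\rho=\rho'\}$ in $E_0$. First I would reduce to $\psi\in C^\infty$ with compact support in $\tilde M_j\cup\partial\tilde M_j$ satisfying the boundary condition. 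The general case should follow by density in $H^1_w$, since the left side of \eqref{eq:Lichnerowicz-2} is continuous in the $\dot H^1$ norm once we know $\hat\nabla\phi_{\mathbf{u}}\in L^2$. That fact follows from the AH decay: $\hat\nabla\phi_{\mathbf{u}}=O(\rho^n)$ in $\tilde g_j$-norm, because $\phi_{\mathbf{u}}$ is Killing for the hyperbolic model and $\tilde g_j-g_{\mathbb H}=O(\rho^n)$ in $C^1$. The volume grows like $\rho^{-n}$, so $|\hat\nabla\phi_{\mathbf{u}}|^2$ is integrable. The same holds for $\hat D\phi_{\mathbf{u}}$, since $|\hat D\Phi|\le\sqrt n|\hat\nabla\Phi|$. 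For compactly supported $\psi$ the far boundaries on the arbitrary ends contribute nothing, because $\phi_{\mathbf{u}}$ is supported in $E_0$.

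On the inner boundary $\partial\tilde M_j$ only $\psi$ is present. I would use the standard decomposition of the boundary term into the boundary Dirac operator and mean curvature,
\[
B_{\tilde\nu}(\psi)=\langle D^{\partial}\psi,\psi\rangle-\tfrac{H}{2}|\psi|^2+\tfrac{\sqrt{-1}}{2}\langle c(\tilde\nu)\psi,\psi\rangle\cdot(\text{sign}).
\]
Here I would commit to the sign convention with $\tilde\nu$ outward from $\Omega$, hence pointing into $V_j$. Under the chirality-type condition $\psi=\sqrt{-1}c(\tilde\nu)\psi$, the term $\langle D^\partial\psi,\psi\rangle$ vanishes pointwise after integration, since $D^\partial$ anticommutes with $\sqrt{-1}c(\tilde\nu)$. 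The modification term becomes $\pm\frac12|\psi|^2$, and one is left with $-\frac12(H\mp1)|\psi|^2$ up to the constant. Positivity of the mean curvature of $\partial V_j$, chosen so that $H>0$ (in fact $H$ is close to $n-1$ by the preceding lemma), together with the correct choice of sign in the boundary condition, should give $\int_{\partial\tilde M_j}B_{\tilde\nu}(\psi)\le0$. This sign bookkeeping is where I expect the main care is needed. If the conventions come out wrong, I would flip to $-\sqrt{-1}c(\tilde\nu)$ and recheck, using $H\ge n-1-O(a)>1$, which leaves room.

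On $\{\rho=\rho'\}$ the integrand is $B(\phi_{\mathbf{u}}+\psi)=B(\phi_{\mathbf{u}})$ once $\rho'$ is smaller than the support of $\psi$. Then the limit is the standard mass computation of Chruściel--Herzlich (cf.\ \cite{CH03}) and of \cite{BQ08}, applied with the expansion $\tilde g_j=\sinh^{-2}\rho(d\rho^2+\gamma_{\text{std}}+\frac{\rho^n}{n}\tilde h_j+O(\rho^{n+1}))$ from Corollary \ref{cor:comformalmassaspects}. To carry it out, I would write $B(\phi_{\mathbf{u}})=\langle\hat\nabla_{\tilde\nu}\phi_{\mathbf{u}}+c(\tilde\nu)\hat D\phi_{\mathbf{u}},\phi_{\mathbf{u}}\rangle$, linearize the connection in $e=\tilde g_j-g_{\mathbb H}$, and use $|\varphi_{\mathbf{u}}|^2\sim\rho^{-1}(1+\sqrt{-1}\langle c(x)\mathbf u,\mathbf u\rangle)$, the lapse $V_{(1,x)}$. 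The resulting integrand is $\frac14\operatorname{tr}\tilde h_j\,\rho^{n-1}\cdot\rho^{-1}(\cdots)$ against the area $\rho^{1-n}$, which gives the stated limit after $O(\rho)$ errors vanish. The density step then yields \eqref{eq:Lichnerowicz-2} for all $\psi\in H^1_w$ satisfying the boundary condition, with inequality coming from the inner boundary term.
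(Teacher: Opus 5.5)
Your proposal follows the paper's proof. The inequality is first obtained for compactly supported $\psi$ from the integrated Lichnerowicz identity \eqref{eq:Lichnerowicz}, using two facts: the inner boundary term is nonpositive by mean convexity (which the paper simply quotes from \cite[Equation (4.28)]{CH03}), and the term on $\{\rho=\rho'\}$ tends to Wang's mass expression. It is then extended to all $\psi\in H^1_w$ by $\dot H^1$-density. Your check that $\hat\nabla\phi_{\mathbf{u}}\in L^2$ makes explicit what the paper's density step uses tacitly; the rate is $O(\rho^{n-1/2})$ rather than $O(\rho^n)$, since $|\phi_{\mathbf{u}}|\sim\rho^{-1/2}$, but this is still square-integrable.

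One correction to your inner-boundary computation. The zeroth-order part of $\hat\nabla_{\tilde\nu}\psi+c(\tilde\nu)\hat D\psi$ is $-\tfrac{\sqrt{-1}(n-1)}{2}c(\tilde\nu)\psi$, not $\pm\tfrac{\sqrt{-1}}{2}c(\tilde\nu)\psi$. Hence the stated condition $\psi=\sqrt{-1}c(\tilde\nu)\psi$ gives $B_{\tilde\nu}(\psi)=-\tfrac12\bigl(H+(n-1)\bigr)|\psi|^2\le 0$, where $H\approx n-1$ is taken for the normal pointing into $V_j$. The opposite condition would give $-\tfrac12\bigl(H-(n-1)\bigr)|\psi|^2=O(a)|\psi|^2$, which has no definite sign. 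So the sign of the boundary condition is forced, and your fallback of flipping it and invoking $H>1$ would not work.
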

\begin{proof}
    First, we note that if $\psi\in C^\infty_c(\tilde{M}_j,\tilde{\slashed{S}}_j)$, then \eqref{eq:Lichnerowicz-2} follows from \eqref{eq:Lichnerowicz} and the fact that 
    $$
        \int_{\partial\tilde{M}_j}B_{\tilde\nu}(\psi)\,d\sigma_{\tilde{g}_j}\le 0,
    $$
    cf. \cite[Equation (4.28)]{CH03}, since the boundary is mean convex. 

    Next, assume that $\psi_i\to\psi$ in the $H^1_w$-norm, where $\psi_i$ is compactly supported. Then $\psi_i\to\psi$ in the $\dot{H}^1$-norm as well. Denote $\Phi_i=\phi_{\mathbf{u}}+\psi_i$. Note that
    \begin{align*}
        (|\hat{\nabla}\Phi_i|^2_{\tilde{g}_j}-|\hat{D}\Phi_i|^2_{\tilde{g}_j})-(|\hat{\nabla}{\Phi}|^2_{\tilde{g}_j}-|\hat{D}{\Phi}|^2_{\tilde{g}_j})&=\begin{aligned}[t]
            &\langle \hat{\nabla}(\psi_i-\psi),\hat{\nabla}\Phi_i\rangle_{\tilde{g}_j}+\langle \hat{\nabla}\Phi,\hat{\nabla}(\psi_i-\psi)\rangle_{\tilde{g}_j}\\
            &+\langle \hat{D}(\psi_i-\psi),\hat{D}\Phi_i\rangle_{\tilde{g}_j}+\langle \hat{D}\Phi,\hat{D}(\psi_i-\psi)\rangle_{\tilde{g}_j}
        \end{aligned}\\
        &\le C|\hat{\nabla}(\psi_i-\psi)|_{\tilde{g}_j}(|\hat{\nabla}\Phi_i|_{\tilde{g}_j}+|\hat{\nabla}\Phi|_{\tilde{g}_j})
    \end{align*}
    where $C$ is a constant depending only on $n$, since $|\hat{D}\psi|_{\tilde{g}_j}^2\le n|\hat{\nabla}\psi|_{\tilde{g}_j}^2$. Thus, 
    $$
        \lim_{i\to\infty}\int_{{\tilde{M}_j}}(|\hat{\nabla}\Phi_i|^2_{\tilde{g}_j}-|\hat{D}\Phi_i|^2_{\tilde{g}_j})\,d\mu_{\tilde{g}_j}=\int_{{ \tilde{M}_j}}(|\hat{\nabla}\Phi|^2_{\tilde{g}_j}-|\hat{D}\Phi|^2_{\tilde{g}_j})\,d\mu_{\tilde{g}_j}.
    $$
    Hence, \eqref{eq:Lichnerowicz-2} holds for $\psi\in H^1_w$ as well. 

    Finally, the limit on the right-hand side of \eqref{eq:Lichnerowicz-2} exists and is computed, for example, in \cite[Theorem 2.4]{Wang2001}.
\end{proof}

By solving an elliptic boundary value problem, we can find $\psi\in H^1_w$ (cf. \cite{CH03}) such that $\Phi_{\mathbf{u}}=\phi_{\mathbf{u}}+\psi$ satisfies 
$$
    \begin{cases}
        \hat{D}\Phi_{\mathbf{u}}=0, & \text{ on }\tilde{M}_j,\\
        \Phi_{\mathbf{u}}=\sqrt{-1}c(\tilde\nu)\Phi_{\mathbf{u}}, &\text{ on }\partial\tilde{M}_j.
    \end{cases}
$$
Later we will denote $\Phi_{\mathbf{u}}$ as $\Phi_{\mathbf{u}}^j$ to indicate its dependence on $j$. By Lemma \ref{lem:Lichnerowicz}, we have
\begin{equation}\label{eq:Phi^j estimate}
    \|\Phi_{\mathbf{u}}^j\|_{\dot{H}^1}^2=\int_{\tilde M_j}|\hat{\nabla}\Phi_{\mathbf{u}}^j|^2_{\tilde{g}_j}\,d\mu_{\tilde{g}_j}\le\frac{1}{4}\int_{S^{n-1}} \operatorname{tr}_{\gamma_{\text{std}}}(\tilde{h}_j)(1+\sqrt{-1}\langle c(x)\mathbf{u},\mathbf{u}\rangle_{\tilde{g}_j})\,d \mu_{\gamma_{\text{std}}},
\end{equation}
and consequently, 
\begin{equation}\label{eq:Phi^j estimate-2}
    \|\Phi_{\mathbf{u}}^j\|_{H_w^1}^2\le\frac{1}{2}\int_{S^{n-1}} \operatorname{tr}_{\gamma_{\text{std}}}(\tilde{h}_j)(1+\sqrt{-1}\langle c(x)\mathbf{u},\mathbf{u}\rangle_{\tilde{g}_j})\,d \mu_{\gamma_{\text{std}}}.
\end{equation}

\textbf{Step 2: Transplant $\Phi_{\mathbf{u}}^j$'s to spinors on the same bundle.} Recall that $\tilde M_j$ is conformal to $M\setminus\bar{V}_j$ with the conformal factor $u_j^{\frac{4}{n-2}}$, and that $u_j\to u_\infty=1$. We can identify the spinor bundle $\tilde{\slashed{S}}_j$ with the spinor bundle $\tilde{\slashed{S}}$ on $(M\setminus\bar{V}_j,g)$ via an isometry: 
$$
    \iota_j:\tilde{\slashed{S}}\to \tilde{\slashed{S}}_j,
$$
and the spinor connections are related by 
\begin{equation}
    \nabla^{\tilde{g}_j}_X(\iota_j\varphi)=\iota_j(\nabla_X^{g}\varphi)+\frac{1}{2(n-2)}\left[c\left(\frac{\nabla^{g} u_j}{u_j}\right),c(X)\right](\iota_j\varphi),
\end{equation}
for any tangent vector $X$ and any $\varphi\in C^\infty(M\setminus\bar{V}_j,\tilde{\slashed{S}})$, c.f. \cite[Lemma 5.27]{LM89}. Together with 
$$
    \iota_j(c(X)\varphi)=u_j^{-\frac{2}{n-2}}c(X)\iota_j(\varphi),
$$
we have 
\begin{equation}
    \hat\nabla^{\tilde{g}_j}_X(\iota_j\varphi)=\iota_j(\hat\nabla_X^{g}\varphi)+\tfrac{1}{2(n-2)}\left[c\left(\tfrac{\nabla^{g} u_j}{u_j}\right),c(X)\right](\iota_j\varphi)+\tfrac{\sqrt{-1}}{2}(1-u_j^{-\frac{2}{n-2}})c(X)\iota_j(\varphi).
\end{equation}

\begin{lemma}
    A subsequence of $\{\iota_j^{-1}(\Phi_{\mathbf{u}}^j)\}$ converges to a spinor $\Phi_{\mathbf{u}}^\infty$ smoothly on every compact subset of $M\setminus\mathcal{S}$. Moreover, 
    \begin{equation}\label{eq:Phi^infty estimate}
        \int_{M\setminus\mathcal{S}}|\hat{\nabla}^g\Phi_{\mathbf{u}}^\infty|^2_{g}\,d\mu_{g}\le\frac{1}{4}\int_{S^{n-1}} \operatorname{tr}_{\gamma_{\text{std}}}(h)(1+\sqrt{-1}\langle c(x)\mathbf{u},\mathbf{u}\rangle_{g})\,d\mu_{\gamma_{\text{std}}}.
    \end{equation}
\end{lemma}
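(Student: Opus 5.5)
The plan is to transplant the uniform $H^1_w$ bound \eqref{eq:Phi^j estimate-2} to the fixed metric $g$ on compact subsets of $M\setminus\mathcal S$. Then extract a limit by elliptic regularity for the Dirac equation, and finally pass to the limit in \eqref{eq:Phi^j estimate} via lower semicontinuity.

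Set $\varphi_j:=\iota_j^{-1}(\Phi_{\mathbf u}^j)$ and fix a compact $Q\subset M\setminus\mathcal S$; for large $j$ we have $Q\subset M\setminus\bar V_j$. Since $u_j\to u_\infty\equiv 1$ smoothly on $Q$, the formula for $\hat\nabla^{\tilde g_j}(\iota_j\varphi)$ gives
\[
\bigl|\hat\nabla^g\varphi_j\bigr|_g\le (1+o(1))\,\bigl|\hat\nabla^{\tilde g_j}\Phi^j_{\mathbf u}\bigr|_{\tilde g_j}+\epsilon_j|\varphi_j|_g \quad\text{on }Q,
\]
with $\epsilon_j=O(\|\nabla u_j\|_{C^0(Q)}+\|1-u_j^{-2/(n-2)}\|_{C^0(Q)})\to0$. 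In addition, $d\mu_{\tilde g_j}=(1+o(1))\,d\mu_g$ on $Q$.

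The right side of \eqref{eq:Phi^j estimate-2} is bounded uniformly in $j$. Indeed, $\tilde h_j=h+\frac{4(n+1)}{n-2}A_j\gamma_{\text{std}}$ by Proposition \ref{prop:mass aspect change}, and $|A_j|\le A$ by Lemma \ref{lem:u_j arbitrary end}. Because the weight $w$ is positive and locally integrable, and (after shrinking $Q$ into a fixed relatively compact domain with $w$ bounded below) we may use the local weighted bound together with a Poincaré-type estimate, we obtain $\sup_j\|\varphi_j\|_{H^1(Q)}<\infty$. There is one subtlety: $w$ depends on $j$ through $\tilde g_j$. To handle it, I would instead use the standard local estimate $\|\varphi\|_{L^2(Q)}\le C(Q')\|\hat\nabla^g\varphi\|_{L^2(Q')}+C\|\varphi\|_{L^2(\text{fixed ball})}$, and control the fixed-ball term through the AH end, where $w$ is a constant independent of $j$.

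Next, $\hat D^{\tilde g_j}\Phi^j_{\mathbf u}=0$. Under the conformal identification this becomes a first order elliptic equation for $\varphi_j$ whose coefficients converge smoothly to those of $\hat D^g$, namely $D^g$ plus zero order terms involving $\nabla u_j$ and $u_j-1$. Interior elliptic estimates bootstrap the $H^1$ bound to $C^k(Q)$ bounds. A diagonal argument over an exhaustion of $M\setminus\mathcal S$ then yields a subsequence with $\varphi_j\to\Phi_{\mathbf u}^\infty$ smoothly on compacts.

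For \eqref{eq:Phi^infty estimate}, smooth convergence on $Q$ and the comparison of connections above give
\[
\int_Q|\hat\nabla^g\Phi^\infty_{\mathbf u}|_g^2\,d\mu_g=\lim_j\int_Q|\hat\nabla^{\tilde g_j}\Phi^j_{\mathbf u}|^2_{\tilde g_j}\,d\mu_{\tilde g_j}\le\liminf_j\tfrac14\int_{S^{n-1}}\operatorname{tr}(\tilde h_j)(1+\sqrt{-1}\langle c(x)\mathbf u,\mathbf u\rangle)\,d\mu_{\gamma_{\text{std}}}.
\]
It remains to show that $A_j\to A_\infty=0$ in a sense sufficient to pass to the limit under the integral. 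This is the \textbf{main obstacle}. I would first try the barrier argument of Proposition \ref{prop:gap} and Lemma \ref{lem:YamabeAH}: sandwich $u_j$ between $1\pm(\sup_{\{\rho=\rho_1\}}|u_j-1|)\,w$, where $w$ is the $\rho^n$-barrier. Then $|A_j|\le C\sup_{\{\rho=\rho_1\}}|u_j-1|\to0$, since $u_j\to1$ uniformly on the compact hypersurface $\{\rho=\rho_1\}$. With this, $\tilde h_j\to h$ uniformly, so the right side converges to $\frac14\int\operatorname{tr}(h)(1+\sqrt{-1}\langle c(x)\mathbf u,\mathbf u\rangle)$. Letting $Q\nearrow M\setminus\mathcal S$ and applying monotone convergence gives \eqref{eq:Phi^infty estimate}.
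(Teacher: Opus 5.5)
Your proposal follows the paper's route: transfer the uniform energy bounds for $\Phi^j_{\mathbf u}$ to the fixed metric $g$ via $\iota_j$ on compact sets where $u_j\to 1$ smoothly, extract a diagonal subsequence converging smoothly on compacta, and pass to the limit in the energy estimate using $\tilde h_j\to h$ uniformly. Your additions fill in steps the paper only asserts: the elliptic bootstrap through the conformally transformed Dirac equation, the barrier argument giving $\sup_{\partial X}|A_j|\to 0$, and anchoring the local $L^2$ bound on a fixed ball in the AH end because $w$ depends on $j$. Two caveats apply to your argument exactly as they do, implicitly, to the paper's: the weighted bound really controls $\psi_j=\Phi^j_{\mathbf u}-\phi_{\mathbf u}$ rather than $\Phi^j_{\mathbf u}$ itself, which is not in $H^1_w$ because $\phi_{\mathbf u}$ is not square-integrable on the AH end; and the $j$-independence of the value of $w$ on that end is assumed rather than proved.
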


\begin{proof}
    Fix $\epsilon>0$ and a compact $\Omega\subset M\setminus\mathcal{S}$. Let $\{e_i\}$ be a local orthonormal frame on $M\setminus\mathcal{S}$ with respect to $\tilde{g}$. Then we have
    \begin{align*}
        |\hat{\nabla}^{g}(\iota_j^{-1}(\Phi_{\mathbf{u}}^j))|_{g}^2&=\sum_{i=1}^n |\hat{\nabla}^{g}_{e_i}(\iota_j^{-1}(\Phi_{\mathbf{u}}^j))|_{g}^2\\
        &=\begin{aligned}[t]
            \sum_{i=1}^n\left|\hat{\nabla}^{\tilde{g}_j}_{e_i}(\Phi_{\mathbf{u}}^j)-\frac{1}{2(n-2)}\left[c\left(\frac{\nabla^{g} u_j}{u_j}\right),c(e_i)\right](\Phi_{\mathbf{u}}^j)\right.\\
            \left.-\frac{\sqrt{-1}}{2}(1-u_j^{-\frac{2}{n-2}})c(e_i)\Phi_{\mathbf{u}}^j\right|_{\tilde{g}_j}^2
        \end{aligned}\\
        &=\begin{aligned}[t]
            \sum_{i=1}^n\left|u_j^{\frac{2}{n-2}}\hat{\nabla}^{\tilde{g}_j}_{{u_j}^{-\frac{2}{n-2}}e_i}(\Phi_{\mathbf{u}}^j)-\frac{1}{2(n-2)}\left[c\left(\frac{\nabla^{g} u_j}{u_j}\right),c(e_i)\right](\Phi_{\mathbf{u}}^j)\right.\\
            \left.-\frac{\sqrt{-1}}{2}(1-u_j^{-\frac{2}{n-2}})c(e_i)\Phi_{\mathbf{u}}^j\right|_{\tilde{g}_j}^2.
        \end{aligned}
    \end{align*}
    Since $u_j\to 1$ smoothly on $\Omega$, there exists $j_0$ such that for any $j\ge j_0$, 
    $$
        \left||\hat{\nabla}^{g}(\iota_j^{-1}(\Phi_{\mathbf{u}}^j))|_{g}^2-|\hat{\nabla}^{\tilde{g}_j}(\Phi_{\mathbf{u}}^j)|_{\tilde{g}_j}^2\right|\le \epsilon(|\Phi_{\mathbf{u}}^j|_{\tilde{g}_j}^2+|\hat{\nabla}^{\tilde{g}_j}(\Phi_{\mathbf{u}}^j)|_{\tilde{g}_j}^2).
    $$
    
    On the other hand, note that 
    $$
        d\mu_{g}=u_j^{-\frac{2n}{n-2}}\,d\mu_{\tilde{g}_j},
    $$
    Thus, by making $j_0$ large enough, for any $j\ge j_0$, we have
    $$
        \int_{\Omega}|\hat{\nabla}^{g}(\iota_j^{-1}(\Phi_{\mathbf{u}}^j))|^2_{g}\,d\mu_{g}\le (1+\epsilon)^2\int_{\Omega}|\hat{\nabla}^{\tilde{g}_j}(\Phi_{\mathbf{u}}^j)|^2_{\tilde{g}_j}\,d\mu_{\tilde{g}_j}+\epsilon(1+\epsilon)\int_{\Omega}|\Phi_{\mathbf{u}}^j|_{\tilde{g}_j}^2\,d\mu_{\tilde{g}_j}.
    $$
    Since $\Omega$ is compact, $w$ has a positive lower bound on $\Omega$, and thus, the last integral is uniformly bounded by some constant 
    $$
        C_\Omega:=(\inf_{\Omega}w)^{-1}\sup_{j}\|\Phi_{\mathbf{u}}^j\|^2_{H_w^1}\le\frac{1}{2\inf_{\Omega}w}\int_{S^{n-1}} \operatorname{tr}_{\gamma_{\text{std}}}(\tilde{h}_j)(1+\sqrt{-1}\langle c(x)\mathbf{u},\mathbf{u}\rangle_{\tilde{g}_j})\,d \mu_{\gamma_{\text{std}}},
    $$
    cf. \eqref{eq:Phi^j estimate-2}. We conclude that, by \eqref{eq:Phi^j estimate}
    $$
        \int_{\Omega}|\hat{\nabla}^{g}(\iota_j^{-1}(\Phi_{\mathbf{u}}^j))|^2_{g}\,d\mu_{g}\le\tfrac{(1+\epsilon)^2}{4}\int_{S^{n-1}} \operatorname{tr}_{\gamma_{\text{std}}}(\tilde{h}_j)(1+\sqrt{-1}\langle c(x)\mathbf{u},\mathbf{u}\rangle_{\tilde{g}_j})\, d\mu_{\gamma_{\text{std}}}+\epsilon(1+\epsilon) C_{\Omega}.
    $$
    Since $u_j\to 1$, we have $\tilde{h}_j\to h$ uniformly, and for $j\ge j_0$ large enough, 
    \begin{align*}
        \int_{\Omega}|\hat{\nabla}^{g}(\iota_j^{-1}(\Phi_{\mathbf{u}}^j))|^2_{g}\,d\mu_{g}&\le\tfrac{(1+\epsilon)^2}{4}\int_{S^{n-1}} \operatorname{tr}_{\gamma_{\text{std}}}(h)(1+\sqrt{-1}\langle c(x)\mathbf{u},\mathbf{u}\rangle_{g})\, d\mu_{\gamma_{\text{std}}}+\epsilon(1+\epsilon) C_{\Omega}+\epsilon\\
        &=:C_1(\Omega,\epsilon).
    \end{align*}

    Similarly, we can get a uniform bound of the $H^1_w$-norm:
    $$
        \int_{\Omega}(|\nabla^g(\iota_j^{-1}(\Phi_{\mathbf{u}}^j))|_g^2+|\iota_j^{-1}(\Phi_{\mathbf{u}}^j)|_g^2w)\,d\mu_{\tilde{g}}\le C_2(\Omega,\epsilon).
    $$
    Thus, by passing to a subsequence, we have $\iota_j^{-1}(\Phi_{\mathbf{u}}^j)\to \Phi_{\mathbf{u}}^\infty$ smoothly on $\Omega$, and 
    $$
        \int_{\Omega}|\hat{\nabla}^{g}(\Phi_{\mathbf{u}}^\infty)|^2_{g}\,d\mu_{g}\le C_1(\Omega,\epsilon).
    $$

    Since $\epsilon$ is arbitrary, by a diagonal argument, we can find a subsequence of $\{\iota_j^{-1}(\Phi_{\mathbf{u}}^j)\}$ converging to $\Phi_{\mathbf{u}}^\infty$ smoothly on $\Omega$, and
    \begin{equation}\label{eq:Killing spinor}
        \int_{\Omega}|\hat{\nabla}^{g}(\Phi_{\mathbf{u}}^\infty)|^2_{g}\,d\mu_{g}\le \frac{1}{4}\int_{S^{n-1}} \operatorname{tr}_{\gamma_{\text{std}}}(h)(1+\sqrt{-1}\langle c(x)\mathbf{u},\mathbf{u}\rangle_{g})\, d\mu_{\gamma_{\text{std}}}.
    \end{equation}
    Since $\Omega$ is arbitrary, we can pass to a further subsequence if necessary to make sure that the convergence holds on every compact subset of $M\setminus\mathcal{S}$. Let $\Omega$ increase to $M\setminus\mathcal{S}$ and we obtain \eqref{eq:Phi^infty estimate}. 
\end{proof}

\textbf{Step 3: Conclude the existence of a Killing spinor.} Since we have assumed that the inequality in \eqref{eq:mass nonnegative} is an equality, i.e., 
$$
    \int_{S^{n-1}} \operatorname{tr}_{\gamma_{\text{std}}}(h)\,d \mu_{\gamma_{\text{std}}} = \left|\int_{S^{n-1}} \operatorname{tr}_{\gamma_{\text{std}}}(h) x\, d \mu_{\gamma_{\text{std}}}\right|,
$$
there exists a unit vector $\mathbf{u}\in S$ such that 
$$
    \int_{S^{n-1}} \operatorname{tr}_{\gamma_{\text{std}}}(h)(1+\sqrt{-1}\langle c(x)\mathbf{u},\mathbf{u}\rangle_{g}) \, d\mu_{\gamma_{\text{std}}}=0.
$$
Thus, we conclude that 
\begin{proposition}\label{prop:Ricci rigidity spin}
    If the inequality in \eqref{eq:mass nonnegative} is an equality and if $\mathcal{R}$ is spin, then there exists a unit vector $\mathbf{u}\in S$ such that $\hat{\nabla}\Phi_{\mathbf{u}}^\infty=0$. In particular, $\Ric_g\equiv-(n-1)g$.
\end{proposition}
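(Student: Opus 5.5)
The plan is to combine the estimate \eqref{eq:Phi^infty estimate} with the equality assumption, and then read off the Einstein condition from the integrability of the Killing spinor equation.

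First, choose $\mathbf{u}$ as described just before the statement. By the equality case and Cauchy--Schwarz, there is a unit vector $\mathbf{u}\in S$ such that
\[
\int_{S^{n-1}}\operatorname{tr}_{\gamma_{\text{std}}}(h)\bigl(1+\sqrt{-1}\langle c(x)\mathbf{u},\mathbf{u}\rangle\bigr)\,d\mu_{\gamma_{\text{std}}}=0.
\]
Concretely, $\sqrt{-1}\langle c(x)\mathbf{u},\mathbf{u}\rangle=-x\cdot\mathbf{v}$, where $\mathbf{v}$ is the unit vector $\PADM(g)/|\PADM(g)|$. Such a $\mathbf{u}$ exists because the map $\mathbf{u}\mapsto \sqrt{-1}\langle c(\cdot)\mathbf{u},\mathbf{u}\rangle$ hits every unit vector of $\mathbb{R}^n$; this is the standard fact used in Wang's and Chru\'sciel--Herzlich's mass formula.

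Applying \eqref{eq:Phi^infty estimate} with this $\mathbf{u}$ gives $\int_{M\setminus\mathcal{S}}|\hat\nabla^g\Phi^\infty_{\mathbf{u}}|^2\,d\mu_g\le 0$. Hence $\hat\nabla\Phi^\infty_{\mathbf{u}}=0$ on $\mathcal{R}$, and smoothness follows from the smooth convergence established in the preceding lemma.

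Next, I must check that $\Phi^\infty_{\mathbf{u}}\not\equiv0$. Near $\partial X$, each $\Phi^j_{\mathbf{u}}=\phi_{\mathbf{u}}+\psi_j$ with $\|\psi_j\|_{H^1_w}$ uniformly bounded, and $w$ is constant on $E_0$. The truncated Killing spinor $\phi_{\mathbf{u}}$ grows like $\rho^{-1/2}$, so it is not in $L^2(w\,d\mu)$ on $E_0$. The limit $\psi_\infty=\iota^{-1}$-limit $-\phi_{\mathbf{u}}$ does lie in $L^2(w)$ by lower semicontinuity. Therefore $\Phi^\infty_{\mathbf{u}}$ cannot vanish identically on $E_0$. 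A parallel section for $\hat\nabla$ that vanishes at one point vanishes everywhere, since $\hat\nabla$ is a connection; so $\Phi^\infty_{\mathbf{u}}$ is nowhere zero on the connected set $\mathcal{R}$.

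Finally, compute the curvature of $\hat\nabla$. For an imaginary Killing spinor $\nabla_X\Phi=-\frac{\sqrt{-1}}{2}c(X)\Phi$, the curvature identity gives
\[
\sum_i c(e_i)R(e_i,X)\Phi=-\tfrac12 c(\Ric(X))\Phi,
\]
and from the Killing equation the same expression equals $\frac{n-1}{2}c(X)\Phi$. This yields $c\bigl(\Ric(X)+(n-1)X\bigr)\Phi=0$. Since $c(Y)$ is invertible for $Y\ne0$ (because $c(Y)^2=-|Y|^2$) and $\Phi\neq0$, we get $\Ric_g=-(n-1)g$ on $\mathcal{R}$.

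I expect the nonvanishing step to be the main obstacle, since it needs care with the asymptotics of $\phi_{\mathbf{u}}$ against the weight $w$, and with passing the uniform $H^1_w$ bound \eqref{eq:Phi^j estimate-2} through $\iota_j$ on $E_0$. There $u_j\to1$ uniformly by the bounds $|A_j|\le A$ and the barrier estimates, so the comparison should go through as in the previous lemma.
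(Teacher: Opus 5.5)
Your proposal is correct and follows the paper's route. You choose $\mathbf{u}$ so that the right-hand side of \eqref{eq:Phi^infty estimate} vanishes, conclude $\hat\nabla\Phi^\infty_{\mathbf u}=0$ on $\mathcal R$, and then read off $\Ric_g=-(n-1)g$ from the Killing-spinor curvature identity. The paper's proof stops at ``Killing spinor, hence Einstein'' and never checks that $\Phi^\infty_{\mathbf u}\not\equiv 0$, even though this is genuinely needed because the zero spinor is trivially $\hat\nabla$-parallel. Your nonvanishing step fills that point: you play a $j$-uniform $L^2_w$ bound on $\psi_j$ against the non-integrability of $\phi_{\mathbf u}$ on $E_0$. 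You are right to place that bound on $\psi_j$ rather than on $\Phi^j_{\mathbf u}$, which is not in $L^2_w$ at all. The step still depends on the weight being uniformly positive on $E_0$ in $j$, which you already flag.
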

\begin{proof}
    In this case, from \eqref{eq:Killing spinor}, we obtain that 
    $$
        \hat{\nabla}^{g}(\Phi_{\mathbf{u}}^\infty)\equiv 0,
    $$
    i.e., $\Phi_{\mathbf{u}}$ is a Killing spinor on $\mathcal{R}$. Therefore, $g$ is Einstein, and thus $\Ric_g\equiv -(n-1)g$. 
\end{proof}

\subsection{From Einstein to hyperbolic space}\label{sec:hyperbolic}

By Anderson's work \cite{anderson03}, we can show that if the singular set $\mathcal{S}=\varnothing$, then an Einstein manifold with an AH end must be hyperbolic. 

\begin{proposition}\label{prop:diameter bound}
    Let $(M,d,\mu)$ be a metric measure space as in the introduction, with an AH end $E_0$. Let $r$ be a geodesic defining function of $\partial X$, i.e., in a collar neighborhood $U$ of the conformal infinity $\partial X$, 
    $$
        g=r^{-2}\bar g, \quad \bar g=dr^2+\gamma_r',
    $$
    where $\gamma_r'$ is a family of smooth metrics on $\partial X$. Then any minimizing $\bar g$-geodesic $c$ starting from $\partial X$ has length 
    $$
        L_{\bar{g}}(c)\le D=2.
    $$
\end{proposition}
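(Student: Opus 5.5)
The plan is to reduce the $\bar g$-length of $c$ to an integral of $r$ along a $g$-geodesic ray, and to bound that integral by comparison with the model case. In $\mathbb{H}^n$ written as the Poincar\'e ball, the geodesic defining function $r$ makes $\bar g$ (near $\partial X$) the flat unit ball. Minimizing $\bar g$-geodesics from $\partial X$ are then radial segments, of length at most the diameter $2$. So the target is to show that $(M,g)$ cannot be ``longer'' than this model in the $\bar g$-sense along $c$.

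\textbf{Step 1 (normal start).} Since $c$ minimizes $\bar g$-length from the compact hypersurface $\partial X$, the first variation formula forces $c$ to leave $\partial X$ orthogonally. Because $\bar g=dr^2+\gamma_r'$ on $U$ and the $\bar g$-geodesics orthogonal to $\partial X$ are the $\partial_r$-lines, inside $U$ the curve $c$ is an $r$-line. In particular $L_{\bar g}(c\cap U)=r$ at the exit point. Equivalently, in terms of $g$ this portion is a unit-speed $g$-geodesic along which $t:=-\log r$ is the $g$-distance to a level set of $r$.

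\textbf{Step 2 (extension past the collar).} Outside $U$, extend $r$ by $r:=e^{-t}$, where $t$ is the $g$-distance from a fixed level set $\{r=r_1\}\subset U$ on the interior side, suitably normalized. Then $\bar g=r^2g$ is a (Lipschitz) global conformal compactification. Write $s$ for $g$-arclength along $c$ and observe
\[
L_{\bar g}(c)=\int r\,|\dot c|_g\,ds=\int e^{-t(c(s))}\,ds .
\]
Since $t$ is $1$-Lipschitz, $t(c(s))\ge s-\text{const}$ along $c$ parametrized from the exit point. This gives $L_{\bar g}(c)\le r_1+\int_0^\infty r_1e^{-s}\,ds$ up to the normalization constant, which must be tuned so that the total is at most $2$. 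Here I would use the explicit relation $r=(\cosh\rho-1)/\sinh\rho<1$ to fix the constants and match the model value.

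\textbf{Step 3 (the main obstacle).} The main obstacle is making Step 2 sharp enough to give exactly $D=2$ rather than some constant depending on the geometry of $K$. The naive Lipschitz bound loses information once $c$ enters the compact core and the (possibly singular) region, where $c$ need not follow a $g$-ray. My first attempt would be to use minimality of $c$ directly. Its $\bar g$-length is at most that of any competitor, for instance the $\partial_r$-line continued as a $g$-ray from $\partial X$ to the endpoint of $c$. This reduces everything to estimating $\int e^{-t}\,ds$ along $g$-rays emanating from $\partial X$, for which $t(\gamma(s))=s$ exactly, yielding the bound $2$ after normalization.

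If that competitor argument fails because rays from $\partial X$ do not reach every point, I would fall back on Anderson's comparison. This uses the Ricci lower bound (or the RCD condition) with Laplacian or mean-curvature comparison for the level sets of $t$, which behave like horospheres. The aim is to show they remain mean convex, so that every point is reached by such a ray.
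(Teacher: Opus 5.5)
\textbf{There is a genuine gap.} Steps 2--3 use no curvature hypothesis, and without one the bound is false, so no normalization or Lipschitz argument can give $D=2$.

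Your Step 1 is fine, but notice what it already says. Along a $\partial_r$-line $|dr|_{\bar g}=1$, so $L_{\bar g}(c)=r$ at the endpoint. The proposition is therefore exactly the claim that normal geodesics from $\partial X$ stop minimizing before $r$ reaches $2$. Your phrase ``yielding the bound $2$ after normalization'' in Step 3 simply restates this.

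Step 2 also contains two concrete errors.
\begin{enumerate}
\item The geodesic defining function \emph{increases} inward: $r=r_1e^{t}$ along normal geodesics. Putting $r=e^{-t}$ on the interior side therefore produces a different, kinked compactification, not the $\bar g$ of the statement.
\item $1$-Lipschitz only gives $t(c(s))\le t(c(0))+s$, not $t(c(s))\ge s-\mathrm{const}$.
\end{enumerate}

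To see that curvature is indispensable, take $g=ds^2+\phi(s)^2\gamma_{\text{std}}$ with $\phi=\sinh s$ for $s\ge 1$. Below that, insert a neck on which $\phi$ is nearly constant over an $s$-interval of length $L$, and close it with a smooth cap at a center $s=s_c\le 1-L$. Then $r=2e^{-s}$ is smooth with $|dr|_{\bar g}=1$ away from the center. So the radial line is a minimizing $\bar g$-geodesic from $\partial X$, of length $2e^{-s_c}\ge 2e^{L-1}$.

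\textbf{The missing ingredient is the Einstein condition} $\Ric_g=-(n-1)g$. The statement leaves it implicit, but it is the setting in which the proposition is used: the final corollary applies it after $M$ has been shown to be Einstein. Anderson's argument, which the paper follows, depends on it.

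First, a minimizing $c$ has a neighborhood disjoint from the cut locus of $\partial X$. Hence $\bar g=dr^2+\gamma'_r$ persists near all of $c$, not only in the collar.

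Along $c$, with primes denoting $d/dr$:
\begin{enumerate}
\item The conformal change formula and $R_g=-n(n-1)$ give $\bar R=-2(n-1)r^{-1}\Delta_{\bar g}r$.
\item The Einstein equation, combined with the Riccati equation for the level sets of $r$, gives $\bar R'=2(n-1)r^{-1}|\operatorname{Hess}_{\bar g}r|^2_{\bar g}$.
\item Cauchy--Schwarz, $|\operatorname{Hess}_{\bar g}r|^2_{\bar g}\ge(\Delta_{\bar g}r)^2/(n-1)$, then yields
\[
\bar R'\ge\frac{r\,\bar R^{2}}{2(n-1)^2},\qquad \bar R(0)=(n-1)^2 .
\]
\end{enumerate}
Thus $\bar R\ge(n-1)^2>0$. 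Integrating $(1/\bar R)'\le -r/(2(n-1)^2)$ gives $0<1/\bar R\le (4-r^2)/(4(n-1)^2)$. So $r<2$ along $c$, which is $L_{\bar g}(c)\le 2$.

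Your fallback can be repaired in this spirit. Assuming only $\Ric_g\ge-(n-1)g$, the Riccati inequality for $H=\Delta_{\bar g}r$ gives the same differential inequality, with the same value at $r=0$, for $-2(n-1)H/r$. But the goal must be blow-up of $H$, i.e.\ a focal point of $\partial X$, by $r=2$. Mean convexity of the level sets is not the right target. Also, the paper's standing hypotheses ($\Ric_g\ge -Cd^{-2}$ or $RCD(K,N)$) do not provide $\Ric_g\ge-(n-1)g$.
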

By a $\bar g$-geodesic, we mean a smooth curve $c$ that is geodesic with respect to $\bar g$. 
\begin{proof}
    This is \cite[Proposition 5.1]{anderson03}. We only sketch their argument here for reference, and the constant $D$ should be computed in a general dimension $n\ge 3$. 
    
    Fix a small $r_0>0$ so that $U$ contains all points $p$ with $r(p)<2r_0$. Let $S_0=\{p\in M:\dist_{\bar g}(x,\partial X)=r_0\}$. Since $r>0$, we can write the defining function as 
    $$
        r=2e^{-s},
    $$
    where $s$ is the signed distance to $S_0$ with respect to $g$, near $\partial X$, shifted by a constant: 
    $$
        s(p)=\operatorname{sgn.dist}_g(p,S_0)+\ln\frac{2}{r_0}, \quad\forall p\in U\setminus\partial X.
    $$
    On $U\setminus\partial X$, we can write 
    $$
        g=ds^2+\gamma_s'', \quad \gamma_s''=r^{-2}\gamma_r'.
    $$

    A key observation is that, if $c$ is a minimal $\bar g$-geodesic starting from $\partial X$, then there is a neighborhood $V$ of the image of $c$ that is disjoint from the cut locus of $\partial X$. Hence, on $U\cup V$, we can write 
    $$
        \bar g=dr^2+\gamma_r'
    $$
    and as a consequence, on $(U\cap V)\setminus\partial X$,  
    $$
        g=ds^2+\gamma_s''
    $$
    where $\gamma_r'$ and $\gamma_s''$ are defined on some open subset of $\partial X$. 

    Thus, the calculation in \cite[Proposition 1.4]{anderson03} holds locally, i.e., 
    $$
        \bar R'=2(n-1)r^{-1}|\operatorname{Hess}_{\bar g}r|_{\bar g}^2,
    $$
    where $\bar R$ is the scalar curvature of $\bar g$, and the derivative is taken along the geodesic $c$. By Cauchy-Schwarz, 
    $$
        |\operatorname{Hess}_{\bar g}r|_{\bar g}^2\ge \frac{|\Delta_{\bar g}r|^2}{n-1}.
    $$
    The conformal change formula for the scalar curvature implies that 
    $$
        \bar R=-2(n-1)r^{-1}\Delta_{\bar g}r.
    $$
    Hence, 
    $$
        \bar R'\ge \frac{\bar R^2}{2(n-1)^2}r.
    $$
    When $r=0$, we have $\bar R=(n-1)^2$, which can be computed by Gauss-Codazzi. Hence, integrating the differential inequality gives $L_{\bar g}(c)\le 2$. 
\end{proof}

\begin{corollary}
    If $\mathcal{S}=\varnothing$, $\kappa\in L^2$, and $\EADM(g)=|\PADM(g)|$, then $(M^n,g)$ is isometric to $\mathbb{H}^n$. 
\end{corollary}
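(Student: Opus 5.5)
First, Proposition~\ref{prop:ricci-rigidity} applies, since $\mathcal{S}=\varnothing$, $\kappa\in L^2$ and $\EADM(g)=|\PADM(g)|$. It gives $\Ric_g\equiv-(n-1)g$ on $M$, so $(M,g)$ is a smooth complete Einstein manifold with the AH end $E_0$ and possibly some arbitrary ends. The rest of the argument is in two parts. First I show there are no arbitrary ends, so that $M$ is conformally compact with conformal infinity the round sphere $\partial X$. Then I show that such an Einstein filling of the round sphere is isometric to $\mathbb{H}^n$.

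\emph{No arbitrary ends.} I would use Proposition~\ref{prop:diameter bound}. Take $r$ a geodesic defining function near $\partial X$ and set $\bar g=r^{2}g$, extended as a conformally related metric along minimizing $\bar g$-geodesics from $\partial X$. Every point $q\in M$ is joined to $\partial X$ by a minimizing $\bar g$-geodesic, and by Proposition~\ref{prop:diameter bound} its length is at most $2$. Along such a geodesic the normal exponential map from $\partial X$ is defined and does not reach the cut locus before the endpoint. So $M$ is the image of the compact set $\{(x,t):x\in\partial X,\ 0\le t\le 2\}$ under the normal exponential map of $\bar g$, restricted to $t$ below the cut time. I expect this to be the main obstacle: one has to see that this image, minus the collar near $\partial X$, has compact closure in $M$. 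The reason is that the cut time is bounded and $r$ stays bounded below away from the collar along each such geodesic. With the $\bar R$ monotonicity used in that proof, $\bar R=-2(n-1)r^{-1}\Delta_{\bar g}r$ stays controlled, so $r$ cannot degenerate to $0$ in the interior. Hence $M\setminus U$ is compact, and there are no arbitrary ends. I would also note that the singular set being empty is used here to guarantee that geodesics and the Riccati equation behave smoothly.

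\emph{Rigidity.} Now $M$ is a complete Einstein manifold, conformally compact with round conformal infinity, and with zero mass aspect trace data in the equality case. The plan is to apply Anderson's result \cite{anderson03} (equivalently, the rigidity of \cite{Huang2020,Hir2025} for a single AH end with equality in the PMT) to conclude $(M,g)\cong\mathbb{H}^n$. Concretely, I would follow Anderson's volume comparison route. Proposition~\ref{prop:diameter bound} together with the equality $\bar R'=2(n-1)r^{-1}|\operatorname{Hess}_{\bar g}r|^2$ gives sharp bounds. Since $\bar R(0)=(n-1)^2$ and $M$ is compact in $\bar g$ up to the boundary, one compares with the hyperbolic model where the inequality integrates exactly to length $2$. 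In the equality case the Cauchy--Schwarz step must be an equality, so $\operatorname{Hess}_{\bar g}r$ is pure trace along all geodesics. This forces $\gamma'_r=(1-r^2/4)^2\gamma_{\text{std}}$, and therefore $g$ is the hyperbolic metric on a ball; by completeness, $M=\mathbb{H}^n$. The cleaner alternative is simply to cite \cite{Huang2020,Hir2025}, since after the first step $M$ is a smooth AH manifold with a single end and $\EADM=|\PADM|$.
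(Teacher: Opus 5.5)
Your proposal follows the paper's proof. You get Einstein from Proposition~\ref{prop:ricci-rigidity}, use Proposition~\ref{prop:diameter bound} to exclude arbitrary ends, and then invoke the single-end rigidity of \cite{Huang2020} (or \cite{Hir2025}). For the middle step, the paper argues by contradiction via an infinite-length $\bar g$-geodesic into another end, while you argue via compactness of $M\setminus U$; this works, and no $\bar R$ control is needed, since $r$ is just the $\bar g$-arclength along normal geodesics from $\partial X$.

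For the last step, rely on that citation rather than your sketched Anderson route. Nothing forces equality in the Cauchy--Schwarz step: the mass equality never enters the Riccati argument, which only gives the inequality $L_{\bar g}(c)\le 2$.
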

\begin{proof}
    We have seen that $M$ is Einstein in Section \ref{sec:Ricci rigid smooth}. If $E_0$ is not the only end of $M$, then we can find a $\bar g$-geodesic starting from $\partial X$ to some other end $E_i$, which has infinite length. This contradicts Proposition \ref{prop:diameter bound}. Hence, $E_0$ is the only end. It is known that if $(M^n,g)$ has a single AH end, then $\EADM(g)=|\PADM(g)|$ implies that $(M^n,g)$ is isometric to $\mathbb{H}^n$. See, for example, \cite{Huang2020}.
\end{proof}


\bibliographystyle{alpha}

\bibliography{Positive}

\newcommand{\etalchar}[1]{$^{#1}$}
\begin{thebibliography}{DWWW24}

\bibitem[ACF92]{ACF92}
Lars Andersson, Piotr~T. Chru{\'s}ciel, and Helmut Friedrich.
\newblock On the regularity of solutions to the {{Yamabe}} equation and the existence of smooth hyperboloidal initial data for {{Einstein}}'s field equations.
\newblock {\em Communications in Mathematical Physics}, 149(3):587--612, October 1992.

\bibitem[AM88a]{AMcpt88}
Patricio Aviles and Robert~C. McOwen.
\newblock Complete conformal metrics with negative scalar curvature in compact {{Riemannian}} manifolds.
\newblock {\em Duke Mathematical Journal}, 56(2), April 1988.

\bibitem[AM88b]{AMnoncpt88}
Patricio Aviles and Robert~C. McOwen.
\newblock Conformal deformation to constant negative scalar curvature on noncompact {{Riemannian}} manifolds.
\newblock {\em Journal of Differential Geometry}, 27(2), January 1988.

\bibitem[And03]{anderson03}
Michael~T. Anderson.
\newblock Boundary regularity, uniqueness and non-uniqueness for {{AH Einstein}} metrics on 4-manifolds.
\newblock {\em Advances in Mathematics}, 179(2):205--249, November 2003.

\bibitem[Avi82]{Aviles82}
Patricio Aviles.
\newblock A study of the singularities of solutions of a class of nonlinear elliptic partial differential equation.
\newblock {\em Communications in Partial Differential Equations}, 7(5):609--643, January 1982.

\bibitem[BBL20]{BBL2020}
Anders Bj\"orn, Jana Bj\"orn, and Juha Lehrb\"ack.
\newblock Existence and almost uniqueness for {$p$}-harmonic {G}reen functions on bounded domains in metric spaces.
\newblock {\em J. Differential Equations}, 269(9):6602--6640, 2020.

\bibitem[BC05]{bartnik2005}
Robert~A. Bartnik and Piotr~T. Chru{\'s}ciel.
\newblock Boundary value problems for {{Dirac-type}} equations.
\newblock {\em Journal f\"ur die reine und angewandte Mathematik (Crelles Journal)}, 2005(579):13--73, March 2005.

\bibitem[BHH{\etalchar{+}}26]{BHHSZ2026}
Yuchen Bi, Tianze Hao, Shihang He, Yuguang Shi, and Jintian Zhu.
\newblock A proof for the riemannian positive mass theorem up to dimension 19, 2026.

\bibitem[BQ08]{BQ08}
Vincent Bonini and Jie Qing.
\newblock A positive mass theorem on asymptotically hyperbolic manifolds with corners along a hypersurface.
\newblock {\em Annales Henri Poincar\'e}, 9(2):347--372, April 2008.

\bibitem[BW26]{BW2026}
S.~Brendle and Y.~Wang.
\newblock On the spacetime positive energy theorem in arbitrary dimension, April 2026.

\bibitem[BZ26]{BZ2026}
Yuchen Bi and Jintian Zhu.
\newblock Positive {{Scalar Curvature Obstructions}} via {{Singular Dimension Descent}}, July 2026.

\bibitem[CH03]{CH03}
Piotr~T. Chru{\'s}ciel and Marc Herzlich.
\newblock The mass of asymptotically hyperbolic {{Riemannian}} manifolds.
\newblock {\em Pacific Journal of Mathematics}, 212(2):231--264, December 2003.

\bibitem[CLZ22]{CLZ22}
Jianchun Chu, Man~Chun Lee, and Jintian Zhu.
\newblock Singular positive mass theorem with arbitrary ends, 2022.

\bibitem[CY75]{CY75}
S.~Y. Cheng and S.~T. Yau.
\newblock Differential equations on riemannian manifolds and their geometric applications.
\newblock {\em Communications on Pure and Applied Mathematics}, 28(3):333--354, May 1975.

\bibitem[DSW24]{DaSW2024}
Xianzhe Dai, Yukai Sun, and Changliang Wang.
\newblock The positive mass theorem for asymptotically flat manifolds with isolated conical singularities.
\newblock {\em Science China Mathematics}, 2024.

\bibitem[DWWW24]{DWWW2024}
Xianzhe Dai, Changliang Wang, Lihe Wang, and Guofang Wei.
\newblock Singular metrics with nonnegative scalar curvature and rcd, 2024.

\bibitem[Eva10]{Evans2010}
Lawrence~C. Evans.
\newblock {\em Partial Differential Equations}.
\newblock Number v. 19 in Graduate Studies in Mathematics. American Mathematical Society, 2nd ed edition, 2010.

\bibitem[Gro23]{Gro23}
Misha Gromov.
\newblock {\em Four lectures on scalar curvature}.
\newblock World Sci. Publ., Hackensack, NJ, 2023.

\bibitem[HJM20]{Huang2020}
Lan-Hsuan Huang, Hyun~Chul Jang, and Daniel Martin.
\newblock Mass rigidity for hyperbolic manifolds.
\newblock {\em Comm. Math. Phys.}, 376(3):2329--2349, 2020.

\bibitem[HJZ25]{Hir2025}
Sven Hirsch, Hyun~Chul Jang, and Yiyue Zhang.
\newblock Rigidity of asymptotically hyperboloidal initial data sets with vanishing mass.
\newblock {\em Comm. Math. Phys.}, 406(12):Paper No. 307, 26, 2025.

\bibitem[HKLZ26]{hirsch2026}
Sven Hirsch, Marcus Khuri, Martin Lesourd, and Yiyue Zhang.
\newblock The hyperboloidal and spacetime positive mass theorem in all dimensions, 2026.

\bibitem[HSY25]{HSY25}
Shihang He, Yuguang Shi, and Haobin Yu.
\newblock Positive mass theorems on singular spaces and some applications, 2025.

\bibitem[HSY26]{HSY26}
Shihang He, Yuguang Shi, and Haobin Yu.
\newblock Singularity removal rigidity theorems for minimal hypersurfaces in manifolds with nonnegative scalar curvature, 2026.

\bibitem[Jia14]{Jiang2014}
Renjin Jiang.
\newblock Cheeger-harmonic functions in metric measure spaces revisited.
\newblock {\em Journal of Functional Analysis}, 266(3):1373--1394, February 2014.

\bibitem[JSZ22]{JSZ22}
Wenshuai Jiang, Weimin Sheng, and Huaiyu Zhang.
\newblock Removable singularity of positive mass theorem with continuous metrics.
\newblock {\em Math. Z}, 302:839--874, 2022.

\bibitem[Kaz24]{Kaz24}
Demetre Kazaras.
\newblock Desingularizing positive scalar curvature 4-manifolds.
\newblock {\em Math. Ann.}, 390:4951--4972, 2024.

\bibitem[KWW26]{khuri2026}
Marcus Khuri, Jian Wang, and Jinmin Wang.
\newblock Riemannian {{Positive Mass Theorem}} in {{All Dimensions}} in the {{Presence}} of {{Low-Codimension Singularities}}, July 2026.

\bibitem[LM89]{LM89}
H.~Blaine Lawson and Marie-Louise Michelsohn.
\newblock {\em Spin geometry}, volume~38 of {\em Princeton Mathematical Series}.
\newblock Princeton University Press, Princeton, 1989.

\bibitem[LM19]{LM2019}
Chao Li and Christos Mantoulidis.
\newblock Positive scalar curvature with skeleton singularities.
\newblock {\em Math. Ann.}, 374(1-2):99--131, 2019.

\bibitem[Mia02]{Miao02}
Pengzi Miao.
\newblock Positive mass theorem on manifolds admitting corners along a hypersurface.
\newblock {\em Advances in Theoretical and Mathematical Physics}, 6(6):1163--1182, 2002.

\bibitem[Tsa26]{tsang2026}
Tin-Yau Tsang.
\newblock Positive mass theorem for initial data sets with arbitrary ends, 2026.

\bibitem[Wan01]{Wang2001}
Xiaodong Wang.
\newblock The mass of asymptotically hyperbolic manifolds.
\newblock {\em J. Differential Geom.}, 57(2):273--299, 2001.

\bibitem[WX24]{WX2024}
Jinmin Wang and Zhizhang Xie.
\newblock Scalar curvature rigidity of spheres with subsets removed and $l^\infty$ metrics, 2024.

\bibitem[Yau75]{Yau75}
Shing-Tung Yau.
\newblock Harmonic functions on complete riemannian manifolds.
\newblock {\em Communications on Pure and Applied Mathematics}, 28(2):201--228, March 1975.

\end{thebibliography}

\end{document}